\numberwithin{equation}{section}
\newtheorem{Proposition}[equation]{Proposition}
\newtheorem{Lemma}[equation]{Lemma}
\newtheorem{Theorem}[equation]{Theorem}
\newtheorem{Corollary}[equation]{Corollary}
\theoremstyle{definition}  
\newtheorem{Definition}[equation]{Definition}
\newtheorem{Remark}[equation]{Remark}
\newcommand\Comment[2][\relax]{\space\par\medskip\noindent%
   \fbox{\begin{minipage}{\textwidth}\textbf{Comment\ifx\relax#1\else---#1\fi}\newline%
        #2\end{minipage}}\medskip
}
\DeclareMathOperator\ldom {\triangleleft}
\DeclareMathOperator\ledom{\trianglelefteq}
\DeclareMathOperator\gdom {\triangleright}
\DeclareMathOperator\gedom{\trianglerighteq}
\def\bi{\text{\boldmath$i$}}
\def\bj{\text{\boldmath$j$}}
\def\bs{\text{\boldmath$s$}}
\def\bc{\text{\boldmath$c$}}
\def\br{\text{\boldmath$r$}}
\def\b1{\text{\boldmath$1$}}
\def\pmod#1{\text{ }(\text{\rm mod } #1)\,}
\def\bijection{\overset{\sim}{\longrightarrow}}
\newcommand{\Hom}{\operatorname{Hom}}
\newcommand{\ind}{\operatorname{ind}}
\def\sgn{\mathtt{sgn}}
\newcommand{\res}{\operatorname{res}}
\newcommand{\cha}{\operatorname{char}}
\newcommand{\St}{\operatorname{St}}
\newcommand{\cont}{\operatorname{cont}}
\newcommand{\Stab}{\operatorname{Stab}}
\newcommand{\Z}{\mathbb{Z}}
\def\phi{{\varphi}}
\newcommand{\Ga}{\Gamma}
\newcommand{\la}{\lambda}
\newcommand{\La}{\Lambda}
\newcommand{\al}{\alpha}
\def\Si{\mathfrak{S}}
\newcommand{\si}{\sigma}
\newcommand{\de}{\delta}
\def\triv#1{\O_{#1}}
\def\sign#1{\O_{-#1}}
\def\Belt{\mathbf{B}}
\newcommand{\Ind}{{\mathrm {Ind}}}
\newcommand{\Res}{{\mathrm {Res}}}
\newcommand{\C}{{\mathbb C}}
\newcommand{\D}{{\mathscr D}}
\def\h{{\mathfrak h}}
\newcommand\Par[1][]{{\mathscr P}^{\kappa#1}}
\def\T{{\mathtt T}}
\def\U{{\mathtt U}}
\def\Stab{{\mathtt S}}
\def\G{{\mathtt G}}
\def\defect{\operatorname{def}}
\def\codeg{\operatorname{codeg}}
\def\height{{\operatorname{ht}}}
\def\RPar{{\mathscr{RP}}^\kappa}
\def\surj{{\twoheadrightarrow}}
\def\Gar{{\operatorname{Gar}}}
\def\onto{{\twoheadrightarrow}}
\def\into{{\hookrightarrow}}
\def\Mod#1{#1\!\operatorname{-Mod}}
\renewcommand\O{\mathcal O}
\def\iso{\stackrel{\sim}{\longrightarrow}}
\newcommand\SetBox[2][35mm]{\Big\{\vcenter{\hsize#1\centering#2}\Big\}}
  \gdef\set#1{\mathinner{\lbrace\,{\mathcode`\|"8000%
  \let|\midvert #1}\,\rbrace}}
\def\midvert{\egroup\mid\bgroup}
\colorlet{darkgreen}{green!50!black}
\tikzset{dots/.style={very thick,loosely dotted},
         greendot/.style={fill,circle,color=darkgreen,inner sep=1.5pt,outer sep=0}
}
\def\greendot(#1,#2){\node[greendot] at(#1,#2){}}
\newenvironment{braid}{
  \begin{tikzpicture}[baseline=6mm,blue,line width=1pt, scale=0.4,
                      draw/.append style={rounded corners},
                      every node/.append style={font=\fontsize{5}{5}\selectfont}]%
  }{\end{tikzpicture}
}
\newcommand\iicrossing[1][i]{%
  \begin{braid}\tikzset{baseline=0mm,scale=0.5}
    \useasboundingbox (-0.6,0) rectangle (1,1.9);
    \draw(0,1)node[left=-1mm]{$#1$}--(1,0);
    \draw(1,1)node[right=-1mm]{$#1$}--(0,0);
  \end{braid}%
}
\newcommand\iisquare[1][i]{%
  \begin{braid}\tikzset{baseline=0mm,scale=0.7}
    \useasboundingbox (-0.6,0) rectangle (1,1.9);
    \draw(0,1)node[left=-1mm]{$#1$}--(1,0.5)--(0,0);
    \draw(1,1)node[right=-1mm]{$#1$}--(0,0.5)--(1,0);
  \end{braid}%
}
\def\Grid(#1,#2){
  \draw[very thin,gray,step=2mm] (0,0)grid(#1,#2);
  \draw[very thin,darkgreen,step=10mm] (0,0)grid(#1,#2);
}
\newcommand\Tableau[2][\relax]{
  \begin{tikzpicture}[scale=0.5,draw/.append style={thick,black}]
    \ifx\relax#1\relax%
    \else 
      \foreach\box in {#1} { \filldraw[blue!30]\box+(-.5,-.5)rectangle++(.5,.5); }
    \fi
    \newcount\row\newcount\col
    \row=0
    \foreach \Row in {#2} {
       \col=1
       \foreach\k in \Row {
          \draw(\the\col,\the\row)+(-.5,-.5)rectangle++(.5,.5);
          \draw(\the\col,\the\row)node{\k};
          \global\advance\col by 1
       }
       \global\advance\row by -1
    }
  \end{tikzpicture}
}
\newcommand\YoungDiagram[2][\relax]{
  \begin{tikzpicture}[scale=0.5,draw/.append style={thick,black}]
    \ifx\relax#1\relax%
    \else 
    \foreach\box in {#1} {
      \filldraw[blue!30]\box rectangle ++(1,1);
    }
    \fi
    \newcount\row
    \row=0
    \foreach \col in {#2} {
       \draw(1,\the\row)grid ++(\col,1);
       \global\advance\row by -1
    }
  \end{tikzpicture}
}
\begin{document}


\title[Universal Specht modules]{{\bf Universal graded Specht modules for cyclotomic Hecke algebras}}

\author{\sc Alexander S. Kleshchev}
\address{Department of Mathematics\\ University of Oregon\\
Eugene\\ OR~97403, USA}
\email{klesh@uoregon.edu}

\author{\sc Andrew Mathas}
\address{School of Mathematics and Statistics F07, 
University of Sydney, NSW 2006, Australia}
\email{andrew.mathas@sydney.edu.au}

\author{\sc Arun Ram}
\address{Department of Mathematics and Statistics,
University of Melbourne, VIC 3010, Australia}
\email{aram@unimelb.edu.au}

\subjclass[2000]{20C08, 20C30, 05E10}

\thanks{
Research supported by the NSF (grant DMS-0654147), the Australian Research Council (DP0986774, DP0986349, DP110100050),  an International
Visiting Professorship at the University of Sydney, the University of Melbourne, and the Hausdorff Institute for Mathematics. The first author thanks the University of Melbourne and the University of Sydney for hospitality.}

\begin{abstract}
The graded Specht module $S^\lambda$ for a cyclotomic Hecke algebra comes with a distinguished generating vector $z^\lambda\in S^\lambda$, which can be thought of as a ``highest weight vector of weight $\lambda$''. This paper describes the {\em defining relations} for the Specht module $S^\lambda$ as a graded module generated by $z^\lambda$. The first three relations say precisely what it means for $z^\lambda$ to be a highest weight vector of weight~$\lambda$. The remaining relations are homogeneous analogues of the classical {\em Garnir relations}. 
The homogeneous Garnir relations, which are {\em simpler} than the classical ones, are
associated with a remarkable family of homogeneous operators on the Specht module which satisfy the braid
relations.
\end{abstract}

\maketitle

\section{Introduction}
Let $S_d$ be the symmetric group on $d$ letters. A central role in representation theory of $S_d$ is played by certain $\Z S_d$-modules $S^\la$ labelled by the partitions $\la$ of $d$. These modules are called {\em Specht modules} and their  construction goes back to \cite{Young1,Young2,Specht}. Specht modules also arise naturally as cell modules in the cellular structure on the group algebra of $S_d$ constructed by Murphy in \cite{Murphy}, see \cite{MathasB,DJM,JM} for further development of these ideas which will be important in this paper. 

It was shown recently by Brundan and the first author \cite{BK} that over an arbitrary {\em  field}\, $F$, the group algebra $FS_d$ is explicitly isomorphic to a certain cyclotomic Khovanov-Lauda-Rouquier (KLR) algebra $R^\Lambda_d$. The algebra $R^\Lambda_d$  is $\Z$-graded, and this grading can be transferred to $FS_d$ using the Brundan-Kleshchev isomorphism. Moreover, in \cite{BKW}, the Specht modules over $F$ were also explicitly graded, which played a crucial role in the graded categorification theorem of \cite{BKllt} generalizing the Ariki's categorification theorem \cite{Ariki}. We refer the reader to \cite{Kbull} for description of these ideas and further references. 

Hu and the second author \cite{HM} have completed the picture by constructing a {\em graded cellular}\, structure on the group algebra of the symmetric group, so that the graded Specht modules of \cite{BKW} arise as the corresponding cell modules. 

In all constructions above, the Specht
module $S^\la$ comes together with a remarkable generating vector $z^\la\in S^\la$, which can be
thought of, informally, as a ``highest weight vector of weight $\la$''. The goal of this paper is to describe the {\em defining relations} of the Specht module $S^\la$
over $\Z$ as a graded module over the KLR algebra $R^\Lambda_d$ generated by $z^\la$. This idea of presenting Specht modules by generators and relations is responsible for our terminology {\em universal Specht modules}. 

Our homogeneous relations for $S^\lambda$ are given in Definition~\ref{DSpecht}. The first
three relations say precisely what we mean by $z^\lambda$ being a highest weight vector of
weight $\la$. The fourth and final relation is a remarkable family of {\em homogeneous Garnir
relations}, which we consider to be the key innovation of this paper. 

We point out that the classical Garnir relations, which go back to \cite{Young2,Garnir}, are very far
from being homogeneous with respect to the gradings under consideration. 
The classical Garnir relations have the form of an alternating sum of elements of the Specht
module corresponding to certain tableaux (being equated to zero). 

Even though substantial initial work is required to define the homogeneous Garnir relations, they are actually much {\em simpler} than the classical ones. For example, if the underlying Lie type of the KLR algebra is $A_{\infty}$, which under the isomorphism of \cite{BK} corresponds to the case where the field $F$ has characteristic $0$, then the homogeneous Garnir relation has the form of just {\em one}\, element corresponding to a special Garnir tableaux (being equated to zero). In the case where the Lie type is $A_{p-1}^{(1)}$, which under the isomorphism of \cite{BK} corresponds to the field $F$ having characteristic $p>0$, the homogeneous Garnir relation does look like a sum, but it has roughly $p$ times as few summands as the classical Garnir relation. For the case of the so-called calibrated representations of the affine Hecke algebra in characteristic zero this phenomenon has been known, see for example \cite[(5.4)]{Ram}. 

Even though so far we have been talking only about the symmetric groups, the story of Specht modules generalizes to all {\em cyclotomic Hecke algebras}, both 
degenerate and non-degenerate. This is the generality which we work with throughout this
paper. 


In section~\ref{SComb} we collect various combinatorial facts and notation. The key notion
here is that of the degree of a standard tableau which was first defined in \cite{BKW}. In
section~\ref{SKLR}, we recall the definition of the affine and cyclotomic KLR algebras and
define ``permutation modules" for these algebras using induction from one-dimensional modules
of the parabolic subalgebras in the affine setting. 

In the crucial section~\ref{SBI}, we define certain elements which we call {\em block
intertwiners}. These intertwiners will later be fed into the definition of the homogeneous
Garnir relations. They permute blocks (or bricks) of size $e$, where $e$ can be thought of as
the analogue of the characteristic of the ground field when working with Specht modules for the
symmetric groups, and this part of the story is trivial when $e=0$. The block intertwiners
$\tau_r$ are defined in terms of products of the large number  of the KLR generators. The KLR
generators do not satisfy Coxeter relations, so we find it truly remarkable that the brick
intertwiners $\tau_r$ do! See the key Theorem~\ref{tau braid}. 

In section~\ref{SHGR}, we define (row) Garnir relations and {\em universal (row) Specht modules} $S^\la$ for the algebra $R^\Lambda_d$ by generators and relations, see Definition~\ref{DSpecht}. Our next goal is to prove that if we identify the cyclotomic KLR algebra $R_d^\La$ with the cyclotomic Hecke algebra $H_d^\La$ via the Brundan-Kleshchev isomorphism, which is only valid over a field, then the universal Specht modules are identified with the usual graded Specht modules of \cite{BKW}. This is done in section~\ref{SNot}.

Section~\ref{SDualSpecht}  develops the parallel story for the column Specht modules $S_\la$, which turn out to be dual to the row Specht modules $S^\la$. 
Accidentally, 
what we call a column Specht module $S_\la$ is what was called a Specht module in James' book \cite{Jbook}. 

The final section~\ref{SApp} contains two applications. One is the description of Specht modules for higher level cyclotomic Hecke algebras as modules induced from Specht modules of level $1$, see Theorem~\ref{TSpechtHigherLevel}. 
In fact, these induced modules were sometimes taken as a definition of Specht modules for higher levels, which was problematic because the connection with the Specht modules as cell modules had not been established in full generality before.

Our second application is a generalization of the famous very useful result from James' book \cite[Theorem 8.15]{Jbook} for symmetric groups: 
$$(S^\la)^*\cong S^{\la'}\otimes \sgn.$$ 
The analogue of this is proved for arbitrary cyclotomic Hecke algebras in the graded setting, see Theorem~\ref{TSignDualSpecht}. 


\section{Combinatorics}\label{SComb}

\subsection{
Lie theoretic notation}\label{SSLTN}
Let $e\in\{0,2,3,4,\dots\}$ and $I:=\Z/e\Z.$ 
Let $\Ga$ be the quiver with vertex set  $I$,
and a directed edge from $i$ to $j$ if $j  = i-1$ (the orientation differs from the one in \cite{BK,HM}). 
Thus $\Gamma$ is a quiver of type~$A_\infty$ if $e=0$
or $A_{e-1}^{(1)}$ if $e > 0$. The corresponding {\em Cartan matrix} 
$(a_{i,j})_{i, j \in I}$ is defined by 
\begin{equation}\label{ECM}
a_{i,j} := \left\{
\begin{array}{rl}
2&\text{if $i=j$},\\
0&\text{if $j \neq i, i \pm 1$},\\
-1&\text{if $i \rightarrow j$ or $i \leftarrow j$},\\
-2&\text{if $i \rightleftarrows j$}.
\end{array}\right.
\end{equation}
(The case $a_{i,j}=-2$ only occurs if $e=2$.) 

Following \cite{Kac}, let $(\h,\Pi,\Pi^\vee)$ be a realization of the Cartan matrix $(a_{ij})_{i,j\in I}$, so we have the simple roots 
$\{\al_i\mid i\in I\},$ 
the fundamental dominant weights 
$\{\La_i\mid i\in I\},$ and the normalized invariant form $(\cdot,\cdot)$ such that
$$
(\al_i,\al_j)=a_{ij}, \quad (\La_i,\al_j)=\de_{ij}\qquad(i,j\in I).
$$
If $e>0$, the {\em null-root} is
\begin{equation}\label{E:nullRoot}
\de:=\al_0+\al_1+\dots+\al_{e-1}.
\end{equation}
Let $P_+$ be the set of dominant integral weights, and 
$
Q_+ := \bigoplus_{i \in I} \Z_{\geq 0} \alpha_i
$ 
the positive part of the root lattice. For $\alpha \in Q_+$ let $\height(\alpha)$ be 
the {\em height of~$\al$}. That is, $\height(\al)$ is the sum of the 
coefficients when $\al$ is expanded in terms of the $\alpha_i$'s.

Let $\Si_d$ be the {\em symmetric group} on $d$ letters and let $s_r=(r,r+1)$, for $1\le r<d$, be the simple transpositions of~$\Si_d$. Then $\Si_d$
acts from the left on the set $ I^d$ by place permutations. If $\bi=(i_1,\dots,i_d) \in I^d$
then its {\em weight} is $|\bi|:=\alpha_{i_1}+\cdots+\alpha_{i_d}\in Q_+$.  Then the
$\Si_d$-orbits on $I^d$ are the sets 
\begin{equation*} I^\alpha := \{\bi \in I^d\mid \al=|\bi|\} 
\end{equation*} 
parametrized by all $\alpha \in Q_+$ of height $d$. 

Throughout the paper, we fix a positive integer $l$, referred to as the {\em level},  and an ordered $l$-tuple  
\begin{equation}\label{EKappa}
\kappa=(k_1,\dots,k_l)\in I^l.
\end{equation}
Define the dominant weight $\La$ (of level $l$) as follows:  
\begin{equation}\label{ELa}
\La=\La(\kappa):=\La_{k_1}+\dots+\La_{k_l}\in P_+.
\end{equation}
Finally, for $\al\in Q_+$, define the {\em defect} of $\al$ (relative to $\La$) to be
\begin{equation}\label{defdef}
\defect(\al)=(\La,\al)-\frac12(\al,\al).
\end{equation}

\subsection{Partitions}\label{SSPar}
Recall that in (\ref{EKappa}) we have fixed a level $l$ and an $l$-tuple
$\kappa=(k_1,\dots,k_l)$.  An {\em $l$-multipartition} of $d$ is an ordered $l$-tuple
of partitions $\mu = (\mu^{(1)} , \dots,\mu^{(l)})$ such that $\sum_{m=1}^l
|\mu^{(m)}|=d$.  We call $\mu^{(m)}$ the $m$th {\em component} of $\mu$. 
Let $\Par_d$ be the set of all $l$-multipartitions of $d$  and put $\Par:=\bigsqcup_{d\geq 0}\Par_d$. Of course, $\Par$ only depends on $l$, and not on $\kappa$, but as soon as we consider residues of nodes of multipartitions, the dependence on~$\kappa$ becomes  crucial. 

The {\em Young diagram} of the multipartition $\mu = (\mu^{(1)} , \dots,\mu^{(l)})\in \Par$ is 
$$
\{(a,b,m)\in\Z_{>0}\times\Z_{>0}\times \{1,\dots,l\}\mid 1\leq b\leq \mu_a^{(m)}\}.
$$
The elements of this set
are the {\em nodes of $\mu$}. More generally, a {\em node} is any element of $\Z_{>0}\times\Z_{>0}\times \{1,\dots,l\}$. 
Usually, we identify the multipartition~$\mu$ with its 
Young diagram and visualize it as a column vector of Young diagrams. 
For example, $((3,1),\emptyset,(4,2))$ is the Young diagram
\begin{align*}
&\YoungDiagram{2,1} \\ &\emptyset\\ &\YoungDiagram{4,2}
\end{align*}

To each node $A=(a,b,m)$ 
we associate its {\em residue}, which is the following element of $I=\Z/e\Z$: 
\begin{equation}\label{ERes}
\res A=\res^\kappa A=k_m+(b-a)\pmod{e}.
\end{equation}
An {\em $i$-node} is a node of residue $i$.
Define the {\em residue content of $\mu$} to be
\begin{equation}\label{EContent}
\cont(\mu):=\sum_{A\in\mu}\al_{\res A} \in Q_+.
\end{equation}
Denote
$$
\Par_\al:=\{\mu\in\Par\mid \cont(\mu)=\al\}\qquad(\al\in Q_+).
$$

A node $A\in\mu$ is a {\em removable node (of~$\mu$)}\, if $\mu\setminus \{A\}$ is (the diagram of) a multipartition. A node $B\not\in\mu$ is an {\em addable node (for~$\mu$)}\, if $\mu\cup \{B\}$ is a multipartition. We use the notation
$$
\mu_A:=\mu\setminus \{A\},\qquad \mu^B:=\mu\cup\{B\}.
$$

Let $\mu,\nu\in \Par_d$. Then $\mu$ {\em dominates} $\nu$, and we write $\mu\unrhd\nu$, if 
$$
\sum_{a=1}^{m-1}|\mu^{(a)}|+\sum_{b=1}^c\mu_b^{(m)}\geq 
\sum_{a=1}^{m-1}|\nu^{(a)}|+\sum_{b=1}^c\nu_b^{(m)}
$$
for all $1\leq m\leq l$ and $c\geq 1$.
In other words, $\mu$ is obtained from $\nu$ by moving nodes up in
the diagram.

We define
\begin{equation}\label{EKappaPrime}
\kappa':=(-k_l,\dots,-k_1).
\end{equation}
Now, let $\mu=(\mu^{(1)},\dots,\mu^{(l)})\in\Par$. 
The {\em conjugate} of $\mu$ is the multipartition
$$\mu'=(\mu^{(l)'},\dots,\mu^{(1)'})\in \Par['],$$ 
where each $\mu^{(m)'}$ is the partition conjugate to $\mu^{(m)}$ in the usual sense, that is, $\mu^{(m)'}$ is obtained by swapping the rows and columns of $\mu^{(m)}$.  

\subsection{Tableaux}\label{SS:tableaux}
Let $\mu=(\mu^{(1)},\dots,\mu^{(l)})\in\Par_d$. 
A {\em $\mu$-tableau} 
$\T=(\T^{(1)},\dots,\T^{(l)})$ is obtained from the diagram of $\mu$ by 
inserting the integers $1,\dots,d$ into the nodes, allowing no repeats. 
For each $m=1,\dots,l$, $\T^{(m)}$ is a $\mu^{(m)}$-tableau, called the $m$th {\em component} of $\T$. 
If the node $A=(a,b,m)\in\mu$ is occupied by the integer $r$ in $\T$ then we write $r=\T(a,b,m)$ and set
$\res_\T(r)=\res A$. The {\em residue sequence} of~$\T$ is
\begin{equation}\label{EResSeq}
\bi(\T)=\bi^\kappa(\T)=(i_1,\dots,i_d)\in I^d,
\end{equation}
where $i_r=\res_\T(r)$ is the residue of the node occupied by 
$r$ in $\T$ ($1\leq r\leq d$). 


A $\mu$-tableau $\T$ is {\em row-strict} (resp. {\em column-strict}) if its entries increase from left to right (resp. from top to bottom) along the rows (resp. columns) of each component of $\T$. 
A $\mu$-tableau $\T$ is {\em standard} if it is row- and column-strict. 
 Let $\St(\mu)$ be the set of standard $\mu$-tableaux.

Let $\T$ be a $\mu$-tableau and suppose that $1\leq r\neq s\leq d$ and that $r=\T(a_1,b_1,m_1)$ and that
$s=\T(a_2,b_2,m_2)$. We write $r\nearrow_\T s$ if $m_1=m_2$, $a_1>a_2$, and $b_1<b_2$; informally, $r$ and $s$
are in the same component and~$s$ is strictly to the north-east of $r$ within that component.  The symbols $\rightarrow_\T,\searrow_\T,\downarrow_\T$ have the  similar obvious meanings. For example,
$r\downarrow_\T s$ means that $r$ and $s$ are located in the same column of the same component of~$\T$ and
that~$s$ is in a strictly lower row of~$\T$ than~$r$. 

Let $\mu\in\Par$, $i\in I$, $A$ be a removable $i$-node and $B$ be an addable $i$-node 
of $\mu$. We set
\begin{align}\label{EDMUA}
d_A(\mu)&=\#\SetBox{addable $i$-nodes of $\mu$\\[-4pt] strictly below $A$}
                -\#\SetBox[38mm]{removable $i$-nodes of $\mu$\\[-4pt] strictly below $A$},\\
\intertext{and}
d^B(\mu)&=\#\SetBox{addable $i$-nodes of $\mu$\\[-4pt] strictly above $B$}
                -\#\SetBox[38mm]{removable $i$-nodes of $\mu$\\[-4pt] strictly above $B$}.
\end{align} 

Given $\mu \in \Par_d$ and $\T \in \St(\mu)$, the {\em degree} of $\T$ is defined in
\cite[section~3.5]{BKW} inductively as follows. If $d=0$, then $\T$ is the empty tableau $\emptyset$, and
we set $\deg(\T):=0$.  Otherwise, let $A$ be the node occupied by $d$ in $\T$. Let $\T_{<d}\in\St(\mu_A)$
be the tableau obtained by removing this node and set
\begin{equation}\label{EDegTab}
\deg(\T):=d_A(\mu)+\deg(\T_{<d}).
\end{equation}
Similarly, define a dual notion of {\em codegree} by
\begin{equation}\label{ECodeg}
\codeg(\emptyset):=0,\quad \codeg(\T):=d^A(\mu_A)+\codeg(\T_{<d}).
\end{equation}
The definition of the degree and codegree of a tableau depend on the residues and so,
ultimately, they depend on~$\kappa$ by (\ref{ERes}). We write $\deg^\kappa(\T)$ and
$\codeg^\kappa(\T)$ when we wish to emphasize this dependence.

By  \cite[Lemma 3.12]{BKW}, using codegree instead of degree for a tableau leads only to a negation and  ``global shift'' by the defect: more precisely, we have
\begin{equation}\label{EDegCodeg}
\deg(\T)+\codeg(\T)=\defect(\al)\qquad(\T\in\St(\mu), \mu\in\Par_\al).
\end{equation}

The group $\Si_d$ acts on the set of $\mu$-tableaux from the left by acting on the entries of the tableaux.
Let $\T^\mu$ be the $\mu$-tableau in which the numbers $1,2,\dots,d$ appear in order from left to right along the successive rows,
working from top row to bottom row. Let $\T_\mu$ be the $\mu$-tableau in which the numbers $1,2,\dots,d$ appear in
from top to bottom along the successive columns, working from the leftmost column to the rightmost column within a component
and moving from the $l$th component up to the first component. 

For example, if $\mu=((3,1),(2,2))$ then
$$
\T^\mu=\begin{array}{l} \Tableau{{1,2,3},{4}}\\[2mm]\Tableau{{5,6},{7,8}}\end{array}
\quad\text{and}\quad
\T_\mu=\begin{array}{l} \Tableau{{5,7,8},{6}}\\[4mm]\Tableau{{1,3},{2,4}}\end{array}.
$$
Set 
\begin{equation}\label{EBIMu}
\bi^\mu:=\bi(\T^\mu) \quad\text{and}\quad
\bi_\mu:=\bi(\T_\mu).
\end{equation} 
For each $\mu$-tableau $\T$ define permutations $w^\T$ and $w_\T$ in~$\Si_d$ by the equations
\begin{equation}\label{EWT}
w^\T  \T^\mu=\T=w_\T \T_\mu.
\end{equation}

If
$\T=(\T^{(1)},\dots,T^{(l)})\in\St(\mu)$ then the {\em conjugate} of $\T$ is the standard $\mu'$-tableau
$\T'=(\T^{(l)'},\dots,\T^{(1)'})$, where $\T^{(m)'}$ is the $\mu^{(m)'}$-tableau obtained by swapping the rows
and columns of $\T^{(m)}$, for $1\le m\le l$. For example, $(\T^\mu)'=\T_{\mu'}$.

\subsection{Bruhat order}
Let $\ell$ be the length function on $\Si_d$ with respect to the Coxeter generators
$s_1,s_2,\dots,s_{d-1}$. Let $\le$ be the {\em Bruhat order} on $\Si_d$ (so that $1\le w$ for all
$w\in\Si_d)$. Define a related
partial order on $\St(\mu)$ as follows: if $\Stab,\T\in\St(\mu)$ then
\begin{align}\label{EBruhat}
\Stab\ledom\T\quad\text{if and only if}\quad w_\Stab\le w_\T.
\end{align}
If $\Stab\ledom\T$ then we also write $\T\gedom\Stab$. If $\Stab\ledom\T$ and $\Stab\ne\T$ we write $\Stab\ldom\T$ and $\T\gdom\Stab$.

Observe that if  $\T\in\St(\mu)$ then $\T_\mu\ledom\T\ledom\T^\mu$. There is a similar connection between the 
relation $w^\Stab\le w^\T$ and the corresponding tableaux. To describe this,   recall conjugate multipartitions and
tableaux.

\begin{Lemma}\label{eBruhat}
  Suppose that $\mu\in\Par_d$ and that $\Stab,\T\in\St(\mu)$. Then:
  \begin{enumerate}
    \item $w_\T=w^{\T'}$;
    \item $\T\ledom\Stab$ if and only if $w^\T\ge w^\Stab$;
    \item $w_{T^\mu}=(w^\T)^{-1}w_\T$ and $w^{\T_\mu}=w_\T^{-1}w^\T$ with
      $\ell(w_{T^\mu})=\ell(w^\T)+\ell(w_\T)$ and $\ell(w^{\T_\mu})=\ell(w_\T)+\ell(w^\T)$.
  \end{enumerate}
\end{Lemma}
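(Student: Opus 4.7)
My plan is to prove (i) directly, then (iii), and deduce (ii) from (iii) using a Coxeter-combinatorial argument.

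For (i): the key observation is that $\Si_d$ acts on tableaux by permuting entries (not positions), so the action commutes with conjugation of tableaux: $(w\T)'=w(\T')$ for every $w\in\Si_d$. Applying $(\cdot)'$ to the equation $\T=w_\T\T_\mu$ yields $\T'=w_\T(\T_\mu)'$; the dual of the identity $(\T^\mu)'=\T_{\mu'}$ already noted in the text is $(\T_\mu)'=\T^{\mu'}$, and so $w_\T\T^{\mu'}=\T'$. Uniqueness of $w^{\T'}$ from its defining equation then forces $w_\T=w^{\T'}$.

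For (iii): the two identities are formal consequences of the defining equations. Combining $w^\T\T^\mu=\T=w_\T\T_\mu$ with $\T^\mu=w_{\T^\mu}\T_\mu$, and using that the $\Si_d$-action on tableaux with distinct entries is free, I obtain $w_\T=w^\T w_{\T^\mu}$, equivalently $w_{\T^\mu}=(w^\T)^{-1}w_\T$; noting also that $w^{\T_\mu}=(w_{\T^\mu})^{-1}$ (since $w^{\T_\mu}w_{\T^\mu}\T_\mu=w^{\T_\mu}\T^\mu=\T_\mu$) yields the second identity $w^{\T_\mu}=w_\T^{-1}w^\T$. The length-additivity $\ell(w_{\T^\mu})=\ell(w^\T)+\ell(w_\T)$ is the nontrivial part, and I would prove it by induction on $d=|\mu|$: remove the entry $d$ from $\T$ at its node $A$, apply the inductive hypothesis to $\T_{<d}\in\St(\mu_A)$, and carefully account for how the single cell $A$ contributes additional elementary transpositions to each of $w^\T$, $w_\T$, and $w_{\T^\mu}$ via explicit reduced expressions.

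For (ii): by definition $\T\ledom\Stab$ iff $w_\T\le w_\Stab$, iff $w_\T^{-1}\le w_\Stab^{-1}$ (inversion preserves Bruhat order). The length-additive factorization $w^{\T_\mu}=w_\T^{-1}w^\T$ from (iii) shows that both $(w_\T^{-1},w^\T)$ and $(w_\Stab^{-1},w^\Stab)$ are reduced factorizations of the fixed element $w^{\T_\mu}$. The conclusion that this is equivalent to $w^\T\ge w^\Stab$ then follows from the standard Coxeter-group fact that for two length-additive factorizations $u_1v_1=u_2v_2$ of a single element, $u_1\le u_2$ in Bruhat order if and only if $v_1\ge v_2$. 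The principal obstacle is the length-additivity in (iii): once that is established via a careful inversion count for the permutations attached to removing a single cell, parts (i) and (ii) follow formally.
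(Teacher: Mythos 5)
Your part (i) is essentially the paper's argument: you conjugate the defining equation $\T=w_\T\T_\mu$, while the paper conjugates $\T=w^\T\T^\mu$, but the two are interchangeable. The genuine divergence is in (ii) and (iii), where you reverse the logical order. The paper proves (ii) first, by invoking the tableau-combinatorial characterization of $\gedom$ from \cite[Theorem~3.8]{MathasB} (namely, $\Stab\gedom\T$ iff the shape of $\Stab_{\le k}$ dominates that of $\T_{\le k}$ for all $k$ --- a criterion that is manifestly reversed by conjugation), and then reads off the length-additivity in (iii) from it. You instead propose to prove (iii) directly by induction and to derive (ii) from (iii) via a Coxeter-theoretic lemma: for length-additive factorizations $u_1v_1=u_2v_2=w$, one has $u_1\le u_2$ iff $v_1\ge v_2$. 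This reformulation of (ii) is correct and, as a tableau-free statement, is an attractive alternative. However, the lemma is not an off-the-shelf ``standard fact'' --- it requires a genuine argument. One proof goes by induction on $\ell(w)$: pick $s\in D_L(u_2)\subseteq D_L(w)$; if $su_1<u_1$ then the lifting property gives $su_1\le su_2$ and both $(su_i,v_i)$ are length-additive factorizations of $sw$, so the inductive hypothesis applies; if $su_1>u_1$, the lifting property gives $u_1\le su_2$, and the strong exchange condition applied to $s\cdot(u_1v_1)$ shows that $u_1$ remains a length-additive left factor of $sw$ with complementary right factor $v_1'<v_1$, so again the inductive hypothesis gives $v_1>v_1'\ge v_2$. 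You should supply this (or cite it), since it is the load-bearing step in your (ii).

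The more serious issue is that your (iii) --- which you explicitly identify as ``the principal obstacle'' --- is only a sketch. Establishing $\ell(w_{\T^\mu})=\ell(w^\T)+\ell(w_\T)$ by induction on $d$ is not as mechanical as ``carefully account for how the single cell $A$ contributes transpositions'': removing the node containing $d$ from $\T$ changes the shape to $\mu_A$, so $\T^\mu$, $\T_\mu$ and hence $w_{\T^\mu}$ all change, and the node containing $d$ in $\T^\mu$ or $\T_\mu$ is generally not $A$. Making this bookkeeping precise is exactly the content that the paper offloads to \cite{MathasB}. Since your (ii) is deduced from your (iii), the argument currently rests on two unproved assertions: the length-additivity and the Coxeter factorization lemma. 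Once both are filled in, the proposal is correct and is a genuinely different route from the paper's; as it stands, it is a plausible plan rather than a proof.
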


\begin{proof}
(i) Observe that $(\T^\mu)'=\T_{\mu'}$, $(\T_\mu)'=\T^{\mu'}$ and
$\St(\mu)=\set{\T|\T'\in\St(\mu')}$. Now, conjugating the equation
$\T=w^\T\T^\mu$ shows that $w_{\T'}=w^\T$, for $\T\in\St(\mu)$.

(ii)  If $\U\in\St(\mu)$ and $1\le k\le d$, let $\U_{\le k}$ be the subtableau of $\T$ containing the entries $1,2\dots,k$. Then it follows from \cite[Theorem~3.8]{MathasB} that $\Stab\gedom\T$ if and only if the shape
of $\Stab_{\le k}$ dominates the shape of $\T_{\le k}$ for all $1\le k\le d$. So $\T\ledom\Stab$ if and
only if $\Stab'\ledom\T'$. Therefore, by part (i) and (\ref{EBruhat}), we get
  $$\T\ledom\Stab\Longleftrightarrow \Stab'\ledom\T'\Longleftrightarrow 
     w_{\Stab'}\le w_{\T'} \Longleftrightarrow w^\Stab\le w^\T.$$

(iii) Since $w_\T\T_\mu=\T=w^\T\T^\mu$, we have $w_{\T^\mu}\T_\mu=(w^\T)^{-1}w_\T\T_\mu$ which implies that
$w_{T^\mu}=(w^\T)^{-1}w_\T$. Since $\T_\mu\ledom\T\ledom\T^\mu$ we obtain
$\ell(w_{T^\mu})=\ell(w^\T)+\ell(w_\T)$ using the description of the Bruhat order given in~(ii). The remaining
claims are proved similarly.
\end{proof}

We will also need the following result.

\begin{Lemma}\label{L211108} {\rm \cite[Lemma 3.7]{BKW}} 
Suppose that $\mu\in\Par_d$, $\T\in\St(\mu)$, and  $1\leq r<d$ such that $r\downarrow_\T r+1$ or $r\rightarrow_\T r+1$. Suppose that $\Stab\in\St(\mu)$ and $\Stab\gdom s_r\T$. Then $\Stab\gedom \T$. 
\end{Lemma}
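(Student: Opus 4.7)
My plan is to translate the hypothesis and conclusion into the Bruhat order on $\Si_d$ via the permutation parametrization of tableaux in $(\ref{EWT})$. For any $\mu$-tableau $\U$, let $w_\U \in \Si_d$ be defined by $w_\U \T_\mu = \U$; on standard tableaux, $(\ref{EBruhat})$ says that dominance corresponds to the Bruhat order on the $w_\U$. I extend this interpretation to the (possibly non-standard) tableau $s_r \T$, taking $\Stab \gdom s_r \T$ to mean $w_\Stab > w_{s_r \T}$. Since the left $\Si_d$-action on tableaux permutes entries, $s_r \T = (s_r w_\T) \T_\mu$, and hence $w_{s_r \T} = s_r w_\T$. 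The conclusion $\Stab \gedom \T$ then reads $w_\Stab \ge w_\T$.

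The crux is that $s_r w_\T$ covers $w_\T$ from above in the Bruhat order. By the standard length criterion for left multiplication by a simple reflection, $\ell(s_r w_\T) = \ell(w_\T) + 1$ is equivalent to $w_\T^{-1}(r) < w_\T^{-1}(r+1)$, and $w_\T^{-1}(k)$ is precisely the $\T_\mu$-entry at the node of $\T$ holding $k$. Recall that $\T_\mu$ is filled by sweeping through components in the order $l, l-1, \dots, 1$, through columns of each component from left to right, and through each column from top to bottom. If $r \downarrow_\T r+1$, the nodes of $r$ and $r+1$ lie in a common column of a single component with $r$'s node strictly higher, so $\T_\mu$ places the smaller integer at $r$'s node. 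If $r \rightarrow_\T r+1$, the two nodes lie in a common row of a single component in consecutive columns with $r$'s node to the left; since $\T_\mu$ completes each column of a component before starting the next, again $r$'s node receives the smaller $\T_\mu$-entry. Either way $w_\T^{-1}(r) < w_\T^{-1}(r+1)$, establishing the covering relation.

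Transitivity of the Bruhat order then yields $w_\Stab > w_{s_r \T} > w_\T$, whence $w_\Stab \ge w_\T$ and $\Stab \gedom \T$ via $(\ref{EBruhat})$. The only substantive calculation is the length computation in the second paragraph; the remainder is definitional unpacking combined with transitivity. The main conceptual point is that the hypothesis on the relative position of $r$ and $r+1$ in $\T$ is exactly what is needed to ensure that $\T_\mu$ sees $r$'s node before $r+1$'s node in its filling order, which is what powers the covering relation.
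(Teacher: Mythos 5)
Your proof is correct. The paper gives no argument of its own for this lemma, simply citing \cite[Lemma 3.7]{BKW}; your reduction to the Bruhat-order length criterion for left multiplication by $s_r$ (via the identity $w_{s_r\T}=s_rw_\T$ and the observation that the column-major filling of $\T_\mu$ forces $w_\T^{-1}(r)<w_\T^{-1}(r+1)$ when $r\downarrow_\T r+1$ or $r\rightarrow_\T r+1$) is the standard and complete way to establish it, consistent with the convention $(\ref{EBruhat})$ used here.
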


\section{KLR algebras and permutation modules}\label{SKLR}

Throughout this paper a \textit{graded
algebra}  will mean a $\Z$-graded algebra and a \textit{graded module} will be a $\Z$-graded module. If $A$ is a graded algebra then $\Mod{A}$ is
the category of finitely generated graded (left) $A$-modules with degree preserving maps.
We use the standard notation of graded representation theory. 
In particular, if $M=\bigoplus_{d\in\Z}M_d$ then $v\in M_d$ is
\textit{homogeneous} of \textit{degree} $d=\deg v$. Further, if $n\in\Z$ then $M\<n\>$ is the
graded module obtained by shifting the grading on $M$ up by $n$ so that $M\<n\>_d=M_{d-n}$.


\subsection{KLR algebras}
Let $\O$ be a commutative ring with identity and $\al\in Q_+$. Recall from \cite{KL1,KL2,R} that the (affine) {\em Khovanov-Lauda-Rouquier algebra}, or KLR algebra, $R_\al=R_\al(\O)$, is defined to be the unital $\O$-algebra generated by the elements 
\begin{equation}\label{EKLGens}
\{e(\bi)\:|\: \bi\in I^\al\}\cup\{y_1,\dots,y_{d}\}\cup\{\psi_1, \dots,\psi_{d-1}\},
\end{equation}
subject only to the following relations:
\begin{align}
e(\bi) e(\bj) &= \de_{\bi,\bj} e(\bi);
\hspace{11.3mm}{\textstyle\sum_{\bi \in I^\al}} e(\bi) = 1;\label{R1}
\\
y_r e(\bi) &= e(\bi) y_r;
\hspace{20mm}\psi_r e(\bi) = e(s_r{ }\bi) \psi_r;\label{R2PsiE}\\
\label{R3Y}
y_r y_s &= y_s y_r;\\
\label{R3YPsi}
\psi_r y_s  &= y_s \psi_r\hspace{42.4mm}\text{if $s \neq r,r+1$};\\
\psi_r \psi_s &= \psi_s \psi_r\hspace{41.8mm}\text{if $|r-s|>1$};\label{R3Psi}\\
\psi_r y_{r+1} e(\bi) &=  (y_r\psi_r+\delta_{i_r,i_{r+1}})e(\bi)
\label{ypsi}\\
y_{r+1} \psi_re(\bi) &= (\psi_ry_r+\delta_{i_r,i_{r+1}})e(\bi)
\label{psiy}\\
\psi_r^2e(\bi) &= 
\left\{
\begin{array}{ll}
0&\text{if $i_r = i_{r+1}$},\\
e(\bi)&\text{if $i_{r+1} \neq i_r, i_r \pm 1$
},\\
(y_{r+1}-y_r)e(\bi)&\text{if $i_r \rightarrow i_{r+1}$},\\
(y_r - y_{r+1})e(\bi)&\text{if $i_r \leftarrow i_{r+1}$},\\
(y_{r+1} - y_{r})(y_{r}-y_{r+1}) e(\bi)\!\!\!&\text{if $i_r \rightleftarrows i_{r+1}$};
\end{array}
\right.
 \label{quad}\\
\psi_{r}\psi_{r+1} \psi_{r} e(\bi)
&=
\left\{\begin{array}{ll}
(\psi_{r+1} \psi_{r} \psi_{r+1} +1)e(\bi)&\text{if $i_{r+2}=i_r \rightarrow i_{r+1}$},\\
(\psi_{r+1} \psi_{r} \psi_{r+1} -1)e(\bi)&\text{if $i_{r+2}=i_r \leftarrow i_{r+1}$},\\
\big(\psi_{r+1} \psi_{r} \psi_{r+1} -2y_{r+1}
\\\qquad\:\quad +y_r+y_{r+2}\big)e(\bi)
\hspace{2.4mm}&\text{if $i_{r+2}=i_r \rightleftarrows i_{r+1}$},\\
\psi_{r+1} \psi_{r} \psi_{r+1} e(\bi)&\text{otherwise}.
\end{array}\right.
\label{braid}
\end{align}

Recall from (\ref{ELa}) that we have fixed $\La=\La(\kappa)\in P_+$. The corresponding {\em cyclotomic KLR algebra} $R_\al^\La=R_\al^\La(\O)$ is generated by the same elements (\ref{EKLGens}) subject only to the relations (\ref{R1})--(\ref{braid}) and  the additional {\em cyclotomic relations}
\begin{equation}\label{ERCyc}
y_1^{(\La,\al_{i_1})}e(\bi)=0\qquad(\bi=(i_1,\dots,i_d)\in I^\al).
\end{equation}
Thus $R_\al^\La$ is the quotient of $R_\al$ by the relations (\ref{ERCyc}). 

The algebras $R_\al$ and $R_\al^\La$ have $\Z$-gradings determined by setting 
$e(\bi)$ to be of degree 0,
$y_r$ of degree $2$, and
$\psi_r e(\bi)$ of degree $-a_{i_r,i_{r+1}}$
for all $r$ and $\bi \in I^\al$.

Note that $R_\al(\Z)\otimes_\Z \O\cong R_\al(\O)$ and $R_\al^\La(\Z)\otimes_\Z \O\cong R_\al^\La(\O)$.  In this
paper~$\O$ will usually be $\Z$ or a field $F$. 

\subsection{Graded duality}\label{SSisomorphisms}
Let $\al\in Q_+$ be of height $d$. 
It is easy to check using generators and relations that there exists a homogeneous algebra anti-involution 
\begin{equation}\label{ECircledast}
\tau:R_\al\longrightarrow R_\al,\quad e(\bi)\mapsto e(\bi),\quad y_r\mapsto y_r,\quad \psi_s\mapsto \psi_s. 
\end{equation}
for all $\bi\in I^\al,\ 1\leq r\leq d$, and $1\leq s<d$. Note that $\tau$ factors through to an anti-involution of the cyclotomic quotient $R_\al^\La$, which we also denote by $\tau$. 

If $M=\bigoplus_{d\in\Z}M_d$ is a finite rank graded $R_\al$-module, then the {\em graded dual}
$M^\circledast$ is the graded $\O$-module such that $(M^\circledast)_d:=\Hom_\O(M_{-d},\O)$, for all
$d\in\Z$, and where the $R_\al$-action is given by $(xf)(m)=f(\tau(x)m)$, for all $f\in M^\circledast, m\in M, x\in
R_\al$. 



\subsection{The sign map}\label{SSSign}
For $\bi=(i_1,\dots,i_d)\in I^d$, 
set 
\begin{equation}
\label{E-I}
-\bi:=(-i_1,\dots,-i_d).
\end{equation} 
If 
$\al=\sum_{i\in I}a_i\alpha_i\in Q_+$, then define
$$\alpha'=\sum_{i\in I}a_i\alpha_{-i}.$$ 
We clearly have $\al'\in Q_+$ and $\height(\al')=\height(\al)$. Moreover, $\bi\in I^\al$ if and only if $-\bi\in I^{\al'}$. 
Now, inspecting the relations, there is a
unique homogeneous algebra isomorphism 
\begin{equation}\label{epsilon}
\sgn:{R_\al}\longrightarrow{R_{\al'}},\quad e(\bi)\mapsto e(-\bi), \quad
  y_r\mapsto -y_r,\quad
  \psi_s\mapsto-\psi_s
\end{equation}
for all $\bi\in I^\al,\ 1\leq r\leq d$, and $1\leq s<d$, where $d=\height(\al)$. 

Recall  $\kappa=(\kappa_1,\dots,\kappa_l)$ from (\ref{EKappa}) and $\kappa'=(-\kappa_l,\dots,-\kappa_1)$ from (\ref{EKappaPrime}). 
Then, as in (\ref{ELa}),
$\kappa'$ determines the dominant weight
$$\Lambda'=\Lambda(\kappa')=\Lambda_{-\kappa_l}+\dots+\Lambda_{-\kappa_1}\in P_+.$$
Equivalently, if $\Lambda=\sum_{i\in I}l_i\Lambda_i$, then
$\Lambda'=\sum_{i\in I}l_i\Lambda_{-i}$. 

The algebra $R_\al^\Lambda$ is the quotient of $R_\al$ by the cyclotomic
relations~(\ref{ERCyc}). Applying the involution~$\sgn$ to (\ref{ERCyc}) we obtain
$$0=\sgn\Big(y_1^{(\Lambda,\alpha_{i_1})}e(\bi)\Big)
=\pm y_1^{(\Lambda,\alpha_{i_1})}e(-\bi) =\pm y_1^{(\Lambda',\alpha_{-i_1})}e(-\bi),$$ where
the right hand side is, up to sign, the cyclotomic relation for $R_{\al'}^{\Lambda'}$.  Hence $\sgn$ factors through
to a graded algebra isomorphism 
$$\sgn:R_{\al}^\Lambda\bijection R_{\alpha'}^{\Lambda'}.$$

The isomorphism $\sgn$ induces equivalences
$$\Mod{R_{\alpha'}}\bijection\Mod{R_\alpha}\quad\text{and}\quad
  \Mod{R_{\alpha'}^{\Lambda'}}\bijection\Mod{R_\alpha^\Lambda}$$
of the corresponding categories of graded modules. These equivalences send the
$R_{\al'}$-module $M$ to the $R_\al$-module $M^\sgn$, where $M^\sgn=M$ as
a graded vector space and where the $R_\al$-action on $M^\sgn$ is given by $a\cdot m=\sgn(a)m$, for $a\in R_\al$
and $m\in M^\sgn$.

\subsection{\boldmath Basis Theorem}
Suppose that $\al\in Q_+$ is of height~$d$.  For the rest of this
paper we fix a {\em preferred reduced decomposition} $w=s_{r_1}\dots s_{r_m}$ for each element $w\in \Si_d$, 
where $m\ge0$ is as small as possible and $1\le r_1,\dots,r_m<d$. Define the elements
$$
\psi_w:=\psi_{r_1}\dots\psi_{r_m}\in R_\al \qquad (w\in \Si_d).
$$
In general, $\psi_w$ depends on the choice of a preferred reduced decomposition of~$w$, but:

\begin{Proposition}\label{PSubtle}
Suppose that $\bi\in I^\al$, and 
$$w=s_{t_1}\dots s_{t_m}=s_{r_1}\dots s_{r_m}$$ 
are two reduced decompositions of an element $w\in \Si_d$. Then in $R_\al$, we have 
$$\psi_{t_1}\dots\psi_{t_m}e(\bi)=\psi_{r_1}\dots\psi_{r_m}e(\bi)+X,$$
where $X$ is a linear combination of elements of the form $\psi_uf(y)e(\bi)$ such that $u<w$, $f(y)$ is a polynomial in $y_1,\dots,y_d$,  and $$\deg(\psi_uf(y)e(\bi))=\deg(\psi_{r_1}\dots\psi_{r_m}e(\bi))=\deg(\psi_{t_1}\dots\psi_{t_m}e(\bi)).$$
\end{Proposition}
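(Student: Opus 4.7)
The plan is to reduce to a single braid move via Matsumoto's theorem and then analyze the resulting correction term using the KLR relations. By Matsumoto's theorem for the symmetric group, any two reduced expressions $s_{t_1}\cdots s_{t_m}$ and $s_{r_1}\cdots s_{r_m}$ of $w$ are connected by a chain of elementary braid moves: commutation moves $s_{a}s_{b}\leftrightarrow s_{b}s_{a}$ with $|a-b|>1$, and three-term braid moves $s_{a}s_{a+1}s_{a}\leftrightarrow s_{a+1}s_{a}s_{a+1}$. Induction on the length of this chain reduces the statement to the case where the two expressions differ in a single such move. Since every relation among the KLR generators is homogeneous, degree matching of the claimed terms $\psi_u f(y)e(\bi)$ will follow automatically once the identity is established in the algebra.

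For the commutation case, relation (\ref{R3Psi}) gives the exact equality $\psi_a\psi_b=\psi_b\psi_a$, so the two $\psi$-products are equal and the error term $X$ is zero. For a three-term braid, write the common portions of the decomposition on either side of the move as $\psi_L$ and $\psi_R$, and let $\bj=s_{b_1}\cdots s_{b_l}\bi$ denote the residue sequence arriving at the site of the move. Relation (\ref{braid}) yields
\[
\psi_{a}\psi_{a+1}\psi_{a}e(\bj)=\psi_{a+1}\psi_{a}\psi_{a+1}e(\bj)+c(\bj),
\]
where $c(\bj)$ is either $0$, $\pm e(\bj)$, or $(-2y_{a+1}+y_a+y_{a+2})e(\bj)$ according to the residue configuration at positions $a,a+1,a+2$ of $\bj$. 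Each alternative is a polynomial in $y_a,y_{a+1},y_{a+2}$ times an idempotent, and is homogeneous of the same degree as $\psi_{a}\psi_{a+1}\psi_{a}e(\bj)$. Substituting back into the full decomposition, the difference of the two $\psi$-products is
\[
\psi_L\cdot c(\bj)\cdot\psi_R\,e(\bi),
\]
a product of $m-3$ of the generators $\psi$ with a polynomial in $y$'s inserted in the middle.

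It remains to rewrite this expression in the form $\sum \psi_u f_u(y)e(\bi)$ with $u<w$. Using (\ref{R2PsiE}), (\ref{R3YPsi}), (\ref{ypsi}), (\ref{psiy}), the $y$'s can be commuted past $\psi_R$ to the far right, producing a principal term (where each $y$ survives with a possibly permuted index) plus error terms in which one $\psi$ has been removed by a Khovanov--Lauda straightening $\psi_ry_{r+1}e(\bi')=y_r\psi_re(\bi')+\delta e(\bi')$. Iterating, each resulting monomial has the shape $\psi_{a_{i_1}}\cdots\psi_{a_{i_q}}g(y)e(\bi)$ where $(a_{i_1},\dots,a_{i_q})$ is a subword of $(a_1,\dots,a_{m-3})$, itself a subword of the preferred reduced expression of $w$ with three letters removed; by the subword characterisation of the Bruhat order, the element $u=s_{a_{i_1}}\cdots s_{a_{i_q}}$ satisfies $\ell(u)\le m-3<m$ and $u<w$. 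A final straightening of each such $\psi$-product to the preferred form $\psi_u$, carried out by the inductive hypothesis on $\ell(w)$, completes the proof. The main obstacle is step three, the straightening: one must track the Bruhat-order bound on the resulting indices $u$ carefully through the iterated applications of (\ref{ypsi}) and (\ref{psiy}), and ensure each intermediate $\psi$-product, though possibly written as a non-reduced or non-preferred word, can be rewritten in the target form $\psi_u f_u(y)e(\bi)$ using the inductive hypothesis without increasing the length.
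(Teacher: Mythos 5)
Your proposal reconstructs a direct proof, whereas the paper itself dispatches this in two sentences by citing \cite[Proposition~2.5]{BKW} and observing that the argument there never invokes the cyclotomic relation~(\ref{ERCyc}), so it transfers verbatim from $R^\La_\al$ to $R_\al$. What you have written is, in substance, the argument one finds in~\cite{BKW}: reduce to a single braid move via Matsumoto's theorem, note that commutation moves cost nothing by~(\ref{R3Psi}), extract the correction polynomial $c(\bj)$ from~(\ref{braid}), push the $y$'s to the right past $\psi_R$ using (\ref{R3YPsi}), (\ref{ypsi}), (\ref{psiy}), and control the resulting indices via the subword characterisation of the Bruhat order. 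So the two proofs are the same argument with different amounts of exposition: the paper delegates, you execute.

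One point worth pressing harder than you do: after the $y$'s are pushed to the far right, the surviving $\psi$-factors may spell a \emph{non-reduced} word $s_{a_{i_1}}\cdots s_{a_{i_q}}$, and then the inductive hypothesis of the Proposition, which relates two \emph{reduced} expressions of the \emph{same} element, does not apply directly. One first has to apply the quadratic relation~(\ref{quad}) to contract the non-reduced word, which inserts further polynomials in $y$, and then push those to the right again; each such pass strictly shortens the $\psi$-word, so the process terminates, and the subword criterion still keeps every resulting $u$ strictly below $w$ in Bruhat order. You flag this yourself at the end, but the phrasing ``carried out by the inductive hypothesis on $\ell(w)$'' slightly misstates what is happening: the bound you are inducting on is the number of $\psi$-factors in the monomial being straightened, not $\ell(w)$. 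Degree-homogeneity of the error term $X$ is, as you say, automatic because every defining relation of $R_\al$ is homogeneous; this part needs no further argument.
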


\begin{proof} This is proved in  \cite[Proposition 2.5]{BKW} for corresponding elements of the cyclotomic KLR
  algebra $R^\Lambda_\alpha$.  As the argument in~\cite{BKW} does not use the relation~(\ref{ERCyc}) the result
  holds in~$R_\alpha$.
\end{proof}

Suppose now that $\mu\in\Par_\al$ and that $\T$ is a $\mu$-tableau. In (\ref{EWT}) we defined the permutations
$w_\T,w^\T\in\Si_d$. Define
\begin{equation}\label{EPsiT}
  \psi^\T:=\psi_{w^\T}\quad\text{and}\quad
\psi_\T:=\psi_{w_\T}.
\end{equation}
These elements will be used to produce bases of various modules below. 

By (\ref{R3Psi}), there is one important case where the elements $\psi_w$ are independent of
the choice of preferred reduced decomposition of $w$. An element $w\in\Si_d$ is {\em fully
commutative} if one can go from any reduced decomposition of $w$ to any other reduced
decomposition of $w$ using only the commuting braid relations; that is, the relations of the form
$s_rs_t=s_ts_r$, for $|r-t|>1$. We refer the reader to \cite{Stembridge} for more details on
fully commutative elements. We record the following easy result for future reference:

\begin{Lemma} \label{LFullComm}
Suppose that $1\leq s< k$  and  let $\D$ be the set of the minimal length left coset representatives of the parabolic subgroup $\Si_s\times\Si_{k-s}$ in the symmetric group $\Si_k$. Then every element of $\D$ is fully commutative. 
\end{Lemma}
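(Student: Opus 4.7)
The plan is to combine two standard ingredients. First, the set $\D$ of minimal length left coset representatives of $\Si_s\times\Si_{k-s}$ in $\Si_k$ has a well-known explicit description: an element $w\in\Si_k$ lies in $\D$ if and only if it is an $(s,k-s)$-shuffle, i.e.
$$w(1)<w(2)<\cdots<w(s)\quad\text{and}\quad w(s+1)<w(s+2)<\cdots<w(k).$$
Second, Stembridge's characterization (which the paper already points to) of fully commutative elements in type $A$ states that $w\in\Si_k$ is fully commutative if and only if $w$ is $321$-avoiding, i.e. there are no indices $a<b<c$ with $w(a)>w(b)>w(c)$.

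Given these two facts, the content of the lemma reduces to the purely combinatorial statement that every $(s,k-s)$-shuffle is $321$-avoiding. I would argue this by contradiction. Suppose $w\in\D$ admits a $321$-pattern at positions $a<b<c$, so $w(a)>w(b)>w(c)$. Looking at where $b$ sits relative to $s$: if $b\le s$ then $a<b\le s$, so the shuffle condition forces $w(a)<w(b)$, contradicting $w(a)>w(b)$; if instead $b>s$, then $s<b<c$, and the shuffle condition forces $w(b)<w(c)$, contradicting $w(b)>w(c)$. Either way we obtain a contradiction, so $w$ is $321$-avoiding and therefore fully commutative.

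There is no real obstacle; both facts are classical, and the combinatorial check takes only a couple of lines. The only thing to be slightly careful about is matching conventions (left cosets versus right cosets, and the precise normalization of minimal length representatives), but either convention yields the same shuffle description after possibly replacing $w$ by $w^{-1}$, and being $321$-avoiding is preserved under inversion. Accordingly, the proof should fit comfortably in a short paragraph, citing the shuffle description of $\D$ and Stembridge's criterion, followed by the two-case dichotomy above.
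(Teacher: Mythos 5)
Your proof is correct. The paper states Lemma~\ref{LFullComm} without proof, treating it as an easy standard fact, and your argument --- identifying the minimal length left coset representatives as $(s,k-s)$-shuffles, invoking the characterization of fully commutative elements of $\Si_k$ as $321$-avoiding permutations, and verifying directly via the two-case dichotomy on the position of $b$ that shuffles are $321$-avoiding --- is exactly the standard way to supply the omitted argument.
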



In general we have the following important result:

\begin{Theorem}\label{TBasis}{\cite[Theorem 2.5]{KL1}}, \cite[Theorem 3.7]{R} 
Let $\al\in Q_+$. Then  
$$ \{\psi_w y_1^{m_1}\dots y_d^{m_d}e(\bi)\mid w\in \Si_d,\ m_1,\dots,m_d\in\Z_{\geq 0}, \ \bi\in I^\al\}
$$ 
is an $\O$-basis of  $R_\al$. 
\end{Theorem}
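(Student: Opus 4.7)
The plan is to prove the two halves of the basis statement separately: that the proposed set spans $R_\al$ as an $\O$-module, and that it is $\O$-linearly independent.

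First I would establish spanning by a $\psi$-length induction. Every element of $R_\al$ is an $\O$-linear combination of words in $e(\bj), y_s, \psi_t$. Using the orthogonal idempotent relations (\ref{R1}), (\ref{R2PsiE}) together with (\ref{R3Y}), any such word can be rewritten as
$$
\psi_{r_1} \cdots \psi_{r_m} \, y_1^{a_1} \cdots y_d^{a_d} \, e(\bi)
$$
for some indices, exponents, and a single $\bi \in I^\al$; call $m$ the $\psi$-length. By repeated application of (\ref{R3YPsi}), (\ref{ypsi}), (\ref{psiy}) all $y$-variables can be commuted past all $\psi$'s at the cost of error terms of strictly smaller $\psi$-length. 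If $s_{r_1} \cdots s_{r_m}$ is not reduced in $\Si_d$, Matsumoto-style applications of braid moves combined with (\ref{braid}) eventually introduce a factor $\psi_r^2$ which, by the quadratic relation (\ref{quad}), collapses to a polynomial in $y_r, y_{r+1}$, strictly reducing $\psi$-length (the correction terms in (\ref{braid}) themselves have strictly smaller $\psi$-length). So modulo strictly smaller $\psi$-length it suffices to treat a reduced expression of some $w \in \Si_d$, at which point Proposition~\ref{PSubtle} allows us to replace it by the preferred reduced decomposition $\psi_w$ modulo lower-length terms. Induction closes the spanning argument.

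Next I would prove linear independence by constructing a faithful graded representation of $R_\al$ on the polynomial module
$$
\mathrm{Pol}_\al := \bigoplus_{\bi \in I^\al} \O[y_1,\dots,y_d] \otimes e(\bi),
$$
where $e(\bj)$ acts as the projection onto the $\bi = \bj$ summand, $y_s$ acts as multiplication within each summand, and $\psi_r$ acts by sending $f \otimes e(\bi)$ to $\partial^{(\bi)}_r(f) \otimes e(s_r \bi)$. Here $\partial^{(\bi)}_r$ is a divided-difference-type operator whose formula is dictated by the relationship between $i_r$ and $i_{r+1}$ in $\Ga$: it is the nil-Hecke divided difference $(1 - s_r)/(y_r - y_{r+1})$ when $i_r = i_{r+1}$, a signed multiplicative operator like $(y_{r+1} - y_r) \cdot s_r$ or $(y_r - y_{r+1}) \cdot s_r$ in the single-bond cases, and an analogous correction when $i_r \rightleftarrows i_{r+1}$ (cf.~\cite{KL1,R}). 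All defining relations (\ref{R1})--(\ref{braid}) would then be verified by direct rational-function calculation; the crucial point is that the apparent poles cancel. Once this representation is available, linear independence of the spanning set follows by applying $\psi_w \, y_1^{m_1} \cdots y_d^{m_d} \, e(\bi)$ to $1 \otimes e(\bi)$: tracking the leading polynomial in $y_1,\dots,y_d$ together with the target residue sequence $w\bi$ shows that distinct basis elements produce distinct images, so no nontrivial $\O$-linear dependence can hold.

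The main obstacle is the case-by-case verification of the braid relation (\ref{braid}) in the polynomial representation, especially the delicate subcases where $i_{r+2} = i_r$ with $i_r \to i_{r+1}$, $i_r \leftarrow i_{r+1}$, or $i_r \rightleftarrows i_{r+1}$. In each, the three-fold composition of the divided-difference-type operators has to be compared to the corresponding transposed composition, with polynomial correction terms matching exactly those prescribed by (\ref{braid}). The spanning step, by contrast, is essentially routine once Proposition~\ref{PSubtle} is in hand.
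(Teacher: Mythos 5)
Your overall strategy (rewriting to get spanning, then a faithful polynomial representation for linear independence) is precisely the proof in the cited references \cite{KL1,R}; the paper itself does not reprove this, so you are in effect reconstructing the reference proof. The spanning half is fine as sketched, including the honest acknowledgment that both (\ref{ypsi})--(\ref{psiy}) and the correction terms in (\ref{braid}) contribute error terms of strictly smaller $\psi$-length, so the induction closes.

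The one genuine gap is the last step of the linear independence argument. You write that applying $\psi_w y_1^{m_1}\cdots y_d^{m_d}e(\bi)$ to $1\otimes e(\bi)$ shows ``distinct basis elements produce distinct images, so no nontrivial $\O$-linear dependence can hold.'' That inference is not valid: a linear map can send a linearly dependent set to a set of pairwise distinct vectors, and a single test vector such as $1\otimes e(\bi)$ certainly need not detect dependence among the full spanning set. What the references actually do is filter $R_\al$ by $\psi$-length and examine the induced action on the polynomial module: an operator $\psi_w y^{\bm}e(\bi)$ acts, modulo lower filtration, as a monomial multiplication followed by the permutation $w$ of variables and of idempotents, so the associated graded operators are visibly free over $\O[y_1,\dots,y_d]\otimes\bigoplus_\bi\O e(\bi)$, from which linear independence of the full set follows. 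You should replace the ``distinct images'' sentence with this triangularity/filtration argument (or with an equivalent leading-term-in-$\mathrm{gr}$ argument). Also, a small inaccuracy in the description of the polynomial action: when $i_r\to i_{r+1}$ and $i_r\leftarrow i_{r+1}$, $\psi_r$ does not act as a ``signed multiplicative operator'' in both directions; only one direction carries the linear factor $(y_{r+1}-y_r)$, the other acts purely by $s_r$, which is exactly what makes the two quadratic relations $\psi_r^2 e(\bi)=(y_{r+1}-y_r)e(\bi)$ and $\psi_r^2 e(\bi)=(y_r-y_{r+1})e(\bi)$ consistent. This does not affect the structure of the argument but matters if you carry out the braid-relation verification you flag as the main obstacle.
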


\subsection{Induction and restriction for affine KLR algebras}
Given $\alpha, \beta \in Q_+$, we set $
R_{\alpha,\beta} := R_\alpha \otimes 
R_\beta,$ 
viewed as an algebra in the usual way. 
Let $M \boxtimes N$ be 
the outer tensor product of the $R_\alpha$-module $M$ and the $R_\beta$-module 
$N$.
There is an obvious injective homogeneous (non-unital) algebra homomorphism 
$R_{\alpha,\beta}\,\into\, R_{\alpha+\beta}$ 
mapping $e(\bi) \otimes e(\bj)$ to $e(\bi\bj)$,
where $\bi\bj$ is the concatenation of the two sequences. The image of the identity
element of $R_{\alpha,\beta}$ under this map is
$
e_{\alpha,\beta}:= \sum_{\bi \in I^\alpha,\ \bj \in I^\beta} e(\bi\bj).
$ 
Let $\Ind_{\alpha,\beta}^{\alpha+\beta}$ and $\Res_{\alpha,\beta}^{\alpha+\beta}$
be the corresponding induction and restriction functors between the corresponding categories of {\em graded}
modules: 
\begin{align*}
\Ind_{\alpha,\beta}^{\alpha+\beta} &:= R_{\alpha+\beta} e_{\alpha,\beta}
\otimes_{R_{\alpha,\beta}} ?:\Mod{R_{\alpha,\beta}} \rightarrow \Mod{R_{\alpha+\beta}},\\
\Res_{\alpha,\beta}^{\alpha+\beta} &:= e_{\alpha,\beta} R_{\alpha+\beta}
\otimes_{R_{\alpha+\beta}} ?:\Mod{R_{\alpha+\beta}}\rightarrow \Mod{R_{\alpha,\beta}}.
\end{align*}
These have obvious generalizations to $n\geq 2$ factors: 
\begin{align*}
\Ind_{\beta_1,\dots,\beta_n}^{\beta_1+\dots+\beta_n} :\Mod{R_{\beta_1,\dots,\beta_n}} \rightarrow \Mod{R_{\beta_1+\dots+\beta_n}},\\
\Res_{\beta_1,\dots,\beta_n}^{\beta_1+\dots+\beta_n} :\Mod{R_{\beta_1+\dots+\beta_n}}\rightarrow \Mod{R_{\beta_1,\dots,\beta_n}}.
\end{align*}
The functor $\Res_{\beta_1,\dots,\beta_n}^{\beta_1+\dots+\beta_n}$ is left multiplication by
the idempotent $e_{\beta_1,\dots,\beta_n}$, so it is exact and sends finite dimensional modules to
finite dimensional modules. 
The functor $\Ind_{\beta_1,\dots,\beta_n}^{\beta_1+\dots+\beta_n}$ is left adjoint to $\Res_{\beta_1,\dots,\beta_n}^{\beta_1+\dots+\beta_n}$. 
Moreover,  $R_{\beta_1+\dots+\beta_n} e_{\beta_1,\dots,\beta_n}$ is a
free graded right $R_{\beta_1,\dots,\beta_n}$-module of finite rank, so
$\Ind_{\beta_1,\dots,\beta_n}^{\beta_1+\dots+\beta_n}$ sends finite dimensional graded modules to finite dimensional graded modules. Finally, if $M_a\in\Mod{R_{\beta_a}}$, for $a=1,\dots,n$, we define 
\begin{equation}\label{ECircProd}
M_1\circ\dots\circ M_n:=\Ind_{\beta_1,\dots,\beta_n}^{\beta_1+\dots+\beta_n}M_1\boxtimes\dots\boxtimes M_n. 
\end{equation}

\subsection{\boldmath Permutation Modules $M(\vec{\bs})$}\label{SSPerm}
For $i\in I$, and $N\in \Z_{\geq 1}$, let $\bs(i,N)\in I^N$ be the tuple $(j_1,\dots,j_N)$ with $j_r=i+r-1\pmod
e$. In other words, $\bs(i,N)$ is the {\em segment} of length $N$ starting at $i$. Similarly, if $N\in\Z_{<0}$
define $\bs(i,N)\in I^{-N}$ be the tuple $(j_1,\dots,j_{-N})$ with $j_r=i-r+1\pmod e$. For example
$\bs(0,e)=(0,1,\dots,e-1)$ and $\bs(0,-e)=(0,-1,\dots,1-e)$. 

Suppose that $\bs:=\bs(i,N)$ is a segment and let $\al=|\bs|\in Q_+$.  Define the corresponding {\em segment module}
$M(\bs):=\O\cdot m(\bs)$ to be the graded $R_\al$-module which is the free $\O$-module of rank one on the
generator $m(\bs)$ of degree $0$ with action  
\begin{equation*}
e(\bi)m(\bs)=\de_{\bi,\bs}m(\bs), \quad \psi_rm(\bs)=0\quad\text{ and }\quad y_tm(\bs)=0
\end{equation*}
for all admissible $\bi$, $r$ and $t$.  Equivalently, $M(\bs)=R_\al/K(\bs)$, where $K(\bs)$ is
the left ideal of $R_\al$ generated by the elements $e(\bi)-\de_{\bi,\bs}$, $\psi_r$, and $y_t$, for all
admissible $\bi$, $r$ and $t$. 

Let $\vec{\bs}=(\bs(1),\dots,\bs(n))$ be an ordered tuple of segments. Set $\al_r:=|\bs(r)|$, and let $\la_r:=\height(\al_r)$ be the length of the segment $\bs(r)$, for $r=1,\dots,n$. Also set $\al=\al_1+\dots+\al_n$ and $d:=\height(\al)$. Note that $(\la_1,\dots,\la_n)$ is a composition of $d$. Define the {\em permutation module} 
$$
M(\vec{\bs})=M(\bs(1),\dots,\bs(n)):=M(\bs(1))\circ\dots\circ M(\bs(n)).
$$
This is the graded $R_{\al}$-module generated by the vector
\begin{equation}\label{EZ}
m(\vec{\bs}):=1\otimes m(\bs(1))\otimes\dots\otimes m(\bs(n)) 
\end{equation}
in $\Ind^{\al}_{\al_1,\dots,\al_n}M(\bs(1))\boxtimes\dots\boxtimes M(\bs(n))$. 
Let $\Si_{\vec{\bs}}$ be the parabolic subgroup $\Si_{\la_1}\times\dots\times\Si_{\la_n}$ of~$\Si_d$.  
Define 
\begin{equation}\label{EBJ}
\bj(\vec{\bs}):=\bs(1)\dots\bs(n)\in I^\al
\end{equation}
where the product on the right hand side is the concatenation. Now let 
$K(\vec{\bs})$ to be the left ideal of $R_\alpha$ generated by
$$\{e(\bi)-\delta_{\bi,\bj(\vec{\bs})}, y_r, \psi_t \mid \bi\in I^\alpha,\ 1\le r\le d,\ 1\leq t<d 
      \text{ such that $s_t\in\Si_{\vec{\bs}}$}\}.
$$
Then we have:
\begin{equation}\label{E:M(s)}
M(\vec{\bs})\cong R_\al/K(\vec{\bs}).
\end{equation}
Under this isomorphism $m(\vec{\bs})$ is identified with $1+K(\vec{\bs})$. With the notation as above, we have as an immediate consequence of the Basis Theorem~\ref{TBasis}:

\begin{Theorem} \label{TMBasis}
Let ${\mathscr D}_{\vec{\bs}}$ be the set of the shortest length left coset representatives of $\Si_{\vec{\bs}}$ in $\Si_d$. Then 
$
\{\psi_{w}m(\vec{\bs})\mid w\in {\mathscr D}_{\vec{\bs}}\}
$ 
is an $\O$-basis of $M(\vec{\bs})$. Moreover each basis element $\psi_{w}m(\vec{\bs})$ is homogeneous of degree equal to the degree of the element $\psi_w e(\bj(\vec{\bs}))\in R_\al$, and $\psi_{w}m(\vec{\bs})\in e(w\cdot\bj(\vec{\bs}))M(\vec{\bs})$.
\end{Theorem}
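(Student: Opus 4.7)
The plan is to reduce Theorem~\ref{TMBasis} to the Basis Theorem~\ref{TBasis} for $R_\alpha$ via the presentation (\ref{E:M(s)}), $M(\vec{\bs}) \cong R_\al/K(\vec{\bs})$. The degree and idempotent-support claims are immediate: the quotient map $R_\al \twoheadrightarrow M(\vec{\bs})$ is homogeneous with $m(\vec{\bs})$ of degree $0$, so $\deg(\psi_w m(\vec{\bs})) = \deg(\psi_w e(\bj(\vec{\bs})))$; and iterating (\ref{R2PsiE}) gives $\psi_w e(\bj(\vec{\bs})) = e(w\cdot \bj(\vec{\bs}))\psi_w$, forcing $\psi_w m(\vec{\bs}) \in e(w\cdot \bj(\vec{\bs}))M(\vec{\bs})$. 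For spanning, the PBW basis $\{\psi_v y^\bm e(\bi)\}$ of Theorem~\ref{TBasis} collapses in the quotient onto $\{\psi_v m(\vec{\bs}) : v \in \Si_d\}$, because each of $y_r$, the $\psi_t$ with $s_t \in \Si_{\vec{\bs}}$, and $e(\bi)-\delta_{\bi,\bj(\vec{\bs})}$ lies in $K(\vec{\bs})$.

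To eliminate $v \notin \mathscr D_{\vec{\bs}}$ I induct on $\ell(v)$. Writing $v = xu$ with $x \in \mathscr D_{\vec{\bs}}$ and $1 \neq u \in \Si_{\vec{\bs}}$, $\ell(v) = \ell(x) + \ell(u)$, a concatenation of reduced decompositions of $x$ and of $u$ (with the latter using only simple reflections in $\Si_{\vec{\bs}}$) gives a reduced decomposition of $v$. Proposition~\ref{PSubtle} then yields
$$\psi_v e(\bj(\vec{\bs})) = \psi_x \psi_u e(\bj(\vec{\bs})) + \sum_{v' < v} \psi_{v'} f_{v'}(y) e(\bj(\vec{\bs}))$$
(with the preferred $\psi_v$ on the left). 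Acting on $m(\vec{\bs})$ kills the leading term, because the rightmost factor of $\psi_u$ is some $\psi_t$ with $s_t \in \Si_{\vec{\bs}}$ and hence annihilates $m(\vec{\bs})$; each correction reduces via $y_r m(\vec{\bs}) = 0$ to a scalar multiple of $\psi_{v'} m(\vec{\bs})$ with $v' < v$, so the inductive hypothesis closes the spanning argument.

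For linear independence, I view $R_\al e(\bj(\vec{\bs}))$ as a right module over the parabolic subalgebra $R_{\al_1,\dots,\al_n} e(\bj(\vec{\bs}))$. The coset decomposition $\Si_d = \mathscr D_{\vec{\bs}} \cdot \Si_{\vec{\bs}}$ with additive lengths, combined with Proposition~\ref{PSubtle}, rewrites every PBW basis element $\psi_v y^\bm e(\bj(\vec{\bs}))$ as $\psi_x \psi_u y^\bm e(\bj(\vec{\bs}))$ plus strictly lower PBW terms; triangularity promotes this to an $\O$-basis and exhibits $R_\al e(\bj(\vec{\bs}))$ as a free right $R_{\al_1,\dots,\al_n} e(\bj(\vec{\bs}))$-module on $\{\psi_x : x \in \mathscr D_{\vec{\bs}}\}$. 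Tensoring out the right-module relations that define the one-dimensional segment module $M(\bs(1))\boxtimes\cdots\boxtimes M(\bs(n))$ then yields a free $\O$-module of rank $|\mathscr D_{\vec{\bs}}|$ with basis $\{\psi_x m(\vec{\bs}) : x \in \mathscr D_{\vec{\bs}}\}$, matching the spanning set above.

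The main obstacle is the triangularity step of the linear independence argument: the lower-order PBW corrections from Proposition~\ref{PSubtle} must be tracked carefully enough that the factorization $v = xu$ produces a genuine basis rather than just a spanning set after quotienting by the parabolic generators. The spanning step, by contrast, is a mechanical application of the same proposition together with the annihilation relations in $K(\vec{\bs})$.
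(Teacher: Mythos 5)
Your spanning argument is correct and is the natural expansion of the ``immediate'' consequence the paper has in mind: the paper gives no further proof of Theorem~\ref{TMBasis}, and your reduction of the PBW basis modulo $K(\vec{\bs})$ via Proposition~\ref{PSubtle}, the factorization $v=xu$, and the annihilation relations is exactly the intended argument.

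In the linear independence step, however, the module structures are not quite right as stated. The object $R_{\al_1,\dots,\al_n}e(\bj(\vec{\bs}))$ is a left ideal of the parabolic, not a unital ring, so one cannot speak of a free right module over it; and replacing it by $e(\bj(\vec{\bs}))R_{\al_1,\dots,\al_n}e(\bj(\vec{\bs}))$ does not save the argument, because $\psi_u e(\bj(\vec{\bs}))\in e(u\cdot\bj(\vec{\bs}))R_\al$ and in general $u\cdot\bj(\vec{\bs})\neq\bj(\vec{\bs})$ even for $u\in\Si_{\vec{\bs}}$ (the segments can contain repeated residues), so the PBW elements $\psi_x\psi_u y^{\bm}e(\bj(\vec{\bs}))$ do not decompose along that idempotent truncation. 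The clean version of your idea works with the full parabolic idempotent: your triangularity argument shows that $R_\al e_{\al_1,\dots,\al_n}$ is a free right $R_{\al_1,\dots,\al_n}$-module with basis $\{\psi_x e_{\al_1,\dots,\al_n}\mid x\in\mathscr D_{\vec{\bs}}\}$ (the freeness, without the explicit basis, is already recorded by the paper in its discussion of the induction functor). Tensoring over $R_{\al_1,\dots,\al_n}$ with the rank-one module $M(\bs(1))\boxtimes\cdots\boxtimes M(\bs(n))$, which is concentrated at $e(\bj(\vec{\bs}))$, then yields directly that $M(\vec{\bs})$ is $\O$-free with basis $\{\psi_x\otimes m(\vec{\bs})=\psi_x m(\vec{\bs})\mid x\in\mathscr D_{\vec{\bs}}\}$. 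With this correction your proof is complete.
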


\section{Block intertwiners}\label{SBI}
Throughout this section we assume that $e>0$. 
Recall from (\ref{E:nullRoot}) that $\de\in Q_+$ is the null-root.  
We fix $i\in I$ and a {\em composition} $\la=(\la_1,\dots,\la_n)$ of $k$. Define
$$\vec{\bs}(i,\la):=(\bs(i,e\la_1),\dots,\bs(i,e\la_n))$$ 
the tuple of segment of lengths $e\la_1,\dots,e\la_n$, all starting at $i$. We consider the corresponding permutation module 
$$M(i,\la):=M(\vec{\bs}(i,\la))$$ 
over the algebra $R_{k\de}$ as in section~\ref{SSPerm}. Let 
$$\bj=(j_1,\dots,j_{ke}):=\bj(\vec{\bs}(i,\la))\in I^{k\de}$$ 
as defined in (\ref{EBJ}). We have 
$\bj=\bs(i,ke). 
$
Finally, let the corresponding idempotent be 
$$
e(i,\la):=e(\bj(\vec{\bs}(i,\la)))\in R_{k\de}
$$ 
and 
$$m(i,\la):=m(\vec{\bs}(i,\la))\in M(i,\la),$$ 
the generator of $M(\bs)$ as in (\ref{EZ}).

\subsection{\boldmath The elements $\si$}\label{SSSi}
We consider the element $w_r$ of the symmetric group $\Si_{ke}$ defined as the product of transpositions
\begin{equation}\label{w_r}
w_r:=\prod_{a=re-e+1}^{re}(a,a+e)\qquad(1\leq r<k).
\end{equation}
Informally, $w_r$ permutes the $r$th ``$e$-block" and the $(r+1)$st ``$e$-block". If we write $w_r=w_r's_{re}$ then $\ell(w_r)=\ell(w_r')+1$. 

Define
\begin{equation}\label{ESi_r}
\si_r:=\psi_{w_r}e(i,\la)\in R_{k\de}
\qquad(1\leq r<k).
\end{equation}
Note by Lemma~\ref{LFullComm} that $w_r$ and $w_r'$ are fully commutative elements so the elements 
$\psi_{w_r}$ and $\psi_{w_r'}$ of $R_{k\de}$ do not depend on the choice of preferred reduced decompositions for these
permutations. Furthermore, $\psi_{w_r}=\psi_{w_r'}\psi_{re}$. 

To prove the results in this section we will use the graphical representation of elements of $R_{k\de}$ and $M(i,\la)=R_{k\de}/K(\vec{\bs}(i,\la))=R_{k\de}m(i,\la)$ following \cite{KL1}. In fact, the diagram used to represent an element $he(i,\la)\in R_{k\de}$ in \cite{KL1}, here will represent the element $hv\in M(i,\la)$, for some $v\in e(i,\la)M(i,\la)$. Of course, the element $v$ needs to be specified before this makes sense.   
For example, if $v=m(i,\la)$, then 
$$
m(i,\la)=
\begin{braid}\tikzset{baseline=7mm}
  \draw (0,4)node[above]{$j_1$}--(0,0);
  \draw (1,4)node[above]{$j_2$}--(1,0);
  \draw[dots] (1.2,4)--(3.8,4);
  \draw[dots] (1.2,0)--(3.8,0);
  \draw (4,4)node[above]{$j_{ke}$}--(4,0);
\end{braid},
\quad \psi_rm(i,\la)=
\begin{braid}\tikzset{baseline=7mm}
  \draw (0,4)node[above]{$j_1$}--(0,0);
  \draw[dots] (0.2,4)--(1.8,4);
  \draw[dots] (0.2,0)--(1.8,0);
  \draw (2,4)node[above]{$j_{r-1}$}--(2,0);
  \draw (3,4)node[above]{$j_r$}--(4,0);
  \draw (4,4)node[above]{$j_{r+1}$}--(3,0);
  \draw (5,4)node[above]{}--(5,0);
  \draw[dots] (5.2,4)--(6.8,4);
  \draw[dots] (5.2,0)--(6.8,0);
  \draw (7,4)node[above]{$j_{ke}$}--(7,0);
\end{braid},
$$
and 
$$
y_sm(i,\la) =
\begin{braid}\tikzset{baseline=7mm}
  \draw (0,4)node[above]{$j_1$}--(0,0);
  \draw[dots] (0.2,4)--(1.8,4);
  \draw[dots] (0.2,0)--(1.8,0);
  \draw (2,4)node[above]{$j_{s-1}$}--(2,0);
  \draw (3,4)node[above]{$j_s$}--(3,0);
  \greendot(3,2);
  \draw (4,4)node[above]{$j_s$}--(4,0);
  \draw[dots] (4.2,4)--(5.8,4);
  \draw[dots] (4.2,0)--(5.8,0);
  \draw (6,4)node[above]{$j_{ke}$}--(6,0);
\end{braid},
$$
where $1\le r<d$ and $1\le s\le d$. 
Also, setting $r'=(r-1)e$, $r''=(r+1)e+1$, we have
\begin{equation}\label{sigma}
\si_rm(i,\la)=
\begin{braid}\tikzset{baseline=7mm}
  \draw(0,4)node[above]{$j_1$}--(0,0);
  \draw[dots](0.2,4)--(1.8,4);
  \draw[dots](0.2,0)--(1.8,0);
  \draw (2,4)node[above]{$j_{r'}$}--(2,0);
  \draw (3,4)node[above]{$i$}--(7,0);
  \draw (4,4)node[above]{$i{+}1$}--(8,0);
  \draw[dots](4.2,4)--(5.8,4);
  \draw[dots](4.2,0)--(5.8,0);
  \draw (6,4)node[above]{$i{-}1$}--(10,0);
  \draw (7,4)node[above]{$i$}--(3,0);
  \draw (8,4)node[above]{$i{+}1$}--(4,0);
  \draw[dots](8.2,4)--(9.8,4);
  \draw[dots](8.2,0)--(9.8,0);
  \draw (10,4)node[above]{$i{-}1\,$}--(6,0);
  \draw (11,4)node[above]{$\,j_{r''}$}--(11,0);
  \draw[dots](11.2,4)--(12.8,4);
  \draw[dots](11.2,0)--(12.8,0);
  \draw (13,4)node[above]{$j_{ke}$}--(13,0);
\end{braid}
\end{equation}
We will 
colour the strings of the diagrams to improve readability, but these colours will have no
mathematical meaning (and will not be distinguishable in black and white!).


\subsection{\boldmath The block permutation subspace}\label{SSBlPermSp}
Consider the {\em block permutation subspace} 
$$
T(i,\la):=\O\text{-span}\{\si_{r_1}\si_{r_2}\dots\si_{r_a}m(i,\la)\mid 1\le r_1,\dots,r_a<k\}.
$$
It is not hard to see using Theorem~\ref{TMBasis} that 
$$
T(i,\la)= e(i,\la)M(i,\la).
$$

%
It is easy to see that $\deg(\si_re(i,\la))=0$. Therefore, 
$$T(i,\la)\subseteq M(i,\la)_0,$$ 
the degree zero component of $M(i,\la)$. 

\begin{Lemma}\label{LNew}
Suppose that $1\le s, t\le ke$ with  $t\not\equiv0\pmod e$. Then the elements $y_s$ and $\psi_t$ act as zero on $T(i,\la)$.
\end{Lemma}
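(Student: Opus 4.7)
The plan is to prove the lemma by induction on the number $a$ of intertwiner factors in a spanning element
\[
  v = \si_{r_1}\si_{r_2}\cdots \si_{r_a}\,m(i,\la) \in T(i,\la),
\]
showing simultaneously that $y_s\, v = 0$ for all $1\le s\le ke$ and that $\psi_t\, v = 0$ for all $t\not\equiv 0\pmod e$.

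The base case $a=0$ is immediate from the presentation $M(i,\la)\cong R_{k\de}/K(\vec{\bs}(i,\la))$ in~(\ref{E:M(s)}): by definition the left ideal $K(\vec{\bs}(i,\la))$ contains $y_r$ for every~$r$ and $\psi_t$ for every~$t$ with $s_t\in\Si_{\vec{\bs}}$, and since $\Si_{\vec{\bs}}$ is the product of symmetric groups permuting within each $e$-block, the latter condition is exactly $t\not\equiv 0\pmod e$.

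For the inductive step I would write $v = \si_{r_1}v'$ with $v' = \si_{r_2}\cdots\si_{r_a}\,m(i,\la)$, and commute $y_s$ (respectively $\psi_t$) past $\si_{r_1} = \psi_{w_{r_1}}e(i,\la)$ using the KLR relations~(\ref{R3YPsi}), (\ref{R3Psi}), (\ref{ypsi})--(\ref{psiy}), (\ref{quad}), and~(\ref{braid}). The key structural observation is that $w_{r_1}$ sends position $(r_1-1)e+c$ to $r_1 e+c$ and back for $1\le c\le e$, so it preserves residues modulo~$e$. Consequently the leading term of the commutation equals $\si_{r_1}y_{s'}v'$ (respectively $\si_{r_1}\psi_{t'}v'$) with $s'\equiv s\pmod e$ and $t'\equiv t\pmod e$; in particular $t'\not\equiv 0\pmod e$ whenever $t\not\equiv 0\pmod e$, so the inductive hypothesis applied to~$v'$ annihilates this main term.

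What remains is to handle the correction terms arising from the $\delta_{i_r,i_{r+1}}$-summands in~(\ref{ypsi})--(\ref{psiy}) and from the cubic-relation corrections in~(\ref{braid}), which are triggered precisely at the same-residue crossings inside $w_{r_1}$. Each correction has strictly shorter $\psi$-length than $\psi_{w_{r_1}}$, and a secondary induction on this length should rewrite each such correction as an element of the form ``a product of fewer than $a$ intertwiners $\si_r$ applied to $m(i,\la)$, acted on either by a $y$-generator or by a $\psi_t$ with $t\not\equiv 0\pmod e$''---returning us to the primary inductive hypothesis. The main obstacle will be precisely this bookkeeping: tracking residues and position indices through each correction and verifying that every correction admits such a rewriting. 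A judicious choice of a reduced expression for $w_{r_1}$---reflecting the graphical representation in~(\ref{sigma}) as a ``thick crossing'' of two $e$-blocks---should make the structure of the corrections uniform and the bookkeeping tractable.
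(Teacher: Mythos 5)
Your plan is a genuinely different route from the paper's.  The paper gets both statements almost for free from two facts established just before the lemma: $T(i,\la)=e(i,\la)M(i,\la)$ and $T(i,\la)\subseteq M(i,\la)_0$.  Since $y_s$ commutes with $e(i,\la)$, $y_sv$ again lies in $T(i,\la)\subseteq M(i,\la)_0$, but $\deg(y_sv)=2$, so $y_sv=0$.  For $\psi_t$, note $\psi_tv\in e(s_t\cdot\bj)M(i,\la)$, and this weight space is zero by Theorem~\ref{TMBasis}: every nonzero weight $w\cdot\bj$ ($w\in\D_{\vec{\bs}}$) is a shuffle of the segments $\bs(i,e\la_m)$, whereas with $t\not\equiv 0\pmod e$ the tuple $s_t\bj$ has a prefix containing more $(i+c)$'s than $(i+c-1)$'s (where $c=t\bmod e\ne 0$), which no such shuffle can have.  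No commutation computations are needed.

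By contrast you propose a double induction on the number of $\si$-factors, peeling off $\si_{r_1}$ by KLR commutations.  The base case is fine (although the aside that ``$s_t\in\Si_{\vec{\bs}}$ is exactly $t\not\equiv 0\pmod e$'' is inaccurate in general: $\Si_{\vec{\bs}}=\Si_{e\la_1}\times\cdots\times\Si_{e\la_n}$ contains $s_t$ with $t\equiv 0\pmod e$ as soon as some $\la_m>1$; fortunately you only use the implication, which holds).  The genuine gap is the inductive step.  After applying the primary hypothesis to $v'$ to kill the leading term and the $y$-parts of the corrections, you are left with terms $c_u\,\psi_u e(\bj)v'$ for various $u<w_{r_1}$ with $u\bj=\bj$.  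These terms do lie in $T(i,\la)$, but $T(i,\la)$ is the span of $\si$-words of \emph{arbitrary} length applied to $m(i,\la)$, so there is no a priori reason why $\psi_u v'$ should be expressible as ``a $\si$-word with fewer than $a$ factors, preceded by a $y$ or a $\psi_t$ with $t\not\equiv 0\pmod e$''.  That is exactly what you would need for the secondary induction to close, and you assert it should work but give no mechanism that makes the $\si$-length decrease.  Without a demonstrated terminating rewriting, the double induction does not run.  Given the grading and weight facts stated immediately before the lemma, the direct arguments are both shorter and sidestep this difficulty entirely.
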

\begin{proof}
Let $v\in T(i,\la)$. We have 
$$y_sv\in e(i,\la)M(i,\la)=T(i,\la)\subseteq M(i,\la)_0.$$ 
On the other hand, $\deg(y_sv)=\deg(y_s)+\deg(v)=2$. Hence $y_sv=0$. Moreover, $\psi_tv\in e(s_t\cdot\bj)M(i,\la)=0$, the last equality holding by Theorem~\ref{TMBasis}. 
\end{proof}

\subsection{Quadratic relation} 
We want to study relations satisfied by the elements $\si_r$ acting on  $T(i,\la)$. Our main goal is to show that the elements $\tau_r:=\si_r+1$ satisfy the Coxeter relations on $T(i,\la)$. We begin with the quadratic relations.

\begin{Lemma} \label{LPsiSi}
Suppose that $1\leq r<k$ and $v\in T(i,\la)$. Then 
$
\psi_{re}\si_rv=-2\psi_{re}v.
$
Equivalently, in terms of diagrams we have
$$
\begin{braid}\tikzset{yscale=0.5,baseline=-0mm}
  \draw(0,4)node[above]{$j_1$}--(0,0)--(0,-4);
  \draw[dots](0.2,4)--(1.8,4);
  \draw[dots](0.2,0)--(1.8,0);
  \draw[dots](0.2,-4)--(1.8,-4);
  \draw (2,4)node[above]{$j_{r'}$}--(2,0)--(2,-4);
  \draw (3,4)node[above=0.4mm]{$i$}--(7,0)--(6,-4);
  \draw (4,4)node[above]{$i{+}1$}--(8,0)--(8,-4);
  \draw[dots](4.2,4)--(5.8,4);
  \draw[dots](4.2,0)--(5.8,0);
  \draw[dots](4.2,-4)--(5.8,-4);
  \draw (6,4)node[above]{$i{-}1$}--(10,0)--(10,-4);
  \draw (7,4)node[above=0.4mm]{$i$}--(3,0)--(3,-4);
  \draw (8,4)node[above]{$i{+}1$}--(4,0)--(4,-4);
  \draw[dots](8.2,4)--(9.8,4);
  \draw[dots](8.2,0)--(9.8,0);
  \draw[dots](8.2,-4)--(9.8,-4);
  \draw (10,4)node[above]{$i{-}1\,$}--(6,0)--(7,-4);
  \draw (11,4)node[above]{$\,j_{r''}$}--(11,0)--(11,-4);
  \draw[dots](11.2,4)--(12.8,4);
  \draw[dots](11.2,0)--(12.8,0);
  \draw[dots](11.2,-4)--(12.8,-4);
  \draw (13,4)node[above]{$j_{ke}$}--(13,0)--(13,-4);
\end{braid}
=-2
\begin{braid}
  \draw(0,4)node[above]{$j_1$}--(0,0);
  \draw[dots](0.2,4)--(1.8,4);
  \draw[dots](0.2,0)--(1.8,0);
  \draw (2,4)node[above]{$j_{r'}$}--(2,0);
  \draw (3,4)node[above=0.4mm]{$i$}--(3,0);
  \draw (4,4)node[above]{$i{+}1$}--(4,0);
  \draw[dots](4.2,4)--(5.8,4);
  \draw[dots](4.2,0)--(5.8,0);
  \draw (6,4)node[above]{$i{-}1$}--(7,0);
  \draw (7,4)node[above=0.4mm]{$i$}--(6,0);
  \draw (8,4)node[above]{$i{+}1$}--(8,0);
  \draw[dots](8.2,4)--(9.8,4);
  \draw[dots](8.2,0)--(9.8,0);
  \draw (10,4)node[above]{$i{-}1\,$}--(10,0);
  \draw (11,4)node[above]{$\,j_{r''}$}--(11,0);
  \draw[dots](11.2,4)--(12.8,4);
  \draw[dots](11.2,0)--(12.8,0);
  \draw (13,4)node[above]{$j_{ke}$}--(13,0);
\end{braid}
$$
where $r'=(r-1)e$, $r''=(r+1)e+1$ and $\bi=\bs(i,ke)$.
\end{Lemma}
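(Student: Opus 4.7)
The plan is to reduce the identity to a direct computation in $R_{k\delta}$ using the KLR relations. Since $w_r(re) = re+e > re-e+1 = w_r(re+1)$, the permutation $w_r$ admits a reduced factorization $w_r = s_{re}\,w_r''$ with $w_r'' := s_{re}w_r$ of length $e^2 - 1$. Full commutativity of $w_r$ from Lemma~\ref{LFullComm} gives $\psi_{w_r} = \psi_{re}\psi_{w_r''}$ in $R_{k\delta}$ independent of choices, so $\psi_{re}\sigma_r = \psi_{re}^2\psi_{w_r''}e(i,\lambda)$. Since $w_r''\,\bj = s_{re}\bj$ for $\bj = \bs(i,ke)$, the residues at positions $re$ and $re+1$ of $s_{re}\bj$ are $i$ and $i-1$; the quadratic relation~(\ref{quad}) then gives $\psi_{re}^2 e(s_{re}\bj) = (y_{re+1} - y_{re})e(s_{re}\bj)$ for $e \ge 3$, with the doubled form $(y_{re+1}-y_{re})(y_{re}-y_{re+1})e(s_{re}\bj)$ for $e = 2$. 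Passing the idempotent back through $\psi_{w_r''}$, we obtain for $v \in T(i,\lambda)$
$$\psi_{re}\sigma_r v = (y_{re+1} - y_{re})\psi_{w_r''}v$$
in the case $e \ge 3$.

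The next step is to commute $y_{re+1}$ and $y_{re}$ past $\psi_{w_r''}$ using the relations~(\ref{R3YPsi}),~(\ref{ypsi}), and~(\ref{psiy}). Thinking graphically, each $y_s$ is a dot sliding along the strand originating at top position $s$, picking up a $\pm\delta$-correction at each crossing where the two strands have the same residue. Once the dot reaches the bottom of the diagram of $\psi_{w_r''}$, it acts on $v$ and is killed by Lemma~\ref{LNew}; only the $\delta$-corrections survive. The essential combinatorial fact is that in $\psi_{w_r''}$ the strand starting at top position $re+1$ (residue $i$, ending at bottom position $re-e+1$) meets exactly one other strand of residue $i$, namely the one from top position $re-e+1$; symmetrically, the strand from top position $re$ (residue $i-1$) meets exactly one other residue-$(i-1)$ strand. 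Each such meeting yields a single $\delta$-correction, and further application of the push-through procedure to the residual $\psi$-chain reduces each correction to $\pm\psi_{re}v$; the two signs combine to produce exactly $-2\psi_{re}v$.

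The principal obstacle is the combinatorial bookkeeping in the second step: each $\delta$-correction is an element of $R_{k\delta}$ whose simplification to $\pm\psi_{re}v$ requires several further braid and commutation moves, and at each stage one must verify that spurious summands either vanish on $T(i,\lambda)$ by Lemma~\ref{LNew} (because they contain an inner factor $\psi_t$ with $t \not\equiv 0 \pmod e$ or a factor $y_s$) or cancel in pairs. The case $e = 2$ requires iterating the push-through procedure on the extra factor $(y_{re}-y_{re+1})$ produced by the quadratic relation, and the interaction of the two iterations must be tracked separately to confirm that the same final coefficient $-2$ is produced.
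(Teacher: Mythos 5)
Your proposal takes essentially the same approach as the paper's proof: isolate $\psi_{re}^2$ via a length-additive reduced factorisation of $w_r$ (you write $w_r = s_{re}w_r''$, the paper writes $w_r = w_r's_{re}$, but either works), expand the double crossing by the quadratic relation into dot-terms, and slide the dots through the remaining diagram, with Lemma~\ref{LNew} killing the pushed-through dots and the spurious terms produced along the way. One small slip in the bookkeeping paragraph: after applying $\psi_{re}^2$ the relevant idempotent is $e(s_{re}\bj)$, so the residue-$i$ dot is $y_{re}$ and rides the strand from top position $re$ (ending at bottom position $re-e+1$), not $re+1$; you have transposed the labels $re$ and $re+1$, but the count of one same-residue crossing per dot and the final coefficient $-2$ are correct.
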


\begin{proof} For typographical convenience, we only consider the case where $r=1$ and
$i=0$. 
We first treat the case $e=2$ which is exceptional because in this case the quiver $\Gamma$ is not simply
laced.  Using the relation (\ref{quad}), and then (\ref{ypsi}) and (\ref{psiy}), we have: 
\begin{align*}
\psi_{2}\si_1&=
  \begin{braid}\tikzset{scale=0.8}
    \draw(0,4)node[above]{0}--(2,2)--(2,1)--(1,0);
    \draw(1,4)node[above]{1}--(3,2)--(3,1)--(3,0);
    \draw(2,4)node[above]{0}--(0,2)--(0,1)--(0,0);
    \draw(3,4)node[above]{1}--(1,2)--(1,1)--(2,0);
  \end{braid}
=2\begin{braid}\tikzset{scale=0.8}
    \draw(0,4)node[above]{0}--(1,2)--(1,0);
    \draw(1,4)node[above]{1}--(3,2)--(3,1)--(3,0);
    \draw(2,4)node[above]{0}--(0,2)--(0,1)--(0,0);
    \draw(3,4)node[above]{1}--(2,2)--(2,0);
    \greendot(1,1);
    \greendot(2,1);
  \end{braid}
 -\begin{braid}\tikzset{scale=0.8}
    \draw(0,4)node[above]{0}--(1,2)--(1,0);
    \draw(1,4)node[above]{1}--(3,2)--(3,1)--(3,0);
    \draw(2,4)node[above]{0}--(0,2)--(0,1)--(0,0);
    \draw(3,4)node[above]{1}--(2,2)--(2,0);
    \greendot(1,0.5);
    \greendot(1,1.5);
  \end{braid}
 -\begin{braid}\tikzset{scale=0.8}
    \draw(0,4)node[above]{0}--(1,2)--(1,0);
    \draw(1,4)node[above]{1}--(3,2)--(3,1)--(3,0);
    \draw(2,4)node[above]{0}--(0,2)--(0,1)--(0,0);
    \draw(3,4)node[above]{1}--(2,2)--(2,0);
    \greendot(2,0.5);
    \greendot(2,1.5);
  \end{braid}
\\& = 2\begin{braid}\tikzset{scale=0.8}
    \draw(0,4)node[above]{0}--(1,2)--(1,0);
    \draw(1,4)node[above]{1}--(3,2)--(3,1)--(3,0);
    \draw(2,4)node[above]{0}--(0,2)--(0,1)--(0,0);
    \draw(3,4)node[above]{1}--(2,2)--(2,0);
    \greendot(1,1);
    \greendot(2,1);
  \end{braid}
=2\begin{braid}\tikzset{scale=0.8}
    \draw(0,4)node[above]{0}--(0,0);
    \draw(1,4)node[above]{1}--(3,2)--(3,1)--(3,0);
    \draw(2,4)node[above]{0}--(1,2)--(1,0);
    \draw(3,4)node[above]{1}--(2,2)--(2,0);
    \greendot(2,1);
  \end{braid}
=-2\begin{braid}\tikzset{scale=0.8}
    \draw(0,4)node[above]{0}--(0,0);
    \draw(1,4)node[above]{1}--(2,2)--(2,0);
    \draw(2,4)node[above]{0}--(1,2)--(1,0);
    \draw(3,4)node[above]{1}--(3,0);
  \end{braid}
  =-2\psi_2v,
\end{align*}
as required.

Now suppose that $e>2$. 
To start, using (\ref{quad}) we see that $\psi_e\sigma_1v$ equals 
\begin{align*}
-\begin{braid}
  \draw (1,3) node[above]{$0$}--(7,0)--(7,-1);
  \draw (2,3) node[above]{$1$}--(9,0)--(9,-1);
  \draw (3,3) node[above]{$2$}--(10,0)--(10,-1);
  \draw (4,3) node[above]{$3$}--(11,0)--(11,-1);
  \draw[dots] (4.3,3)--(5.8,3);
  \draw[dots] (4.3,0)--(5.8,0);
  \draw[dots] (4.3,-1)--(5.8,-1);
  \draw (6,3) node[above]{$-2$}--(13,0)--(13,-1);
  \draw (7,3) node[above]{$-1$}--(14,0)--(14,-1);
  \draw (8,3) node[above]{$0$}--(1,0)--(1,-1);
  \draw (9,3) node[above]{$1$}--(2,0)--(2,-1);
  \draw (10,3) node[above]{$2$}--(3,0)--(3,-1);
  \draw (11,3) node[above]{$3$}--(4,0)--(4,-1);
  \draw[dots] (11.3,3)--(12.6,3);
  \draw[dots] (11.3,0)--(12.6,0);
  \draw[dots] (11.3,-1)--(12.6,-1);
  \draw (13,3) node[above]{$-2$} -- (6,0) --  (6,-1);
  \draw (14,3) node[above]{$-1$} -- (8,0) --  (8,-1);
  \greendot(7,-0.5);
\end{braid}
+
\begin{braid}
  \draw (1,3) node[above]{$0$}--(7,0)--(7,-1);
  \draw (2,3) node[above]{$1$}--(9,0)--(9,-1);
  \draw (3,3) node[above]{$2$}--(10,0)--(10,-1);
  \draw (4,3) node[above]{$3$}--(11,0)--(11,-1);
  \draw[dots] (4.3,3)--(5.8,3);
  \draw[dots] (4.3,0)--(5.8,0);
  \draw[dots] (4.3,-1)--(5.8,-1);
  \draw (6,3) node[above]{$-2$}--(13,0)--(13,-1);
  \draw (7,3) node[above]{$-1$}--(14,0)--(14,-1);
  \draw (8,3) node[above]{$0$}--(1,0)--(1,-1);
  \draw (9,3) node[above]{$1$}--(2,0)--(2,-1);
  \draw (10,3) node[above]{$2$}--(3,0)--(3,-1);
  \draw (11,3) node[above]{$3$}--(4,0)--(4,-1);
  \draw[dots] (11.3,3)--(12.6,3);
  \draw[dots] (11.3,0)--(12.6,0);
  \draw[dots] (11.3,-1)--(12.6,-1);
  \draw (13,3) node[above]{$-2$} -- (6,0) --  (6,-1);
  \draw (14,3) node[above]{$-1$} -- (8,0) --  (8,-1);
  \greendot(8,-0.5);
\end{braid}.
\end{align*}

Let $D_1$ be the first diagram and let $D_2$ be the second diagram. To complete the proof, we show that   $D_1=\psi_ev$ and  $D_2=-\psi_e v$. 
In fact, the two equalities are proved similarly, so we give details only for the first one. 
Using (\ref{ypsi}), we see that 
\begin{align*}
D_1=&\begin{braid}\tikzset{scale=1.2}
  \draw (1,3) node[above]{$0$}--(7,0)--(7,-1);
  \draw (2,3) node[above]{$1$}--(9,0)--(9,-1);
  \draw (3,3) node[above]{$2$}--(10,0)--(10,-1);
  \draw (4,3) node[above]{$3$}--(11,0)--(11,-1);
  \draw[dots] (4.3,3)--(5.8,3);
  \draw[dots] (4.3,0)--(5.8,0);
  \draw[dots] (4.3,-1)--(5.8,-1);
  \draw (6,3) node[above]{$-2$}--(13,0)--(13,-1);
  \draw (7,3) node[above]{$-1$}--(14,0)--(14,-1);
  \draw (8,3) node[above]{$0$}--(1,0)--(1,-1);
  \draw (9,3) node[above]{$1$}--(2,0)--(2,-1);
  \draw (10,3) node[above]{$2$}--(3,0)--(3,-1);
  \draw (11,3) node[above]{$3$}--(4,0)--(4,-1);
  \draw[dots] (11.3,3)--(12.6,3);
  \draw[dots] (11.3,0)--(12.6,0);
  \draw[dots] (11.3,-1)--(12.6,-1);
  \draw (13,3) node[above]{$-2$} -- (6,0) --  (6,-1);
  \draw (14,3) node[above]{$-1$} -- (8,0) --  (8,-1);
  \greendot(4.45,1.3);
\end{braid}
\\=&
\begin{braid}\tikzset{baseline=3mm}
  \draw[red] (1,3) node[above]{$0$}--(4.0,1.375)--(1,0)--(1,-1);
  \draw (2,3) node[above]{$1$}--(9,0)--(9,-1);
  \draw (3,3) node[above]{$2$}--(10,0)--(10,-1);
  \draw (4,3) node[above]{$3$}--(11,0)--(11,-1);
  \draw[dots] (4.3,3)--(5.8,3);
  \draw[dots] (4.3,0)--(5.8,0);
  \draw[dots] (4.3,-1)--(5.8,-1);
  \draw (6,3) node[above]{$-2$}--(13,0)--(13,-1);
  \draw (7,3) node[above]{$-1$}--(14,0)--(14,-1);
  \draw[red] (8,3) node[above]{$0$}--(4.25,1.375)--(7,0)--(7,-1);
  \draw (9,3) node[above]{$1$}--(2,0)--(2,-1);
  \draw (10,3) node[above]{$2$}--(3,0)--(3,-1);
  \draw (11,3) node[above]{$3$}--(4,0)--(4,-1);
  \draw[dots] (11.3,3)--(12.6,3);
  \draw[dots] (11.3,0)--(12.6,0);
  \draw[dots] (11.3,-1)--(12.6,-1);
  \draw (13,3) node[above]{$-2$} -- (6,0) --  (6,-1);
  \draw (14,3) node[above]{$-1$} -- (8,0) --  (8,-1);
\end{braid}
+
\begin{braid}\tikzset{baseline=3mm}
  \draw (1,3) node[above]{$0$}--(7,0)--(7,-1);
  \greendot(3.5,1.75);
  \draw (2,3) node[above]{$1$}--(9,0)--(9,-1);
  \draw (3,3) node[above]{$2$}--(10,0)--(10,-1);
  \draw (4,3) node[above]{$3$}--(11,0)--(11,-1);
  \draw[dots] (4.3,3)--(5.8,3);
  \draw[dots] (4.3,0)--(5.8,0);
  \draw[dots] (4.3,-1)--(5.8,-1);
  \draw (6,3) node[above]{$-2$}--(13,0)--(13,-1);
  \draw (7,3) node[above]{$-1$}--(14,0)--(14,-1);
  \draw (8,3) node[above]{$0$}--(1,0)--(1,-1);
  \draw (9,3) node[above]{$1$}--(2,0)--(2,-1);
  \draw (10,3) node[above]{$2$}--(3,0)--(3,-1);
  \draw (11,3) node[above]{$3$}--(4,0)--(4,-1);
  \draw[dots] (11.3,3)--(12.6,3);
  \draw[dots] (11.3,0)--(12.6,0);
  \draw[dots] (11.3,-1)--(12.6,-1);
  \draw (13,3) node[above]{$-2$} -- (6,0) --  (6,-1);
  \draw (14,3) node[above]{$-1$} -- (8,0) --  (8,-1);
\end{braid}.
\end{align*}
The second summand is zero as $y_1v=0$ by Lemma~\ref{LNew}. Applying the braid relations (\ref{braid}) to the first summand, we get that $D_1$ equals 
\begin{align*}
\begin{braid}\tikzset{scale=1.1}
  \draw (1,3) node[above]{$0$}--(1,-1);
  \draw (2,3) node[above]{$1$}--(9,0)--(9,-1);
  \draw (3,3) node[above]{$2$}--(10,0)--(10,-1);
  \draw (4,3) node[above]{$3$}--(11,0)--(11,-1);
  \draw[dots] (4.3,3)--(5.8,3);
  \draw[dots] (4.3,0)--(5.8,0);
  \draw[dots] (4.3,-1)--(5.8,-1);
  \draw (6,3) node[above]{$-2$}--(13,0)--(13,-1);
  \draw (7,3) node[above]{$-1$}--(14,0)--(14,-1);
  \draw[red] (8,3) node[above]{$0$}--(5.125,1.875)--(6.0,1.5)--(5,1.1)--(7,0)--(7,-1);
  \draw (9,3) node[above]{$1$}--(2,0)--(2,-1);
  \draw (10,3) node[above]{$2$}--(3,0)--(3,-1);
  \draw (11,3) node[above]{$3$}--(4,0)--(4,-1);
  \draw[dots] (11.3,3)--(12.6,3);
  \draw[dots] (11.3,0)--(12.6,0);
  \draw[dots] (11.3,-1)--(12.6,-1);
  \draw (13,3) node[above]{$-2$} -- (6,0) --  (6,-1);
  \draw (14,3) node[above]{$-1$} -- (8,0) --  (8,-1);
\end{braid}
+
\begin{braid}\tikzset{scale=1.1}
  \draw (1,3) node[above]{$0$}--(1,-1);
  \draw[red] (2,3) node[above]{$1$}--(5,1.75)--(5,1.25)--(2,0)--(2,-1);
  \draw (3,3) node[above]{$2$}--(10,0)--(10,-1);
  \draw (4,3) node[above]{$3$}--(11,0)--(11,-1);
  \draw[dots] (4.3,3)--(5.8,3);
  \draw[dots] (4.3,0)--(5.8,0);
  \draw[dots] (4.3,-1)--(5.8,-1);
  \draw (6,3) node[above]{$-2$}--(13,0)--(13,-1);
  \draw (7,3) node[above]{$-1$}--(14,0)--(14,-1);
  \draw[red] (8,3) node[above]{$0$}--(5.25,1.75)--(5.25,1.25)--(7,0)--(7,-1);
  \draw[red] (9,3) node[above]{$1$}--(5.5,1.5)--(9,0)--(9,-1);
  \draw (10,3) node[above]{$2$}--(3,0)--(3,-1);
  \draw (11,3) node[above]{$3$}--(4,0)--(4,-1);
  \draw[dots] (11.3,3)--(12.6,3);
  \draw[dots] (11.3,0)--(12.6,0);
  \draw[dots] (11.3,-1)--(12.6,-1);
  \draw (13,3) node[above]{$-2$} -- (6,0) --  (6,-1);
  \draw (14,3) node[above]{$-1$} -- (8,0) --  (8,-1);
\end{braid}.
\end{align*}
Using the braid relations to pull the second $0$-string through shows that the first summand equals 
$$
\begin{braid}\tikzset{scale=1.2}
  \draw (1,3) node[above]{$0$}--(1,-1);
  \draw (2,3) node[above]{$1$}--(9,0)--(9,-1);
  \draw (3,3) node[above]{$2$}--(10,0)--(10,-1);
  \draw (4,3) node[above]{$3$}--(11,0)--(11,-1);
  \draw[dots] (4.3,3)--(5.8,3);
  \draw[dots] (4.3,0)--(5.8,0);
  \draw[dots] (4.3,-1)--(5.8,-1);
  \draw (6,3) node[above]{$-2$}--(13,0)--(13,-1);
  \draw (7,3) node[above]{$-1$}--(14,0)--(14,-1);
  \draw[red] (8,3) node[above]{$0$}--(8.875,2.75)--(5,1.125)--(7,0)--(7,-1);
  \draw (9,3) node[above]{$1$}--(2,0)--(2,-1);
  \draw (10,3) node[above]{$2$}--(3,0)--(3,-1);
  \draw (11,3) node[above]{$3$}--(4,0)--(4,-1);
  \draw[dots] (11.3,3)--(12.6,3);
  \draw[dots] (11.3,0)--(12.6,0);
  \draw[dots] (11.3,-1)--(12.6,-1);
  \draw (13,3) node[above]{$-2$} -- (6,0) --  (6,-1);
  \draw (14,3) node[above]{$-1$} -- (8,0) --  (8,-1);
\end{braid},$$
showing that this element is zero 
since $\psi_{e+1}v=0$, by Lemma~\ref{LNew}. Applying the braid relations to the second summand, we get  
\begin{align*}
\begin{braid}\tikzset{scale=1.2}
  \draw (1,3) node[above]{$0$}--(1,-1);
  \draw (2,3) node[above]{$1$}--(2,-1);
  \draw (3,3) node[above]{$2$}--(10,0)--(10,-1);
  \draw (4,3) node[above]{$3$}--(11,0)--(11,-1);
  \draw[dots] (4.3,3)--(5.8,3);
  \draw[dots] (4.3,0)--(5.8,0);
  \draw[dots] (4.3,-1)--(5.8,-1);
  \draw (6,3) node[above]{$-2$}--(13,0)--(13,-1);
  \draw (7,3) node[above]{$-1$}--(14,0)--(14,-1);
  \draw (8,3) node[above]{$0$}--(5.25,1.75)--(5.25,1.25)--(7,0)--(7,-1);
  \draw[red] (9,3) node[above]{$1$}--(6.375,1.75)--(7,1.5)--(6.375,1.25)--(9,0)--(9,-1);
  \draw(10,3) node[above]{$2$}--(3,0)--(3,-1);
  \draw (11,3) node[above]{$3$}--(4,0)--(4,-1);
  \draw[dots] (11.3,3)--(12.6,3);
  \draw[dots] (11.3,0)--(12.6,0);
  \draw[dots] (11.3,-1)--(12.6,-1);
  \draw (13,3) node[above]{$-2$} -- (6,0) --  (6,-1);
  \draw (14,3) node[above]{$-1$} -- (8,0) --  (8,-1);
\end{braid}
+
\begin{braid}\tikzset{scale=1.2}
  \draw (1,3) node[above]{$0$}--(1,-1);
  \draw (2,3) node[above]{$1$}--(2,-1);
  \draw[red] (3,3) node[above]{$2$}--(5.75,1.75)--(5.75,1.25)--(3,0)--(3,-1);
  \draw (4,3) node[above]{$3$}--(11,0)--(11,-1);
  \draw[dots] (4.3,3)--(5.8,3);
  \draw[dots] (4.3,0)--(5.8,0);
  \draw[dots] (4.3,-1)--(5.8,-1);
  \draw (6,3) node[above]{$-2$}--(13,0)--(13,-1);
  \draw (7,3) node[above]{$-1$}--(14,0)--(14,-1);
  \draw (8,3) node[above]{$0$}--(5.25,1.75)--(5.25,1.25)--(7,0)--(7,-1);
  \draw[red] (9,3) node[above]{$1$}--(6.0,1.75)--(6.0,1.25)--(9,0)--(9,-1);
  \draw[red] (10,3) node[above]{$2$}--(6.5,1.5)--(10,0)--(10,-1);
  \draw (11,3) node[above]{$3$}--(4,0)--(4,-1);
  \draw[dots] (11.3,3)--(12.6,3);
  \draw[dots] (11.3,0)--(12.6,0);
  \draw[dots] (11.3,-1)--(12.6,-1);
  \draw (13,3) node[above]{$-2$} -- (6,0) --  (6,-1);
  \draw (14,3) node[above]{$-1$} -- (8,0) --  (8,-1);
\end{braid}.
\end{align*}
As before, using the braid relations to pull the second $1$-string to the top of the
first diagram shows that the first summand is zero. So, by (\ref{quad}), 
$$D_1=
\begin{braid}
  \draw (1,3) node[above]{$0$}--(1,-1);
  \draw (2,3) node[above]{$1$}--(2,-1);
  \draw (3,3) node[above]{$2$}--(3,-1);
  \draw (4,3) node[above]{$3$}--(11,0)--(11,-1);
  \draw[dots] (4.3,3)--(5.8,3);
  \draw[dots] (4.3,0)--(5.8,0);
  \draw[dots] (4.3,-1)--(5.8,-1);
  \draw (6,3) node[above]{$-2$}--(13,0)--(13,-1);
  \draw (7,3) node[above]{$-1$}--(14,0)--(14,-1);
  \draw (8,3) node[above]{$0$}--(5.25,1.75)--(5.25,1.25)--(7,0)--(7,-1);
  \draw (9,3) node[above]{$1$}--(6.0,1.75)--(6.0,1.25)--(9,0)--(9,-1);
  \draw (10,3) node[above]{$2$}--(6.5,1.5)--(10,0)--(10,-1);
  \draw (11,3) node[above]{$3$}--(4,0)--(4,-1);
  \draw[dots] (11.3,3)--(12.6,3);
  \draw[dots] (11.3,0)--(12.6,0);
  \draw[dots] (11.3,-1)--(12.6,-1);
  \draw (13,3) node[above]{$-2$} -- (6,0) --  (6,-1);
  \draw (14,3) node[above]{$-1$} -- (8,0) --  (8,-1);
\end{braid}.
$$
The argument so far has straightened the first three strings in the diagram. Continuing
in this way straightens the first $e-1$ strings so that
$$
D_1=
\begin{braid}
  \draw (1,3) node[above]{$0$}--(1,-1);
  \draw (2,3) node[above]{$1$}--(2,-1);
  \draw (3,3) node[above]{$2$}--(3,-1);
  \draw (4,3) node[above]{$3$}--(4,-1);
  \draw[dots] (4.3,3)--(5.8,3);
  \draw[dots] (4.3,0)--(5.8,0);
  \draw[dots] (4.3,-1)--(5.8,-1);
  \draw (6,3) node[above]{$-2$}--(6,-1);
  \draw (7,3) node[above]{$-1$}--(14,0)--(14,-1);
  \draw (8,3) node[above]{$0$}--(7,1.375)--(7,0)--(7,-1);
  \draw (9,3) node[above]{$1$}--(7.5,1.375)--(9,0)--(9,-1);
  \draw (10,3) node[above]{$2$}--(8,1.375)--(10,0)--(10,-1);
  \draw (11,3) node[above]{$3$}--(8.5,1.375)--(11,0)--(11,-1);
  \draw[dots] (11.3,3)--(12.6,3);
  \draw[dots] (11.3,0)--(12.6,0);
  \draw[dots] (11.3,-1)--(12.6,-1);
  \draw (13,3) node[above]{$-2$} -- (9.5,1.375)--(13,0) --  (13,-1);
  \draw (14,3) node[above]{$-1$} -- (8,0) --  (8,-1);
\end{braid}
$$
Now applying the braid relation for the last time shows that $D_1$ equals 
\begin{align*}
&\begin{braid}
  \draw (1,3) node[above]{$0$}--(1,-1);
  \draw (2,3) node[above]{$1$}--(2,-1);
  \draw (3,3) node[above]{$2$}--(3,-1);
  \draw (4,3) node[above]{$3$}--(4,-1);
  \draw[dots] (4.3,3)--(5.8,3);
  \draw[dots] (4.3,0)--(5.8,0);
  \draw[dots] (4.3,-1)--(5.8,-1);
  \draw (6,3) node[above]{$-2$}--(6,-1);
  \draw (7,3) node[above]{$-1$}--(14,0)--(14,-1);
  \draw (8,3) node[above]{$0$}--(7,1.375)--(7,0)--(7,-1);
  \draw (9,3) node[above]{$1$}--(7.5,1.375)--(9,0)--(9,-1);
  \draw (10,3) node[above]{$2$}--(8,1.375)--(10,0)--(10,-1);
  \draw (11,3) node[above]{$3$}--(8.5,1.375)--(11,0)--(11,-1);
  \draw[dots] (11.3,3)--(12.6,3);
  \draw[dots] (11.3,0)--(12.6,0);
  \draw[dots] (11.3,-1)--(12.6,-1);
  \draw[red] (13,3) node[above]{$-2$}--(13,-1);
  \draw (14,3) node[above]{$-1$}--(8,0)--(8,-1);
\end{braid}
+
\begin{braid}
  \draw (1,3) node[above]{$0$}--(1,-1);
  \draw (2,3) node[above]{$1$}--(2,-1);
  \draw (3,3) node[above]{$2$}--(3,-1);
  \draw (4,3) node[above]{$3$}--(4,-1);
  \draw[dots] (4.3,3)--(5.8,3);
  \draw[dots] (4.3,0)--(5.8,0);
  \draw[dots] (4.3,-1)--(5.8,-1);
  \draw (6,3) node[above]{$-2$}--(6,-1);
  \draw[red] (7,3) node[above]{$-1$}--(10,1.75)--(10,1)--(8,0)--(8,-1);
  \draw (8,3) node[above]{$0$}--(7,1.375)--(7,0)--(7,-1);
  \draw (9,3) node[above]{$1$}--(7.5,1.375)--(9,0)--(9,-1);
  \draw (10,3) node[above]{$2$}--(8,1.375)--(10,0)--(10,-1);
  \draw (11,3) node[above]{$3$}--(8.5,1.375)--(11,0)--(11,-1);
  \draw[dots] (11.3,3)--(12.6,3);
  \draw[dots] (11.3,0)--(12.6,0);
  \draw[dots] (11.3,-1)--(12.6,-1);
  \draw[red] (13,3) node[above]{$-2$} -- (10.5,1.75)--(10.5,1)--(13,0) --  (13,-1);
  \draw[red] (14,3) node[above]{$-1$} -- (11,1.375)--(14,0) --  (14,-1);
\end{braid}
\\=&
\begin{braid}
  \draw (1,3) node[above]{$0$}--(1,-1);
  \draw (2,3) node[above]{$1$}--(2,-1);
  \draw (3,3) node[above]{$2$}--(3,-1);
  \draw (4,3) node[above]{$3$}--(4,-1);
  \draw[dots] (4.3,3)--(5.8,3);
  \draw[dots] (4.3,0)--(5.8,0);
  \draw[dots] (4.3,-1)--(5.8,-1);
  \draw (6,3) node[above]{$-2$}--(6,-1);
  \draw (7,3) node[above]{$-1$}--(8,-1);
  \draw (8,3) node[above]{$0$}--(7,-1);
  \draw (9,3) node[above]{$1$}--(9,-1);
  \draw (10,3) node[above]{$2$}--(10,-1);
  \draw (11,3) node[above]{$3$}--(11,-1);
  \draw[dots] (11.3,3)--(12.6,3);
  \draw[dots] (11.3,0)--(12.6,0);
  \draw[dots] (11.3,-1)--(12.6,-1);
  \draw (13,3) node[above]{$-2$} -- (13,-1);
  \draw (14,3) node[above]{$-1$} --(14,-1);
\end{braid}
=\psi_ev,
\end{align*}
as required. 
\end{proof}

Recall from section~\ref{SSSi} that $\psi_{w_r}=\psi_{w_r'}\psi_{re}$.

\begin{Corollary} \label{CSiSq}
Suppose that $1\leq r<k$ and $v\in T(i,\la)$. Then 
$
\si_r^2v= -2\si_rv.
$
\end{Corollary}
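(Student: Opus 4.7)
The plan is to reduce $\sigma_r^2 v = -2\sigma_r v$ directly to Lemma~\ref{LPsiSi} using the factorization of $\psi_{w_r}$ already noted in Section~\ref{SSSi}. Concretely, recall that $w_r = w_r' s_{re}$ with $\ell(w_r) = \ell(w_r')+1$, both elements are fully commutative, and consequently $\psi_{w_r} = \psi_{w_r'}\psi_{re}$ unambiguously.

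First I would observe that since $v \in T(i,\lambda) = e(i,\lambda)M(i,\lambda)$, we have $e(i,\lambda)v = v$, so $\sigma_r v = \psi_{w_r}v$. Moreover $\sigma_r v$ again lies in $T(i,\lambda)$ (being in the $\mathcal{O}$-span of elements $\sigma_{r_1}\cdots\sigma_{r_a}m(i,\lambda)$, or equivalently in $e(i,\lambda)M(i,\lambda)$), so $e(i,\lambda)\sigma_r v = \sigma_r v$ as well. Therefore
\begin{equation*}
\sigma_r^2 v \;=\; \psi_{w_r} e(i,\lambda)\,\sigma_r v \;=\; \psi_{w_r}\sigma_r v \;=\; \psi_{w_r'}\psi_{re}\,\sigma_r v.
\end{equation*}

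Next, since $\sigma_r v \in T(i,\lambda)$, Lemma~\ref{LPsiSi} applies (with $\sigma_r v$ playing the role of $v$ there, wait\textemdash actually directly: the lemma asserts $\psi_{re}\sigma_r v = -2\psi_{re}v$ for any $v \in T(i,\lambda)$), giving $\psi_{re}\sigma_r v = -2\psi_{re}v$. Substituting yields
\begin{equation*}
\sigma_r^2 v \;=\; \psi_{w_r'}(-2\psi_{re}v) \;=\; -2\psi_{w_r'}\psi_{re}v \;=\; -2\psi_{w_r}v \;=\; -2\sigma_r v,
\end{equation*}
which is the claim.

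There is essentially no obstacle here: all of the work has been done in Lemma~\ref{LPsiSi}. The only small points to verify are the bookkeeping with the idempotent $e(i,\lambda)$ (handled by the fact that $T(i,\lambda) = e(i,\lambda)M(i,\lambda)$ and is preserved by the $\sigma_r$) and the legitimacy of splitting $\psi_{w_r} = \psi_{w_r'}\psi_{re}$, which is guaranteed by the full commutativity of $w_r$ noted in Section~\ref{SSSi} together with Lemma~\ref{LFullComm}.
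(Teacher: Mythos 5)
Your proof is correct and follows exactly the same route as the paper: factor $\psi_{w_r}=\psi_{w_r'}\psi_{re}$, apply Lemma~\ref{LPsiSi} to $\psi_{re}\sigma_r v$, and reassemble. The extra care you take with the idempotent $e(i,\lambda)$ and with the fact that $T(i,\lambda)$ is preserved by $\sigma_r$ is sound but was left implicit in the paper's one-line computation.
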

\begin{proof}
Using Lemma~\ref{LPsiSi}, we get 
$$\si_r^2v=\psi_{w_r'}\psi_{re}\si_rv=-2\psi_{w_r'}\psi_{re}v
       =-2\psi_{w_r}v=-2\si_rv,
$$
as desired. 
\end{proof}

\subsection{Braid relations}
This section is dedicated to the proof of the following 

\begin{Theorem}\label{sigma braid}
  Suppose that $1\le r< k$ and $v\in T(i,\la)$. Then 
  $$(\sigma_r\sigma_{r+1}\sigma_r-
    \sigma_{r+1}\sigma_r\sigma_{r+1}
          - \sigma_r+\sigma_{r+1})v=0.$$
\end{Theorem}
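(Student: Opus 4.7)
The plan is to recognize the desired identity as the Coxeter braid relation for the shifted operators $\tau_r := \sigma_r + 1$. By Corollary~\ref{CSiSq}, $\tau_r^2 v = v$; expanding $(\tau_r\tau_{r+1}\tau_r - \tau_{r+1}\tau_r\tau_{r+1})v$ and substituting $\sigma_r^2 v = -2\sigma_r v$ (and likewise for $\sigma_{r+1}$) shows that the asserted identity is equivalent to the braid relation $\tau_r\tau_{r+1}\tau_r v = \tau_{r+1}\tau_r\tau_{r+1} v$ on $T(i,\lambda)$. So the content of the theorem is that the block intertwiners $\tau_r$ act as the standard Coxeter generators of a symmetric group on $T(i,\lambda)$.

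Since $\sigma_r$ and $\sigma_{r+1}$ act nontrivially only on the $r$th, $(r{+}1)$st, and $(r{+}2)$nd $e$-blocks, the calculation reduces (treating the remaining blocks as passive identity strands) to the case $k = 3$, $r = 1$, $\lambda = (1,1,1)$. Let $w_{\mathrm{rev}} \in \Sigma_{3e}$ be the permutation reversing the three $e$-blocks. A direct inversion count gives $\ell(w_1) = \ell(w_2) = e^2$ and $\ell(w_{\mathrm{rev}}) = 3e^2$, so both $w_1 w_2 w_1$ and $w_2 w_1 w_2$ are reduced decompositions of $w_{\mathrm{rev}}$. Proposition~\ref{PSubtle} then gives
$$
\bigl(\sigma_1 \sigma_2 \sigma_1 - \sigma_2 \sigma_1 \sigma_2 \bigr) e(i,\lambda) \;=\; \sum_{u < w_{\mathrm{rev}}} \psi_u \, f_u(y)\, e(i,\lambda),
$$
with every summand homogeneous of degree zero. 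Applying this to $v \in T(i,\lambda)$, Lemma~\ref{LNew} annihilates every term in which $f_u$ contains a $y$-variable, leaving a linear combination of vectors $\psi_u v$ with $u < w_{\mathrm{rev}}$, which by Theorem~\ref{TMBasis} are linearly independent basis elements of $M(i,\lambda)$.

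The heart of the argument is to compute these coefficients explicitly and show that the result is $\sigma_1 v - \sigma_2 v$. I would do this through a direct diagrammatic comparison of $\sigma_1 \sigma_2 \sigma_1 v$ and $\sigma_2 \sigma_1 \sigma_2 v$ in the graphical calculus of~(\ref{sigma}), repeatedly applying the KLR braid relation~(\ref{braid}) to transform one picture into the other. Each triple of consecutive strings whose residues match the pattern $(j, j\pm 1, j)$ produces a $\pm 1$ correction term (or a $y$-correction when $e = 2$) at the cost of one resolved crossing. The $y$-corrections are killed once carried to the top of the diagram by Lemma~\ref{LNew}; the $\pm 1$ corrections contribute diagrams with fewer crossings which in turn must be organized, the bulk of them cancelling pairwise against their mirror-image counterparts from the other side of the identity.

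The main obstacle is this combinatorial bookkeeping. The full diagrams have $3e^2$ crossings, the braid relation~(\ref{braid}) splits into four cases depending on residues, and the chain of intermediate correction terms to track can be substantial. A useful strategy is to work inductively on $e$, or to peel off a single string from each block at a time, reducing to a smaller instance of the same identity. The case $e = 2$ requires separate treatment because of the double-arrow corrections in~(\ref{quad}) and~(\ref{braid}), which introduce genuine $y$-polynomials that must be carefully commuted to the top of the diagram before Lemma~\ref{LNew} can be invoked.
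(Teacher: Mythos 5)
Your framing is correct and matches the logical organization of the paper: the stated identity is equivalent, via Corollary~\ref{CSiSq}, to the braid relation $\tau_r\tau_{r+1}\tau_r v = \tau_{r+1}\tau_r\tau_{r+1}v$; the reduction to $k=3$ is legitimate because $\sigma_r,\sigma_{r+1}$ touch only three consecutive $e$-blocks and Lemma~\ref{LNew} holds for all $\lambda$; and your observation that $w_1w_2w_1$ and $w_2w_1w_2$ are both reduced for the block-reversal $w_{\mathrm{rev}}$ (so Proposition~\ref{PSubtle} bounds the discrepancy by terms $\psi_u f(y)e(\bi)$ with $u<w_{\mathrm{rev}}$, the $y$-terms then dying by Lemma~\ref{LNew}) is correct. (One small slip: you shouldn't fix $\lambda=(1,1,1)$; the claim is for arbitrary $\lambda$, and nothing in the computation depends on $\lambda$ beyond Lemma~\ref{LNew}, so no reduction on $\lambda$ should be made.)

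However, there is a genuine gap: the computation that you yourself flag as ``the heart of the argument'' — showing that the residual terms sum to exactly $\sigma_1 v - \sigma_2 v$ — is not carried out, only described in general terms (``repeatedly applying the KLR braid relation,'' ``peel off a single string from each block at a time,'' ``work inductively on $e$''). This is where essentially all of the work lives. The paper supplies this content by proving three auxiliary diagrammatic identities (an equality $\sigma_r v = D_1$ exhibiting $\sigma_r$ with a pair of crossings rerouted, the decompositions $E_1 = \sigma_1 v + E_1'$ and $E_2 = \sigma_2 v + E_2'$, and a one-strand ``shifting'' move) and then chaining them: writing $\sigma_1\sigma_2\sigma_1 v$ as $E_1$, extracting $\sigma_1 v$, applying the shift lemma $(e-1)$ times to convert $E_1'$ into $E_2'$, and finally recognizing $E_2 = E_2' + \sigma_2 v$ as $\sigma_2\sigma_1\sigma_2 v$. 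Your ``peeling off strings'' suggestion points in roughly the right direction, but the actual bookkeeping — tracking which correction terms vanish by Lemma~\ref{LNew} vs.\ by the quadratic relation $\iisquare=0$, and organizing the remaining terms into a telescoping chain — is nontrivial and is not a routine consequence of Proposition~\ref{PSubtle}; the coefficients are not obtained by a general principle but by an explicit ladder of braid moves. Without carrying out this computation (or stating and proving the intermediate identities), the argument is a plan, not a proof. The exceptional case $e=2$, which you correctly note requires separate treatment because of the double-arrow $y$-corrections in (\ref{quad}) and (\ref{braid}), adds a further missing piece.
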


In the proof, for typographical reasons, we assume that $i=0$ and  $k=3$ (this corresponds to ignoring vertical strings to the left and to the right of the relation we are interested in). 
As in the Lemma~\ref{LPsiSi}, all diagrams represent elements of $T(i,\la)$ obtained by applying the corresponding elements of $R_{k\de}$ to a given $v\in T(i,\la)$.

First, we need three technical lemmas.

\begin{Lemma}\label{sigma twist}
Suppose that  $e>2$, $k=3$, $i=0$, and $v\in T(i,\la)$. Then:
\begin{align*}
\sigma_1v&=
\begin{braid}\tikzset{baseline=9mm}
  \draw(0,5)node[above]{$0$}--(4,3)--(4,2)--(0,0);
  \draw(1,5)node[above]{$1$}--(11,0);
  \draw[dots](1.4,5)--(2.7,5);
  \draw[dots](1.6,2.5)--(2.9,2.5);
  \draw[dots](1.4,0)--(2.7,0);
  \draw(3,5)node[above]{$-2$}--(13,0);
  \draw(4,5)node[above]{$-1$}--(14,0);
  \draw(5,5)node[above]{$0$}--(0,2.5)--(5,0);
  \draw(6,5)node[above]{$1$}--(1,2.5)--(6,0);
  \draw[dots](6.3,5)--(7.7,5);
  \draw[dots](6.3,0)--(7.7,0);
  \draw(8,5)node[above]{$-2$}--(3,2.5)--(8,0);
  \draw(9,5)node[above]{$-1$}--(5,3)--(5,2)--(9,0);
  \draw(10,5)node[above]{$0$}--(5.2,2.5)--(10,0);
  \draw(11,5)node[above]{$1$}--(1,0);
  \draw[dots](11.3,5)--(12.7,5);
  \draw[dots](11.3,0)--(12.7,0);
  \draw(13,5)node[above]{$-2$}--(3,0);
  \draw(14,5)node[above]{$-1$}--(4,0);
\end{braid} ,\\\intertext{and}
\sigma_2v&=
\begin{braid}\tikzset{baseline=9mm}
  \draw(14,5)node[above]{$-1$}--(10,3)--(10,2)--(14,0);
  \draw(13,5)node[above]{$-2$}--(3,0);
  \draw[dots](12.6,5)--(11.3,5);
  \draw[dots](12.4,2.5)--(11.1,2.5);
  \draw[dots](12.6,0)--(11.3,0);
  \draw(11,5)node[above]{$1$}--(1,0);
  \draw(10,5)node[above]{$0$}--(0,0);
  \draw(9,5)node[above]{$-1$}--(14,2.5)--(9,0);
  \draw(8,5)node[above]{$-2$}--(13,2.5)--(8,0);
  \draw[dots](7.7,5)--(6.3,5);
  \draw[dots](7.7,0)--(6.3,0);
  \draw(6,5)node[above]{$1$}--(11,2.5)--(6,0);
  \draw(5,5)node[above]{$0$}--(9,3)--(9,2)--(5,0);
  \draw(4,5)node[above]{$-1$}--(8.8,2.5)--(4,0);
  \draw(3,5)node[above]{$-2$}--(13,0);
  \draw[dots](2.7,5)--(1.3,5);
  \draw[dots](2.7,0)--(1.3,0);
  \draw(1,5)node[above]{$1$}--(11,0);
  \draw(0,5)node[above]{$0$}--(10,0);
\end{braid} .
\end{align*}
\end{Lemma}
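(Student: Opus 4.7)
The proof will proceed by a direct diagrammatic computation in the spirit of the proof of Lemma~\ref{LPsiSi}, working with the visual representation of elements $hv\in T(i,\lambda)$ introduced in Section~\ref{SBI}. The starting point is the defining diagram~(\ref{sigma}) for $\sigma_1 v$ (with $r=1$, $k=3$), which realizes the swap of the first $e$-block with the second $e$-block while leaving block~$3$ fixed. I transform this into the target diagram by applying the KLR braid relation~(\ref{braid}) systematically, together with the commutation relations (\ref{R3Psi}) and (\ref{R3YPsi}).

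The key simplification engine is Lemma~\ref{LNew}: for any $v\in T(i,\lambda)$ we have $y_s v=0$ for every $s$, and $\psi_t v=0$ for every $t\not\equiv 0\pmod e$. Consequently, each of the first two cases of (\ref{braid}) — which produce a $\pm 1$ correction term — contributes a shorter word whose leftmost factor is either a $y_s$, or a $\psi_t$ that can be commuted past the remaining fully commutative tail to hit $v$ directly; in either case Lemma~\ref{LNew} kills the correction. The same holds for the $e=2$ case of (\ref{braid}), whose correction involves only $y$-factors. This effectively promotes (\ref{braid}) to genuine braid relations for the $\psi$-strings acting on $v\in T(i,\lambda)$, allowing us to rearrange strings freely modulo the vanishing corrections.

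Using this freedom, I restructure the standard diagram of $\sigma_1 v$ into the twisted form appearing in the statement: the two small ``bubbles'' at $(4,2)$--$(4,3)$ and $(5,2)$--$(5,3)$ record the surviving same-residue crossings (between the residue-$0$ strings of blocks~$1$ and $2$, and the residue-$(-1)$ strings of blocks~$2$ and $3$, respectively), while the piecewise-linear paths through height~$2.5$ encode a sequence of fully commutative moves (Lemma~\ref{LFullComm}) that thread the block-$2$ strings through the block-$1$ region. The argument for $\sigma_2 v$ is entirely symmetric, obtained by shifting all indices by~$e$ and exchanging the roles of blocks~$2$ and~$3$.

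The main obstacle is the detailed combinatorial bookkeeping: one must specify the precise sequence of braid moves needed to reach the target configuration, and verify at each step that the produced correction terms are of a form killed by Lemma~\ref{LNew}. This is the same technical burden as in the proof of Lemma~\ref{LPsiSi}, here scaled up to accommodate the larger diagrams, and the lemma is best proved by writing out the chain of diagrammatic equalities explicitly.
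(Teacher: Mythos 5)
Your overall strategy — proving the identity by a chain of diagrammatic moves, using Lemma~\ref{LNew} to kill corrections, in the same spirit as Lemma~\ref{LPsiSi} — is the right one, and indeed the paper's proof is exactly such a chain (run in the other direction: it starts from the right-hand diagram $D_1$ and simplifies it to $\sigma_1 v$). However, two parts of your outline contain genuine errors that would derail an attempt to fill in the details.

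First, you describe the ``bubbles'' at $(4,2)$--$(4,3)$ and $(5,2)$--$(5,3)$ as recording ``surviving same-residue crossings.'' If a bubble were a same-residue double crossing, the quadratic relation (\ref{quad}) would give $\psi_r^2e(\bi)=0$ for $i_r=i_{r+1}$, and the entire target diagram would collapse to zero — which is obviously wrong, since $\sigma_1v$ is a basis vector of $T(i,\la)$. The bubbles are not same-residue crossings, and the quadratic moves the proof actually performs are on \emph{adjacent}-residue pairs, e.g.\ the $(-1,0)$-pair, where (\ref{quad}) produces a nontrivial dot term $(y_{r+1}-y_r)e(\bi)$ rather than zero.

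Second, and more importantly, your claim that (\ref{braid}) is ``promoted to genuine braid relations'' modulo Lemma~\ref{LNew} covers only the $\pm1$-type corrections, and even there requires case-by-case verification (the paper pulls a specific string to the top of the diagram in each instance to expose a $\psi_t$ with $t\not\equiv0\pmod e$, and then invokes Lemma~\ref{LNew}). But the proof does not run on (\ref{braid}) alone: the decisive steps use the quadratic relation (\ref{quad}) on an adjacent-residue pair to undo a double crossing, producing a dot in the \emph{interior} of the diagram. That dot is \emph{not} killed by Lemma~\ref{LNew} directly — it is not adjacent to $v$. It must be slid across a crossing via (\ref{ypsi})/(\ref{psiy}), which trades the dot for an uncrossed strand of the same residue, and it is precisely this exchange that shifts a string by one block and moves the diagram one step closer to $\sigma_1v$. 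Repeating this dot-sliding $e-1$ times across the bubble is the engine of the proof. Your outline omits (\ref{quad}) and (\ref{ypsi})/(\ref{psiy}) entirely, treating all corrections as uniformly killable, so the proposed chain of moves would stall as soon as the first interior dot appears. To repair the argument you would need to explicitly incorporate the quadratic move on adjacent residues together with the dot-sliding identities, tracking which of the two resulting $y$-terms reaches $v$ (and vanishes) and which survives to restructure the diagram.
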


\begin{proof}
  We prove only the first identity for $\sigma_1$ as the proof of the second one is almost  identical. 
Let $D_1$ be the first diagram on the right hand side of the first equality.   Using more strings for clarity of exposition,
$$
D_1=\begin{braid}\tikzset{scale=1.3,baseline=12mm}
\draw(0,5)node[above]{$0$}--(5.9,2.9)--(5.9,2.1)--(0,0);
\draw(1,5)node[above]{$1$}--(15,0);
\draw(2,5)node[above]{$2$}--(16,0);
\draw[dots](2.6,5)--(3.9,5);
\draw[dots](2.6,2.5)--(3.9,2.5);
\draw[dots](2.6,0)--(3.9,0);
\draw(4,5)node[above]{$-3$}--(18,0);
\draw(5,5)node[above]{$-2$}--(19,0);
\draw(6,5)node[above]{$-1$}--(20,0);
\draw(7,5)node[above]{$0$}--(0,2.5)--(7,0);
\draw(8,5)node[above]{$1$}--(1,2.5)--(8,0);
\draw(9,5)node[above]{$2$}--(2,2.5)--(9,0);
\draw[dots](9.3,5)--(10.7,5);
\draw[dots](9.3,0)--(10.7,0);
\draw(11,5)node[above]{$-3$}--(4,2.5)--(11,0);
\draw(12,5)node[above]{$-2$}--(5,2.5)--(12,0);
\draw(13,5)node[above]{$-1$}--(6,2.5)--(13,0);
\draw[red](14,5)node[above]{$0$}--(7,2.5)--(14,0);
\draw(15,5)node[above]{$1$}--(1,0);
\draw(16,5)node[above]{$2$}--(2,0);
\draw[dots](16.3,5)--(17.7,5);
\draw[dots](16.3,0)--(17.7,0);
\draw(18,5)node[above]{$-3$}--(4,0);
\draw(19,5)node[above]{$-2$}--(5,0);
\draw(20,5)node[above]{$-1$}--(6,0);
\end{braid}.
$$
Pulling the rightmost $0$-string past the\,\, \iicrossing[1]-crossing immediately to its right gives zero because $\psi_{2e+1}v=0$ by Lemma~\ref{LNew}. Here, and in similar
situations below, we will omit such terms which arise when 
applying the braid relations (\ref{braid}). This observation shows that 
\begin{align*}
D_1&=
  \begin{braid}\tikzset{scale=1.3,baseline=12mm}
    \draw(0,5)node[above]{$0$}--(5.9,2.9)--(5.9,2.1)--(0,0);
    \draw[red](1,5)node[above]{$1$}--(6.9,2.9)--(6.9,2.1)--(1,0);
    \draw(2,5)node[above]{$2$}--(16,0);
    \draw[dots](2.6,5)--(3.9,5);
    \draw[dots](2.6,2.5)--(3.9,2.5);
    \draw[dots](2.6,0)--(3.9,0);
    \draw(4,5)node[above]{$-3$}--(18,0);
    \draw(5,5)node[above]{$-2$}--(19,0);
    \draw(6,5)node[above]{$-1$}--(20,0);
    \draw(7,5)node[above]{$0$}--(0,2.5)--(7,0);
    \draw(8,5)node[above]{$1$}--(1,2.5)--(8,0);
    \draw(9,5)node[above]{$2$}--(2,2.5)--(9,0);
    \draw[dots](9.3,5)--(10.7,5);
    \draw[dots](9.3,0)--(10.7,0);
    \draw(11,5)node[above]{$-3$}--(4,2.5)--(11,0);
    \draw(12,5)node[above]{$-2$}--(5,2.5)--(12,0);
    \draw(13,5)node[above]{$-1$}--(6,2.5)--(13,0);
    \draw[red](14,5)node[above]{$0$}--(7,2.5)--(14,0);
    \draw[red](15,5)node[above]{$1$}--(8,2.5)--(15,0);
    \draw(16,5)node[above]{$2$}--(2,0);
    \draw[dots](16.3,5)--(17.7,5);
    \draw[dots](16.3,0)--(17.7,0);
    \draw(18,5)node[above]{$-3$}--(4,0);
    \draw(19,5)node[above]{$-2$}--(5,0);
    \draw(20,5)node[above]{$-1$}--(6,0);
  \end{braid}\\
&=
  \begin{braid}\tikzset{scale=1.3,baseline=12mm}
    \draw(0,5)node[above]{$0$}--(5.9,2.9)--(5.9,2.1)--(0,0);
    \draw(1,5)node[above]{$1$}--(6.9,2.9)--(6.9,2.1)--(1,0);
    \draw[red](2,5)node[above]{$2$}--(7.9,2.9)--(7.9,2.1)--(2,0);
    \draw[dots](2.6,5)--(3.9,5);
    \draw[dots](2.6,2.5)--(3.9,2.5);
    \draw[dots](2.6,0)--(3.9,0);
    \draw(4,5)node[above]{$-3$}--(18,0);
    \draw(5,5)node[above]{$-2$}--(19,0);
    \draw(6,5)node[above]{$-1$}--(20,0);
    \draw(7,5)node[above]{$0$}--(0,2.5)--(7,0);
    \draw(8,5)node[above]{$1$}--(1,2.5)--(8,0);
    \draw(9,5)node[above]{$2$}--(2,2.5)--(9,0);
    \draw[dots](9.3,5)--(10.7,5);
    \draw[dots](9.3,0)--(10.7,0);
    \draw(11,5)node[above]{$-3$}--(4,2.5)--(11,0);
    \draw(12,5)node[above]{$-2$}--(5,2.5)--(12,0);
    \draw(13,5)node[above]{$-1$}--(6,2.5)--(13,0);
    \draw(14,5)node[above]{$0$}--(7,2.5)--(14,0);
    \draw[red](15,5)node[above]{$1$}--(8,2.5)--(15,0);
    \draw[red](16,5)node[above]{$2$}--(9,2.5)--(16,0);
    \draw[dots](16.3,5)--(17.7,5);
    \draw[dots](16.3,0)--(17.7,0);
    \draw(18,5)node[above]{$-3$}--(4,0);
    \draw(19,5)node[above]{$-2$}--(5,0);
    \draw(20,5)node[above]{$-1$}--(6,0);
  \end{braid}
\end{align*}
where for the second equality we pulled the rightmost $1$-string past the
\iicrossing[2]-crossing.  Continuing in this way and pulling the right most $(i-1)$-string past its
neighbouring\,\, \iicrossing-crossing, for $3\le i<e-1$, shows that 
$$D_1=
  \begin{braid}\tikzset{scale=1.3,baseline=12mm}
    \draw(0,5)node[above]{$0$}--(5.9,2.9)--(5.9,2.1)--(0,0);
    \draw(1,5)node[above]{$1$}--(6.9,2.9)--(6.9,2.1)--(1,0);
    \draw(2,5)node[above]{$2$}--(7.9,2.9)--(7.9,2.1)--(2,0);
    \draw[dots](2.6,5)--(3.9,5);
    \draw[dots](2.6,2.5)--(3.9,2.5);
    \draw[dots](2.6,0)--(3.9,0);
    \draw(4,5)node[above]{$-3$}--(9.9,2.9)--(9.9,2.1)--(4,0);
    \draw(5,5)node[above]{$-2$}--(10.9,2.9)--(10.9,2.1)--(5,0);
    \draw(6,5)node[above]{$-1$}--(20,0);
    \draw(7,5)node[above]{$0$}--(0,2.5)--(7,0);
    \draw(8,5)node[above]{$1$}--(1,2.5)--(8,0);
    \draw(9,5)node[above]{$2$}--(2,2.5)--(9,0);
    \draw[dots](9.3,5)--(10.7,5);
    \draw[dots](8.2,2.5)--(9.2,2.5);
    \draw[dots](9.3,0)--(10.7,0);
    \draw(11,5)node[above]{$-3$}--(4,2.5)--(11,0);
    \draw(12,5)node[above]{$-2$}--(5,2.5)--(12,0);
    \draw(13,5)node[above]{$-1$}--(6,2.5)--(13,0);
    \draw(14,5)node[above]{$0$}--(7,2.5)--(14,0);
    \draw[dots](14.3,5)--(15.7,5);
    \draw[dots](14.3,0)--(15.7,0);
    \draw(16,5)node[above]{$-5$}--(9,2.5)--(16,0);
    \draw(17,5)node[above]{$-4$}--(10,2.5)--(17,0);
    \draw(18,5)node[above]{$-3$}--(11,2.5)--(18,0);
    \draw(19,5)node[above]{$-2$}--(12,2.5)--(19,0);
    \draw(20,5)node[above]{$-1$}--(6,0);
  \end{braid}.
$$
Another application of the braid relation (\ref{braid}) yields
$$D_1=
  \begin{braid}\tikzset{scale=1.3,baseline=12mm}
    \draw(0,5)node[above]{$0$}--(5.9,2.9)--(5.9,2.1)--(0,0);
    \draw(1,5)node[above]{$1$}--(6.9,2.9)--(6.9,2.1)--(1,0);
    \draw(2,5)node[above]{$2$}--(7.9,2.9)--(7.9,2.1)--(2,0);
    \draw[dots](2.6,5)--(3.9,5);
    \draw[dots](2.6,2.5)--(3.9,2.5);
    \draw[dots](2.6,0)--(3.9,0);
    \draw(4,5)node[above]{$-3$}--(9.9,2.9)--(9.9,2.1)--(4,0);
    \draw(5,5)node[above]{$-2$}--(10.9,2.9)--(10.9,2.1)--(5,0);
    \draw[red](6,5)node[above]{$-1$}--(11.9,2.9)--(11.9,2.1)--(6,0);
    \draw(7,5)node[above]{$0$}--(0,2.5)--(7,0);
    \draw(8,5)node[above]{$1$}--(1,2.5)--(8,0);
    \draw(9,5)node[above]{$2$}--(2,2.5)--(9,0);
    \draw[dots](9.3,5)--(10.7,5);
    \draw[dots](8.2,2.5)--(9.2,2.5);
    \draw[dots](9.3,0)--(10.7,0);
    \draw(11,5)node[above]{$-3$}--(4,2.5)--(11,0);
    \draw(12,5)node[above]{$-2$}--(5,2.5)--(12,0);
    \draw(13,5)node[above]{$-1$}--(6,2.5)--(13,0);
    \draw(14,5)node[above]{$0$}--(7,2.5)--(14,0);
    \draw[dots](14.3,5)--(15.7,5);
    \draw[dots](14.3,0)--(15.7,0);
    \draw(16,5)node[above]{$-5$}--(9,2.5)--(16,0);
    \draw(17,5)node[above]{$-4$}--(10,2.5)--(17,0);
    \draw(18,5)node[above]{$-3$}--(11,2.5)--(18,0);
    \draw[red](19,5)node[above]{$-2$}--(19,0);
    \draw[red](20,5)node[above]{$-1$}--(20,0);
  \end{braid}.
$$
Applying (\ref{R3Psi}) we can straighten the rightmost $-3,-4,\dots,-1$ strings completely and
then pull the next $e+1$ strings to the right to give 
$$D_1=
  \begin{braid}\tikzset{scale=1.3,baseline=12mm}
    \draw(0,5)node[above]{$0$}--(5.9,2.9)--(5.9,2.1)--(0,0);
    \draw(1,5)node[above]{$1$}--(6.9,2.9)--(6.9,2.1)--(1,0);
    \draw(2,5)node[above]{$2$}--(7.9,2.9)--(7.9,2.1)--(2,0);
    \draw[dots](2.6,5)--(3.9,5);
    \draw[dots](2.6,0)--(3.9,0);
    \draw(4,5)node[above]{$-3$}--(9.9,2.9)--(9.9,2.1)--(4,0);
    \draw(5,5)node[above]{$-2$}--(10.9,2.9)--(10.9,2.1)--(5,0);
    \draw(6,5)node[above]{$-1$}--(11.9,2.9)--(11.9,2.1)--(6,0);
    \draw(7,5)node[above]{$0$}--(4,2.5)--(7,0);
    \draw(8,5)node[above]{$1$}--(5,2.5)--(8,0);
    \draw(9,5)node[above]{$2$}--(6,2.5)--(9,0);
    \draw(10,5)node[above]{$3$}--(7,2.5)--(10,0);
    \draw[dots](10.3,5)--(11.7,5);
    \draw[dots](8.2,2.5)--(9.2,2.5);
    \draw[dots](10.3,0)--(11.7,0);
    \draw(12,5)node[above]{$-2$}--(9,2.5)--(12,0);
    \draw(13,5)node[above]{$-1$}--(10,2.5)--(13,0);
    \draw(14,5)node[above]{$0$}--(11.4,2.5)--(14,0);
    \draw(15,5)node[above]{$1$}--(15,0);
    \draw(16,5)node[above]{$2$}--(16,0);
    \draw[dots](16.3,5)--(17.7,5);
    \draw[dots](16.3,0)--(17.7,0);
    \draw(18,5)node[above]{$-3$}--(18,0);
    \draw(19,5)node[above]{$-2$}--(19,0);
    \draw(20,5)node[above]{$-1$}--(20,0);
  \end{braid}.
$$
Now applying the quadratic relation (\ref{quad}) to the rightmost pair of $(-1,0)$-strings,   using Lemma~\ref{LNew}, and then applying the relation (\ref{psiy}), gives
\begin{align*}
  D_1&=
  \begin{braid}\tikzset{scale=1.3,baseline=12mm}
    \draw(0,5)node[above]{$0$}--(5.9,2.9)--(5.9,2.1)--(0,0);
    \draw(1,5)node[above]{$1$}--(6.9,2.9)--(6.9,2.1)--(1,0);
    \draw(2,5)node[above]{$2$}--(7.9,2.9)--(7.9,2.1)--(2,0);
    \draw[dots](2.6,5)--(3.9,5);
    \draw[dots](2.6,0)--(3.9,0);
    \draw(4,5)node[above]{$-3$}--(9.9,2.9)--(9.9,2.1)--(4,0);
    \draw(5,5)node[above]{$-2$}--(10.9,2.9)--(10.9,2.1)--(5,0);
    \draw[red](6,5)node[above]{$-1$}--(11.9,2.9)--(11.9,2.1)--(6,0);
    \draw(7,5)node[above]{$0$}--(4,2.5)--(7,0);
    \draw(8,5)node[above]{$1$}--(5,2.5)--(8,0);
    \draw(9,5)node[above]{$2$}--(6,2.5)--(9,0);
    \draw(10,5)node[above]{$3$}--(7,2.5)--(10,0);
    \draw[dots](10.3,5)--(11.7,5);
    \draw[dots](8.2,2.5)--(9.2,2.5);
    \draw[dots](10.3,0)--(11.7,0);
    \draw(12,5)node[above]{$-2$}--(9,2.5)--(12,0);
    \draw(13,5)node[above]{$-1$}--(10,2.5)--(13,0);
    \draw[red](14,5)node[above]{$0$}--(14,0);
    \draw(15,5)node[above]{$1$}--(15,0);
    \draw(16,5)node[above]{$2$}--(16,0);
    \draw[dots](16.3,5)--(17.7,5);
    \draw[dots](16.3,0)--(17.7,0);
    \draw(18,5)node[above]{$-3$}--(18,0);
    \draw(19,5)node[above]{$-2$}--(19,0);
    \draw(20,5)node[above]{$-1$}--(20,0);
    \greendot(11.95,2.5);
  \end{braid}\\
  &=
  \begin{braid}\tikzset{scale=1.3,baseline=12mm}
    \draw(0,5)node[above]{$0$}--(5.9,2.9)--(5.9,2.1)--(0,0);
    \draw(1,5)node[above]{$1$}--(6.9,2.9)--(6.9,2.1)--(1,0);
    \draw(2,5)node[above]{$2$}--(7.9,2.9)--(7.9,2.1)--(2,0);
    \draw[dots](2.6,5)--(3.9,5);
    \draw[dots](2.6,0)--(3.9,0);
    \draw(4,5)node[above]{$-3$}--(9.9,2.9)--(9.9,2.1)--(4,0);
    \draw(5,5)node[above]{$-2$}--(10.9,2.9)--(10.9,2.1)--(5,0);
    \draw[red](6,5)node[above]{$-1$}--(11.9,2.9)--(13,0);
    \draw(7,5)node[above]{$0$}--(4,2.5)--(7,0);
    \draw(8,5)node[above]{$1$}--(5,2.5)--(8,0);
    \draw(9,5)node[above]{$2$}--(6,2.5)--(9,0);
    \draw(10,5)node[above]{$3$}--(7,2.5)--(10,0);
    \draw[dots](10.3,5)--(11.7,5);
    \draw[dots](8.2,2.5)--(9.2,2.5);
    \draw[dots](10.3,0)--(11.7,0);
    \draw(12,5)node[above]{$-2$}--(9,2.5)--(12,0);
    \draw[red](13,5)node[above]{$-1$}--(12,2)--(6,0);
    \draw(14,5)node[above]{$0$}--(14,0);
    \draw(15,5)node[above]{$1$}--(15,0);
    \draw(16,5)node[above]{$2$}--(16,0);
    \draw[dots](16.3,5)--(17.7,5);
    \draw[dots](16.3,0)--(17.7,0);
    \draw(18,5)node[above]{$-3$}--(18,0);
    \draw(19,5)node[above]{$-2$}--(19,0);
    \draw(20,5)node[above]{$-1$}--(20,0);
  \end{braid}
\end{align*}
Repeating the same argument another $e-2$ times shows that 
$$D_1=\begin{braid}\tikzset{scale=1.3,baseline=12mm}
    \draw(0,5)node[above]{$0$}--(5.9,2.9)--(5.9,2.1)--(0,0);
    \draw(1,5)node[above]{$1$}--(6.9,2.9)--(6.9,2.1)--(1,0);
    \draw(2,5)node[above]{$2$}--(7.9,2.9)--(9,0);
    \draw[dots](2.6,5)--(4.9,5);
    \draw[dots](2.6,0)--(4.9,0);
    \draw(5,5)node[above]{$-2$}--(10.9,2.9)--(12,0);
    \draw(6,5)node[above]{$-1$}--(11.9,2.9)--(13,0);
    \draw(7,5)node[above]{$0$}--(4,2.5)--(7,0);
    \draw(8,5)node[above]{$1$}--(5,2.5)--(8,0);
    \draw(9,5)node[above]{$2$}--(8,2)--(2,0);
    \draw[dots](9.3,5)--(11.7,5);
    \draw[dots](8.4,2.5)--(11,2.5);
    \draw[dots](9.3,0)--(11.7,0);
    \draw(12,5)node[above]{$-2$}--(11,2)--(5,0);
    \draw(13,5)node[above]{$-1$}--(12,2)--(6,0);
    \draw(14,5)node[above]{$0$}--(14,0);
    \draw(15,5)node[above]{$1$}--(15,0);
    \draw(16,5)node[above]{$2$}--(16,0);
    \draw[dots](16.3,5)--(17.7,5);
    \draw[dots](16.3,0)--(17.7,0);
    \draw(18,5)node[above]{$-3$}--(18,0);
    \draw(19,5)node[above]{$-2$}--(19,0);
    \draw(20,5)node[above]{$-1$}--(20,0);
  \end{braid}
$$
A final application of (\ref{quad}) and (\ref{psiy}) now shows that
$D_1=\sigma_1v$ completing the proof.
\end{proof}

\begin{Lemma}\label{braiding}
Suppose that $k=3$, $i=0$, $v\in T(i,\la)$ and let 
  \begin{align*}
    E_1&=\begin{braid}\tikzset{scale=0.9,baseline=8mm}
      \draw(0,5)node[above]{$0$}--(10,0);
      \draw(1,5)node[above]{$1$}--(11,0);
      \draw[dots](1.5,5)--(2.8,5);
      \draw[dots](1.7,2.5)--(2.9,2.5);
      \draw[dots](1.5,0)--(2.8,0);
      \draw(3,5)node[above]{$-2$}--(13,0);
      \draw(4,5)node[above]{$-1$}--(14,0);
      \draw(5,5)node[above]{$0$}--(0,2.5)--(5,0);
      \draw(6,5)node[above]{$1$}--(1,2.5)--(6,0);
      \draw[dots](6.3,5)--(7.7,5);
      \draw[dots](6.3,2.5)--(7.7,2.5);
      \draw[dots](6.3,0)--(7.7,0);
      \draw(8,5)node[above]{$-2$}--(3,2.5)--(8,0);
      \draw(9,5)node[above]{$-1$}--(4,2.5)--(9,0);
      \draw(10,5)node[above]{$0$}--(0,0);
      \draw(11,5)node[above]{$1$}--(1,0);
      \draw[dots](11.3,5)--(12.7,5);
      \draw[dots](11.3,0)--(12.7,0);
      \draw(13,5)node[above]{$-2$}--(3,0);
      \draw(14,5)node[above]{$-1$}--(4,0);
    \end{braid},
    &E_1'&=\begin{braid}\tikzset{scale=0.9,baseline=8mm}
      \draw(0,5)node[above]{$0$}--(10,0);
      \draw(1,5)node[above]{$1$}--(11,0);
      \draw[dots](1.5,5)--(2.8,5);
      \draw[dots](1.7,2.5)--(2.9,2.5);
      \draw[dots](1.5,0)--(2.8,0);
      \draw(3,5)node[above]{$-2$}--(13,0);
      \draw(4,5)node[above]{$-1$}--(14,0);
      \draw(5,5)node[above]{$0$}--(0,2.5)--(5,0);
      \draw(6,5)node[above]{$1$}--(1,2.5)--(6,0);
      \draw[dots](6.3,5)--(7.7,5);
      \draw[dots](6.3,2.5)--(7.7,2.5);
      \draw[dots](6.3,0)--(7.7,0);
      \draw(8,5)node[above]{$-2$}--(3,2.5)--(8,0);
      \draw(9,5)node[above]{$-1$}--(12,2.5)--(9,0);
      \draw(10,5)node[above]{$0$}--(0,0);
      \draw(11,5)node[above]{$1$}--(1,0);
      \draw[dots](11.3,5)--(12.7,5);
      \draw[dots](11.3,0)--(12.7,0);
      \draw(13,5)node[above]{$-2$}--(3,0);
      \draw(14,5)node[above]{$-1$}--(4,0);
    \end{braid},\\
    E_2&=\begin{braid}\tikzset{scale=0.9,baseline=8mm}
      \draw(14,5)node[above]{$-1$}--(4,0);
      \draw(13,5)node[above]{$-2$}--(3,0);
      \draw[dots](12.5,5)--(11.2,5);
      \draw[dots](12.3,2.5)--(11.1,2.5);
      \draw[dots](12.5,0)--(11.2,0);
      \draw(11,5)node[above]{$1$}--(1,0);
      \draw(10,5)node[above]{$0$}--(0,0);
      \draw(9,5)node[above]{$-1$}--(14,2.5)--(9,0);
      \draw(8,5)node[above]{$-2$}--(13,2.5)--(8,0);
      \draw[dots](6.3,5)--(7.7,5);
      \draw[dots](6.3,2.5)--(7.7,2.5);
      \draw[dots](6.3,0)--(7.7,0);
      \draw(6,5)node[above]{$1$}--(11,2.5)--(6,0);
      \draw(5,5)node[above]{$0$}--(10,2.5)--(5,0);
      \draw(4,5)node[above]{$-1$}--(14,0);
      \draw(3,5)node[above]{$-2$}--(13,0);
      \draw[dots](3.7,5)--(2.3,5);
      \draw[dots](3.7,0)--(2.3,0);
      \draw(1,5)node[above]{$1$}--(11,0);
      \draw(0,5)node[above]{$0$}--(10,0);
    \end{braid},
   &E_2'&=\begin{braid}\tikzset{scale=0.9,baseline=8mm}
      \draw(14,5)node[above]{$-1$}--(4,0);
      \draw(13,5)node[above]{$-2$}--(3,0);
      \draw[dots](12.5,5)--(11.2,5);
      \draw[dots](12.3,2.5)--(11.1,2.5);
      \draw[dots](12.5,0)--(11.2,0);
      \draw(11,5)node[above]{$0$}--(1,0);
      \draw(10,5)node[above]{$1$}--(0,0);
      \draw(9,5)node[above]{$-1$}--(14,2.5)--(9,0);
      \draw(8,5)node[above]{$-2$}--(13,2.5)--(8,0);
      \draw[dots](6.3,5)--(7.7,5);
      \draw[dots](6.3,2.5)--(7.7,2.5);
      \draw[dots](6.3,0)--(7.7,0);
      \draw(6,5)node[above]{$0$}--(11,2.5)--(6,0);
      \draw(5,5)node[above]{$1$}--(2,2.5)--(5,0);
      \draw(4,5)node[above]{$-1$}--(14,0);
      \draw(3,5)node[above]{$-2$}--(13,0);
      \draw[dots](2.7,5)--(1.3,5);
      \draw[dots](2.7,0)--(1.3,0);
      \draw(1,5)node[above]{$1$}--(11,0);
      \draw(0,5)node[above]{$0$}--(10,0);
    \end{braid}
  \end{align*}
  Then $E_1=\sigma_1v+E_1'$ and $E_2=\sigma_2v+E_2'$.
\end{Lemma}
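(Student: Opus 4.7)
My plan is to prove both identities via a single application of the KLR braid relation~(\ref{braid}) at a carefully chosen triple of adjacent strings, followed by diagrammatic simplifications that invoke Lemma~\ref{LNew} and Lemma~\ref{sigma twist}. Since $E_2,E_2',\sigma_2 v$ arise from $E_1,E_1',\sigma_1 v$ by the obvious left--right reflection of the diagrams, it suffices to prove $E_1 = \sigma_1 v + E_1'$; the proof of $E_2 = \sigma_2 v + E_2'$ is then formally identical.

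Comparing the diagrams for $E_1$ and $E_1'$, the only visible difference is the routing of the rightmost $-1$-string of the middle block: in $E_1$ it passes through the intermediate horizontal slice at position~$4$ (staying with the middle block), while in $E_1'$ it passes through position~$12$ (on the far right). The plan is to locate a triple of three consecutive strings in $E_1$ with residue pattern of one of the nontrivial cases of~(\ref{braid})---typically $(0,-1,0)$ or $(-1,0,-1)$---and apply that relation. This yields
\[
\psi_{r}\psi_{r+1}\psi_{r} e(\bi) = \psi_{r+1}\psi_{r}\psi_{r+1} e(\bi) \pm e(\bi),
\]
where the first term on the right, after inserting it back into the full diagram, is precisely $E_1'$ (the braid move is exactly what reroutes the $-1$-string from the left to the right at the intermediate height), and the $\pm 1$ correction is $\pm$ a diagram with one fewer triple crossing.

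To conclude I would recognize this correction diagram as $\sigma_1 v$. Using the commutation relations~(\ref{R3Psi}) and~(\ref{R3YPsi}) to slide distant strings past one another, together with~(\ref{ypsi}) and (\ref{psiy}) to eliminate any $y$-terms that appear, the correction diagram straightens to the explicit diagram given for $\sigma_1 v$ in Lemma~\ref{sigma twist}; any extra terms produced along the way involve either a $y_s$ or a $\psi_t$ with $t\not\equiv 0\pmod e$ acting on an element of $T(i,\lambda)$ and so vanish by Lemma~\ref{LNew}. The sign of the correction matches the sign of the arrow in the quiver $\Gamma$, which is what causes the identity to come out with coefficient $+1$ in front of $\sigma_1 v$ (and not $-1$).

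The main obstacle is the diagrammatic bookkeeping: many terms can in principle appear from repeatedly applying~(\ref{braid}) to straighten the correction, and one has to verify that all but the desired one vanish on $T(i,\lambda)$. The case $e=2$ deserves separate attention because the relevant instance of~(\ref{braid}) then involves $y$-corrections (the $i_r\rightleftarrows i_{r+1}$ branch), but as in the opening computation of the proof of Lemma~\ref{LPsiSi} these $y$-terms are killed by Lemma~\ref{LNew}, so the same argument goes through.
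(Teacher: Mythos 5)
Your plan follows essentially the same route as the paper's proof: apply the KLR braid relation at the $(0,-1,0)$ triple, identify the scalar correction term with $\sigma_1 v$ via Lemma~\ref{sigma twist}, and handle the $e=2$ case separately. One imprecision worth flagging: the main braid term is not ``precisely $E_1'$'' after a single application of~(\ref{braid}). In the paper, after the first braid move one still needs further braid/commutation moves to push the rerouted $-1$-string across the block, and a second error-producing application of~(\ref{braid}) then yields $E_1'$ plus an additional diagram $D$, which one must kill separately via Lemma~\ref{LNew} (using $\psi_{3e-1}v=0$). You acknowledge that ``all but the desired [term] vanish,'' which covers this conceptually, but a careful write-up should make explicit that the vanishing of $D$ is where the bulk of the work lies, rather than implying the braid relation lands directly on $E_1'$. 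Similarly, the remark that $E_2$ is the ``left--right reflection'' of $E_1$ is heuristic --- there is no reflection automorphism in play --- but the paper likewise just asserts the two cases are ``proved similarly,'' so this is harmless.
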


\begin{proof}
Both identities are proved similarly, so we consider only the first one. 
First consider the exceptional case $e=2$. Then we have to show that
\begin{equation}\label{E6910}
 \begin{braid}\tikzset{scale=0.9,baseline=8mm}
      \draw(0,5)node[above]{$0$}--(4,0);
      \draw(1,5)node[above]{$1$}--(5,0);
      \draw(2,5)node[above]{$0$}--(0,2.5)--(2,0);
      \draw(3,5)node[above]{$1$}--(1,2.5)--(3,0);
      \draw(4,5)node[above]{$0$}--(0,0);
      \draw(5,5)node[above]{$1$}--(1,0);
    \end{braid}
    =\sigma_1v+\begin{braid}\tikzset{scale=0.9,baseline=8mm}
      \draw(0,5)node[above]{$0$}--(4,0);
      \draw(1,5)node[above]{$1$}--(5,0);
      \draw(2,5)node[above]{$0$}--(0,2.5)--(2,0);
      \draw(3,5)node[above]{$1$}--(5,2.5)--(3,0);
      \draw(4,5)node[above]{$0$}--(0,0);
      \draw(5,5)node[above]{$1$}--(1,0);
    \end{braid}.
\end{equation}
Applying the braid relation (\ref{braid}), (the first line of) the quadratic relation (\ref{quad}) and Lemma~\ref{LNew}, shows that
$$
   \begin{braid}\tikzset{scale=0.9,baseline=8mm}
      \draw(0,5)node[above]{$0$}--(4,0);
      \draw(1,5)node[above]{$1$}--(5,0);
      \draw(2,5)node[above]{$0$}--(0,2.5)--(2,0);
      \draw(3,5)node[above]{$1$}--(1,2.5)--(3,0);
      \draw(4,5)node[above]{$0$}--(0,0);
      \draw(5,5)node[above]{$1$}--(1,0);
    \end{braid}
    =
   \begin{braid}\tikzset{scale=0.9,baseline=8mm}
      \draw(0,5)node[above]{$0$}--(4,0);
      \draw(1,5)node[above]{$1$}--(5,0);
      \draw(2,5)node[above]{$0$}--(0,2.5)--(2,0);
      \draw[red](3,5)node[above]{$1$}--(1.7,3.375)--(2.5,2.5)--(1.7,1.625)--(3,0);
      \draw(4,5)node[above]{$0$}--(0,0);
      \draw(5,5)node[above]{$1$}--(1,0);
    \end{braid}
    +\begin{braid}\tikzset{scale=0.9,baseline=8mm}
      \draw[red](0,5)node[above]{$0$}--(1.6,3)--(1.6,2)--(0,0);
      \draw(1,5)node[above]{$1$}--(5,0);
      \draw(2,5)node[above]{$0$}--(0,2.5)--(2,0);
      \draw[red](3,5)node[above]{$1$}--(1.8,2.5)--(3,0);
      \draw[red](4,5)node[above]{$0$}--(2,2.5)--(4,0);
      \draw(5,5)node[above]{$1$}--(1,0);
      \greendot(1.6,2.5);
    \end{braid}.   
$$
Applying (\ref{braid}) twice to the first summand and (\ref{psiy}) to the second summand gives
$$
   \begin{braid}\tikzset{scale=0.9,baseline=8mm}
      \draw(0,5)node[above]{$0$}--(4,0);
      \draw(1,5)node[above]{$1$}--(5,0);
      \draw(2,5)node[above]{$0$}--(0,2.5)--(2,0);
      \draw(3,5)node[above]{$1$}--(1,2.5)--(3,0);
      \draw(4,5)node[above]{$0$}--(0,0);
      \draw(5,5)node[above]{$1$}--(1,0);
    \end{braid}
    =
   \begin{braid}\tikzset{scale=0.9,baseline=8mm}
      \draw(0,5)node[above]{$0$}--(4,0);
      \draw(1,5)node[above]{$1$}--(5,0);
      \draw(2,5)node[above]{$0$}--(0,2.5)--(2,0);
      \draw[red](3,5)node[above]{$1$}--(3.8,4)--(2.5,2.5)--(3.8,1.0)--(3,0);
      \draw(4,5)node[above]{$0$}--(0,0);
      \draw(5,5)node[above]{$1$}--(1,0);
    \end{braid}
    +\begin{braid}\tikzset{scale=0.9,baseline=8mm}
      \draw[red](0,5)node[above]{$0$}--(0.8,4)--(0,2.5)--(2,0);
      \draw(1,5)node[above]{$1$}--(5,0);
      \draw[red](2,5)node[above]{$0$}--(1.2,4)--(1.6,2.5)--(0,0);
      \draw(3,5)node[above]{$1$}--(1.8,2.5)--(3,0);
      \draw(4,5)node[above]{$0$}--(2,2.5)--(4,0);
      \draw(5,5)node[above]{$1$}--(1,0);
    \end{braid}.
$$
The relations  (\ref{braid}), (\ref{psiy}) and Lemma~\ref{LNew} show that the first summand above equals the second summand on the right hand side of (\ref{E6910}) and the second  summand above equals $\si_1v$. 

Now consider the case when $e>2$. By~(\ref{braid}), $E_1$ is equal to 
$$\begin{braid}\tikzset{scale=1.0,baseline=9mm}
      \draw(0,5)node[above]{$0$}--(10,0);
      \draw(1,5)node[above]{$1$}--(11,0);
      \draw[dots](1.5,5)--(2.8,5);
      \draw[dots](1.7,2.5)--(2.9,2.5);
      \draw[dots](1.5,0)--(2.8,0);
      \draw(3,5)node[above]{$-2$}--(13,0);
      \draw(4,5)node[above]{$-1$}--(14,0);
      \draw(5,5)node[above]{$0$}--(0,2.5)--(5,0);
      \draw(6,5)node[above]{$1$}--(1,2.5)--(6,0);
      \draw[dots](6.3,5)--(7.7,5);
      \draw[dots](6.3,2.5)--(7.7,2.5);
      \draw[dots](6.3,0)--(7.7,0);
      \draw(8,5)node[above]{$-2$}--(3,2.5)--(8,0);
      \draw[red](9,5)node[above]{$-1$}--(4.7,2.85)--(5.6,2.5)--(4.7,2.15)--(9,0);
      \draw(10,5)node[above]{$0$}--(0,0);
      \draw(11,5)node[above]{$1$}--(1,0);
      \draw[dots](11.3,5)--(12.7,5);
      \draw[dots](11.3,0)--(12.7,0);
      \draw(13,5)node[above]{$-2$}--(3,0);
      \draw(14,5)node[above]{$-1$}--(4,0);
    \end{braid}
  +\begin{braid}\tikzset{scale=1.0,baseline=9mm}
      \draw[red](0,5)node[above]{$0$}--(3.6,3.2)--(3.6,1.8)--(0,0);
      \draw(1,5)node[above]{$1$}--(11,0);
      \draw[dots](1.5,5)--(2.8,5);
      \draw[dots](1.7,2.5)--(2.9,2.5);
      \draw[dots](1.5,0)--(2.8,0);
      \draw(3,5)node[above]{$-2$}--(13,0);
      \draw(4,5)node[above]{$-1$}--(14,0);
      \draw(5,5)node[above]{$0$}--(0,2.5)--(5,0);
      \draw(6,5)node[above]{$1$}--(1,2.5)--(6,0);
      \draw[dots](6.3,5)--(7.7,5);
      \draw[dots](6.3,2.5)--(7.7,2.5);
      \draw[dots](6.3,0)--(7.7,0);
      \draw(8,5)node[above]{$-2$}--(3,2.5)--(8,0);
      \draw[red](9,5)node[above]{$-1$}--(4,2.5)--(9,0);
      \draw[red](10,5)node[above]{$0$}--(5,2.5)--(10,0);
      \draw(11,5)node[above]{$1$}--(1,0);
      \draw[dots](11.3,5)--(12.7,5);
      \draw[dots](11.3,0)--(12.7,0);
      \draw(13,5)node[above]{$-2$}--(3,0);
      \draw(14,5)node[above]{$-1$}--(4,0);
    \end{braid}.
$$
By Lemma~\ref{sigma twist} the second summand is equal to $\sigma_1v$. Using 
the braid relations again, the first summand is equal to
$$\begin{braid}\tikzset{scale=1,baseline=9mm}
      \draw(0,5)node[above]{$0$}--(10,0);
      \draw(1,5)node[above]{$1$}--(11,0);
      \draw[dots](1.5,5)--(2.8,5);
      \draw[dots](1.5,2.5)--(2.7,2.5);
      \draw[dots](1.5,0)--(2.8,0);
      \draw(3,5)node[above]{$-2$}--(13,0);
      \draw(4,5)node[above]{$-1$}--(14,0);
      \draw(5,5)node[above]{$0$}--(0,2.5)--(5,0);
      \draw(6,5)node[above]{$1$}--(1,2.5)--(6,0);
      \draw[dots](6.3,5)--(7.7,5);
      \draw[dots](6.3,0)--(7.7,0);
      \draw(8,5)node[above]{$-2$}--(3,2.5)--(8,0);
      \draw[red](9,5)node[above]{$-1$}--(10.8,4.10)--(7.5,2.45)--(10.8,0.90)--(9,0);
      \draw(10,5)node[above]{$0$}--(0,0);
      \draw(11,5)node[above]{$1$}--(1,0);
      \draw[dots](11.3,5)--(12.7,5);
      \draw[dots](11.3,0)--(12.7,0);
      \draw(13,5)node[above]{$-2$}--(3,0);
      \draw(14,5)node[above]{$-1$}--(4,0);
    \end{braid}=E_1'+
    \begin{braid}\tikzset{scale=1,baseline=9mm}
      \draw(0,5)node[above]{$0$}--(10,0);
      \draw(1,5)node[above]{$1$}--(11,0);
      \draw[dots](1.5,5)--(2.8,5);
      \draw[dots](1.5,2.5)--(2.7,2.5);
      \draw[dots](1.5,0)--(2.8,0);
      \draw[red](3,5)node[above]{$-2$}--(8,2.5)--(3,0);
      \draw(4,5)node[above]{$-1$}--(14,0);
      \draw(5,5)node[above]{$0$}--(0,2.5)--(5,0);
      \draw(6,5)node[above]{$1$}--(1,2.5)--(6,0);
      \draw[dots](6.3,5)--(7.7,5);
      \draw[dots](6.3,0)--(7.7,0);
      \draw(8,5)node[above]{$-2$}--(3,2.5)--(8,0);
      \draw[red](9,5)node[above]{$-1$}--(7.9,2.5)--(9,0);
      \draw(10,5)node[above]{$0$}--(0,0);
      \draw(11,5)node[above]{$1$}--(1,0);
      \draw[dots](11.3,5)--(12.7,5);
      \draw[dots](11.3,0)--(12.7,0);
      \draw[red](13,5)node[above]{$-2$}--(8,2.5)--(13,0);
      \draw(14,5)node[above]{$-1$}--(4,0);
    \end{braid}.
$$
Using the braid relations to pull the rightmost
$-2$-string in the second summand above to the right and observing that the error term of the braid relation equals zero by (\ref{quad}), shows that the second summand equals 
$$D=\begin{braid}\tikzset{scale=1.0,baseline=9mm}
      \draw(0,5)node[above]{$0$}--(10,0);
      \draw(1,5)node[above]{$1$}--(11,0);
      \draw[dots](1.5,5)--(2.8,5);
      \draw[dots](1.5,2.5)--(2.7,2.5);
      \draw[dots](1.5,0)--(2.8,0);
      \draw(3,5)node[above]{$-2$}--(8,2.5)--(3,0);
      \draw(4,5)node[above]{$-1$}--(14,0);
      \draw(5,5)node[above]{$0$}--(0,2.5)--(5,0);
      \draw(6,5)node[above]{$1$}--(1,2.5)--(6,0);
      \draw[dots](6.3,5)--(7.7,5);
      \draw[dots](6.3,0)--(7.7,0);
      \draw(8,5)node[above]{$-2$}--(3,2.5)--(8,0);
      \draw(9,5)node[above]{$-1$}--(8.0,2.5)--(9,0);
      \draw(10,5)node[above]{$0$}--(0,0);
      \draw(11,5)node[above]{$1$}--(1,0);
      \draw[dots](11.3,5)--(12.7,5);
      \draw[dots](11.3,0)--(12.7,0);
      \draw[red](13,5)node[above]{$-2$}--(11,4)--(11,1)--(13,0);
      \draw(14,5)node[above]{$-1$}--(4,0);
    \end{braid}=0,
$$
where the last equality follows because $\psi_{3e-1}v=0$ in view of Lemma~\ref{LNew}.  Therefore,
$E_1=\sigma_1v+E_1'$ as claimed.
\end{proof}

\begin{Lemma}\label{shifting}
Suppose that $i\ne 0,-1$ and $v\in T(i,\la)$. Then
$$\begin{braid}\tikzset{scale=0.9,baseline=8mm}
   \draw(0,5)node[above]{$0$}--(11,0);
   \draw(1,5)node[above]{$1$}--(12,0);
   \draw[dots](1.3,5)--(2.7,5);
   \draw[dots](0.5,2.5)--(1.7,2.5);
   \draw[dots](1.3,0)--(2.7,0);
   \draw(3,5)node[above]{$-1$}--(14,0);
   \draw(4,5)node[above]{$0$}--(0,2.5)--(4,0);
   \draw[dots](4.3,5)--(5.7,5);
   \draw[dots](4.3,0)--(5.7,0);
   \draw(6,5)node[above]{$i{-}1$\,}--(2,2.5)--(6,0);
   \draw(7,5)node[above=0.4mm]{$i$}--(3,2.5)--(7,0);
   \draw(8,5)node[above]{\,$i{+}1$}--(12,2.5)--(8,0);
   \draw[dots](8.3,5)--(9.7,5);
   \draw[dots](12.3,2.5)--(13.7,2.5);
   \draw[dots](8.3,0)--(9.7,0);
   \draw(10,5)node[above]{$-1$}--(14,2.5)--(10,0);
   \draw(11,5)node[above]{$0$}--(0,0);
   \draw(12,5)node[above]{$1$}--(1,0);
   \draw[dots](12.3,5)--(13.7,5);
   \draw[dots](12.3,0)--(13.7,0);
   \draw(14,5)node[above]{$-1$}--(3,0);
   \end{braid}
 =\begin{braid}\tikzset{scale=0.9,baseline=8mm}
   \draw(0,5)node[above]{$0$}--(11,0);
   \draw(1,5)node[above]{$1$}--(12,0);
   \draw[dots](1.3,5)--(2.7,5);
   \draw[dots](0.5,2.5)--(1.7,2.5);
   \draw[dots](1.3,0)--(2.7,0);
   \draw(3,5)node[above]{$-1$}--(14,0);
   \draw(4,5)node[above]{$0$}--(0,2.5)--(4,0);
   \draw[dots](4.3,5)--(5.7,5);
   \draw[dots](4.3,0)--(5.7,0);
   \draw(6,5)node[above]{$i{-}1$\,}--(2,2.5)--(6,0);
   \draw(7,5)node[above=0.4mm]{$i$}--(11,2.5)--(7,0);
   \draw(8,5)node[above]{\,$i{+}1$}--(12,2.5)--(8,0);
   \draw[dots](8.3,5)--(9.7,5);
   \draw[dots](12.3,2.5)--(13.7,2.5);
   \draw[dots](8.3,0)--(9.7,0);
   \draw(10,5)node[above]{$-1$}--(14,2.5)--(10,0);
   \draw(11,5)node[above]{$0$}--(0,0);
   \draw(12,5)node[above]{$1$}--(1,0);
   \draw[dots](12.3,5)--(13.7,5);
   \draw[dots](12.3,0)--(13.7,0);
   \draw(14,5)node[above]{$-1$}--(3,0);
   \end{braid}.
$$
\end{Lemma}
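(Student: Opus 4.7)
The plan is to show that the two diagrams differ only by a homotopy of the $i$-string within the middle band, and that this homotopy can be realized by a sequence of applications of the braid relation~\eqref{braid} whose error terms all vanish on~$T(i,\la)$. The LHS has the $i$-string bending leftwards to pass through $x=3$ in the middle row, placing it just to the right of the $(i-1)$-string, while the RHS has it bending further to the right to pass through $x=11$, just to the left of the $(i+1)$-string; every other string is drawn identically in the two diagrams. The hypothesis $i\ne 0,-1$ is precisely what makes the labels $i-1$ and $i+1$ appear as genuine interior residues of the second $e$-block, so that the local picture near the $i$-string makes sense as drawn.

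I would execute the proof by sliding the $i$-string from $x=3$ to $x=11$ through the middle band one triple point at a time. Each triple point to be crossed is formed by a pair of oppositely-oriented ``first-block-to-third-block'' straight lines crossing each other at some $x\in\{4,\dots,10\}$ with a common residue~$j$. Moving the $i$-string past such a triple point is governed by~\eqref{braid}. If $j\notin\{i,i-1,i+1\}$ then the ``otherwise'' case of~\eqref{braid} gives exact equality. If $j=i$ then the three arrow-sensitive cases of~\eqref{braid} all fail since the quiver $\Ga$ has no loop at a vertex, so again the ``otherwise'' case applies and no correction is produced. The only interesting case is $j=i\pm 1$, where an error term of the form $\pm e(\bi)$ times a reduced diagram is generated.

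The hard part will be showing that each such error diagram is zero on~$T(i,\la)$. I would handle this in the same spirit as the proofs of Lemmas~\ref{sigma twist} and~\ref{braiding}: after further rearrangement using~\eqref{braid} and~\eqref{quad}, the reduced diagram produced by the error term contains either a dot $y_s$ or a crossing $\psi_t$ with $t\not\equiv 0\pmod e$ sitting directly adjacent to the generator of $T(i,\la)$, and both of these are killed by Lemma~\ref{LNew}. The bookkeeping of these error terms across the sliding procedure, keeping track of which $\psi_t$ sits next to $v$ at each stage, is the main source of technical complication. Once all triple points in the range $x\in\{4,\dots,10\}$ have been cleared, the $i$-string has been relocated to $x=11$, and the LHS has been transformed into the RHS modulo terms that vanish on $T(i,\la)$, which is what we want.
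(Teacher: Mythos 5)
You have correctly identified the tools in play (the braid relation \eqref{braid}, the quadratic relation \eqref{quad}, and Lemma~\ref{LNew}), the reason the hypothesis $i\ne 0,-1$ matters, and the source of the dangerous terms (the residues $i\pm1$). However, what you have written is a strategy sketch, not a proof, and the sketch passes over the part that actually carries the weight. The claim that "after further rearrangement ... the reduced diagram produced by the error term contains either a dot $y_s$ or a crossing $\psi_t$ with $t\not\equiv 0\pmod e$ sitting directly adjacent to the generator" is asserted but not established, and it is not quite right as stated: in the paper's argument, for \emph{each} of the two error-producing applications of \eqref{braid} there are two sub-terms, and only one of them is killed via Lemma~\ref{LNew} (by pulling a string all the way to the top and using $\psi_t v=0$). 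The other sub-term is killed \emph{in $R_\alpha$}, before touching $v$ at all, by the quadratic relation $\psi_r^2e(\bi)=0$ applied to a double-crossing of two strings of the same residue in the interior of the diagram — not by producing a $y_s$ or a $\psi_t$ next to $v$.

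The structure of the actual argument is also different from "slide the $i$-string one triple point at a time." The paper applies a single well-chosen instance of \eqref{braid} to decompose $D=D_1+D_2$; then $D_1$ is shown to be zero outright (one sub-term by pulling the $(i-1)$-string to the top and invoking $\psi_{2e+i}v=0$, the other by $\iisquare[i-1]=0$); and only afterwards is the $i$-string in $D_2$ freely slid through the crossings whose residues do not clash, picking up a second and final error term (killed by $\psi_i v=0$ together with $\iisquare[i]=0$). This two-step decomposition is precisely what keeps the bookkeeping tractable. If you literally push the V-vertex across one crossing at a time, you have to show at each stage that the accumulated error diagrams simplify as required — and the "bookkeeping ... is the main source of technical complication" that you flag is not a technicality to be waved at but the entire content of the lemma. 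As it stands the key vanishing is not proved, so there is a genuine gap.
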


\begin{proof}
Let $D$ be the left hand diagram. Then, using the braid relations, 
\begin{align*}D&=
\begin{braid}\tikzset{scale=1.3,baseline=12mm}
  \draw(0,5)node[above]{$0$}--(14,0);
   \draw[dots](0.4,5)--(1.6,5);
   \draw[dots](0.4,0)--(1.6,0);
  \draw[red](2,5)node[above]{$i{-}1$\,}--(8.4,2.71)--(8.4,2.29)--(2,0);
  \draw(3,5)node[above]{$i$}--(17,0);
  \draw(4,5)node[above]{\,$i{+}1$}--(18,0);
   \draw[dots](4.4,5)--(5.6,5);
   \draw[dots](4.4,0)--(5.6,0);
  \draw(6,5)node[above]{$-1$}--(20,0);
  \draw(7,5)node[above]{$0$}--(0,2.5)--(7,0);
   \draw[dots](7.4,5)--(8.6,5);
   \draw[dots](7.4,0)--(8.6,0);
  \draw(9,5)node[above]{$i{-}1\,$}--(2,2.5)--(9,0);
  \draw[red](10,5)node[above]{$i$}--(12.8,4.00)--(8.6,2.5)--(12.5,1.1)--(10,0);
  \draw(11,5)node[above]{$\,i{+}1$}--(17,2.5)--(11,0);
   \draw[dots](11.4,5)--(12.6,5);
   \draw[dots](11.4,2.5)--(12.6,2.5);
   \draw[dots](11.4,0)--(12.6,0);
  \draw(13,5)node[above]{$-1$}--(20,2.5)--(13,0);
  \draw(14,5)node[above]{$0$}--(0,0);
   \draw[dots](14.4,5)--(15.6,5);
   \draw[dots](14.4,0)--(15.6,0);
  \draw[red](16,5)node[above]{$i{-}1\,$}--(9,2.5)--(16,0);
  \draw(17,5)node[above]{$i$}--(3,0);
  \draw(18,5)node[above]{$\,i{+}1$}--(4,0);
   \draw[dots](18.4,5)--(19.6,5);
   \draw[dots](18.4,0)--(19.6,0);
  \draw(20,5)node[above]{$-1$}--(6,0);
\end{braid}
\\&+\begin{braid}\tikzset{scale=1.3,baseline=12mm}
  \draw(0,5)node[above]{$0$}--(14,0);
   \draw[dots](0.5,5)--(1.7,5);
   \draw[dots](0.6,2.5)--(1.8,2.5);
   \draw[dots](0.5,0)--(1.7,0);
  \draw(2,5)node[above]{$i{-}1$\,}--(16,0);
  \draw(3,5)node[above=0.4mm]{$i$}--(17,0);
  \draw(4,5)node[above]{\,$i{+}1$}--(18,0);
   \draw[dots](4.4,5)--(5.6,5);
   \draw[dots](4.4,0)--(5.6,0);
  \draw(6,5)node[above]{$-1$}--(20,0);
  \draw(7,5)node[above]{$0$}--(0,2.5)--(7,0);
   \draw[dots](7.4,5)--(8.6,5);
   \draw[dots](7.4,0)--(8.6,0);
  \draw(9,5)node[above]{$i{-}1\,$}--(2,2.5)--(9,0);
  \draw[red](10,5)node[above=0.4mm]{$i$}--(12.7,4.04)--(9.0,2.68)--(9.7,2.5)--(9,2.32)--(12.75,0.98)--(10,0);
  \draw(11,5)node[above]{$\,i{+}1$}--(18,2.5)--(11,0);
   \draw[dots](11.4,5)--(12.6,5);
   \draw[dots](11.4,0)--(12.6,0);
  \draw(13,5)node[above]{$-1$}--(20,2.5)--(13,0);
  \draw(14,5)node[above]{$0$}--(0,0);
   \draw[dots](14.4,5)--(15.6,5);
   \draw[dots](14.4,0)--(15.6,0);
  \draw(16,5)node[above]{$i{-}1\,$}--(2,0);
  \draw(17,5)node[above]{$i$}--(3,0);
  \draw(18,5)node[above]{$\,i{+}1$}--(4,0);
   \draw[dots](18.4,5)--(19.6,5);
   \draw[dots](18.3,2.5)--(19.5,2.5);
   \draw[dots](18.4,0)--(19.6,0);
  \draw(20,5)node[above]{$-1$}--(6,0);
\end{braid}.
\end{align*}
Let the first summand of $D$ be $D_1$ and the second one be $D_2$. Then by the braid relations, we have
\begin{align*}
D_1=&\begin{braid}\tikzset{scale=1.3,baseline=12mm}
   \draw(0,5)node[above]{$0$}--(14,0);
   \draw[dots](0.5,5)--(1.7,5);
   \draw[dots](0.6,2.5)--(1.8,2.5);
   \draw[dots](0.5,0)--(1.7,0);
  \draw(2,5)node[above]{$i{-}1$\,}--(8.4,2.71)--(8.4,2.29)--(2,0);
  \draw(3,5)node[above=0.4mm]{$i$}--(17,0);
  \draw(4,5)node[above]{\,$i{+}1$}--(18,0);
   \draw[dots](4.4,5)--(5.6,5);
   \draw[dots](4.4,0)--(5.6,0);
  \draw(6,5)node[above]{$-1$}--(20,0);
  \draw(7,5)node[above]{$0$}--(0,2.5)--(7,0);
   \draw[dots](7.4,5)--(8.6,5);
   \draw[dots](7.4,0)--(8.6,0);
  \draw(9,5)node[above]{$i{-}1\,$}--(2,2.5)--(9,0);
  \draw(10,5)node[above]{$i$}--(12.8,4.00)--(8.6,2.5)--(12.5,1.1)--(10,0);
  \draw(11,5)node[above]{$\,i{+}1$}--(18,2.5)--(11,0);
   \draw[dots](11.4,5)--(12.6,5);
   \draw[dots](11.4,0)--(12.6,0);
  \draw(13,5)node[above]{$-1$}--(20,2.5)--(13,0);
  \draw(14,5)node[above]{$0$}--(0,0);
   \draw[dots](14.4,5)--(15.6,5);
   \draw[dots](14.4,0)--(15.6,0);
  \draw[red](16,5)node[above]{$i{-}1\,$}--(9.7,2.75)--(10.5,2.5)
                                        --(9.7,2.25)--(16,0);
  \draw(17,5)node[above]{$i$}--(3,0);
  \draw(18,5)node[above]{$\,i{+}1$}--(4,0);
   \draw[dots](18.4,5)--(19.6,5);
   \draw[dots](18.3,2.5)--(19.5,2.5);
   \draw[dots](18.4,0)--(19.6,0);
  \draw(20,5)node[above]{$-1$}--(6,0);
\end{braid}\\
  +&\begin{braid}\tikzset{scale=1.3,baseline=12mm}
   \draw(0,5)node[above]{$0$}--(14,0);
   \draw[dots](0.5,5)--(1.7,5);
   \draw[dots](0.6,2.5)--(1.8,2.5);
   \draw[dots](0.5,0)--(1.7,0);
  \draw(2,5)node[above]{$i{-}1$\,}--(8.4,2.71)--(8.4,2.29)--(2,0);
  \draw[red](3,5)node[above=0.4mm]{$i$}--(9.4,2.71)--(9.4,2.29)--(3,0);
  \draw(4,5)node[above]{\,$i{+}1$}--(18,0);
   \draw[dots](4.4,5)--(5.6,5);
   \draw[dots](4.4,0)--(5.6,0);
  \draw(6,5)node[above]{$-1$}--(20,0);
  \draw(7,5)node[above]{$0$}--(0,2.5)--(7,0);
   \draw[dots](7.4,5)--(8.6,5);
   \draw[dots](7.4,0)--(8.6,0);
  \draw(9,5)node[above]{$i{-}1\,$}--(2,2.5)--(9,0);
  \draw(10,5)node[above]{$i$}--(12.8,4.00)--(8.6,2.5)--(12.5,1.1)--(10,0);
  \draw(11,5)node[above]{$\,i{+}1$}--(18,2.5)--(11,0);
   \draw[dots](11.4,5)--(12.6,5);
   \draw[dots](11.4,0)--(12.6,0);
  \draw(13,5)node[above]{$-1$}--(20,2.5)--(13,0);
  \draw(14,5)node[above]{$0$}--(0,0);
   \draw[dots](14.4,5)--(15.6,5);
   \draw[dots](14.4,0)--(15.6,0);
  \draw[red](16,5)node[above]{$i{-}1\,$}--(9.7,2.75)--(9.7,2.25)--(16,0);
  \draw[red](17,5)node[above]{$i$}--(10,2.5)--(17,0);
  \draw(18,5)node[above]{$\,i{+}1$}--(4,0);
   \draw[dots](18.4,5)--(19.6,5);
   \draw[dots](18.3,2.5)--(19.5,2.5);
   \draw[dots](18.4,0)--(19.6,0);
  \draw(20,5)node[above]{$-1$}--(6,0);
\end{braid}.
\end{align*}
The first summand is zero because we can use (\ref{braid}) to
pull the rightmost $(i-1)$-string to the top of the diagram and then use the fact that
$\psi_{2e+i}v=0$ by Lemma~\ref{LNew}. The second summand is zero by
because $\iisquare=0$ by (\ref{quad}). Hence, $D_1=0$. Now consider $D_2$. Using the braid relations to pull the middle $i$-string
in $D_2$ to the right, $D_2$ is equal to the diagram on the right hand side of the formula in the
statement of the lemma plus the following error term
\begin{align*}
\begin{braid}\tikzset{scale=1.3,baseline=12mm}
  \draw(0,5)node[above]{$0$}--(14,0);
   \draw[dots](0.5,5)--(1.7,5);
   \draw[dots](0.6,2.5)--(1.8,2.5);
   \draw[dots](0.5,0)--(1.7,0);
  \draw(2,5)node[above]{$i{-}1$\,}--(16,0);
  \draw(3,5)node[above=0.4mm]{$i$}--(17,0);
  \draw[red](4,5)node[above]{\,$i{+}1$}--(10.6,2.71)--(10.6,2.29)--(4,0);
   \draw[dots](4.4,5)--(5.6,5);
   \draw[dots](4.4,0)--(5.6,0);
  \draw(6,5)node[above]{$-1$}--(20,0);
  \draw(7,5)node[above]{$0$}--(0,2.5)--(7,0);
   \draw[dots](7.4,5)--(8.6,5);
   \draw[dots](7.4,0)--(8.6,0);
  \draw(9,5)node[above]{$i{-}1\,$}--(2,2.5)--(9,0);
  \draw[red](10,5)node[above=0.4mm]{$i$}--(13.8,3.64)--(10.6,2.5)--(13.8,1.36)--(10,0);
  \draw(11,5)node[above]{$\,i{+}1$}--(18,2.5)--(11,0);
   \draw[dots](11.4,5)--(12.6,5);
   \draw[dots](11.4,0)--(12.6,0);
  \draw(13,5)node[above]{$-1$}--(20,2.5)--(13,0);
  \draw(14,5)node[above]{$0$}--(0,0);
   \draw[dots](14.4,5)--(15.6,5);
   \draw[dots](14.4,0)--(15.6,0);
  \draw(16,5)node[above]{$i{-}1\,$}--(2,0);
  \draw(17,5)node[above]{$i$}--(3,0);
  \draw[red](18,5)node[above]{$\,i{+}1$}--(11,2.5)--(18,0);
   \draw[dots](18.4,5)--(19.6,5);
   \draw[dots](18.3,2.5)--(19.5,2.5);
   \draw[dots](18.4,0)--(19.6,0);
  \draw(20,5)node[above]{$-1$}--(6,0);
\end{braid},
\end{align*}
which using the (error term free) braid relations, equals
\begin{align*}
\begin{braid}\tikzset{scale=1.3,baseline=12mm}
  \draw(0,5)node[above]{$0$}--(14,0);
   \draw[dots](0.5,5)--(1.7,5);
   \draw[dots](0.6,2.5)--(1.8,2.5);
   \draw[dots](0.5,0)--(1.7,0);
  \draw(2,5)node[above]{$i{-}1$\,}--(16,0);
  \draw(3,5)node[above=0.4mm]{$i$}--(17,0);
  \draw(4,5)node[above]{\,$i{+}1$}--(10.6,2.71)--(10.6,2.29)--(4,0);
   \draw[dots](4.4,5)--(5.6,5);
   \draw[dots](4.4,0)--(5.6,0);
  \draw(6,5)node[above]{$-1$}--(20,0);
  \draw(7,5)node[above]{$0$}--(0,2.5)--(7,0);
   \draw[dots](7.4,5)--(8.6,5);
   \draw[dots](7.4,0)--(8.6,0);
  \draw(9,5)node[above]{$i{-}1\,$}--(2,2.5)--(9,0);
  \draw[red](10,5)node[above=0.4mm]{$i$}--(7.5,4.10)--(10.8,2.92)--(10.8,2.08)--(7.5,0.9)--(10,0);
  \draw(11,5)node[above]{$\,i{+}1$}--(18,2.5)--(11,0);
   \draw[dots](11.4,5)--(12.6,5);
   \draw[dots](11.4,0)--(12.6,0);
  \draw(13,5)node[above]{$-1$}--(20,2.5)--(13,0);
  \draw(14,5)node[above]{$0$}--(0,0);
   \draw[dots](14.4,5)--(15.6,5);
   \draw[dots](14.4,0)--(15.6,0);
  \draw(16,5)node[above]{$i{-}1\,$}--(2,0);
  \draw(17,5)node[above]{$i$}--(3,0);
  \draw(18,5)node[above]{$\,i{+}1$}--(11,2.5)--(18,0);
   \draw[dots](18.4,5)--(19.6,5);
   \draw[dots](18.3,2.5)--(19.5,2.5);
   \draw[dots](18.4,0)--(19.6,0);
  \draw(20,5)node[above]{$-1$}--(6,0);
\end{braid}.
\end{align*}
By the braid relations again, this equals 
\begin{align*}
\begin{braid}\tikzset{scale=1.3,baseline=12mm}
  \draw(0,5)node[above]{$0$}--(14,0);
   \draw[dots](0.5,5)--(1.7,5);
   \draw[dots](0.6,2.5)--(1.8,2.5);
   \draw[dots](0.5,0)--(1.7,0);
  \draw(2,5)node[above]{$i{-}1$\,}--(16,0);
  \draw(3,5)node[above=0.4mm]{$i$}--(17,0);
  \draw[red](4,5)node[above]{\,$i{+}1$}--(10.3,2.75)--(9.4,2.5)--(10.3,2.25)--(4,0);
   \draw[dots](4.4,5)--(5.6,5);
   \draw[dots](4.4,0)--(5.6,0);
  \draw(6,5)node[above]{$-1$}--(20,0);
  \draw(7,5)node[above]{$0$}--(0,2.5)--(7,0);
   \draw[dots](7.4,5)--(8.6,5);
   \draw[dots](7.4,0)--(8.6,0);
  \draw(9,5)node[above]{$i{-}1\,$}--(2,2.5)--(9,0);
   \draw(10,5)node[above=0.4mm]{$i$}--(7.5,4.10)--(10.8,2.92)--(10.8,2.08)--(7.5,0.9)--(10,0);
   \draw(11,5)node[above]{$\,i{+}1$}--(18,2.5)--(11,0);
   \draw[dots](11.4,5)--(12.6,5);
   \draw[dots](11.4,0)--(12.6,0);
  \draw(13,5)node[above]{$-1$}--(20,2.5)--(13,0);
  \draw(14,5)node[above]{$0$}--(0,0);
   \draw[dots](14.4,5)--(15.6,5);
   \draw[dots](14.4,0)--(15.6,0);
  \draw(16,5)node[above]{$i{-}1\,$}--(2,0);
  \draw(17,5)node[above]{$i$}--(3,0);
  \draw(18,5)node[above]{$\,i{+}1$}--(11,2.5)--(18,0);
   \draw[dots](18.4,5)--(19.6,5);
   \draw[dots](18.3,2.5)--(19.5,2.5);
   \draw[dots](18.4,0)--(19.6,0);
  \draw(20,5)node[above]{$-1$}--(6,0);
\end{braid}\\
+\begin{braid}\tikzset{scale=1.3,baseline=12mm}
  \draw(0,5)node[above]{$0$}--(14,0);
   \draw[dots](0.5,5)--(1.7,5);
   \draw[dots](0.6,2.5)--(1.8,2.5);
   \draw[dots](0.5,0)--(1.7,0);
  \draw(2,5)node[above]{$i{-}1$\,}--(16,0);
  \draw[red](3,5)node[above=0.4mm]{$i$}--(10,2.5)--(3,0);
  \draw[red](4,5)node[above]{\,$i{+}1$}--(10.2,2.78)--(10.2,2.21)--(4,0);
   \draw[dots](4.4,5)--(5.6,5);
   \draw[dots](4.4,0)--(5.6,0);
  \draw(6,5)node[above]{$-1$}--(20,0);
  \draw(7,5)node[above]{$0$}--(0,2.5)--(7,0);
   \draw[dots](7.4,5)--(8.6,5);
   \draw[dots](7.4,0)--(8.6,0);
  \draw(9,5)node[above]{$i{-}1\,$}--(2,2.5)--(9,0);
  \draw(10,5)node[above=0.4mm]{$i$}--(7.5,4.10)--(10.8,2.92)--(10.8,2.08)--(7.5,0.9)--(10,0);
  \draw(11,5)node[above]{$\,i{+}1$}--(18,2.5)--(11,0);
   \draw[dots](11.4,5)--(12.6,5);
   \draw[dots](11.4,0)--(12.6,0);
  \draw(13,5)node[above]{$-1$}--(20,2.5)--(13,0);
  \draw(14,5)node[above]{$0$}--(0,0);
   \draw[dots](14.4,5)--(15.6,5);
   \draw[dots](14.4,0)--(15.6,0);
  \draw(16,5)node[above]{$i{-}1\,$}--(2,0);
  \draw[red](17,5)node[above]{$i$}--(10.5,2.71)--(10.5,2.29)--(17,0);
  \draw(18,5)node[above]{$\,i{+}1$}--(11,2.5)--(18,0);
   \draw[dots](18.4,5)--(19.6,5);
   \draw[dots](18.3,2.5)--(19.5,2.5);
   \draw[dots](18.4,0)--(19.6,0);
  \draw(20,5)node[above]{$-1$}--(6,0);
\end{braid}
\end{align*}
The first summand is zero since $\psi_iv=0$ by Lemma~\ref{LNew}. The second term is  zero because of the quadratic relation
$\iisquare[i]=0$. The proof of the lemma is complete.
\end{proof}

We can now prove Theorem~\ref{sigma braid}. 

\vspace{2mm}
\begin{proof}[Proof of Theorem~\ref{sigma braid}.] 
Writing $\sigma_1\sigma_2\sigma_1v$ in terms of diagrams and using
Lemma~\ref{braiding} we have
\begin{align*}
  \sigma_1\sigma_2\sigma_1v=
   &\begin{braid}\tikzset{scale=1.0,baseline=9mm}
      \draw(0,5)node[above]{$0$}--(10,0);
      \draw(1,5)node[above]{$1$}--(11,0);
      \draw[dots](1.5,5)--(2.8,5);
      \draw[dots](1.7,2.5)--(2.9,2.5);
      \draw[dots](1.5,0)--(2.8,0);
      \draw(3,5)node[above]{$-2$}--(13,0);
      \draw(4,5)node[above]{$-1$}--(14,0);
      \draw(5,5)node[above]{$0$}--(0,2.5)--(5,0);
      \draw(6,5)node[above]{$1$}--(1,2.5)--(6,0);
      \draw[dots](6.3,5)--(7.7,5);
      \draw[dots](6.3,2.5)--(7.7,2.5);
      \draw[dots](6.3,0)--(7.7,0);
      \draw(8,5)node[above]{$-2$}--(3,2.5)--(8,0);
      \draw(9,5)node[above]{$-1$}--(4,2.5)--(9,0);
      \draw(10,5)node[above]{$0$}--(0,0);
      \draw(11,5)node[above]{$1$}--(1,0);
      \draw[dots](11.3,5)--(12.7,5);
      \draw[dots](11.3,0)--(12.7,0);
      \draw(13,5)node[above]{$-2$}--(3,0);
      \draw(14,5)node[above]{$-1$}--(4,0);
    \end{braid}\\
  =&\,\sigma_1+
    \begin{braid}\tikzset{scale=1.0,baseline=9mm}
      \draw(0,5)node[above]{$0$}--(10,0);
      \draw(1,5)node[above]{$1$}--(11,0);
      \draw[dots](1.5,5)--(2.8,5);
      \draw[dots](1.7,2.5)--(2.9,2.5);
      \draw[dots](1.5,0)--(2.8,0);
      \draw(3,5)node[above]{$-2$}--(13,0);
      \draw(4,5)node[above]{$-1$}--(14,0);
      \draw(5,5)node[above]{$0$}--(0,2.5)--(5,0);
      \draw(6,5)node[above]{$1$}--(1,2.5)--(6,0);
      \draw[dots](6.3,5)--(7.7,5);
      \draw[dots](6.3,2.5)--(7.7,2.5);
      \draw[dots](6.3,0)--(7.7,0);
      \draw(8,5)node[above]{$-2$}--(3,2.5)--(8,0);
      \draw(9,5)node[above]{$-1$}--(12,2.5)--(9,0);
      \draw(10,5)node[above]{$0$}--(0,0);
      \draw(11,5)node[above]{$1$}--(1,0);
      \draw[dots](11.3,5)--(12.7,5);
      \draw[dots](11.3,0)--(12.7,0);
      \draw(13,5)node[above]{$-2$}--(3,0);
      \draw(14,5)node[above]{$-1$}--(4,0);
    \end{braid}.
    \end{align*}
    Hence, applying Lemma~\ref{shifting} $(e-1)$ times,
\begin{align*}
\sigma_1\sigma_2\sigma_1v
  =&\,\sigma_1v+
    \begin{braid}\tikzset{scale=1.0,baseline=9mm}
      \draw(0,5)node[above]{$0$}--(10,0);
      \draw(1,5)node[above]{$1$}--(11,0);
      \draw[dots](1.5,5)--(2.8,5);
      \draw[dots](1.7,2.5)--(2.9,2.5);
      \draw[dots](1.5,0)--(2.8,0);
      \draw(3,5)node[above]{$-2$}--(13,0);
      \draw(4,5)node[above]{$-1$}--(14,0);
      \draw(5,5)node[above]{$0$}--(0,2.5)--(5,0);
      \draw(6,5)node[above]{$1$}--(1,2.5)--(6,0);
      \draw[dots](6.3,5)--(7.7,5);
      \draw[dots](6.3,2.5)--(7.7,2.5);
      \draw[dots](6.3,0)--(7.7,0);
      \draw(8,5)node[above]{$-2$}--(11,2.5)--(8,0);
      \draw(9,5)node[above]{$-1$}--(12,2.5)--(9,0);
      \draw(10,5)node[above]{$0$}--(0,0);
      \draw(11,5)node[above]{$1$}--(1,0);
      \draw[dots](11.3,5)--(12.7,5);
      \draw[dots](11.3,0)--(12.7,0);
      \draw(13,5)node[above]{$-2$}--(3,0);
      \draw(14,5)node[above]{$-1$}--(4,0);
    \end{braid}\\
    =\dots=&\,\sigma_1v+
    \begin{braid}\tikzset{scale=1.0,baseline=9mm}
      \draw(14,5)node[above]{$-1$}--(4,0);
      \draw(13,5)node[above]{$-2$}--(3,0);
      \draw[dots](12.5,5)--(11.2,5);
      \draw[dots](12.3,2.5)--(11.1,2.5);
      \draw[dots](12.5,0)--(11.2,0);
      \draw(11,5)node[above]{$0$}--(1,0);
      \draw(10,5)node[above]{$1$}--(0,0);
      \draw(9,5)node[above]{$-1$}--(14,2.5)--(9,0);
      \draw(8,5)node[above]{$-2$}--(13,2.5)--(8,0);
      \draw[dots](6.3,5)--(7.7,5);
      \draw[dots](6.3,2.5)--(7.7,2.5);
      \draw[dots](6.3,0)--(7.7,0);
      \draw(6,5)node[above]{$0$}--(11,2.5)--(6,0);
      \draw(5,5)node[above]{$1$}--(2,2.5)--(5,0);
      \draw(4,5)node[above]{$-1$}--(14,0);
      \draw(3,5)node[above]{$-2$}--(13,0);
      \draw[dots](2.7,5)--(1.3,5);
      \draw[dots](2.7,0)--(1.3,0);
      \draw(1,5)node[above]{$1$}--(11,0);
      \draw(0,5)node[above]{$0$}--(10,0);
    \end{braid}\\
    =&\,\sigma_1v-\sigma_2v+
    \begin{braid}\tikzset{scale=1.0,baseline=9mm}
      \draw(14,5)node[above]{$-1$}--(4,0);
      \draw(13,5)node[above]{$-2$}--(3,0);
      \draw[dots](12.5,5)--(11.2,5);
      \draw[dots](12.3,2.5)--(11.1,2.5);
      \draw[dots](12.5,0)--(11.2,0);
      \draw(11,5)node[above]{$1$}--(1,0);
      \draw(10,5)node[above]{$0$}--(0,0);
      \draw(9,5)node[above]{$-1$}--(14,2.5)--(9,0);
      \draw(8,5)node[above]{$-2$}--(13,2.5)--(8,0);
      \draw[dots](6.3,5)--(7.7,5);
      \draw[dots](6.3,2.5)--(7.7,2.5);
      \draw[dots](6.3,0)--(7.7,0);
      \draw(6,5)node[above]{$1$}--(11,2.5)--(6,0);
      \draw(5,5)node[above]{$0$}--(10,2.5)--(5,0);
      \draw(4,5)node[above]{$-1$}--(14,0);
      \draw(3,5)node[above]{$-2$}--(13,0);
      \draw[dots](3.7,5)--(2.3,5);
      \draw[dots](3.7,0)--(2.3,0);
      \draw(1,5)node[above]{$1$}--(11,0);
      \draw(0,5)node[above]{$0$}--(10,0);
    \end{braid},
\end{align*}
where the last equality follows using the identity $E_2=\sigma_2v+E_2'$ from
Lemma~\ref{braiding}. The diagram in the last equation is equal to
$\sigma_2\sigma_1\sigma_2v$, so this completes the proof of Theorem~\ref{sigma braid}.
\end{proof}

\subsection{\boldmath The elements $\tau$}\label{SSTau}
Let $1\le r<k$. Recall from the beginning of the section that $\sigma_r=\psi_{w_r}e(i,\la)\in R_{k\de}$. Define 
      $$\tau_r=(\si_r+1)e(i,\la)=\sigma_r+e(i,\la)
                    =(\psi_{w_r}+1)e(i,\la).$$ 
Quite remarkably, as we now show, the elements $\tau_1,\dots,\tau_{k-1}$ satisfy the usual
Coxeter relations for the symmetric group $\Si_k$ when they act on the block permutation space
$T(i,\la)$. Let $\Si_\la=\Si_{\la_1}\times\dots\Si_{\la_n}$ be the parabolic subgroup of
$\Si_k$ indexed by $\la$, $\triv{\la}$ the trivial representation of $\Si_\la$, and $\D_\la$ the set of the minimal length left coset representatives of $\Si_\la$ in $\Si_k$.

\begin{Theorem}\label{tau braid}
  Suppose that $1\le r,s<k$ and $v\in T(i,\la)$. Then
  \begin{enumerate}
  \item $\tau_{r}^2v=v$.
     \item If $|r-s|>1$ then 
        $\tau_r\tau_sv=\tau_s\tau_rv$.
     \item If $r< k-1$ then
        $\tau_r\tau_{r+1}\tau_rv= 
                \tau_{r+1}\tau_r\tau_{r+1}v$.
  \end{enumerate}
  Consequently, $\Si_k$ acts on $T(i,\la)$, and the elements $\tau_um(i,\la)$ for $u\in \Si_k$ are well-defined. Finally, 
  $T(i,\la)\cong\ind_{\O\Si_\la}^{\O\Si_k}\triv{\la}$ as $\O\Si_k$-modules, and $T(i,\la)$ has $\O$-basis $\{\tau_um(i,\la)\mid u\in\D_\la\}
$.  
\end{Theorem}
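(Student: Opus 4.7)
Since $\tau_r = \sigma_r + e(i,\la)$, parts (i)--(iii) reduce to routine algebraic expansions on top of the two nontrivial inputs already in hand: the quadratic identity $\sigma_r^2 v = -2\sigma_r v$ from Corollary~\ref{CSiSq} and the ``deformed braid'' identity $(\sigma_r\sigma_{r+1}\sigma_r - \sigma_{r+1}\sigma_r\sigma_{r+1})v = (\sigma_r - \sigma_{r+1})v$ from Theorem~\ref{sigma braid}. For (i) the expansion $\tau_r^2 v = \sigma_r^2 v + 2\sigma_r v + v$ collapses to $v$ immediately. For (ii) the permutations $w_r$ and $w_s$ act on disjoint sets of positions when $|r-s|>1$, so every simple reflection in the preferred reduced words for $w_r$ and $w_s$ commutes pairwise by (\ref{R3Psi}), giving $\sigma_r\sigma_s = \sigma_s\sigma_r$ and hence (ii). For (iii) I would expand both $\tau_r\tau_{r+1}\tau_r$ and $\tau_{r+1}\tau_r\tau_{r+1}$ in terms of $\sigma$-monomials; the summands $\sigma_r\sigma_{r+1}$, $\sigma_{r+1}\sigma_r$ and the constant terms cancel between the two sides, leaving
\[
  (\sigma_r\sigma_{r+1}\sigma_r - \sigma_{r+1}\sigma_r\sigma_{r+1})v + (\sigma_r^2 - \sigma_{r+1}^2)v + (\sigma_r - \sigma_{r+1})v,
\]
which equals $(\sigma_r - \sigma_{r+1})v - 2(\sigma_r - \sigma_{r+1})v + (\sigma_r - \sigma_{r+1})v = 0$ by Theorem~\ref{sigma braid} and Corollary~\ref{CSiSq}.

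Granted (i)--(iii), the $\tau_r$'s satisfy the Coxeter presentation of $\Si_k$ on $T(i,\la)$, so $T(i,\la)$ becomes an $\O\Si_k$-module via $s_r \mapsto \tau_r$ and $\tau_u$ is unambiguous for every $u \in \Si_k$. The next checkpoint is that $\tau_r m(i,\la) = m(i,\la)$ whenever $s_r \in \Si_\la$: in that case the blocks $r$ and $r{+}1$ lie in a common $e\la_m$-strip, so $w_r \in \Si_{\vec{\bs}(i,\la)}$ and every simple reflection in any reduced word for $w_r$ also lies in $\Si_{\vec{\bs}(i,\la)}$. By the description (\ref{E:M(s)}) of $M(i,\la) = R_{k\de}/K(\vec{\bs}(i,\la))$, each such $\psi_t$ annihilates $m(i,\la)$, so $\sigma_r m(i,\la) = \psi_{w_r} m(i,\la) = 0$ and $\tau_r m(i,\la) = m(i,\la)$. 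Composing along reduced words, $\tau_v m(i,\la) = m(i,\la)$ for every $v \in \Si_\la$, and hence the assignment $u \otimes 1 \mapsto \tau_u m(i,\la)$ defines an $\O\Si_k$-equivariant map $\phi \colon \ind_{\O\Si_\la}^{\O\Si_k}\triv{\la} \to T(i,\la)$. It is surjective because, by definition, $T(i,\la)$ is the $\O$-span of $\sigma$-monomials applied to $m(i,\la)$, and $\sigma_r = \tau_r - e(i,\la)$.

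The closing move is a rank comparison. The left hand module has free $\O$-rank $|\D_\la|$. On the other side, Theorem~\ref{TMBasis} presents $T(i,\la) = e(i,\la)M(i,\la)$ as free on $\{\psi_w m(i,\la)\}$ for those $w \in \D_{\vec{\bs}(i,\la)}$ fixing the residue sequence $\bj(\vec{\bs}(i,\la)) = \bs(i,ke)$. Since this sequence is a concatenation of $k$ copies of the pairwise-distinct segment $(i, i+1, \ldots, i+e-1)$, the residue-fixing elements of $\Si_{ke}$ are exactly the ``block permutations'' $\hat u$ for $u \in \Si_k$, under the embedding $s_r \mapsto w_r$. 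A direct inversion count gives $\ell(\hat u) = e^2\ell(u)$, and combining this with the descent characterization of minimal coset representatives shows $\hat u \in \D_{\vec{\bs}(i,\la)}$ if and only if $u \in \D_\la$. Thus $T(i,\la)$ is free of $\O$-rank $|\D_\la|$, and $\phi$ is a surjection between free $\O$-modules of equal finite rank, hence an isomorphism, yielding simultaneously the module identification and the basis statement.

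The main technical obstacle I anticipate is this last combinatorial bijection: the descent characterization of $\D_{\vec{\bs}(i,\la)}$ has to be matched to that of $\D_\la$ through the block embedding $s_r \mapsto w_r$, and although this is expected to be purely combinatorial, it is the only step where the passage from $e$-blocks in $\Si_{ke}$ back to single entries of $\Si_k$ has to be verified rather than merely asserted.
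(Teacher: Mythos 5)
Your parts (i)--(iii) and the construction of the surjection $\phi$ match the paper's argument (the paper also expands $\tau_r\tau_{r+1}\tau_r$ and applies Theorem~\ref{sigma braid} together with Corollary~\ref{CSiSq}). Where you diverge is in establishing injectivity. The paper does not do a rank count: it observes that the transition from the $\tau$-monomials $\tau_u m(i,\la)$ to the $\sigma$-monomials $\sigma_u m(i,\la)$, for $u\in\D_\la$, is unitriangular (in the Bruhat order), and that the latter are distinct basis vectors $\psi_{\hat u}m(i,\la)$ from Theorem~\ref{TMBasis}; linear independence of the images $\phi(u\otimes 1)=\tau_u m(i,\la)$ then follows directly. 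That argument only needs the one-sided inclusion $\{\hat u: u\in\D_\la\}\subseteq\D_{\vec{\bs}(i,\la)}$ together with the length additivity $\ell(\hat u)=e^2\ell(u)$, whereas your rank comparison requires the stronger \emph{equality} of $\{\hat u:u\in\D_\la\}$ with the full set of residue-preserving elements of $\D_{\vec{\bs}(i,\la)}$. Your route is structurally cleaner (a surjection between free $\O$-modules of the same finite rank is automatically an isomorphism, over any commutative ring) at the cost of the heavier combinatorial input, and it bypasses the triangularity observation entirely.

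The combinatorial claim you flag is correct, and it closes quickly, so I would not call this a gap so much as a step requiring a lemma. Suppose $w\in\D_{\vec{\bs}(i,\la)}$ and $w$ fixes $\bs(i,ke)$; then $w(t)\equiv t\pmod e$, so writing $w((b-1)e+j)=(\pi_j(b)-1)e+j$ defines, for each $j=1,\dots,e$, a bijection $\pi_j$ of $\{1,\dots,k\}$. Because $w$ is increasing on each $e$-block (every $e$-block sits inside one of the $e\la_m$-blocks, on which a minimal left coset representative is increasing), one gets $\pi_1(b)\le\pi_2(b)\le\dots\le\pi_e(b)$ for every $b$. Summing over $b$ shows $\sum_b\pi_1(b)\le\dots\le\sum_b\pi_e(b)$ with all sums equal to $\binom{k+1}{2}$, forcing $\pi_1=\dots=\pi_e=:\pi$ and hence $w=\hat\pi$. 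Finally the extra constraint that $w$ increases across consecutive $e$-blocks contained in a common $e\la_m$-block is exactly $\pi(b)<\pi(b+1)$, i.e.\ $\pi\in\D_\la$. Note the paper itself uses this identification implicitly when it asserts, in section~\ref{SSBlPermSp}, that $T(i,\la)=e(i,\la)M(i,\la)$, so in the end both proofs rest on the same combinatorics; the difference is purely in how the linear independence is then extracted.
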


\begin{proof}
Part (i) comes from Corollary~\ref{CSiSq}. Part~(ii) follows directly from the definition of $\sigma_r$. For
  part~(iii), by definition
  \begin{align*}
  \tau_r\tau_{r+1}\tau_rv
     &=(\sigma_r+1)(\sigma_{r+1}+1)
         (\sigma_r+1)v\\
     &=(\sigma_r\sigma_{r+1}\sigma_r
     +\sigma_r\sigma_{r+1}+\sigma_{r+1}\sigma_r+\sigma_r^2
     +2\sigma_r+\sigma_{r+1}+1)v.
  \end{align*} 
Theorem~\ref{sigma braid} and Corollary~\ref{CSiSq} now imply (iii).
So we obtain the action of the symmetric group $\Si_k$ on $T(i,\la)$ with Coxeter generators $s_r$ acting as $\tau_r$ for all $r=1,\dots,k-1$. 

For the final statement of the proposition, consider the parabolic subgroup $\Si_\la\leq \Si_k$ generated by 
$$
\{s_r\mid r\neq \la_1+\dots+\la_a\ \text{for all $a=1,\dots,n$}\}.
$$ 
Note from the definition of $T(i,\la)$ that $T(i,\la)$ is an $\O$-span of all elements of the form
$\tau_{r_1}\dots\tau_{r_a}m(i,\la)$. Moreover, $\O\cdot m(i,\la)\cong\triv{\la}$ is the trivial module
of~$\Si_\la$ because if $s_r\in\Si_\la$ then $\tau_r m(i,\la)=m(i,\la)$ since $\sigma_rm(i,\la)=0$  by
(\ref{EZ}). So we have a surjective homomorphism from $\ind_{\O\Si_\la}^{\O\Si_k}\triv{\la}$ onto $T(i,\la)$,
which sends the natural cyclic generator of $\ind_{\O\Si_\la}^{\O\Si_k}\triv{\la}$ onto $m(i,\la)$. The
injectivity of this map follows from Theorem~\ref{TMBasis}, which describes an $\O$-basis for $M(i,\la)$,
together with the observation that the transition matrix for the change of basis from the products of the
$\tau_r$ to the corresponding products of the $\si_r$ is unitriangular. 
\end{proof}

\section{Homogeneous Garnir relations}\label{SHGR}
In this section we define universal graded (row) Specht modules $S^\mu$ for $R_\al$ by generators and relations, see
Definition~\ref{DSpecht}. This definition will be justified in Theorem~\ref{TMain} when
we show that these universal graded Specht modules are isomorphic to the usual graded Specht modules from~\cite{BKW,HM}.

\subsection{Row Garnir tableaux}\label{SGTab}
The definitions here differ slightly from those given in \cite{BKW} but match those in \cite{MathasB}.
Let $A=(a,b,m)$ be a node of $\mu\in\Par$. Then $A$ is a {\em (row) Garnir node} if $(a+1,b,m)$ is
also a node of $\mu$. 
The {\em (row)  $A$-Garnir belt} $\Belt^A$ is the set of nodes
$$\Belt^A=\set{(a,c,m)\in\mu| b\leq c\leq \mu^{(m)}_{a}}\cup
          \set{(a+1,c,m)\in\mu|1\leq c\leq b}. $$
For example,
if $A=(2,3,2)$ then the $A$-Garnir belt $\Belt^A$ for $\mu=((1),(7,7,4,1))$ is highlighted below:  
$$\begin{array}{l}
\YoungDiagram{1}\\[10pt]
\begin{tikzpicture}[scale=0.5,draw/.append style={thick,black}]
  \newcount\col
  \foreach\Row/\row in {{,,,,,,}/0,{,,A,,,,}/-1,{,,,}/-2,{{}}/-3} {
     \col=1
     \foreach\k in \Row {
        \draw(\the\col,\row)+(-.5,-.5)rectangle++(.5,.5);
        \draw(\the\col,\row)node{\k};
        \global\advance\col by 1
      }
   }
   \draw[red,double,very thick]
     (2.5,-0.5)--++(5,0)--++(0,-1)--++(-4,0)--++(0,-1)--++(-3,0)--++(0,1)--++(2,0)--cycle;
\end{tikzpicture}
\end{array}$$
The (row) {\em $A$-Garnir tableau}  is the $\mu$-tableaux $\G^A$ defined as follows. Let $u=\T^\mu(a,b,m)$ and $v=\T^\mu(a+1,b,m)$. Now insert  the numbers $u,u+1,\dots,v$ into the nodes of the Garnir belt going from left bottom to top right, and the other numbers into the same positions as in $\T^\mu$. 
Continuing the previous example, 
$u=11, v=18$, and $T^\mu$ and the $(2,3,2)$-Garnir tableau are: 
$$
\T^\mu=\begin{array}{l}
\Tableau{{1}}\\[10pt]
\begin{tikzpicture}[scale=0.5,draw/.append style={thick,black}]
  \newcount\col
  \foreach\Row/\row in {{2,3,4,5,6,7,8}/0,{9,10,11,12,13,14,15}/-1,{16,17,18,19}/-2,{20}/-3} {
     \col=1
     \foreach\k in \Row {
        \draw(\the\col,\row)+(-.5,-.5)rectangle++(.5,.5);
        \draw(\the\col,\row)node{\k};
        \global\advance\col by 1
      }
   }
   \draw[red,double,very thick]
     (2.5,-0.5)--++(5,0)--++(0,-1)--++(-4,0)--++(0,-1)--++(-3,0)--++(0,1)--++(2,0)--cycle;
\end{tikzpicture}
\end{array},\quad
\G^A=\begin{array}{l}
\Tableau{{1}}\\[10pt]
\begin{tikzpicture}[scale=0.5,draw/.append style={thick,black}]
  \newcount\col
  \foreach\Row/\row in {{2,3,4,5,6,7,8}/0,{9,10,14,15,16,17,18}/-1,{11,12,13,19}/-2,{20}/-3} {
     \col=1
     \foreach\k in \Row {
        \draw(\the\col,\row)+(-.5,-.5)rectangle++(.5,.5);
        \draw(\the\col,\row)node{\k};
        \global\advance\col by 1
      }
   }
   \draw[red,double,very thick]
     (2.5,-0.5)--++(5,0)--++(0,-1)--++(-4,0)--++(0,-1)--++(-3,0)--++(0,1)--++(2,0)--cycle;
\end{tikzpicture}
\end{array}$$

\begin{Lemma}\label{LAgrees} 
Suppose that $\mu\in\Par$, $A$ is a Garnir node of~$\mu$, and $\Stab\in\St(\mu)$.
If $\Stab\gdom\G^A$ then $\Stab$ agrees with $\T^\mu$ outside the $A$-Garnir belt. 
\end{Lemma}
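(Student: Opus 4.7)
The plan is to exploit the equivalent description of dominance on standard tableaux in terms of the shapes of initial subtableaux (as used already in the proof of Lemma~\ref{eBruhat}(ii)), combined with the observation that $\G^A$ agrees with $\T^\mu$ on the nose outside the Garnir belt. For $\U \in \St(\mu)$ and $1 \le k \le d$, let $\U_{\le k}$ be the subtableau of $\U$ formed by the entries $1, \ldots, k$.

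First, I would invoke the standard fact (see \cite[Theorem~3.8]{MathasB}) that for $\U, \V \in \St(\mu)$ one has $\U \gedom \V$ if and only if $\sh(\U_{\le k}) \gedom \sh(\V_{\le k})$ for every $k$. Since $\T^\mu$ is the maximal element of $\St(\mu)$ under dominance, we automatically have $\Stab \ledom \T^\mu$, so $\sh(\T^\mu_{\le k}) \gedom \sh(\Stab_{\le k})$ for all $k$.

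Next, set $u = \T^\mu(a,b,m)$ and $v = \T^\mu(a+1,b,m)$. By construction of $\G^A$, the entries $u, u+1, \ldots, v$ fill exactly the nodes of the Garnir belt $\Belt^A$, while every other entry occupies in $\G^A$ the same node as in $\T^\mu$. Consequently, for every $k$ with $k < u$ or $k \ge v$, the subtableaux $\G^A_{\le k}$ and $\T^\mu_{\le k}$ cover the same set of nodes and hence have the same shape. Combining this with the hypothesis $\Stab \gedom \G^A$ yields, for such $k$,
\[
\sh(\Stab_{\le k}) \gedom \sh(\G^A_{\le k}) = \sh(\T^\mu_{\le k}) \gedom \sh(\Stab_{\le k}),
\]
which forces $\sh(\Stab_{\le k}) = \sh(\T^\mu_{\le k})$.

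Finally, the node occupied by the entry $k$ in any standard tableau is determined by the shapes of its initial subtableaux of sizes $k-1$ and $k$. Applying the shape equality from the previous step across $1 \le k \le u-1$ and across $v \le k \le d$ pins down the positions of the entries $1, \ldots, u-1$ and $v+1, \ldots, d$ in $\Stab$ to be exactly those in $\T^\mu$; the remaining entries $u, \ldots, v$ must then occupy the Garnir belt, as required. I do not anticipate any serious obstacle; the only care needed is the bookkeeping point that the equalities at $k = v$ and $k = v+1$ together are what pin down the position of the entry $v+1$, and so on up the chain. Notably, only $\Stab \gedom \G^A$ is used, so the strict inequality in the hypothesis plays no role in this particular conclusion.
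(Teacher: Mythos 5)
Your argument is correct and takes a genuinely different route from the paper. The paper simply cites \cite[Lemma~3.9]{BKW} (which already asserts the conclusion when $\Stab\gedom s_r\G^A$ for the unique $r$ making $s_r\G^A$ standard) together with Lemma~\ref{L211108} (which upgrades $\Stab\gdom\G^A$ to $\Stab\gedom s_r\G^A$). You instead give a self-contained, elementary argument via the initial-subtableau characterization of dominance, sandwiching $\sh(\Stab_{\le k})$ between $\sh(\G^A_{\le k})$ and $\sh(\T^\mu_{\le k})$ and observing these coincide outside the window $u\le k<v$. Two buys: your argument is shorter and avoids reliance on the technical Lemma~3.9 of \cite{BKW}; the paper's route is more modular since it reuses results it needs anyway for the spanning/basis arguments.

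One point of rigor you should tighten: you state the initial-subtableau characterization for $\U,\V\in\St(\mu)$ and then apply it with $\V=\G^A$, which is row-strict but not standard. \cite[Theorem~3.8]{MathasB} does in fact hold at the level of row-standard tableaux, with $\sh(\T_{\le k})$ interpreted as the composition recording how many of the entries $1,\dots,k$ lie in each row (and this composition determines the set of occupied nodes because row-standard tableaux fill each row from the left). It is worth saying this explicitly, because for $u\le k<v$ the set of nodes $\G^A_{\le k}$ is generally not a Young diagram, so ``shape'' must be read as a composition. This does not affect your proof since you only ever compare shapes at the values $k<u$ and $k\ge v$, where $\G^A_{\le k}$ and $\T^\mu_{\le k}$ literally coincide as sets of nodes, but the invocation of the theorem needs to be phrased for row-standard tableaux for the step $\Stab\gedom\G^A \Rightarrow \sh(\Stab_{\le k})\gedom\sh(\G^A_{\le k})$ to be licensed at all.
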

\begin{proof}
This follows from \cite[Lemma 3.9]{BKW} and  Lemma~\ref{L211108}. 
\end{proof}


The importance of the Garnir tableaux comes from the following:

\begin{Lemma} \label{LAndrew}%
Suppose that $\mu\in\Par_d$ and that $\T$ is a row-strict $\mu$-tableau which is not standard. Then
there exists a Garnir tableau $\G=\G^A$, for $A\in\mu$ a row Garnir node,  and $w\in S_d$ such that 
$\T=w\G$ and $\ell(w^\T)=\ell(w^\G)+\ell(w)$. 
\end{Lemma}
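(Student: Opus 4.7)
The plan is to choose the row Garnir node $A$ from the combinatorial data of $\T$, construct $w$, and then verify the length condition.

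Since $\T$ is row-strict but not standard, there is at least one position $(a,b,m)\in\mu$ with $(a+1,b,m)\in\mu$ and $\T(a,b,m)>\T(a+1,b,m)$. Among all such column violations, I would choose the one for which the $\T^\mu$-label of the bottom witness $(a+1,b,m)$ is smallest---equivalently, for which $(a+1,b,m)$ is earliest in the row-reading order of $\T^\mu$---and set $A:=(a,b,m)$, which is automatically a row Garnir node. Having fixed $A$, write $\G=\G^A$ and define $w:=w^\T(w^\G)^{-1}\in\Si_d$, which satisfies $w\G=\T$ by construction.

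The content of the lemma is then the length identity $\ell(w^\T)=\ell(w^\G)+\ell(w)$, equivalent to $w^\G$ being a right factor of $w^\T$ in the right weak Bruhat order. To verify this, I would work with explicit inversion counts. The permutation $w^\G$ has a particularly clean structure: its inversions are precisely the pairs of labels $(k,k')$ with $k$ in the upper part of the Garnir belt of $\T^\mu$ (the labels $u,u+1,\dots,v-b$, where $u=\T^\mu(a,b,m)$ and $v=\T^\mu(a+1,b,m)$) and $k'$ in the lower part (the labels $v-b+1,\dots,v$), reflecting the single ``block swap'' performed by $\G$. The central claim to prove is that every such pair is also an inversion of $w^\T$; this is where the minimality of $v$ enters, ensuring that labels up to $v$ are arranged in $\T$ in a way compatible with the ``upper entries above lower entries'' structure of the belt.

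Once this inversion inclusion $\mathrm{Inv}(w^\G)\subseteq\mathrm{Inv}(w^\T)$ is established, a standard combinatorial argument identifies the remaining inversions of $w^\T$ (via conjugation by $w^\G$) with the inversions of $w$, yielding the length identity. The main obstacle will be the inversion inclusion itself: it requires showing that the earliest-violation choice of $A$ really does force the required placement of the belt labels in $\T$, even when $\T$ and $\T^\mu$ differ at positions outside the belt. The tools most likely to be useful here are Lemma~\ref{L211108}, which governs how dominance interacts with adjacent swaps of row-bordering entries, and the shape-of-subtableaux description of the Bruhat order from \cite[Theorem 3.8]{MathasB} invoked in the proof of Lemma~\ref{eBruhat}(ii).
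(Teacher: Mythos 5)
The paper disposes of this lemma with a citation to \cite[Lemma 3.14]{MathasB} (for $l=1$) and a remark that the general case follows easily. Your proposal is a genuinely different, direct route, and its overall architecture is sound: set $w:=w^\T(w^\G)^{-1}$ so that $\T=w\G$ automatically, and reduce the length identity to the inversion inclusion $\mathrm{Inv}(w^\G)\subseteq\mathrm{Inv}(w^\T)$ via the standard fact that $\ell(uv)=\ell(u)+\ell(v)$ iff $\mathrm{Inv}(v)\subseteq\mathrm{Inv}(uv)$. Your description of $\mathrm{Inv}(w^\G)$ as the set of pairs $(k,k')$ with $k\in\{u,\dots,v-b\}$ (belt-top labels) and $k'\in\{v-b+1,\dots,v\}$ (belt-bottom labels) is also correct.

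Where the proposal goes astray is in what you identify as the main obstacle. The inversion inclusion is \emph{immediate} from row-strictness of $\T$ together with the single column violation, and it holds for \emph{any} Garnir node $A=(a,b,m)$ at which $\T(a,b,m)>\T(a+1,b,m)$: the position in $\T^\mu$ of a label $k\in\{u,\dots,v-b\}$ is $(a,b+k-u,m)$ with $b+k-u\ge b$, and that of a label $k'\in\{v-b+1,\dots,v\}$ is $(a+1,k'-v+b,m)$ with $k'-v+b\le b$, so
$$\T(a,b+k-u,m)\;\ge\;\T(a,b,m)\;>\;\T(a+1,b,m)\;\ge\;\T(a+1,k'-v+b,m)$$
by row-strictness. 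Thus $(k,k')\in\mathrm{Inv}(w^\T)$ with no further input. In particular your minimality stipulation on the choice of $A$ is superfluous, and neither Lemma~\ref{L211108} nor the shape-of-subtableaux description of the Bruhat order is needed. Once you replace the flagged obstacle with this one-line observation, your proof is complete and correct, and it is arguably cleaner than relying on the external reference; it also makes transparent that the higher-level case imposes no extra difficulty, since the belt lies in a single component.
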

\begin{proof}
If $l=1$ this is \cite[Lemma 3.14]{MathasB}, and the general case follows  easily from the case $l=1$. 
\end{proof}

\subsection{Bricks}\label{SSBricks}
Fix $\mu\in\Par_d$ and a Garnir node $A=(a,b,m)\in\mu$. 
A (row $A$-){\em brick} is a set of $e$ successive nodes in the same row 
$$\{(c,d,m),(c,d+1,m),\dots,(c,d+e-1,m)\}\subseteq \Belt^A$$ such that
$\res(c,d,m)=\res A$. Note that $\Belt^A$ is a disjoint union of the bricks that it contains together
with less than $e$ nodes at the end of row $a$ which are not contained in a brick and less than $e$ nodes
at the beginning of row $a+1$ which are not contained in a brick. 

Let $k=k^A$ be the number of bricks in $\Belt^A$. We label the bricks 
$$B_1^A,B_2^A,\dots, B_k^A$$  
going from left to right along row $a+1$ and then from left to right along row $a$ of $\G^A$ as in the example above. Of course, it might happen that $\Belt^A$ does not contain any bricks (this is always true
if $e=0$), in which case $k=0$.

For example, the following diagram shows the bricks in the $(2,3,2)$-Garnir belt of
$\mu=\big((1),(7,7,4,1)\big)$ when $e=2$: 
$$\begin{array}{l}
\Tableau{{1}}\\[-20pt]
\begin{tikzpicture}[scale=0.5,draw/.append style={thick,black}]
  \fill[orange!30](2.5,-1.5)rectangle(4.5,-0.5);
  \fill[blue!40](4.5,-1.5)rectangle(6.5,-0.5);
  \fill[blue!40](1.5,-2.5)rectangle(3.5,-1.5);
  \newcount\col
  \foreach\Row/\row in {{2,3,4,5,6,7,8}/0,{9,10,14,15,16,17,18}/-1,{11,12,13,19}/-2,{20}/-3} {
     \col=1
     \foreach\k in \Row {
        \draw(\the\col,\row)+(-.5,-.5)rectangle++(.5,.5);
        \draw(\the\col,\row)node{\k};
        \global\advance\col by 1
      }
   }
   \draw[red,double,very thick](2.5,-1.5)--(2.5,-0.5)--(7.5,-0.5)--(7.5,-1.5)--(3.5,-1.5)
                     --(3.5,-2.5)--(0.5,-2.5)--(0.5,-1.5)--(2.5,-1.5);
   \draw[red,double,very thick](4.5,-0.5)--(4.5,-1.5);
   \draw[red,double,very thick](6.5,-0.5)--(6.5,-1.5);
   \draw[red,double,very thick](1.5,-1.5)--(1.5,-2.5);
   \draw[red,double,very thick](2.5,-1.5)--(3.5,-1.5);
   \draw[red,thin,->](3.85,1)node[red,above=0mm]{$B_2^A$}--(3.6,-0.8);
   \draw[red,thin,->](5.85,1)node[red,above=0mm]{$B_3^A$}--(5.6,-0.8);
   \draw[red,thin,->](3.05,-3.5)node[red,below=0mm]{$B_1^A$}--(2.6,-2.2);
\end{tikzpicture}
\end{array}$$
Note that $k=3$, there are two bricks $B_2^A$, $B_3^A$ in row 2 and one brick $B_1^A$ in row 3 of
the second component. 
Further, $(3,1,2)$ and $(2,7,2)$ are the only nodes in the $(2,3,2)$-Garnir belt of~$\G^A$ which are not contained in a brick.

Assume now that $k>0$ and let $n=n^A$ be the smallest entry in $\G^A$ which is contained in a brick in
$\Belt^A$. In the example above, $n=12$. Extending (\ref{w_r}), define
\begin{equation}\label{EWRA}
w_r^A=\prod_{a=n+re-e}^{n+re-1}(a,a+e)\in\Si_d\qquad (1\le r<k).
\end{equation}
Informally, $w_r^A$ swaps the bricks
$B_r^A$ and $B_{r+1}^A$. The elements 
$w_1^A,w_2^A,\dots,w_{k-1}^A$ are the Coxeter generators of the
symmetric group 
$$\Si^A:=\langle w_1^A,w_2^A,\dots,w_{k-1}^A\rangle\cong \Si_k.$$ 
We call $\Si^A$ the (row) {\em brick permutation group}.  By convention,
$\Si^A$ is the trivial group if $k=0$. 

Let $\Gar^A$ be the set of all row-strict $\mu$-tableaux which are obtained from the Garnir tableau $\G^A$ by brick permutations; that is, by acting with the brick permutation group $\Si^A$ on $\G^A$. Note that all of the tableaux in $\Gar^A$, except for $\G^A$, are standard. Moreover, $\G^A$ is the minimal element of $\Gar^A$, with respect to the Bruhat order, and there is a unique maximal tableaux $\T^A$ in $\Gar^A$. Further, by definition, if $\T\in\Gar^A$ then $\bi(\T)=\bi(\G^A)$. Consequently, we let $\bi^A=\bi(\G^A)$ be this common residue sequence. 

Define $f=f^A$ to be the number of $A$-bricks in row $a$ of the Garnir belt $\Belt^A$. Finally, let $\D^A$ be
the set of minimal length left coset representations of $\Si_f\times\Si_{k-f}$ in $\Si^A\cong\Si_k$. Note that by definition $\Si^A$ is a subgroup of $\Si_d$, so $\D^A$ is a subset of $\Si_d$ and, in particular, its elements act on $\mu$-tableaux. Note that
\begin{equation}\label{EGarD}
\Gar^A=\{w\T^A\mid w\in\D^A\}. 
\end{equation}

Continuing the example above, $\T^A$ is the tableau
$$\T^A=\begin{array}{l}
\Tableau{{1}}\\[-20pt]
\begin{tikzpicture}[scale=0.5,draw/.append style={thick,black},baseline=1mm]
  \fill[orange!50](2.5,-1.5)rectangle(4.5,-0.5);
  \fill[blue!30](4.5,-1.5)rectangle(6.5,-0.5);
  \fill[blue!30](1.5,-2.5)rectangle(3.5,-1.5);
  \newcount\col
  \foreach\Row/\row in {{2,3,4,5,6,7,8}/0,{9,10,12,13,14,15,18}/-1,{11,16,17,19}/-2,{20}/-3} {
     \col=1
     \foreach\k in \Row {
        \draw(\the\col,\row)+(-.5,-.5)rectangle++(.5,.5);
        \draw(\the\col,\row)node{\k};
        \global\advance\col by 1
      }
   }
   \draw[red,double,very thick](2.5,-1.5)--(2.5,-0.5)--(7.5,-0.5)--(7.5,-1.5)--(3.5,-1.5)
                     --(3.5,-2.5)--(0.5,-2.5)--(0.5,-1.5)--(2.5,-1.5);
   \draw[red,double,very thick](4.5,-0.5)--(4.5,-1.5);
   \draw[red,double,very thick](6.5,-0.5)--(6.5,-1.5);
   \draw[red,double,very thick](1.5,-1.5)--(1.5,-2.5);
   \draw[red,double,very thick](2.5,-1.5)--(3.5,-1.5);
   \draw[red,thin,->](3.85,1)node[red,above=0mm]{$B_1$}--(3.6,-0.8);
   \draw[red,thin,->](5.85,1)node[red,above=0mm]{$B_2$}--(5.6,-0.8);
   \draw[red,thin,->](3.05,-3.5)node[red,below=0mm]{$B_3$}--(2.6,-2.2);
\end{tikzpicture},
\end{array}$$
and 
$\Gar^A=\{\T^A,\Stab:=w_2^A\T^A,\G^A=w_1^Aw_2^A\T^A\}.$
Recall from section~\ref{SSPar} that the
residues of the nodes are determined by a fixed choice of the multicharge~$\kappa$. If we take
$\kappa=(0,0)$ in our example above with $e=2$ then the residues of the nodes in $\mu$ are as follows:
$$\begin{array}{l}
\Tableau{{0}}\\ \Tableau{{0,1,0,1,0,1,0},{1,0,1,0,1,0,1},{0,1,0,1},{1}}
\end{array}$$
Recalling the notation (\ref{EPsiT}) and using Khovanov-Lauda diagrams, we have
\begin{align*}
\psi^{\T^A}e(\bi^\mu)&=\begin{braid}
     \tikzset{baseline=9mm}
     \foreach \x/\r in {1/0,2/0,3/1,4/0,5/1,6/0,7/1,8/0,9/1,10/0,19/1,20/1} {
       \draw(\x,5)node[above]{$\r$}--(\x,0);
     }
     \draw(11,5)node[above]{$1$}--(12,0);
     \draw(12,5)node[above]{$0$}--(13,0);
     \draw(13,5)node[above]{$1$}--(14,0);
     \draw(14,5)node[above]{$0$}--(15,0);
     \draw[darkgreen](15,5)node[above]{$1$}--(18,0);
     \draw[darkgreen](16,5)node[above]{$0$}--(11,0);
     \draw(17,5)node[above]{$1$}--(16,0);
     \draw(18,5)node[above]{$0$}--(17,0);
     \draw[red](10.65,5.15) rectangle++(1.7,0.65);
     \draw[red](12.65,5.15) rectangle++(1.7,0.65);
     \draw[red](16.65,5.15) rectangle++(1.7,0.65);
     \draw[red](11.65,-0.25) rectangle++(1.7,0.65);
     \draw[red](13.65,-0.25) rectangle++(1.7,0.65);
     \draw[red](15.65,-0.25) rectangle++(1.7,0.65);
\end{braid},\\
\psi^{\Stab}e(\bi^\mu)&=\begin{braid}
     \tikzset{baseline=9mm}
     \foreach \x/\r in {1/0,2/0,3/1,4/0,5/1,6/0,7/1,8/0,9/1,10/0,19/1,20/1} {
       \draw(\x,5)node[above]{$\r$}--(\x,0);
     }
     \draw(11,5)node[above]{$1$}--(12,0);
     \draw(12,5)node[above]{$0$}--(13,0);
     \draw(13,5)node[above]{$1$}--(16,0);
     \draw(14,5)node[above]{$0$}--(17,0);
     \draw[darkgreen](15,5)node[above]{$1$}--(18,0);
     \draw[darkgreen](16,5)node[above]{$0$}--(11,0);
     \draw(17,5)node[above]{$1$}--(14,0);
     \draw(18,5)node[above]{$0$}--(15,0);
     \draw[red](10.65,5.15) rectangle++(1.7,0.65);
     \draw[red](12.65,5.15) rectangle++(1.7,0.65);
     \draw[red](16.65,5.15) rectangle++(1.7,0.65);
     \draw[red](11.65,-0.25) rectangle++(1.7,0.65);
     \draw[red](13.65,-0.25) rectangle++(1.7,0.65);
     \draw[red](15.65,-0.25) rectangle++(1.7,0.65);
\end{braid},\\
\intertext{and}
\psi^{\G^A}e(\bi^\mu)&=\begin{braid}
     \tikzset{baseline=9mm}
     \foreach \x/\r in {1/0,2/0,3/1,4/0,5/1,6/0,7/1,8/0,9/1,10/0,19/1,20/1} {
       \draw(\x,5)node[above]{$\r$}--(\x,0);
     }
     \draw(11,5)node[above]{$1$}--(14,0);
     \draw(12,5)node[above]{$0$}--(15,0);
     \draw(13,5)node[above]{$1$}--(16,0);
     \draw(14,5)node[above]{$0$}--(17,0);
     \draw[darkgreen](15,5)node[above]{$1$}--(18,0);
     \draw[darkgreen](16,5)node[above]{$0$}--(11,0);
     \draw(17,5)node[above]{$1$}--(12,0);
     \draw(18,5)node[above]{$0$}--(13,0);
     \draw[red](10.65,5.15) rectangle++(1.7,0.65);
     \draw[red](12.65,5.15) rectangle++(1.7,0.65);
     \draw[red](16.65,5.15) rectangle++(1.7,0.65);
     \draw[red](11.65,-0.25) rectangle++(1.7,0.65);
     \draw[red](13.65,-0.25) rectangle++(1.7,0.65);
     \draw[red](15.65,-0.25) rectangle++(1.7,0.65);
\end{braid}.
\end{align*}
The circles in these diagrams correspond to the bricks in the Garnir belt.

The degree statement in the following lemma is what will guarantee the homogeneity of our Garnir relations. This result is implicit in the proof of \cite[Proposition~3.16]{BKW}. 

\begin{Lemma}\label{L10910}
  Suppose that $\mu\in\Par_d$ and $A\in\mu$ is a Garnir node. Then
$$
\Gar^A\setminus\{\G^A\}=\{\T\in\St(\mu)\mid \T\gedom\G^A\ \text{and}\ \bi(\T)=\bi^A\}.
$$
Moreover, $\deg(\T)=\deg(\G^A)$ for all $\T\in\Gar^A$. 
\end{Lemma}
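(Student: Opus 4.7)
I will address the two claims in turn.

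For the set equality, the forward inclusion is direct. Given $\T \in \Gar^A \setminus \{\G^A\}$, write $\T = \pi \cdot \G^A$ for some nontrivial $\pi \in \Si^A$. Then $\T$ agrees with $\G^A$ (and hence with $\T^\mu$) outside the Garnir belt, and inside the belt the bricks have been shuffled by $\pi$. Since every brick shares the same residue pattern $(\res A, \res A+1, \ldots, \res A + e - 1)$, shuffling bricks leaves the residue sequence intact, so $\bi(\T) = \bi^A$. Standardness of $\T$ is built into the definition of $\Gar^A$, and $\T \gedom \G^A$ because $\G^A$ is the minimum of $\Gar^A$ in the Bruhat order, as stated in the text.

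For the reverse inclusion, suppose $\T \in \St(\mu)$ with $\T \gedom \G^A$ and $\bi(\T) = \bi^A$. Since $\G^A$ is not standard we have $\T \neq \G^A$, so $\T \gdom \G^A$, and Lemma~\ref{LAgrees} then guarantees that $\T$ agrees with $\T^\mu$ outside the Garnir belt; hence $\T$ redistributes the entries $\{u, u+1, \ldots, v\}$ inside the belt. The residual positions at the start of row $a+1$ and at the end of row $a$ (those not contained in any brick) are forced to hold specific entries: row-strictness together with the entries of $\T^\mu$ just outside the belt leaves no choice but to place the smallest residual entries in order at the start of row $a+1$ and the largest in order at the end of row $a$. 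For the remaining entries filling brick positions, the constraint $\bi(\T) = \bi^A$ combined with row-strictness across the brick positions within each of rows $a$ and $a+1$ forces each brick position to contain $e$ consecutive integers, and the distribution of these $e$-blocks among the $k$ brick positions corresponds precisely to a minimal length coset representative in $\D^A$, placing $\T$ in $\Gar^A$.

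For the degree equality, I would parametrize $\Gar^A$ via $\D^A$ and induct on the length of the coset representative. Suppose $\T, \T' \in \Gar^A$ are adjacent in this parametrization, differing by an elementary move that transfers one brick between rows $a$ and $a+1$. When $\deg$ is computed by the inductive formula~(\ref{EDegTab}) by peeling entries from largest to smallest, this move changes the positions of the $e$ entries in the transferred brick while preserving their residues. The change in the running contribution $\sum_r d_{A_r}(\nu_r)$ amounts to summing the net change in addable-minus-removable $i$-node counts, for each residue $i \in I$, over the $e$ affected peels. Because a brick's residue pattern is a complete cycle through $I$, these changes cancel, giving $\deg(\T) = \deg(\T')$; this is the calculation implicit in the proof of~\cite[Proposition~3.16]{BKW}.

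The main obstacle is this last cancellation. The peeling procedure produces different intermediate shapes $\nu_r$ for $\T$ and $\T'$, so one cannot compare terms directly; instead one must track how addable and removable $i$-node counts shift when a brick moves between rows. The key observation enabling the cancellation is that a single brick cycles through all residues in $I$ exactly once, so the imbalances introduced at each residue telescope and the total degree is preserved.
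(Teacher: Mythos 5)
The paper does not prove this lemma --- it says only that the result is implicit in the proof of \cite[Proposition~3.16]{BKW} --- so there is no in-paper argument against which to compare your attempt; I evaluate it on its own. The forward inclusion is fine, and you have correctly located where the substance lies, but in neither the reverse inclusion nor the degree claim is the substance actually supplied.

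\emph{Reverse inclusion.} The assertion that row-strictness together with $\bi(\T)=\bi^A$ ``forces each brick position to contain $e$ consecutive integers'' distributed ``precisely'' as a coset representative is exactly the claim to be proved, not an observation. To make it a proof one should use that the residue of the entry $u+j$ of $\G^A$ equals $k_m+j-a$ for every $j$, \emph{whether that entry sits in row $a$ or row $a+1$} of the belt; together with the fact that residues advance by one per column along each row, this constrains modulo $e$ which indices $j$ may be placed in each row, and it is this modular constraint that forces the partition of the belt entries to respect the brick boundaries. Without this (or some equivalent) step the argument is a description rather than a proof; note also that you never actually invoke column-strictness, even though you are proving a statement about standard tableaux.

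\emph{Degree equality.} You flag the telescoping cancellation as ``the main obstacle'' and then do not provide it. The observation that the intermediate shapes $\nu_r$ differ is precisely what makes the direct peeling argument hard; stating that the imbalances telescope is not a proof. A shorter route, entirely available inside the paper, avoids the peeling. By \cite[Corollary~3.14]{BKW}, $\deg(\T)-\deg(\T^\mu)=\deg\bigl(\psi^\T e(\bi^\mu)\bigr)$; for $\T\in\Gar^A$ write $w^\T=u^\T w^{\T^A}$ with $u^\T\in\D^A$ and lengths additive, where $u^\T$ is a word in the generators $w^A_r$. Each $\psi_{w^A_r}e(\bi^A)$ has degree zero, because a brick passes through each residue of $I$ exactly once and $\sum_{j\in I}a_{ij}=(\al_i,\de)=0$; this is the same observation $\deg(\si_r e(i,\la))=0$ already made in Section~\ref{SSBlPermSp}. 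Hence $\deg\bigl(\psi^\T e(\bi^\mu)\bigr)$ is independent of $\T\in\Gar^A$, and therefore so is $\deg(\T)$. This replaces the entire peeling computation with a single degree count.
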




\subsection{\boldmath The row permutation modules $M^\mu$}
Let $\mu\in \Par_\al$ be a multipartition with (non-empty) rows $R_1,\dots,R_g$ counted from
top to bottom. If a row $R_a$ has length $N$ and the leftmost node of $R_a$ has residue $i$
we associate the segment $\br(a):=\bs(i,N)$ to $R_a$. Let $\vec{\br}=(\br(1),\dots,\br(g))$,
and, recalling the definitions from section~\ref{SSPerm}, put 
$$
M^\mu=M^\mu(\O):=M(\vec{\br})\,\<\deg \T^\mu\>.
$$ 
Note the degree shift by $\deg \T^\mu$, the significance of which is explained by Theorem~\ref{TMain} below. 
The module $M^\mu$ is generated by the vector
$
m^\mu:=m(\vec{\br}) 
$
of degree $\deg \T^\mu$. Recalling (\ref{EPsiT}), for any $\mu$-tableau $\T$ we define 
$$
m^\T:=\psi^\T m^\mu.
$$
The following is a special case of Theorem~\ref{TMBasis}:

\begin{Theorem} \label{TMMUBasis}
Suppose that $\al\in Q_+$ and $\mu\in\Par_\al$. Then 
$$
\{m^\T\mid \text{$\T$ is a row-strict $\mu$-tableau}\}
$$
is an $\O$-basis of $M^\mu$.  
\end{Theorem}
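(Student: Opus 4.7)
The plan is to deduce Theorem~\ref{TMMUBasis} directly from Theorem~\ref{TMBasis} by carefully unpacking the definitions. First I would identify the relevant segment tuple and parabolic: if the rows $R_1,\dots,R_g$ of $\mu$ have lengths $\la_1,\dots,\la_g$, then by construction the associated tuple of segments is $\vec{\br}=(\br(1),\dots,\br(g))$, and the corresponding parabolic subgroup is $\Si_{\vec{\br}}=\Si_{\la_1}\times\dots\times\Si_{\la_g}$. A direct inspection of the definition of $\T^\mu$ (numbers inserted along rows, from top to bottom) shows that $\Si_{\vec{\br}}$ coincides with the row stabilizer of $\T^\mu$ under the left action of $\Si_d$ on $\mu$-tableaux. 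Moreover $\bj(\vec{\br})=\bi(\T^\mu)=\bi^\mu$, so $m^\mu=m(\vec{\br})$ (up to the degree shift).

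Next I would set up the bijection between row-strict $\mu$-tableaux and the set $\mathscr{D}_{\vec{\br}}$ of minimal length left coset representatives of $\Si_{\vec{\br}}$ in $\Si_d$. Each left coset $w\Si_{\vec{\br}}$ has a unique element of minimal length, and because $\Si_{\vec{\br}}$ acts by permuting entries within rows of $\T^\mu$, each coset contains exactly one tableau $w\T^\mu$ whose rows are increasing, namely the one obtained by sorting rows. Thus the assignment $\T\mapsto w^\T$, where $w^\T\T^\mu=\T$ (as in (\ref{EWT})), gives the desired bijection, and $m^\T=\psi^\T m^\mu=\psi_{w^\T}m(\vec{\br})$.

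Combining these two observations, the set $\{m^\T\mid \T \text{ row-strict}\}$ coincides with $\{\psi_w m(\vec{\br})\mid w\in\mathscr{D}_{\vec{\br}}\}$, which is an $\O$-basis of $M(\vec{\br})=M^\mu$ (as an ungraded module) by Theorem~\ref{TMBasis}. The degree shift $\<\deg\T^\mu\>$ in the definition of $M^\mu$ only redistributes degrees and does not affect the basis assertion.

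The only genuinely subtle point is the dependence of $\psi^\T$ on the chosen reduced decomposition of $w^\T$; this is controlled by Proposition~\ref{PSubtle}, which shows that changing reduced decompositions alters $\psi_w m(\vec{\br})$ only by a linear combination of $\psi_u f(y)e(\bj(\vec{\br}))m(\vec{\br})$ with $u<w$, and such terms already lie in the span of the lower basis elements (indeed $f(y)e(\bj(\vec{\br}))m(\vec{\br})$ reduces to $\delta_{f,\mathrm{const}}m(\vec{\br})$ since $y_t m(\vec{\br})=0$). Hence the change-of-basis matrix between any two such choices is unitriangular with respect to Bruhat order, so the conclusion of Theorem~\ref{TMBasis} transfers to our preferred elements $m^\T$, completing the proof.
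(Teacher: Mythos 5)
Your argument is correct and follows the same route as the paper, which simply states that the theorem is a special case of Theorem~\ref{TMBasis}. The identification of $\Si_{\vec{\br}}$ with the row stabilizer of $\T^\mu$, the observation that $\bj(\vec{\br})=\bi^\mu$, and the bijection $w\mapsto w\T^\mu$ between $\mathscr{D}_{\vec{\br}}$ and the row-strict $\mu$-tableaux are exactly the ingredients that make Theorem~\ref{TMBasis} specialize to the claim, and your remark that the degree shift $\<\deg\T^\mu\>$ does not affect the basis property is right.

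One small point of clarification: your final paragraph about dependence on the reduced decomposition addresses a subtlety that does not actually arise here. The paper fixes one preferred reduced decomposition for each $w\in\Si_d$ once and for all (just before Proposition~\ref{PSubtle}), and both $\psi_w$ in Theorem~\ref{TMBasis} and $\psi^\T=\psi_{w^\T}$ in the definition of $m^\T$ refer to those same fixed choices. So the two bases are not merely unitriangularly related---they are literally the same set. That said, the argument you give (using Proposition~\ref{PSubtle} together with $y_t m(\vec{\br})=0$ to see that changing decompositions only produces lower-order corrections) is valid and would be relevant if one wished to show the basis assertion is independent of the initial global choice, so it does no harm to include it.
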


\subsection{\boldmath Universal row Specht modules $S^\mu$}

Fix a Garnir node $A\in\mu$, and let $\Si^A$ be the corresponding block permutation group with
generators $w_1^A,\dots,w_{k-1}^A$ as defined in section~\ref{SSBricks}. Using the notation of~(\ref{EWRA}), we define 
\begin{equation}\label{ESiTauA}
\si_r^A:=\psi_{w_r^A}e(\bi^A)\quad\text{and}\quad \tau_r^A:=(\si_r^A+1)e(\bi^A),
\end{equation}
cf. section~\ref{SSTau}.
Any element $u\in\Si^A$ can written as a reduced product $u=w_{r_1}^A\dots w_{r_a}^A$ of simple generators $w_1^A,\dots,w_{k-1}^A$ of $\Si^A$. 
In general, the elements $\tau_r^A$ do not have to satisfy Coxeter relations. However, if $u$ is fully commutative then the element
$$
\tau_u^A:=\tau_{r_1}^A\dots \tau_{r_a}^A
$$
is well-defined, since $\tau_r^A$ and $\tau_s^A$ commute for $|r-s|>1$. 
In particular, we have well-defined elements 
$$
\{\tau_u^A\mid u\in \D^A\}.
$$
({\em As  operators on the brick permutation space $T^{\mu,A}$}, defined below, the elements $\tau_r^A$ do satisfy Coxeter relations, see Theorem~\ref{TTauA}(ii).)

Recall from
(\ref{EGarD}) that $\Gar^A$ is the set of row-strict tableaux obtained from the tableau $\T^A$
by acting with the elements of $\D^A$. Note that for any $\Stab\in\Gar^A$,
we can write $w^\Stab=u^\Stab w^{\T^A}$ so that $\ell(w^\Stab)=\ell(u^\Stab)+\ell(w^{\T^A})$
and $u^\Stab\in\D^A$. Moreover, in view of Lemma~\ref{LFullComm}, all elements
$w^\Stab,u^\Stab,$ and $w^{\T^A}$ are fully commutative so the elements
$\psi_{u^\Stab}$, $\psi^{\T^A}$ and
$\psi^\Stab=\psi_{u^\Stab}\psi^{\T^A}$ are all independent of the choice of preferred reduced
decomposition.
Set 
$$
m^A:=m^{\T^A}=\psi^{\T^A}m^\mu\in M^\mu. 
$$

\begin{Definition}
Suppose that $\mu\in\Par_\al$ and $A\in\mu$ is a Garnir node. The {\em (row) Garnir element} is  
$$
g^A:=
\sum_{u\in\D^A}\tau_u^A\psi^{\T^A}\in R_\al.
$$
\end{Definition}

In the module $M^\mu$ we have 
$$g^Am^\mu=\sum_{u\in\D^A}\tau_u^Am^A.$$ 
By Lemma~\ref{L10910} all of the summands on the right hand side have the same degree.  
Finally, if $\D^A=\{1\}$, we have $\G^A=\T^A$ and $g^A=\psi^{\G^A}$. 

\begin{Definition} \label{DSpecht}
Let $\al\in Q_+$, $d=\height(\al)$, and $\mu\in\Par_\al$. Define the {\em universal graded (row) Specht module}\, $S^\mu=S^\mu(\O)$ to be the graded $R_\al$-module generated  by the vector $z^\mu$ of degree $\deg(\T^\mu)$ subject only to the following relations:
\begin{enumerate}
\item[{\rm (i)}] $e(\bj)z^\mu=\de_{\bj,\bi^\mu} z^\mu$ for all $\bj\in I^\al$;
\item[{\rm (ii)}] $y_rz^\mu=0$ for all $r=1,\dots,d$;
\item[{\rm (iii)}] $\psi_rz^\mu=0$ for all $r=1,\dots,d-1$ such that  $r\rightarrow_{\T^\mu}  r+1$;
\item[{\rm (iv)}] ({\em homogeneous Garnir relations})\, $g^A z^\mu=0$, for all (row) Garnir nodes\, $A$ in~$\mu$. 
\end{enumerate}
\end{Definition}

In other words, $S^\mu= (R_\al/J_\al^\mu)\<\deg(\T^\mu)\>$, where $J_\al^\mu$ is the (homogeneous) left ideal of $R_\al$ generated by the elements 
\begin{enumerate}
\item[{\rm (i)}] $e(\bj)-\de_{\bj,\bi^\mu}$ for all $\bj\in I^\al$;
\item[{\rm (ii)}] $y_r$ for all $r=1,\dots,d$;
\item[{\rm (iii)}] $\psi_r$ for all $r=1,\dots,d-1$ such that  $r\rightarrow_{\T^\mu}  r+1$;
\item[{\rm (iv)}] $g^A$\, for all Garnir nodes\, $A\in \mu$. 
\end{enumerate}
In view of (\ref{E:M(s)}), the elements (i)-(iii) generate a left ideal $K^\mu$ such that $R_\al/K^\mu\cong
M^\mu$. So we have a natural surjection $M^\mu\onto S^\mu$, with the kernel $J^\mu$ of this
surjection generated by the Garnir relations $g^Am^\mu=0$. This surjection maps $m^\mu$ to $z^\mu$ and
$J^\mu=J_\al^\mu m^\mu$.

\begin{Remark} 
Our homogeneous Garnir relations are simpler than the ones defined by Young and Garnir in that they have fewer
summands. For example, if $\G^A$ is the only tableau in $\Gar^A$, then the Garnir relation is simply saying
that $\psi^{\G^A}z^\mu=0$. Note that we always have $\Gar^A=\{\G^A\}$ when $e=0$ or $e>d$. 
\end{Remark}

Our main goal is to obtain a basis for the universal Specht modules and to relate them to the
usual Specht modules for cyclotomic Hecke algebras.

\subsection{\boldmath Row brick permutation space $T^{\mu,A}$} \label{SSBrickPerm}
Continuing with the notation of the previous subsection, define the {\em (row) brick permutation space}\, $T^{\mu,A}\subseteq M^\mu$ to be the $\O$-span of all elements of the form 
$
\si_{r_1}^A\dots\si_{r_a}^Am^A,
$
cf. section~\ref{SSBlPermSp}.

\begin{Theorem} \label{TTauA}
Suppose that $\al\in Q_+$, $\mu\in\Par_\al$, $A\in\mu$ is a Garnir node and 
let~$k=k^A$ and~$f=f^A$.  Then:
\begin{enumerate}
\item[{\rm (i)}] $T^{\mu,A}$ is the $\O$-span of all elements of the form 
$
\tau_{r_1}^A\dots\tau_{r_a}^Am^A. 
$
In particular, the elements $\tau_1^A,\dots,\tau_{k-1}^A$ act on $T^{\mu,A}$. 
\item[{\rm (ii)}]  As $\O$-linear operators on  $T^{\mu,A}$, the elements $\tau_1^A,\dots,\tau_{k-1}^A$ satisfy the Coxeter relations for the symmetric group $\Si_k$. Thus, we can consider $T^{\mu,A}$ as an $\O\Si_k$-module. 
\item[{\rm (iii)}] Let $\triv{(f,k-f)}$ be the trivial ${\O\Si_{f,k-f}}$-module. There is an isomorphism of $\O\Si_k$-modules 
$$T^{\mu,A}\cong\ind_{\O\Si_{(f,k-f})}^{\O\Si_k}\triv{(f,k-f)},$$ under which $m^A\in T^{\mu,A}$ corresponds to the natural cyclic generator of the induced module on the right hand side. 
\item[{\rm (iv)}] $\{\tau_u^A m^A\mid u\in\D^A\} 
$
is an $\O$-basis of $T^{\mu,A}$. 
\end{enumerate}
\end{Theorem}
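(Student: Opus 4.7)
The plan is to reduce Theorem~\ref{TTauA} to Theorem~\ref{tau braid} by exploiting the local nature of the key computations. Throughout, set $i = \res A$ and let $n = n^A$ denote the smallest brick entry in $\G^A$. The brick positions of $\G^A$ are then the consecutive positions $n, n+1, \dots, n+ke-1$, and the restriction of $\bi^A$ to this range is the concatenation of $k$ copies of the segment $\bs(i, e)$---exactly the residue pattern underlying Section~\ref{SBI}. Consequently, the elements $\si_r^A = \psi_{w_r^A}e(\bi^A)$ act on the brick region by diagrams indistinguishable from the $\sigma_r$ of~\eqref{sigma}, the only difference being the ``environment'' of strands outside the bricks, governed by the factor $\psi^{\T^A}$ in $m^A = \psi^{\T^A} m^\mu$.

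Part (i) is immediate from the definition $\tau_r^A = \si_r^A + e(\bi^A)$: since $e(\bi^A)$ acts as the identity on $T^{\mu,A}$, the $\O$-span of products $\tau_{r_1}^A\cdots\tau_{r_a}^A m^A$ equals the $\O$-span of products $\si_{r_1}^A\cdots\si_{r_a}^A m^A$, which is $T^{\mu,A}$ by definition. For (ii), the key technical input is the analog of Lemma~\ref{LNew}: for every $v \in T^{\mu,A}$, $y_s v = 0$ whenever $s \in \{n,\dots,n+ke-1\}$, and $\psi_t v = 0$ whenever $t \in \{n,\dots,n+ke-2\}$ is strictly inside a single brick. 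I would prove this analog by a degree argument: by Lemma~\ref{L10910} every $\T \in \Gar^A$ has common degree $\deg \T^A$, so $T^{\mu,A}$ is concentrated in a single degree inside the weight space $e(\bi^A) M^\mu$; any $y_s v$ or $\psi_t v$ of the prohibited type would lie at strictly higher degree yet still be forced into $T^{\mu,A}$, hence must vanish. Alternatively, one pushes the offending $y$ or $\psi$ upward through $\psi^{\T^A}$ using the KLR relations and absorbs the main terms into the vanishing relations $y_r m^\mu = 0$ and $\psi_r m^\mu = 0$ for $s_r \in \Si_{\vec{\br}}$. Granting the analog, the proofs of Lemma~\ref{LPsiSi}, Corollary~\ref{CSiSq} and Theorem~\ref{sigma braid} transport verbatim, and the Coxeter relations for the $\tau_r^A$ follow exactly as in the proof of Theorem~\ref{tau braid}.

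For (iii), we aim to produce a surjective $\O\Si_k$-equivariant homomorphism
\begin{equation*}
\ind_{\O\Si_{f,k-f}}^{\O\Si_k}\triv{(f,k-f)}\twoheadrightarrow T^{\mu,A}
\end{equation*}
sending the natural cyclic generator to $m^A$. The crucial verification is that $\si_r^A m^A = 0$ whenever $w_r^A$ lies in the parabolic. For such $r$ the transposition $w_r^A$ swaps two bricks whose entries in $\T^\mu$ sit in a common row, so that $w_r^A$ itself belongs to the row-parabolic $\Si_{\vec{\br}}$ of $\Si_d$. Rewriting $\si_r^A m^A = \psi_{w_r^A}\psi^{\T^A} m^\mu$ and applying Proposition~\ref{PSubtle} to the non-reduced product $w_r^A w^{\T^A}$, all surviving terms carry a factor $\psi_t$ with $s_t \in \Si_{\vec{\br}}$ acting on $m^\mu$, which vanishes by the presentation~\eqref{E:M(s)} of $M^\mu$.

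Finally, (iv) follows from (iii) by triangularity. Using Proposition~\ref{PSubtle}, for each $u \in \D^A$ one obtains an expansion
\begin{equation*}
\tau_u^A m^A = m^{u\T^A} + \sum_{\substack{\T \in \Gar^A\\ \T \ldom u\T^A}} c_\T\, m^\T
\end{equation*}
in the $\O$-basis of $M^\mu$ from Theorem~\ref{TMMUBasis}. The resulting $\O$-linear independence of $\{\tau_u^A m^A : u \in \D^A\}$, combined with the rank $|\D^A| = \binom{k}{f}$ of the induced module, forces the surjection in (iii) to be an isomorphism and yields the basis claimed in (iv). The principal obstacle throughout is the analog of Lemma~\ref{LNew}: the environment of $m^A$ is far richer than the pristine $m(i,\la)$ of Section~\ref{SBI}, so one must carefully track how dots and interior crossings in the brick region interact with the many crossings of $\psi^{\T^A}$ lying outside; once that technical point is settled, the remainder of the argument simply transports the local computations of Theorem~\ref{tau braid}.
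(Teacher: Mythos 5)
Your strategy of reducing to Theorem~\ref{tau braid} by viewing the brick range of $m^A$ as a copy of $M(i,(f,k-f))$ is exactly the right one, and parts (i), (iii), (iv) of your outline are essentially sound. However, the mechanism you choose for the reduction is less efficient than necessary, and the degree argument you propose for the analogue of Lemma~\ref{LNew} has a real gap.

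The gap: you assert that for $v\in T^{\mu,A}$ and $s$ in the brick range, the element $y_s v$ ``would lie at strictly higher degree yet still be forced into $T^{\mu,A}$, hence must vanish.'' Nothing in what you have established forces $y_s v$ into $T^{\mu,A}$ (or into any degree-concentrated subspace). In the original Lemma~\ref{LNew} this step works precisely because $T(i,\lambda)=e(i,\lambda)M(i,\lambda)$, so $y_s$ manifestly preserves it; but the analogous identity $T^{\mu,A}=e(\bi^A)\cdot\big(\iota^A(R_{k\delta})\,m^A\big)$ is not available to you at that point, and $e(\bi^A)M^\mu$ itself is not degree-concentrated (it contains $m^\T$ for every row-strict $\T$ with $\bi(\T)=\bi^A$, not only those in $\Gar^A$). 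Filling this in requires essentially the input you are trying to avoid.

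The cleaner route, which the paper takes, is to introduce the non-unital embedding $\iota^A\colon R_{k\delta}\hookrightarrow R_\alpha$ (shifting indices by $n-1$ and padding idempotents) and to verify the \emph{defining relations of the permutation module on the single vector $m^A$}: namely $e(\bj)m^A=\delta_{\bj,\bs(i,ke)}m^A$, $y_t m^A=0$, and $\psi_s m^A=0$ unless $s=ef$. These are easy direct computations on one vector, much easier than the ``for every $v\in T^{\mu,A}$'' statement you attempt. They produce a surjection $M(i,(f,k-f))\twoheadrightarrow R_{k\delta}\cdot m^A$, which Theorem~\ref{TMBasis} shows is an isomorphism. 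Once that $R_{k\delta}$-module isomorphism is in hand, $T^{\mu,A}$ corresponds to $T(i,(f,k-f))$, the analogue of Lemma~\ref{LNew} is inherited for free, and \emph{all four parts} of Theorem~\ref{TTauA} follow at once from Theorem~\ref{tau braid} without having to re-transport the diagram calculations of Lemma~\ref{LPsiSi}, Corollary~\ref{CSiSq}, and Theorem~\ref{sigma braid}. I recommend restructuring your argument around this module isomorphism; your verifications in (iii) are in fact a close cousin of exactly the single-vector checks needed to set it up.
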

\begin{proof}
Let $i=\res A$, and let $n=n^A$ be, as before, the smallest entry in $\G^A$ which is contained in a brick in
$\Belt^A$. . Set $\bi=\bi^A:=(i_1\dots,i_d)$. For any $\bj=(j_1,\dots,j_{ke})\in I^{k\de}$ define the tuple 
$
\bj^A=(j_1,\dots,j_d)
$ 
where $j_t=i_t$ for $t<n$ and $t\geq n+ek$, and $j_{n+s}=j_{s+1}$ for all $s=0,1,\dots,ke-1$. 
There is a (non-unital) embedding of algebras $\iota^A:R_{k\de}\into R_\al$ such that 
$$
\iota^A: \psi_s\mapsto\psi_{s+n-1}, y_t\mapsto y_{t+n-1},\ e(\bj)\mapsto e(\bj^A)
$$
for all admissible $s,t$ and $\bj$. 
From now on we are going to suppress the notation $\iota^A$ and simply identify $R_{k\de}$ with the subalgebra $\iota^A(R_{k\de})$ inside $R_\al$. 

Consider the $R_{k\de}$-module $R_{k\de}\cdot m^A$ generated by the vector $m^A\in M^\mu$.  We claim that this module is isomorphic to the permutation module $M(i,(f,k-f))$ defined in section~\ref{SBI}. Indeed, it is easy to check that $e(\bj)m^A=\de_{\bj,\bs(i,ke)}m^A$, $y_tm^A=0$, and $\psi_sm^A=0$ unless $s=ef$. This shows that there is an $R_{k\de}$-homomorphism from $M(i,(f,k-f))$ onto $R_{k\de}\cdot m^A$ which maps $m(\vec{\bs}(i,(f,k-f)))$ to $m^A$. An application of Theorem~\ref{TMBasis} now implies that this homomorphism is an isomorphism. Hence, the result follows from Theorem~\ref{tau braid}. 
\end{proof}

\begin{Corollary} \label{C8910}
Suppose that $\mu\in\Par_\al$, $A\in\mu$ is a Garnir node of $\mu$, and $\Stab=u\T^A\in\Gar^A$ for some $u\in\D^A$, cf. (\ref{EGarD}). Then
$$
m^\Stab=\tau_u^A m^A+\sum_{w\in\D^A,\ w\lhd u} c_w\tau_w^Am^A
$$
for some $c_w\in\O$. In particular, $\{m^\T \mid \T\in\Gar^A\}$  
is an $\O$-basis of $T^{\mu,A}$.
\end{Corollary}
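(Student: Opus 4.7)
The plan is to compute $m^\Stab$ via the block intertwiners $\sigma_r^A$ and then re-expand in the $\tau$-basis furnished by Theorem~\ref{TTauA}(iv). From the paragraph immediately preceding the corollary we have $\psi^\Stab = \psi_u\psi^{\T^A}$, with full commutativity of the three permutations ensured by Lemma~\ref{LFullComm}, so $m^\Stab = \psi_u m^A$. Fix a reduced expression $u = w_{r_1}^A \cdots w_{r_a}^A$ in $\Si^A$. Each $w_r^A$ swaps two adjacent $A$-bricks that carry identical residue sequences $(\res A, \res A + 1, \dots, \res A + e - 1)$, so $w_r^A$ fixes $\bi^A$. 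Since $m^A \in e(\bi^A) M^\mu$, the idempotent $e(\bi^A)$ can be freely inserted between successive $\psi_{w_{r_i}^A}$'s acting on $m^A$, yielding
$$m^\Stab \;=\; \sigma_{r_1}^A \cdots \sigma_{r_a}^A\, m^A \quad \text{in } T^{\mu,A}.$$

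Next I would transport the computation to the induced-module model of Theorem~\ref{TTauA}(iii): $T^{\mu,A} \cong \ind_{\O\Si_{(f,k-f)}}^{\O\Si^A} \triv{(f,k-f)}$, under which $\tau_r^A$ acts as the Coxeter generator $w_r^A \in \Si^A \cong \Si_k$, so $\sigma_r^A = \tau_r^A - e(\bi^A)$ acts as $w_r^A - 1$. Binomially expanding,
$$m^\Stab \;=\; \prod_{i=1}^{a} (w_{r_i}^A - 1)\cdot m^A \;=\; \sum_{S \subseteq \{1,\dots,a\}} (-1)^{a-|S|}\, v_S\cdot m^A,$$
where $v_S := \prod_{i\in S} w_{r_i}^A$ with factors taken in increasing order of $i$. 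In the trivial induced module, $g \cdot m^A = \tau_{\pi(g)}^A m^A$, where $\pi(g) \in \D^A$ is the minimum-length left-coset representative of $g\,\Si_{(f,k-f)}$; each term therefore rewrites as $\tau_{\pi(v_S)}^A m^A$.

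Collecting terms: $S = \{1,\dots,a\}$ gives $v_S = u \in \D^A$ and $\pi(v_S) = u$, contributing $+\tau_u^A m^A$. For any proper subset $S$, the Subword Property of Bruhat order yields $v_S \le u$, and since $\ell(v_S) \le |S| < a = \ell(u)$ this inequality is strict, so $v_S \lhd u$. Because $v_S = \pi(v_S)\cdot h$ with $\ell(v_S) = \ell(\pi(v_S)) + \ell(h)$, $\pi(v_S)$ is a subword of $v_S$ and hence $\pi(v_S) \le v_S \lhd u$; the same length count excludes $\pi(v_S) = u$, so the coefficient of $\tau_u^A m^A$ is exactly $1$. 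This proves the claimed identity. The ``in particular'' statement then follows because the bijection $\Stab \leftrightarrow u$ between $\Gar^A$ and $\D^A$, together with the Bruhat-unitriangularity of this change of basis, converts $\{\tau_u^A m^A \mid u \in \D^A\}$ into $\{m^\T \mid \T \in \Gar^A\}$, which is therefore also an $\O$-basis of $T^{\mu,A}$. The main delicate point is the initial identification $m^\Stab = \sigma_{r_1}^A \cdots \sigma_{r_a}^A m^A$, which requires both the residue-fixing property $w_r^A \cdot \bi^A = \bi^A$ and the full commutativity of $u$ (via Lemma~\ref{LFullComm}) to make $\psi_u$ well-defined and to justify the idempotent insertions.
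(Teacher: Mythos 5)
Your proposal is correct and follows the paper's proof essentially verbatim through the chain $m^\Stab=\psi_u m^A=\si_{r_1}^A\cdots\si_{r_a}^Am^A=(\tau_{r_1}^A-1)\cdots(\tau_{r_a}^A-1)m^A$; the paper then simply says this ``implies the result in view of Theorem~\ref{TTauA},'' and your binomial expansion in the induced-module model, together with the subword-property argument showing $\pi(v_S)\lhd u$ for proper $S$, is precisely the unitriangularity check that the paper leaves to the reader.
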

\begin{proof}
Let $u=w_{r_1}^A\dots w_{r_a}^A$ be a reduced decomposition in $\Si^A$. Then, using (\ref{ESiTauA}), 
\begin{align*}
m^\Stab&=\psi^\Stab m^\mu=\psi_u\psi^{\T^A}m^\mu=\psi_u m^A=\si_{r_1}^A\dots\si_{r_a}^Am^A
\\
&=(\tau_{r_1}^A-1)\dots(\tau_{r_a}^A-1)m^A,
\end{align*}
which implies the result in view of Theorem~\ref{TTauA}. 
\end{proof}

\subsection{A spanning set for the universal row Specht module}\label{SSspanning}
Let $\al\in Q_+$ with $\height(\al)=d$, and $\mu\in \Par_\al$. Recall that 
$S^\mu\cong M^\mu/J^\mu$ and $z^\mu=m^\mu+J^\mu$. 
Also set 
$$z^A:=m^A+J^\mu\in S^\mu
$$
for any Garnir node $A\in\mu$. 

Recall from (\ref{EPsiT}) that for each $\mu$-tableau $\T$ we have defined the element $\psi^\T\in R_\al$,
which depends on a fixed choice of reduced decomposition of $w^\T\in\Si_d$. Hence, we can associate to $\T$
the homogeneous element 
$$ v^\T:=\psi^\T z^\mu =m^\T+J^\mu  \in S^\mu.  $$

\begin{Lemma} \label{L8910}
Suppose that  $\mu\in\Par_\al$ and $A$ be a Garnir node of $\mu$. Then 
$$
v^{\G^A}=\sum_{\T\in\Gar^A,\ \T\gdom\G^A} c_\T v^\T
$$
for some $c_T\in\O$. 
\end{Lemma}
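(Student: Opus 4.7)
The plan is to combine the defining Garnir relation of Definition~\ref{DSpecht}(iv) with the unitriangular expansion of $m^\Stab$ supplied by Corollary~\ref{C8910}. Let $u_{\max}\in\D^A$ be the (unique) longest element, so that $\G^A=u_{\max}\T^A$. Since $\ell(w^{u\T^A})=\ell(u)+\ell(w^{\T^A})$ for every $u\in\D^A$, the permutation $w^{\G^A}$ is Bruhat-maximal among $\{w^\Stab\mid\Stab\in\Gar^A\}$, and by Lemma~\ref{eBruhat}(ii) this makes $\G^A$ the unique dominance-minimum of $\Gar^A$.

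Using $z^A=\psi^{\T^A}z^\mu$, the Garnir relation in $S^\mu$ reads
$$0=g^A z^\mu=\sum_{u\in\D^A}\tau_u^A z^A,$$
which I will rewrite as
$$\tau^A_{u_{\max}}z^A=-\sum_{u\in\D^A\setminus\{u_{\max}\}}\tau_u^A z^A.$$
On the other hand, applying Corollary~\ref{C8910} to $\Stab=\G^A$ yields
$$v^{\G^A}=\tau^A_{u_{\max}}z^A+\sum_{w\lhd u_{\max}}c_w\,\tau_w^A z^A\qquad(c_w\in\O).$$
Substituting the first displayed identity into the second expresses $v^{\G^A}$ as an $\O$-linear combination of the elements $\tau_u^A z^A$ with $u\in\D^A\setminus\{u_{\max}\}$.

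The final step is to invert the unitriangular system supplied by Corollary~\ref{C8910}: the expansions $v^\Stab=\tau_u^A z^A+\sum_{w\lhd u}c^\Stab_w\,\tau_w^A z^A$, for $\Stab=u\T^A$, are unitriangular with respect to the Bruhat order on $\D^A$, so each $\tau_u^A z^A$ is an $\O$-linear combination of $\{v^\Stab\mid\Stab=v\T^A,\ v\le u\}$. For $u\neq u_{\max}$, every such $v$ satisfies $v\lhd u_{\max}$, so $\Stab=v\T^A\in\Gar^A\setminus\{\G^A\}$; since $\G^A$ is the dominance-minimum of $\Gar^A$, Lemma~\ref{eBruhat}(ii) gives $\Stab\gdom\G^A$. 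Assembling the two substitutions then produces the claimed expression for $v^{\G^A}$.

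The only substantive work is bookkeeping, namely translating between the Bruhat order on $\D^A$ that underlies the unitriangularity in Corollary~\ref{C8910} and the dominance order on $\Gar^A$ appearing in the lemma's statement. This translation is routine given the length additivity $\ell(w^\Stab)=\ell(u)+\ell(w^{\T^A})$ for $\Stab=u\T^A$ and Lemma~\ref{eBruhat}(ii), so no new combinatorial input should be required beyond what is already available in the paper.
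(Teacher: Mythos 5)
Your proof is correct and follows essentially the same route as the paper's: both expand $v^{\G^A}$ via Corollary~\ref{C8910}, invoke the homogeneous Garnir relation $g^A z^\mu=0$ to eliminate the top term $\tau^A_{u_{\max}}z^A$, and invert the unitriangular system from Corollary~\ref{C8910} to re-express the remaining $\tau^A_u z^A$ in terms of $v^\T$ with $\T\gdom\G^A$. The only cosmetic difference is that the paper combines the first two steps by writing $v^{\G^A}=g^Az^\mu+\sum_{\T\neq\G^A}(d_\T-1)\tau^A_{u^\T}z^A$ rather than substituting the isolated relation for $\tau^A_{u_{\max}}z^A$, and the paper takes the fact that $\G^A$ is the dominance-minimum of $\Gar^A$ as already established in Section~\ref{SSBricks} rather than re-deriving it.
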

\begin{proof}
In view of (\ref{EGarD}), for each $\T\in\Gar^A$, there exists a unique $u^\T\in\D^A$ with $\T=u^\T\T^A$. By Corollary~\ref{C8910}, there exist $d_\T\in \O$ such that 
\begin{align*}
v^{\G^A}&=\psi^{\G^A}z^\mu=\Big(\tau_{u^{\G^A}}^A+\sum_{\T\in\Gar^A,\ \T\neq\G^A}d_\T\tau_{u^\T}^A\Big)z^A
\\
&=g^A z^\mu+\sum_{\T\in\Gar^A,\ \T\neq\G^A}(d_\T-1)\tau_{u^\T}^Az^A.
\end{align*}
Since $g^A z^\mu=0$ by Definition~\ref{DSpecht}(iv), the result now follows by (inverting the equations in) Corollary~\ref{C8910}.
\end{proof}

We now make the first step towards describing a standard homogeneous basis of $S^\mu$. In Corollary~\ref{COBasis} below we show that (\ref{ESBasis}) is a basis. 

\begin{Proposition} \label{PSpan}
Let $\mu\in\Par_\al$. The elements of the set
\begin{equation}\label{ESBasis}
\{v^\T\mid \T\in\St(\mu)\}
\end{equation}
span $S^\mu$ over $\O$. Moreover, we have $\deg(v^\T)=\deg(\T)$ for all $\T\in \St(\mu)$. 
\end{Proposition}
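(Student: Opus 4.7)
\smallskip\noindent\textbf{Proof plan for Proposition~\ref{PSpan}.}

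\smallskip\noindent\emph{Degree computation.} Since $z^\mu$ is the image of $m^\mu\in M^\mu=M(\vec\br)\<\deg\T^\mu\>$, it is homogeneous of degree $\deg(\T^\mu)$. The element $v^\T=\psi^\T z^\mu$ is then homogeneous of degree $\deg(\psi^\T e(\bi^\mu))+\deg(\T^\mu)$ (the degree of $\psi^\T e(\bi^\mu)\in R_\al$ is independent of the choice of reduced decomposition by Proposition~\ref{PSubtle}). So it suffices to prove the identity
$$
\deg\bigl(\psi^\T e(\bi^\mu)\bigr)=\deg(\T)-\deg(\T^\mu).
$$
This is essentially the degree computation of~\cite{BKW}. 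I would prove it by induction on $\ell(w^\T)$: for $\T=\T^\mu$ both sides are zero; for the induction step, pick $r$ with $r,r+1$ in increasing positions in $\T$ so that $s_r\T\lhd\T$ in the Bruhat order, write $\psi^\T=\psi_r\psi^{s_r\T}$ (available by the length additivity property), and match the change in the KLR degree $-a_{i_r,i_{r+1}}$ with the change in tableau degree produced by swapping $r$ and $r+1$. The latter reduces to a local check using the definitions~(\ref{EDMUA}) and~(\ref{EDegTab}), as in~\cite[Lemmas~3.11--3.12]{BKW}.

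\smallskip\noindent\emph{Spanning.} By Theorem~\ref{TMMUBasis} the elements $m^\T$ for row-strict $\mu$-tableaux $\T$ form an $\O$-basis of $M^\mu$, so their images $v^\T=m^\T+J^\mu$ span $S^\mu$. It thus suffices to show that for every row-strict $\T$, the element $v^\T$ lies in the $\O$-span of $\{v^\Stab\mid \Stab\in\St(\mu)\}$. I would prove this by induction on $\ell(w^\T)$; the base $w^\T=1$ gives $\T=\T^\mu$, which is standard.

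\smallskip\noindent\emph{Inductive step.} If $\T$ is standard there is nothing to do. Otherwise apply Lemma~\ref{LAndrew} to obtain a Garnir node $A\in\mu$ and $w\in\Si_d$ such that $\T=w\G^A$ with $\ell(w^\T)=\ell(w)+\ell(w^{\G^A})$. Using Proposition~\ref{PSubtle} to compare reduced decompositions, we get
$$
\psi^\T e(\bi^\mu)=\psi_w\psi^{\G^A}e(\bi^\mu)+X,
$$
where $X$ is a sum of terms $\psi_uf(y)e(\bi^\mu)$ with $u<w^\T$. Applying both sides to $z^\mu$ and using $y_rz^\mu=0$ (so $f(y)z^\mu=f(0)z^\mu$), we obtain
$$
v^\T=\psi_w\,v^{\G^A}+\sum_{u<w^\T}c_u\,\psi_uz^\mu
$$
for scalars $c_u\in\O$. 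By Lemma~\ref{L8910}, $v^{\G^A}=\sum_{\Stab\in\Gar^A,\,\Stab\gdom\G^A}c_\Stab v^\Stab$, and all such $\Stab$ are standard with $w^\Stab<w^{\G^A}$. Hence each $\psi_w\psi^\Stab e(\bi^\mu)$ is a product of $\ell(w)+\ell(w^\Stab)<\ell(w^\T)$ generators. Applying Proposition~\ref{PSubtle} and the basis theorem to rewrite each such product in the form $\sum\psi_vf_v(y)e(\bi^\mu)$ with $\ell(v)<\ell(w^\T)$, and again using $y_rz^\mu=0$, we express $v^\T$ as an $\O$-linear combination of elements $\psi_vz^\mu$ with $\ell(v)<\ell(w^\T)$. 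Writing $\psi_vz^\mu=v^{v\T^\mu}$, the induction hypothesis finishes the argument.

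\smallskip\noindent\emph{Main obstacle.} The delicate point is the bookkeeping for the lower-order terms produced by Proposition~\ref{PSubtle}: one must verify that every such term, after annihilating $y$-monomials against $z^\mu$, is indexed by a permutation strictly smaller than $w^\T$ in the Bruhat order, so that the induction closes. This uses in an essential way both the length-additivity in Lemma~\ref{LAndrew} and the strict inequality $w^\Stab<w^{\G^A}$ from Lemma~\ref{eBruhat}(ii) for $\Stab\in\Gar^A$ with $\Stab\gdom\G^A$.
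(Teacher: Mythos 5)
Your proof follows essentially the same route as the paper's: reduce to spanning over row-strict tableaux via Theorem~\ref{TMMUBasis}, then induct, using Lemma~\ref{LAndrew}, Proposition~\ref{PSubtle} and Lemma~\ref{L8910} to rewrite $v^\T$ for a non-standard row-strict $\T$ in terms of strictly smaller pieces. The only cosmetic differences are that you induct on $\ell(w^\T)$ rather than inverse-along the Bruhat order on row-strict tableaux and you sketch a re-derivation of the degree identity rather than citing \cite[Corollary~3.14]{BKW} as the paper does; note that when the product $w\cdot w^\Stab$ is not reduced your appeal to Proposition~\ref{PSubtle} alone does not quite suffice and one should additionally invoke the standard fact that the KLR relations never increase the number of $\psi$-generators in a word, a point the paper's ``the result now follows by induction'' also glosses over.
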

\begin{proof}
Note that $v^\T=\psi^\T e(\bi^\mu)z^\mu$. Now, using \cite[Corollary 3.14]{BKW}, we have  
$\deg(\psi^\T e(\bi^\mu))=\deg(\T)-\deg(\T^\mu)$, which implies the second statement of the proposition, as $\deg(z^\mu)=\deg(\T^\mu)$ by definition. 

By Theorem~\ref{TMMUBasis}, it suffices to show that for every row-strict tableau $\T$ of shape $\mu$, the vector $v^\T\in S^\mu$ is an $\O$-linear combination of elements in (\ref{ESBasis}). We prove this by inverse induction on the Bruhat order on the row-strict tableaux $\T$. The induction starts when $\T=\T^\mu$, the unique maximal row-strict tableau. In this case $\T$ is standard so there is nothing to prove. 

For the inductive step, assume that the result has been proved for all row standard tableaux $\U\gdom \T$. If the row strict tableau $\T$ is standard then there is  nothing to prove, so suppose that $\T$ is not standard. Then by Lemma~\ref{LAndrew}, there exists a Garnir tableaux $\G$ of shape $\mu$ and $w\in \Si_d$ such that $\T=w\G$ and $\ell(w^\T)=\ell(w^\G)+\ell(w)$. Using Proposition~\ref{PSubtle} for the second equality, and then Lemma~\ref{L8910} for the last equality, we get
\begin{align*}
v^\T&=\psi^\T e(\bi^\mu)z^\mu=(\psi_w\psi^\G e(\bi^\mu)+xe(\bi^\mu))z^\mu=\psi_wv^G+xz^\mu
\\
&=\sum_{\T\gdom\G} c_\T \psi_w v^\T+xz^\mu,
\end{align*}
where $x$ is a linear combination of elements of the form $\psi_uf(y)e(\bi)$ such that $u< w$ and $f(y)$ is a polynomial in $y_1,\dots,y_d$. The result now follows by induction.
\end{proof}

\section{Cyclotomic Hecke algebras and Specht modules}\label{SNot}
Recall from Definition~\ref{DSpecht} that we have defined  by generators and relations the universal graded (row) Specht modules $S^\mu=S^\mu(\O)$ for the KLR algebra $R_\al$ for all multipartitions $\mu\in\Par_\al$. In this section we connect these universal Specht modules to the usual Specht modules for the affine Hecke algebras $H_\al$ via the isomorphism between the cyclotomic quotients of the KLR algebras and of the affine Hecke algebras constructed in \cite{BK}. This will allow us to obtain a standard homogeneous basis for $S^\mu(\O)$ using \cite{BKW,HM}. 

In this section we will need to  distinguish between the universal graded (row) Specht modules $S^\mu=S^\mu(\O)$ for $R_\al$ and the usual graded (row) Specht modules for $H_\al$, which we will denote $S^\mu_H$. The Specht modules $S^\mu_H$ are defined as cell modules for the cellular algebra $H_\al^\La$. We review their properties below. 

\subsection{Ground field and parameters}
Let $F$ be a field, and $\xi \in F^\times$ be an invertible element. Let $e$ be the  smallest positive integer such that $1+\xi+\dots+\xi^{e-1} = 0,$ setting $e := 0$ if no such integer exists. This $e$ allows us to use the Lie theoretic notation of section~\ref{SSLTN}.  In particular, we have $I=\Z/e\Z$, $\Ga$, $Q_+$, $P_+$, etc. 

For $i\in I$ define the scalar $\nu(i)\in F$ as follows:
\begin{equation}\label{ENu}
  \nu(i):= \begin{cases}i &\text{if }\xi=1,\\\xi^i &\text{if }\xi\neq1.\end{cases}
\end{equation}

\subsection{Cyclotomic Hecke algebra}
Let $H_d=H_d(F,\xi)$ be
the {\em affine Hecke algebra} over the ground field $F$ associated to the symmetric group $\Si_d$ with parameter $\xi$. 
Thus, if $\xi\neq 1$, then 
$H_d$ is the $F$-algebra generated by 
$$T_1,\dots,T_{d-1},X_1^{\pm 1}, \dots, X_d^{\pm 1}$$
subject only to the relations 
\begin{eqnarray}
\label{QECoxeterQuad}
T_r^2 = (\xi -1) T_r +\xi\qquad (1\leq r<d),\\
T_rT_{r+1}T_r=T_{r+1}T_rT_{r+1}\qquad(1\leq r<d-1),\label{QCoxeterClose}\\
T_rT_s=T_sT_r \qquad(1\leq r,s<d,\ |r-s|>1).\label{QCoxeterFar}
\\
X_r^{\pm1} X_s^{\pm1}=X_s^{\pm1} X_r^{\pm1}\qquad(1\leq r,s\leq d), \\ 
X_rX_r^{-1}=1\qquad(1\leq r\leq d),\\
T_r X_r T_r = \xi X_{r+1}\qquad(1\leq r< d),\\
T_r X_s = X_{s}T_r \qquad(1\leq r<d,1\leq s\leq d,s \neq r,r+1).
\end{eqnarray}
If $\xi = 1$, then 
$H_d$ is the $F$-algebra generated by 
$$T_1,\dots,T_{d-1}, X_1,\dots,X_d$$ subject only to the relations (\ref{QECoxeterQuad})--(\ref{QCoxeterFar}) and the relations:
\begin{eqnarray}
X_rX_s=X_sX_r\qquad(1\leq r,s\leq d),
\\
\label{EDAHA}
T_r X_{r+1} = X_r T_r + 1\qquad(1\leq r< d),\\
\label{EDAHA2}
 T_r X_s = X_s T_r \qquad(1\leq r<d,1\leq s\leq d,s \neq r,r+1).
\end{eqnarray}

Recall that in (\ref{EKappa}) and (\ref{ELa}) we have fixed a level $l$, a tuple $\kappa=(k_1,\dots,k_l)$, and the corresponding weight $\La=\La_{k_1}+\dots+\La_{k_l}\in P_+$. 
The {\em cyclotomic Hecke algebra} $H_d^\La=H_d^\La(F,\xi)$ 
is the quotient 
\begin{equation}\label{ECHA}
H_d^\La := H_d \Big/ \big\langle \,\textstyle\prod_{i\in I}(X_1-\nu(i))^{(\La,\al_i)}\,\big\rangle= H_d\Big/\langle \textstyle\prod_{m=1}^l (X_1-\nu(k_m))\big\rangle.
\end{equation}



\subsection{Weight spaces and idempotents}
Let $\bi=(i_1,\dots,i_d)\in I^d$, and let~$M$ be a finite dimensional $H_d^\La$-module. Define the
{\em $\bi$-weight space} of $M$ as follows:
$$
M_\bi=\{v\in M\mid (X_r-\nu(i_r))^Nv=0\ \text{for $N\gg0$ and $r=1,\dots,d$}\}.
$$
It is known (see e.g. \cite[Lemma 4.7]{Gr} and \cite[Lemma 7.1.2]{Kbook}) that 
all eigenvalues of $X_1,\dots,X_d$ in $M$ are of the form $\nu(i)$, for $i\in I$, and so we have a {\em weight space decomposition}:
$$
M=\bigoplus_{\bi\in I^d} M_\bi.
$$
Using the weight space decomposition of the left regular $H_d^\La$-module, one gets a system of orthogonal idempotents
\begin{equation}\label{EIdempotents}
\{e(\bi)\mid \bi\in I^d\}
\end{equation}
in $H_d^\La$, all but finitely many of which are zero, such that
$
\sum_{\bi\in I^d}e(\bi)=1,
$
and 
$$
e(\bi)M=M_\bi\qquad(\bi\in I^d)
$$
for any finite dimensional $H_d^\La$-module $M$.

 If $\al\in Q_+$ is of height $d$, define 
$
e_{\alpha} := \sum_{\bi \in I^\alpha} e(\bi) \in H^\La_d.
$ 
By \cite{LM} and \cite[Theorem~1]{cyclo},
$e_{\alpha}$ is either zero or it is a primitive central idempotent
in $H^\La_d$.
Hence the algebra
\begin{equation}\label{fe}
H^\La_\alpha := e_{\alpha} H^\La_d
\end{equation}
is either zero or it is a single {\em block}
of the algebra $H^\La_d$. 

\subsection{The Isomorphism Theorem}
Define elements of $H^\La_\al$ as follows: 
\begin{equation}\label{QEPolKL}
  y_r:=\begin{cases}
\sum_{\bi\in I^\al}(1-\nu(i_r)^{-1} X_r)e(\bi),&\text{if $\xi \neq 1$},\\[3mm]
\sum_{\bi\in I^\al}(X_r-\nu(i_r)) e(\bi),&\text{if $\xi=1$},
\end{cases}
\end{equation}
for $1\leq r\leq d$.
Next, if $1\le r<d$ and $\bi\in I^d$ we define
\begin{equation}\label{QQCoxKL}
\psi_r:=
\textstyle \sum_{\bi\in I^\al}(T_r+P_r(\bi))Q_r(\bi)^{-1}e(\bi). 
\end{equation}
where $P_r(\bi)$ and $Q_r(\bi)^{-1}$ are certain polynomials in $F[y_r,y_{r+1}]$ which are explicitly defined
in~\cite{BK}. This gives us the following elements of $H_\al^\La$:
\begin{equation}\label{EKLElts}
\{e(\bi)\mid \bi\in I^\al\}\cup\{y_1,\dots,y_d\}\cup\{\psi_1,\dots,\psi_{d-1}\}.
\end{equation}
Note that these elements have the same names as the generators of the KLR algebras in (\ref{EKLGens}). This is not a coincidence in view of the following Isomorphism Theorem:

\begin{Theorem} \label{TBK} {\rm \cite[Theorem 1.1]{BK}} 
Suppose that $\al\in Q_+$ has height $d$ and $\La\in P_+$. Then $H_\al^\La$ is generated by the elements (\ref{EKLElts}) subject only to the relations (\ref{R1})--(\ref{ERCyc}). In other words, $H_\al^\La(F,\xi)\cong R_\al^\La(F)$. 
\end{Theorem}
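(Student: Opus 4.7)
The plan is to construct a two-sided inverse pair of algebra homomorphisms between $H_\al^\La(F,\xi)$ and $R_\al^\La(F)$, proceeding in four steps. First I would define a candidate map $\Phi: R_\al^\La \to H_\al^\La$ by sending the abstract generators of $R_\al^\La$ from (\ref{EKLGens}) to the elements of $H_\al^\La$ defined in (\ref{EIdempotents}), (\ref{QEPolKL}), (\ref{QQCoxKL}). To justify that this is a well-defined algebra homomorphism, every defining relation (\ref{R1})--(\ref{braid}) plus the cyclotomic relation (\ref{ERCyc}) must be checked.

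The idempotent relations (\ref{R1}) follow from the weight space decomposition and the orthogonality of the $e(\bi)$. The commutation relations (\ref{R2PsiE})--(\ref{R3Y}) and (\ref{R3YPsi})--(\ref{R3Psi}) between commuting generators are essentially inherited from the Hecke algebra: the $y_r$ are polynomials in $X_r$, and the $T_r, X_s$ commute when appropriate. The cyclotomic relation (\ref{ERCyc}) reduces, after evaluating on the weight space $e(\bi)H_\al^\La$, to the original cyclotomic relation $\prod_{m=1}^l(X_1 - \nu(k_m))=0$ combined with the fact that the eigenvalues of $X_1$ on $e(\bi)$ cluster around $\nu(i_1)$, so only the factor $(X_1 - \nu(i_1))^{(\La,\al_{i_1})}$ survives.

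The heart of the argument, and by far the hardest step, is verifying the mixed relations (\ref{ypsi})--(\ref{psiy}), the quadratic relations (\ref{quad}), and especially the braid relations (\ref{braid}). This is where the specific choice of the polynomials $P_r(\bi)$ and $Q_r(\bi)$ in (\ref{QQCoxKL}) becomes essential: they are engineered so that $\psi_r e(\bi) = (T_r + P_r(\bi))Q_r(\bi)^{-1}e(\bi)$ transforms the Hecke braid and quadratic relations into their homogeneous KLR counterparts. One must carefully case-analyze according to whether $i_r = i_{r+1}$, $i_r = i_{r+1}\pm 1$, $i_r \rightleftarrows i_{r+1}$, or neither, using the relevant instance of $T_r^2 = (\xi-1)T_r+\xi$ and its degenerate analogue, plus the standard commutation between $T_r$ and $X_s$. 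The $e=2$ case with the doubly-laced edge $i\rightleftarrows j$ is especially delicate and requires an extra term in the polynomial.

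With $\Phi$ established, I would finish with surjectivity and injectivity. Surjectivity comes by inverting (\ref{QEPolKL}) to recover each $X_r$ as a polynomial in $\{y_r, e(\bi)\}$ (the inversion is valid because we are working on weight spaces where the relevant elements are units plus nilpotents), and then inverting (\ref{QQCoxKL}) to recover $T_r = (\psi_r Q_r(\bi) - P_r(\bi))e(\bi)$ summed over $\bi$. Injectivity is handled by a dimension count: the cyclotomic Hecke algebra has known dimension $l^d d!$ (Ariki-Koike, Ariki, Mathas), while $R_\al^\La$ admits a spanning set $\{\psi_w y^{\bf m} e(\bi)\}$ from the affine basis theorem cut down by the cyclotomic ideal, whose size can be bounded above by $l^d d!$. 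Equality of dimensions together with surjectivity forces $\Phi$ to be an isomorphism.
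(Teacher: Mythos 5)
The paper does not prove this theorem: it is quoted verbatim as Theorem~1.1 of Brundan--Kleshchev~\cite{BK} and used as a black box throughout. So there is no internal proof here to compare against, and your proposal should be read as a blind reconstruction of the argument in the cited reference. At the level of outline it is broadly on track: the map does go from the abstractly presented $R_\al^\La$ to $H_\al^\La$ via the formulas~(\ref{QEPolKL}), (\ref{QQCoxKL}); the bulk of the work really is the case-by-case verification of the quadratic and braid relations, with the $e=2$ edge being the delicate one; and your reading of how~(\ref{ERCyc}) drops out of~(\ref{ECHA}) on an $e(\bi)$-weight space is correct.

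However, your last paragraph conflates two strategies and leaves a genuine gap. You announce a \emph{two-sided inverse pair} but actually argue \emph{surjection plus dimension count}, and the dimension count as written does not go through. The cyclotomic relation~(\ref{ERCyc}) only forces a power of~$y_1$ to vanish; after passing to the quotient of $R_\al$ the PBW spanning set of Theorem~\ref{TBasis} still contains infinitely many monomials in $y_2,\dots,y_d$. Finite-dimensionality of $R_\al^\La$, let alone a bound of the right size, is not immediate: one must first prove that \emph{all} the $y_r$ are nilpotent, which in~\cite{BK} is a separate lemma established by an inductive argument using the relations~(\ref{ypsi})--(\ref{quad}). You also compare against $\dim H_d^\La = l^d d!$, but $H_\al^\La$ is a single block, so either the count has to be run with the correct block dimensions, or one must sum over all $\al$ of height $d$ and compare $\bigoplus_\al R_\al^\La$ with $H_d^\La$. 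Alternatively, if you want to carry out the two-sided-inverse plan as stated, you should actually define $\Psi:H_\al^\La\to R_\al^\La$ by sending $T_r,X_r$ to their expressions in the KLR generators, verify the Hecke relations, and check $\Phi\circ\Psi$ and $\Psi\circ\Phi$ are identities on generators; that route bypasses any dimension bound entirely, and is closer to what~\cite{BK} actually do.
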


In what follows we identify $H_\al^\La(F,\xi)$ and $R_\al^\La(F)$. In particular, $H_\al^\La(F,\xi)$ is now $\Z$-graded.

\subsection{\boldmath Graded Specht modules $S^\mu_H$ for Hecke algebras}\label{SSHSpecht}
Let $\al\in Q_+$ be of height $d$ and fix a multipartition $\mu\in\Par_\al$. The {\em graded (row) Specht module} $S^\mu_H=S^\mu_H(F)$ for $H_\al^\La$ is defined in \cite{BKW}. These graded Specht modules turn out to be the {\em cell modules}\, for $H_\al^\La$ considered as a graded cellular algebra as in \cite{HM}. We will not need the exact definition, only the following key properties of these modules.  Recall the notation of section~\ref{SS:tableaux}. 

\begin{Lemma}\label{LZ1} {\rm \cite[Proposition 3.7]{JM}} 
Let $\mu\in \Par_\al$. There is a homogeneous generator  $z_H^\mu$ of $S^\mu_H$ with $\deg(z_H^\mu)=\deg(\T^\mu)$, $z_H^\mu\in e(\bi^\mu)S^\mu_H$, and $y_r z_H^\mu=0$, for all $r=1,\dots,d$.
\end{Lemma}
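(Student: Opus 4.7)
The plan is to identify $z_H^\mu$ explicitly with a canonical generator coming from the graded cellular structure on $H_\al^\La$ and then read off the three properties from that identification.

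First, I would recall from \cite{HM} (building on \cite{BKW}) that $H_\al^\La \cong R_\al^\La$ carries a graded cellular basis of the form $\{\psi_{\Stab\T}\mid \Stab,\T\in\St(\mu),\ \mu\in\Par_\al\}$ and that $S^\mu_H$ is the associated graded cell module. Under this construction $S^\mu_H$ acquires a homogeneous basis $\{v_\T\mid \T\in\St(\mu)\}$ with $v_\T\in e(\bi(\T))S^\mu_H$ and $\deg v_\T=\deg\T$, coming from the images of the $\psi_{\T^\mu\T}$ modulo the cellular filtration. The distinguished generator is then $z_H^\mu:=v_{\T^\mu}$, and with this choice properties (i) and (ii) are immediate: $\deg z_H^\mu=\deg\T^\mu$, and $\bi(\T^\mu)=\bi^\mu$ forces $z_H^\mu\in e(\bi^\mu)S^\mu_H$.

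The substantive point is $y_r z_H^\mu=0$. The plan is to combine the translation (\ref{QEPolKL}) between $y_r$ and $X_r$ with the classical action of the Jucys--Murphy element $X_r$ on a Murphy-style generator of the Specht module. Concretely, the cellular generator of $S^\mu_H$ can be realised as (a scalar multiple of) $m_\mu\cdot e(\bi^\mu)$, where $m_\mu$ is the usual Murphy element attached to $\T^\mu$. On $m_\mu$ one has the upper-triangular formula $X_r m_\mu=\nu(i_r^\mu)m_\mu+(\text{terms strictly higher in the dominance order on tableaux})$, and all such higher terms vanish in $S^\mu_H$ by the defining cellular filtration. Hence $X_r$ acts on $z_H^\mu$ as the scalar $\nu(i_r^\mu)$, and then~(\ref{QEPolKL}) gives $y_r z_H^\mu=0$ in either the $\xi\ne1$ or the $\xi=1$ case.

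The main obstacle is Step~3: ensuring the triangular action of $X_r$ on the Murphy generator and the compatibility of this classical picture with the KLR grading. The cleanest route is to quote the grading formulas of \cite{BKW} (where the degree of $\psi_{\T^\mu\T^\mu}$ is computed) together with the cellular triangularity statement from \cite{HM} and the explicit substitutions~(\ref{QEPolKL})--(\ref{QQCoxKL}) of \cite{BK}; this packages the needed facts and reduces the verification of (iii) to the already-known behaviour of $X_r$ on $m_\mu$ modulo more-dominant terms.
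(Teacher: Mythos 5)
Your argument is correct and is essentially the argument behind the paper's citation: the paper gives no independent proof of this lemma, simply quoting \cite[Proposition 3.7]{JM} for the Jucys--Murphy eigenvector property $X_r m_\mu=\nu(i_r^\mu)m_\mu$ and relying on the graded cellular structure of \cite{BKW,HM} for the degree and idempotent statements, exactly as you do. One small imprecision in your Step~2: in the Jucys--Murphy triangularity $X_r m_\mu=\nu(i_r^\mu)m_\mu+\cdots$, the same-shape correction terms (indexed by tableaux $\Stab\gdom\T^\mu$) do not ``vanish by the cellular filtration'' --- they simply do not occur, since $\T^\mu$ is the $\ledom$-maximal tableau of shape~$\mu$; the cellular filtration only accounts for killing terms supported on strictly more dominant shapes.
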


Let $\T$ be a $\mu$-tableau. Recall from (\ref{EPsiT}) that we have defined the element 
$\psi^\T=\psi_{w^\T}$ in $R_\al^\La(F)=H_\al^\La(F)$. Set 
$$
v_H^\T:=\psi^\T z_H^\mu\in S^\mu_H.
$$
Just like $\psi_{w}$ the vector $v_H^\T$ will, in general, depend upon on the choice of preferred reduced decomposition of $w^\T\in\Si_d$. Note that $z_H^\mu=v_H^{\T^\mu}$. 

\begin{Lemma}\label{LPsiStr} {\rm \cite[Lemma 4.9]{BKW}} 
Let $\mu\in\Par_d$ and $\T \in\St(\mu)$. 
If $r\downarrow_\T  r+1$ or $r\rightarrow_\T  r+1$ 
then
$$
\psi_rv_H^\T =\sum_{\Stab\in \St(\mu),\,\Stab\gdom \T,\,\bi(\Stab)=\bi(s_r\T)}a_\Stab v_H^\Stab.
$$
for some $a_\Stab\in F$. 
In particular, $\psi_rz_H^\mu=0$ whenever $r\rightarrow_{\T^\mu}  r+1$. 
\end{Lemma}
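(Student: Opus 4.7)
The plan is to expand $\psi_r v_H^\T=\psi_r\psi^\T z_H^\mu$ in the cellular basis $\{v_H^\Stab\mid\Stab\in\St(\mu)\}$, and the first task is to rewrite $\psi_r\psi^\T$. Both hypotheses $r\rightarrow_\T r+1$ and $r\downarrow_\T r+1$ ensure that the position of $r$ in $\T$ precedes the position of $r+1$ under the left-to-right, top-to-bottom reading order defining $\T^\mu$; equivalently $(w^\T)^{-1}(r)<(w^\T)^{-1}(r+1)$, so $\ell(s_rw^\T)=\ell(w^\T)+1$. Choosing the preferred reduced decomposition of $s_rw^\T$ to be $s_r$ concatenated with the preferred decomposition of $w^\T$, Proposition~\ref{PSubtle} (valid in $H_\al^\La\cong R_\al^\La$ by Theorem~\ref{TBK}) gives
$$\psi_r\psi^\T e(\bi^\mu)=\psi_{s_rw^\T}e(\bi^\mu)+\sum_{u<s_rw^\T}\psi_uf_u(y)e(\bi^\mu),$$
all of the same degree. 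Right-multiplying by $z_H^\mu$ and using $y_sz_H^\mu=0$ from Lemma~\ref{LZ1} annihilates every $f_u(y)$ with no constant term, so
$$\psi_rv_H^\T=\psi_{s_rw^\T}z_H^\mu+\sum_{u<s_rw^\T}c_u\,\psi_uz_H^\mu$$
for some scalars $c_u\in F$.

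The leading term is, up to a further application of Proposition~\ref{PSubtle}, the vector associated to the (non-standard) $\mu$-tableau $s_r\T$, and each $\psi_uz_H^\mu$ is similarly attached to the $\mu$-tableau $u\T^\mu$. The next step is to expand every such $v_H^\U$ with $\U$ a non-standard $\mu$-tableau into the cellular basis. This is exactly the content of the homogeneous Garnir relations for $S^\mu_H$: by Lemma~\ref{LAndrew} any non-row-strict $\mu$-tableau $\U$ factors as $w\G^A$ through a Garnir tableau with matching lengths, and a Bruhat induction using Lemmas~\ref{LAgrees}, \ref{L10910} and the Garnir element $g^A$ produces an expansion $v_H^\U=\sum_{\Stab\in\St(\mu),\,\Stab\gdom\U}a_\Stab v_H^\Stab$. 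Applied to $\U=s_r\T$ this yields $\Stab\gdom s_r\T$; applied to $\U=u\T^\mu$ (with the idempotent $e(\bi(s_r\T))$ forcing $\bi(\Stab)=\bi(s_r\T)$ for any surviving $\Stab$) the same induction applies. Lemma~\ref{L211108} then upgrades $\Stab\gdom s_r\T$ to $\Stab\gedom\T$.

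To finish, I would rule out $\Stab=\T$ using residues. Under $r\rightarrow_\T r+1$ we have $\res_\T(r+1)=\res_\T(r)+1$, and under $r\downarrow_\T r+1$ we have $\res_\T(r+1)=\res_\T(r)-1$; since $e\ne 1$, in both cases these residues are distinct in $I=\Z/e\Z$, so $\bi(\T)\ne\bi(s_r\T)$. Because $v_H^\Stab\in e(\bi(\Stab))S_H^\mu$ while $\psi_rv_H^\T\in e(\bi(s_r\T))S_H^\mu$, only $\Stab$ with $\bi(\Stab)=\bi(s_r\T)$ can contribute, which excludes $\Stab=\T$. Combined with $\Stab\gedom\T$ this forces $\Stab\gdom\T$, giving the main formula. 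The ``in particular'' assertion then follows by specialising to $\T=\T^\mu$: since $\T^\mu$ is the unique Bruhat-maximal element of $\St(\mu)$, the indexing set is empty and $\psi_rz_H^\mu=0$.

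The main obstacle in a fully self-contained proof would be the Garnir expansion step: verifying that every non-standard intermediate tableau $\U$ appearing decomposes into cellular basis elements indexed by standard tableaux strictly dominating $\U$. This is precisely the straightening algorithm underlying the graded cellular basis of \cite{HM,BKW}, and all the dominance bookkeeping one requires is already packaged in Lemmas~\ref{LAndrew}, \ref{LAgrees}, \ref{L10910} and~\ref{L211108}, together with induction on the Bruhat order.
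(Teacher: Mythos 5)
The paper does not prove this lemma; it is quoted verbatim from \cite[Lemma~4.9]{BKW}, so there is no in-paper proof to compare against. Your reconstruction is broadly plausible and aligned with how one would expect the \cite{BKW} argument to go: the length observation $\ell(s_rw^\T)=\ell(w^\T)+1$, the reduction via Proposition~\ref{PSubtle} together with $y_sz_H^\mu=0$ from Lemma~\ref{LZ1}, the upgrade $\Stab\gdom s_r\T\Rightarrow\Stab\gedom\T$ via Lemma~\ref{L211108}, the residue argument $\bi(s_r\T)\ne\bi(\T)$ (using $e\ne1$) to exclude $\Stab=\T$, and the ``in particular'' clause from maximality of $\T^\mu$ are all correct, and you correctly note $u\T^\mu\gdom s_r\T$ for $u<s_rw^\T$ via Lemma~\ref{eBruhat}(ii).

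The one genuine problem is the step where you expand $v_H^\U$ for a non-standard $\U$ into the cellular basis by invoking ``the homogeneous Garnir relations for $S^\mu_H$.'' Within this paper's logical architecture, the homogeneous Garnir relations $g^Az_H^\mu=0$ in $S^\mu_H$ are not available a priori: they are what Theorem~\ref{TMain} \emph{proves}, and that proof uses Lemma~\ref{LPsiStr} as an input. So deriving the straightening from the Garnir relations is circular. What you actually need, and what is available as an independent black box from \cite{BKW}, is Theorem~\ref{PVZ}(iii): for \emph{any} $\mu$-tableau $\T$, $v_H^\T=\sum_{\Stab\gedom\T,\,\bi(\Stab)=\bi(\T)}b_\Stab v_H^\Stab$. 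Replacing your Garnir-based justification with a direct appeal to Theorem~\ref{PVZ}(iii) removes the circularity and makes the argument close. (Aside: Lemmas~\ref{LAndrew}, \ref{LAgrees}, \ref{L10910} describe the combinatorics of the straightening but do not by themselves give the identity $g^Az_H^\mu=0$ in $S^\mu_H$, which is the substantive algebraic input; in \cite{BKW} the triangularity comes from comparison with the ungraded Murphy basis, not from Garnir relations.)
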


\begin{Theorem}\label{PVZ} {\rm \cite{BKW}} 
Suppose that $\mu\in\Par_\al$. Then 
\begin{enumerate}
\item[{\rm (i)}] For any $\mu$-tableau $\T$ we have $v_H^\T\in e(\bi(\T))S^\mu_H$. 
\item[{\rm (ii)}] If $\mu\in\St(\mu)$, then $\deg(v_H^\T)=\deg(\T)$. 
\item[{\rm (iii)}] $\{v_H^\T\mid \T\in \St(\mu)\}$ is a basis of  $S_H^\mu(F)$. Moreover, for 
any $\mu$-tableau~$\T$, we have
$$
v_H^\T=\sum_{\substack{\Stab\in \St(\mu)\\\Stab\gedom \T,\bi(\Stab)=\bi(\T)}}b_\Stab v_H^\Stab
$$
for some constants $b_\Stab\in F$. 
\end{enumerate}
\end{Theorem}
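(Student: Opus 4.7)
The plan is to handle the three parts separately. Parts~(i) and~(ii) are formal consequences of Lemma~\ref{LZ1} together with the KLR relations; part~(iii) is the substantive claim, proved originally in \cite{BKW}.

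For~(i), I would push the idempotent $e(\bi^\mu)$ through $\psi^\T$ using the commutation rule~(\ref{R2PsiE}) iteratively along any reduced expression of~$w^\T$. Since $z_H^\mu = e(\bi^\mu) z_H^\mu$ by Lemma~\ref{LZ1}, this yields $v_H^\T = e(w^\T\!\cdot\!\bi^\mu)\, v_H^\T$, and the equality $w^\T\!\cdot\!\bi^\mu = \bi(\T)$ is immediate from $w^\T\T^\mu = \T$ together with the definitions~(\ref{EResSeq}) and~(\ref{EBIMu}). For~(ii), degrees are additive under module action, so $\deg(v_H^\T) = \deg(\psi^\T e(\bi^\mu)) + \deg(z_H^\mu)$; \cite[Corollary~3.14]{BKW} identifies the first summand with $\deg(\T) - \deg(\T^\mu)$, and Lemma~\ref{LZ1} gives the second as $\deg(\T^\mu)$, so the shifts cancel and we are left with $\deg(\T)$.

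Part~(iii) is the real work. I would establish the triangular expansion first, since spanning follows from it at once and only linear independence requires heavier input. The triangularity I would prove by induction on~$\ell(w^\T)$. When $\T$ is standard the identity is tautological (take $b_\T = 1$), so suppose $\T$ is row-strict but not standard: Lemma~\ref{LAndrew} yields $\T = w\G^A$ with $\G^A$ a Garnir tableau and $\ell(w^\T) = \ell(w^{\G^A}) + \ell(w)$. The Garnir straightening available in~$S^\mu_H$ (established via the cellular structure in~\cite{BKW}) expresses $v_H^{\G^A}$ as a combination of $v_H^\Stab$ with $\Stab \gdom \G^A$ and $\bi(\Stab) = \bi(\G^A)$; iterating Lemma~\ref{LPsiStr} to then apply~$\psi_w$ preserves both the dominance bound and the residue sequence, while Proposition~\ref{PSubtle} absorbs any ambiguity in the choice of preferred reduced decomposition into strictly shorter terms, which the inductive hypothesis disposes of. Spanning is then immediate from triangularity. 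The main obstacle is linear independence: for this I would invoke the graded cellular basis of $H_\al^\La$ from~\cite{DJM,HM}, which forces $\dim_F S^\mu_H = |\St(\mu)|$ and hence promotes the triangular spanning set to a basis.
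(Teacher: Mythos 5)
This theorem is quoted from~\cite{BKW} and the paper itself gives no proof, so I am assessing your proposal on its own merits rather than against a proof in the text.

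Parts~(i) and~(ii) are correct and are the standard arguments: pushing $e(\bi^\mu)$ through $\psi^\T$ via~(\ref{R2PsiE}) and using the identification $w^\T\cdot\bi^\mu=\bi(\T)$ for~(i); additivity of degree together with $\deg(\psi^\T e(\bi^\mu))=\deg(\T)-\deg(\T^\mu)$ from~\cite[Corollary~3.14]{BKW} and $\deg(z_H^\mu)=\deg(\T^\mu)$ from Lemma~\ref{LZ1} for~(ii).

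Part~(iii) has two gaps. First, the ``moreover'' claim is for an \emph{arbitrary} $\mu$-tableau~$\T$, but your induction starts the analysis at row-strict tableaux (standard or not) via Lemma~\ref{LAndrew}. You never explain how to reduce a general $\T$ to a row-strict one. That reduction is not free of content: one needs to write $w^\T=w^{\T_0}w_0$ with $\T_0$ the row-strict sorting of~$\T$ and $w_0$ in the row stabilizer of~$\T^\mu$, then use $\psi_r z_H^\mu=0$ for $r\rightarrow_{\T^\mu}r+1$ together with Proposition~\ref{PSubtle} (since the preferred reduced expression for $w^\T$ need not factor through one of $w^{\T_0}$ followed by one of $w_0$), and finally verify that the dominance and residue constraints in the statement are respected. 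Second, and more seriously, the pivotal step ``the Garnir straightening available in $S_H^\mu$ (established via the cellular structure in~\cite{BKW}) expresses $v_H^{\G^A}$ as a combination of $v_H^\Stab$ with $\Stab\gdom\G^A$ and $\bi(\Stab)=\bi(\G^A)$'' risks circularity: in this paper, that straightening is precisely Corollary~\ref{C10910}, whose proof is the one-line ``This comes from Theorem~\ref{PVZ} and Lemma~\ref{L10910}.'' If the Garnir straightening of $v_H^{\G^A}$ in~\cite{BKW} is itself derived from the $\{v_H^\T\}$ basis theorem, then you are proving~(iii) by assuming it. The way~\cite{BKW} actually avoids this is by first establishing unitriangularity of the transition matrix between the Murphy cellular basis and the $\psi$-basis (this is where the hard inductive work lives), after which both the basis claim and the $\psi$-straightening follow; the Garnir form of the straightening is then a post-hoc reformulation via Lemma~\ref{L10910}, not an ingredient. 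Your sketch would be rescued by replacing the black-boxed Garnir step with that change-of-basis argument, or by giving an independent derivation of the straightening of~$v_H^{\G^A}$ that does not presuppose the basis theorem.
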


The following corollary should be compared with Lemma~\ref{L8910}. 

\begin{Corollary} \label{C10910}
Suppose that $\mu\in\Par_d$ and that $\G=\G^A$, where $A\in \mu$ is a Garnir node. Then 
$$
v_H^{\G}=\sum_{\substack{\T\in\Gar^A,\ \T\gdom \G}}c_\T v_H^\T\qquad
$$
for some $c_\T\in F$. 
\end{Corollary}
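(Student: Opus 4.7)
The plan is to deduce this directly from the already-established Theorem~\ref{PVZ}(iii) together with the identification of standard tableaux dominating $\G^A$ with the set $\Gar^A$, given by Lemma~\ref{L10910}. Theorem~\ref{PVZ}(iii) is stated for any $\mu$-tableau (not just standard ones), which is the crucial point, since $\G^A$ is itself not standard in general.

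First I would apply Theorem~\ref{PVZ}(iii) to the tableau $\T=\G=\G^A$. This yields an expansion
$$v_H^{\G} \;=\; \sum_{\substack{\Stab\in\St(\mu)\\ \Stab\gedom\G,\;\bi(\Stab)=\bi(\G)}} b_\Stab\, v_H^\Stab$$
for some $b_\Stab\in F$. Next, I would recall that $\bi(\G)=\bi^A$ by the very definition of $\bi^A$, and invoke Lemma~\ref{L10910}, which asserts
$$\Gar^A\setminus\{\G^A\}\;=\;\{\,\Stab\in\St(\mu)\mid \Stab\gedom \G^A \text{ and } \bi(\Stab)=\bi^A\,\}.$$
Thus the indexing set of the expansion above is precisely $\Gar^A\setminus\{\G\}$.

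Finally, since $\G^A$ is the Bruhat-minimum element of $\Gar^A$ (as noted in section~\ref{SSBricks}), the condition $\Stab\in\Gar^A\setminus\{\G\}$ is equivalent to $\Stab\in\Gar^A$ and $\Stab\gdom\G$. Setting $c_\Stab=b_\Stab$ for $\Stab\in\Gar^A$ with $\Stab\gdom\G$ therefore produces the claimed identity. No obstacle is expected here: the substantive input is Lemma~\ref{L10910} (which furnishes the precise matching of indexing sets) and Theorem~\ref{PVZ}(iii) (whose applicability to non-standard $\G^A$ is exactly what makes this go through). The corollary is a Hecke-algebra shadow of Lemma~\ref{L8910}, and is essentially immediate once these two ingredients are combined.
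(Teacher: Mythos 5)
Your proof is correct and follows exactly the same approach as the paper, which simply cites Theorem~\ref{PVZ} and Lemma~\ref{L10910}; you have just spelled out in detail how the indexing set in the Theorem~\ref{PVZ}(iii) expansion for the non-standard tableau $\G^A$ is identified with $\Gar^A\setminus\{\G^A\}$ via Lemma~\ref{L10910}, and how Bruhat-minimality of $\G^A$ in $\Gar^A$ converts $\gedom$ to $\gdom$.
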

\begin{proof}
This comes from Theorem~\ref{PVZ} and Lemma~\ref{L10910}.
\end{proof}


\subsection{Connecting the universal row Specht modules with the cell modules}\label{SSconnection}
Since we have identified the algebras $H_\al^\La(F)$ and $R_\al^\La(F)$ we may consider the Specht modules $S^\mu_H(F)$ as
an $R_\al^\La(F)$-module. Inflating from $R_\al^\La(F)$ to the affine KLR algebra $R_\al(F)$, we can now
consider $S^\mu_H(F)$ as an $R_\al(F)$-module. The following theorem shows  that, as a graded $R_\alpha(F)$-module, $S^\mu_H(F)$ is
isomorphic to the universal row Specht module $S^\mu(F)$ from Definition~\ref{DSpecht}.

\begin{Theorem} \label{TMain}
Let $\mu\in\Par_\al$. Then the linear map, which sends the basis elements $v_H^\T\in S^\mu_H(F)$ to $v^\T\in S^\mu(F)$ for all $\T\in\St(\mu)$, is a homogeneous isomorphism $S^\mu_H(F)\iso S^\mu(F)$ of graded $R_\al(F)$-modules. 
\end{Theorem}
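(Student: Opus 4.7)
The plan is to construct a homogeneous $R_\al(F)$-module homomorphism $\phi\colon S^\mu(F)\to S^\mu_H(F)$ by sending the defining cyclic generator $z^\mu\mapsto z_H^\mu$, and then to verify bijectivity via a basis count that exploits the spanning set of Proposition \ref{PSpan} and the standard basis of Theorem \ref{PVZ}. Well-definedness of $\phi$ requires that $z_H^\mu$ satisfy the four relations of Definition \ref{DSpecht}. Relations (i) and (ii) are immediate from Lemma \ref{LZ1}, and relation (iii) is the last sentence of Lemma \ref{LPsiStr}. The real content is the homogeneous Garnir relation $g^A z_H^\mu=0$ for every Garnir node $A\in\mu$.

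To establish the Garnir relation in $S^\mu_H(F)$, I would first note that relations (i)--(iii) already yield an $R_\al(F)$-surjection $M^\mu\onto S^\mu_H(F)$ sending $m^\mu\mapsto z_H^\mu$, so it suffices to show that $g^A m^\mu$ is in its kernel. The element $g^A m^\mu=\sum_{u\in\D^A}\tau_u^A m^A$ lies in the brick permutation subspace $T^{\mu,A}\subset M^\mu$, which Theorem \ref{TTauA}(iii) identifies with the induced module $\ind_{F\Si_{(f,k-f)}}^{F\Si_k}\triv{(f,k-f)}$. Its image in $S^\mu_H(F)$ lies in the analogous span of $\{v_H^\T:\T\in\Gar^A\}$, and can be re-expressed in that basis using the unitriangular change of basis of Corollary \ref{C8910}, whose proof only uses the affine KLR relations of Section \ref{SBI} and so transfers verbatim to $S^\mu_H(F)$. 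The resulting linear combination of $v_H^\T$ must vanish by precisely the cellular relation furnished by Corollary \ref{C10910}: the expansion of $v_H^{\G^A}$ in the standard basis elements $v_H^\T$ with $\T\gdom\G^A$ in $\Gar^A$ provides the required identity.

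For bijectivity, note by construction that $\phi(v^\T)=\psi^\T z_H^\mu=v_H^\T$ for every $\T\in\St(\mu)$. Surjectivity of $\phi$ is immediate because $S^\mu_H(F)$ is cyclic on $z_H^\mu$. Proposition \ref{PSpan} gives the spanning set $\{v^\T:\T\in\St(\mu)\}$ of $S^\mu(F)$, and Theorem \ref{PVZ}(iii) says its image $\{v_H^\T\}$ is an $F$-basis of $S^\mu_H(F)$. Hence the $v^\T$ are $F$-linearly independent, must form a basis of $S^\mu(F)$, and $\phi$ identifies these two bases homogeneously, proving the isomorphism of graded $R_\al(F)$-modules. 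Homogeneity is automatic since $\deg(z^\mu)=\deg(\T^\mu)=\deg(z_H^\mu)$ and $\psi^\T$ is homogeneous.

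The main obstacle is the Garnir step. Although Corollary \ref{C10910} makes the eventual cancellation inevitable, turning it into a proof that $g^A z_H^\mu=0$ requires careful bookkeeping: one must match the coefficients of the $\tau$-basis of Theorem \ref{TTauA}(iv) against the coefficients of the $\psi$-basis of Theorem \ref{PVZ}(iii), and align the trivial-submodule sum in $T^{\mu,A}$ with the specific combination of $v_H^\T$ arising from Corollary \ref{C10910}. All the KLR-algebra computations needed for this translation reuse only the defining relations of the affine KLR algebra, so they pass cleanly from $M^\mu$ to $S^\mu_H(F)$; the only genuine subtlety is the change-of-basis calculation itself.
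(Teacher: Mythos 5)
Your overall scaffolding matches the paper's: build a surjection $M^\mu\surj S^\mu_H(F)$ from relations (i)--(iii) via Lemmas \ref{LZ1} and \ref{LPsiStr}, and then reduce to verifying the homogeneous Garnir relation $g^A z_H^\mu=0$. And your direction of attack for the map is the same as the paper's (construct $S^\mu(F)\to S^\mu_H(F)$ on the cyclic generator). The problem is the Garnir step itself, where you treat the hard part as ``careful bookkeeping'' of a change of basis, and that is not where the actual difficulty lies.

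Corollary~\ref{C10910} only tells you that \emph{some} linear relation holds among $\{v_H^\T : \T\in\Gar^A\}$, namely $v_H^{\G^A}=\sum_{\T\gdom\G^A}c_\T v_H^\T$ for unspecified scalars $c_\T$. It does not tell you that the particular vector $g^A z_H^\mu=\sum_{u\in\D^A}\tau_u^A z_H^A$ vanishes, and there is no a priori reason why a unitriangular change from the $\tau$-basis to the $\psi$-basis should align $g^A z_H^\mu$ with the C10910 relation: those are two different linear combinations and matching them is exactly the content you would need to supply. The paper bypasses this entirely by a representation-theoretic argument: Corollaries~\ref{C8910} and~\ref{C10910} together show that the kernel of the induced $F\Si^A$-map $T^{\mu,A}\to\pi(T^{\mu,A})$ is \emph{one-dimensional}, Theorem~\ref{TTauA}(iii) identifies $T^{\mu,A}$ with $\ind_{F\Si_{(f,k-f)}}^{F\Si_k}F_{(f,k-f)}$, and then one invokes the fact that this permutation module has a \emph{unique} one-dimensional (trivial) submodule, which is precisely the span of $g^A m^\mu$. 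That uniqueness --- not a coefficient computation --- is the key lemma you are missing.

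Moreover, once you adopt that argument you immediately run into the one place where uniqueness fails: $k=2$, $f=1$, $\cha F\neq 2$, where $\ind_{F\Si_1\times\Si_1}^{F\Si_2}F\cong F\Si_2$ is semisimple with \emph{two} one-dimensional submodules (trivial and sign), so the dimension count alone cannot identify the kernel. The paper handles this by a separate calculation, showing $\si_1^A z^A\neq 0$ via Lemma~\ref{LPsiSi} (apply $\psi_r$ and observe that $-2v^{s_r\T^A}$ is a nonzero basis element when $\cha F\neq 2$), which rules out the sign submodule and forces the kernel to be trivial. Your proposal omits this exceptional case altogether; without it the argument is incomplete even if you filled in the main step. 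The bijectivity part of your write-up (spanning set versus $F$-basis count) is correct and is the same as the paper's.
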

\begin{proof}
In this proof all modules and algebras are vector spaces over $F$, so we will suppress $F$ from our notation. We will construct the isomorphism in the other direction: $S^\mu\iso S^\mu_H$. 
By Lemmas~\ref{LZ1}, \ref{LPsiStr} and the defining relations for $M^\mu$, cf. (\ref{E:M(s)}), there exists a surjective degree zero homogeneous homomorphism $\pi: M^\mu\surj S^\mu_H$ of graded $R_\al$-modules which maps $m^\T$ to $v_H^\T$ for any row-strict $\mu$-tableau~$\T$.  By Theorem~\ref{PVZ} and Proposition~\ref{PSpan}, it now suffices to check that the homogeneous Garnir relations $g^Az_H^\mu=0$ hold in $S^\mu_H$, for all Garnir nodes $A\in\mu$. 

Fix a Garnir node $A$.
Let $k=k^A$, $f=f^A$ and $\Si^A\cong \Si_k$ be the brick permutation group defined in section~\ref{SSBricks}. 
By Corollary~\ref{C8910}, $\{m^\T\mid \T\in\Gar^A\}$ is an $F$-basis of $T^{\mu,A}$. Note that $\G^A$ is the only non-standard tableaux in $\Gar^A$. As $\pi(T^{\mu,A})$ is spanned by the vectors $\{v_H^\T=\pi(m^\T)\mid\T\in\Gar^A\}$, Corollary~\ref{C10910} shows that $\{v_H^\T\mid\T\in\Gar^A\setminus\{\G^A\}\}$ is a basis of $\pi(T^{\mu,A})$. So $\dim \pi(T^{\mu,A})=\dim T^{\mu,A}-1$. 

Recall from Theorem~\ref{TTauA} that the group $\Si^A$ acts on the brick permutation subspace $T^{\mu,A}$ with
its simple reflections acting as $\tau_1^A,\dots,\tau_{k-1}^A$. Moreover, with respect to this action,
$T^{\mu,A}\cong \ind_{F\Si_{(f,k-f)}}^{F\Si_k}F_{(f,k-f)}$.  Since the elements of $\Si^A$ act on $T^{\mu,A}$
as specific elements of $R_\al$, and $\pi$ is an $R_\al$-homomorphism,~$\pi$ induces an $F\Si^A$-homomorphism
$T^{\mu,A}\longrightarrow \pi(T^{\mu,A})$. By the dimension observations in the previous paragraph, the
kernel of this map is a one dimensional $F\Si_k$ submodule of
$T^{\mu,A}=\ind_{F\Si_{(f,k-f)}}^{F\Si_k}F_{(f,k-f)}$. Therefore, unless $k=2$, $f=1$, and $\cha F\neq 2$,
this kernel is the unique trivial submodule of $\ind_{F\Si_{(f,k-f)}}^{F\Si_k}F_{(f,k-f)}$. Hence, in this
case, $\ker\pi$ is spanned by $\sum_{u\in \D^A}\tau_u^Am^A=g^Am^\mu$. Hence $g^Az_H^\mu=\pi(g^Am^\mu)=0$, so that
the Garnir relation holds in the Specht module $S^\mu_H$, as desired. 

It remains to consider the exceptional case $k=2,f=1,\cha F\neq 2$. In this case we claim that
$(\tau_1^A+1)z^A=0$, for this we need to rule out the possibility that $(\tau_1^A-1)z^A=0$.
Since $\tau_1^Az^A=(\si_1^A+1)z^A$, we just need to prove that $\si_1^Az^A\neq 0$. Let
$A=(a,b,m)$, and $r$ be the entry which occupies the node $(a+1,b,m)$ in $\G^A$. But by
Lemma~\ref{LPsiSi}, we have $$\psi_r \si_1^Az^A=-2\psi_rz^A=-2v^{s_r\T^A}\neq 0,$$ since the
tableau $s_r\T^A$ is standard and $\cha F\ne2$. 
\end{proof}

We can now improve on Proposition~\ref{PSpan}: 

\begin{Corollary} \label{COBasis}
Let $\mu\in\Par_\al$. Then the universal row Specht module  $S^\mu(\O)$ for $R_\al(\O)$  has $\O$-basis
\begin{equation}\label{ESBasis2}
\{v^\T\mid \T\in\St(\mu)\}.
\end{equation}
\end{Corollary}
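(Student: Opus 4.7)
The plan is to deduce the $\O$-basis statement from Theorem~\ref{TMain} (which gives an $F$-basis) by a base-change argument. Proposition~\ref{PSpan} already tells us that $\{v^\T\mid\T\in\St(\mu)\}$ spans $S^\mu(\O)$ over~$\O$ and computes its degree, so only $\O$-linear independence remains.

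First I would verify that the construction $S^\mu$ commutes with base change, i.e., that the canonical map
$$
S^\mu(\Z)\otimes_\Z\O\longrightarrow S^\mu(\O),\qquad z^\mu\otimes 1\mapsto z^\mu,
$$
is an isomorphism of graded $R_\al(\O)$-modules. This follows from the observations that $R_\al(\O)\cong R_\al(\Z)\otimes_\Z\O$ and that each of the four families of generators (i)--(iv) of the defining ideal $J^\mu_\al$ in Definition~\ref{DSpecht} is given by the \emph{same} $\Z$-integral formula over any commutative ring $\O$; in particular the Garnir element $g^A=\sum_{u\in\D^A}\tau^A_u\psi^{\T^A}$ is manifestly an element of $R_\al(\Z)$. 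Under this isomorphism, the element $v^\T=\psi^\T z^\mu\in S^\mu(\O)$ is identified with $v^\T\otimes 1$, so it suffices to prove that $\{v^\T\mid\T\in\St(\mu)\}$ is a $\Z$-basis of $S^\mu(\Z)$.

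Spanning over $\Z$ is Proposition~\ref{PSpan}. For $\Z$-linear independence, I would pick a specific field $F$ of characteristic~$0$ together with a parameter $\xi\in F^\times$ whose associated integer, as defined in Section~\ref{SNot}, coincides with the prescribed $e$: take $F=\QQ(\zeta_e)$ and $\xi=\zeta_e$ a primitive $e$-th root of unity when $e>0$, and $F=\QQ$ with $\xi=2$ when $e=0$. By Theorem~\ref{TMain}, combined with Theorem~\ref{PVZ}(iii), the set $\{v^\T\mid\T\in\St(\mu)\}$ is an $F$-basis of $S^\mu(F)\cong S^\mu(\Z)\otimes_\Z F$. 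Consequently any integral linear relation $\sum c_\T v^\T=0$ in $S^\mu(\Z)$ maps to the corresponding relation in $S^\mu(F)$, forcing each $c_\T=0$ in~$F$ and hence in~$\Z$. Since the $v^\T$ have the correct degrees by Proposition~\ref{PSpan}, this concludes the proof after tensoring back up with $\O$.

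The only non-formal step is the base-change isomorphism $S^\mu(\Z)\otimes_\Z\O\iso S^\mu(\O)$; once this is established, the rest is a direct transfer of the field-theoretic statement of Theorem~\ref{TMain} to the integral and then arbitrary-$\O$ setting. Everything else -- the choice of $(F,\xi)$ realizing a given $e$, and the compatibility of the distinguished generators $v^\T$ across base change -- is a straightforward check.
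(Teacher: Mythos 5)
Your proposal is correct and takes essentially the same approach as the paper: reduce to $\O=\Z$ by base change, use Proposition~\ref{PSpan} for spanning, and establish $\Z$-linear independence by passing to a characteristic-$0$ field where Theorems~\ref{TMain} and~\ref{PVZ}(iii) apply. The only cosmetic differences are that you work over $\QQ(\zeta_e)$ (or $\QQ$ with $\xi=2$) instead of $\C$, and that you spell out the (right-exactness) argument for $S^\mu(\Z)\otimes_\Z\O\cong S^\mu(\O)$ which the paper takes for granted.
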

\begin{proof}
As $S^\mu(\O)\cong S^\mu(\Z)\otimes_\Z\O$, we may assume that $\O=\Z$. By Proposition~\ref{PSpan}, the elements (\ref{ESBasis2}) span $S^\mu(\Z)$. Suppose that we have a relation $ \sum_{\T\in\St(\mu)} c_\T v^\T=0$ with $c_\T\in\Z$.  Extending scalars to $\C$, we get the relation $\sum_{\T\in\St(\mu)} c_\T v^\T=0$ in $S^\mu(\C)$. Pick a parameter $\xi\in \C$ which is a primitive $e$th root of unity in $\C$ if $e>0$ and not a root of unity if $e=0$. Then by Theorem~\ref{TMain}, we get the relation $\sum_{\T\in\St(\mu)} c_\T v^\T_H=0$ in $S^\mu_H(\C)$, which is the usual Specht module for $H_\al^\La(\C,\xi)$. By Theorem~\ref{PVZ}(iii), $c_\T=0$ for all $\T\in\St(\mu)$.  
\end{proof}

\begin{Corollary} 
Let $\mu\in\Par_\al$. The universal row Specht module $S^\mu(\O)$ factors through the natural surjection $R_\al(\O)\onto  R_\al^\La(\O)$ so that $S^\mu(\O)$ is naturally a graded  $R_\al^\La(\O)$-module. 
\end{Corollary}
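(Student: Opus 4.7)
The plan is to show that the cyclotomic generators $y_1^{(\La,\al_{i_1})}e(\bi)$ act as zero on $S^\mu(\O)$, by reducing to the case $\O=\C$ where the connection to cyclotomic Hecke algebras via Theorem~\ref{TMain} is available.

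First, I would reduce to the case $\O=\Z$. Since the defining generators and relations of $S^\mu(\O)$ in Definition~\ref{DSpecht} are all defined over $\Z$ and are linear in the KLR generators, and since $R_\al(\Z)\otimes_\Z\O\cong R_\al(\O)$, one has $S^\mu(\O)\cong S^\mu(\Z)\otimes_\Z\O$ as already recorded in the proof of Corollary~\ref{COBasis}. Thus it suffices to show that the cyclotomic relations annihilate $S^\mu(\Z)$.

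Next, I would pass from $\Z$ to $\C$. By Corollary~\ref{COBasis}, the set $\{v^\T\mid\T\in\St(\mu)\}$ is a $\Z$-basis of $S^\mu(\Z)$, so $S^\mu(\Z)$ is a free $\Z$-module, and hence the canonical map
\[S^\mu(\Z)\longrightarrow S^\mu(\Z)\otimes_\Z\C\cong S^\mu(\C)\]
is injective. Therefore it is enough to check that each cyclotomic generator $y_1^{(\La,\al_{i_1})}e(\bi)$ annihilates $S^\mu(\C)$.

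Finally, I would invoke Theorem~\ref{TMain} to complete the argument. Choose $\xi\in\C$ to be a primitive $e$th root of unity if $e>0$ and a non-root of unity if $e=0$, so that the integer $e$ attached to $(\C,\xi)$ matches ours. By Theorem~\ref{TMain}, $S^\mu(\C)\cong S^\mu_H(\C)$ as graded $R_\al(\C)$-modules, where $S^\mu_H(\C)$ is the graded Specht module for the cyclotomic Hecke algebra $H_\al^\La(\C,\xi)$. Via the Brundan--Kleshchev isomorphism (Theorem~\ref{TBK}), $S^\mu_H(\C)$ is naturally an $R^\La_\al(\C)$-module, so the $R_\al(\C)$-action on $S^\mu_H(\C)$ factors through the canonical surjection $R_\al(\C)\onto R^\La_\al(\C)$. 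In particular the cyclotomic relations act as zero on $S^\mu(\C)$, hence on $S^\mu(\Z)$ by injectivity of the base-change map, and hence on $S^\mu(\O)$ for arbitrary $\O$. Since the only content is combining the basis theorem with Theorem~\ref{TMain}, I do not foresee any genuine obstacle; the one point requiring care is justifying the base-change identity $S^\mu(\O)\cong S^\mu(\Z)\otimes_\Z\O$, which follows directly from the presentation by generators and relations together with right-exactness of $\otimes$.
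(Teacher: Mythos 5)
Your argument is essentially identical to the paper's proof: both reduce from general~$\O$ to~$\Z$, embed $S^\mu(\Z)$ into $S^\mu(\C)$ using freeness from Corollary~\ref{COBasis}, and then invoke Theorem~\ref{TMain} together with the Brundan--Kleshchev isomorphism to conclude that the cyclotomic relations act as zero. The reasoning is correct and complete.
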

\begin{proof}
In view of (\ref{ERCyc}), we just need to prove that 
$$
y_1^{(\La,\al_{i_1})}e(\bi)S^\mu(\O)=0 \qquad(\bi=(i_1,\dots,i_d)\in I^\al).
$$
We may assume that $\O=\Z$. Next, since $S^\mu(\Z)\into S^\mu(\Z)\otimes\C=S^\mu(\C)$, we may now assume that
$\O=\C$.  Choose $\xi\in \C$ as in the proof of Corollary~\ref{COBasis}. Then by Theorem~\ref{TMain}, we have
$S^\mu(\C)=S^\mu_H(\C)$, which is the usual Specht module for $H_\al^\La(\C,\xi)$. Hence, $S^\mu(\C)$ is a
$R^\Lambda_\alpha(\C)$ module since $H_\alpha^\Lambda(\C,\xi)\cong R_\alpha^\lambda(\C)$. Hence, the action of
$R_\alpha^\alpha(\C)$ satisfies the cyclotomic relation (\ref{ERCyc}), implying that $S^\mu(\O)$ is an
$R_\al^\La(\O)$-module as we wanted to show.
\end{proof}

Now the following is clear:

\begin{Corollary} 
Let $\mu\in\Par_\al$. 
\begin{enumerate}
\item[{\rm (i)}] As a graded $R_\al^\La(\O)$-module, the universal row Specht module $S^\mu(\O)$ is generated by the homogeneous element $z^\mu$ of degree $\deg(\T^\mu)$ subject only to the relations (i)--(iv) from Definition~\ref{DSpecht}. 
\item[{\rm (ii)}] As a graded  $H_\al^\La$-module, the row Specht module $S^\mu_H$ is generated  by the homogeneous element $z^\mu$ of degree $\deg(\T^\mu)$ subject only to the relations (i)--(iv) from Definition~\ref{DSpecht}. 
\end{enumerate}
\end{Corollary}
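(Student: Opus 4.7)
The plan is to deduce both statements from the previous Corollary (which showed that $S^\mu(\O)$ descends to an $R_\al^\La(\O)$-module) plus the Isomorphism Theorem~\ref{TBK} and Theorem~\ref{TMain}. The key observation is that Definition~\ref{DSpecht} presents $S^\mu$ as $R_\alpha/J\<\deg\T^\mu\>$ where $J\ledom R_\alpha$ is the left ideal generated by relations (i)--(iv), and we just need to see what happens when we pass to the cyclotomic quotient.

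For part (i), let $\pi\colon R_\alpha(\O)\twoheadrightarrow R_\alpha^\La(\O)$ be the canonical surjection, and let $\tilde J\subseteq R_\alpha^\La(\O)$ be the left ideal generated by (the images under $\pi$ of) the defining relations (i)--(iv). Since $\pi$ is surjective, $\tilde J=\pi(J)$, and so there is a canonical isomorphism of graded left $R_\alpha^\La(\O)$-modules
\[R_\alpha^\La(\O)/\tilde J\;\cong\;R_\alpha(\O)/(J+\ker\pi).\]
By the previous Corollary, $S^\mu(\O)=R_\alpha(\O)/J\cdot\<\deg\T^\mu\>$ is annihilated by $\ker\pi$, which means $\ker\pi\subseteq J$; hence $J+\ker\pi=J$ and the right hand side equals $S^\mu(\O)\<-\deg\T^\mu\>$. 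Shifting back by $\deg\T^\mu$ gives the required presentation of $S^\mu(\O)$ as an $R_\alpha^\La(\O)$-module. This is genuinely just a one-line formal manipulation once the previous Corollary is in hand.

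For part (ii), we combine part (i) (applied over the field $F$) with the Brundan--Kleshchev Isomorphism Theorem~\ref{TBK}, which identifies $H_\alpha^\La(F,\xi)$ with $R_\alpha^\La(F)$, and with Theorem~\ref{TMain}, which identifies $S^\mu(F)$ with $S^\mu_H(F)$ as graded $R_\alpha(F)$-modules via $v^\T\mapsto v_H^\T$. Under these identifications, $z^\mu\in S^\mu(F)$ corresponds to $z_H^\mu\in S^\mu_H(F)$, and the homogeneous relations (i)--(iv) of Definition~\ref{DSpecht} become relations on $z_H^\mu$ inside $S^\mu_H$. So the presentation of $S^\mu(F)$ as a graded $R_\alpha^\La(F)$-module from part (i) transports to the asserted presentation of $S^\mu_H$ as a graded $H_\alpha^\La$-module.

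I do not anticipate a real obstacle here: all the substantial work, namely the construction of the brick intertwiners (Theorem~\ref{tau braid}), the spanning result (Proposition~\ref{PSpan}), the basis theorem (Corollary~\ref{COBasis}), the descent to the cyclotomic quotient, and the comparison with the cell-module Specht modules (Theorem~\ref{TMain}), has already been done. The only minor care needed is to keep the degree shift $\<\deg\T^\mu\>$ straight and to observe that the defining relations of $S^\mu$ are expressed entirely in terms of the KLR generators $e(\bj),y_r,\psi_r$, which lie in both $R_\alpha$ and $R_\alpha^\La$, so ``the same relations'' makes sense in both algebras.
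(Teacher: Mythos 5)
Your proposal is correct and fills in exactly the details that the paper declares ``clear.'' The paper offers no argument for this corollary beyond citing the preceding corollary (that $S^\mu(\O)$ factors through $R_\al(\O)\onto R_\al^\La(\O)$) and the earlier identifications; your ideal-theoretic argument for (i), namely $\ker\pi\subseteq J_\al^\mu$ and the third isomorphism theorem $R_\al^\La/\pi(J_\al^\mu)\cong R_\al/J_\al^\mu$, together with the transport via Theorems~\ref{TBK} and~\ref{TMain} for (ii), is precisely the intended reasoning.
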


\section{Column Specht modules}\label{SDualSpecht}
Having a presentation for a module does not automatically imply a presentation for the dual module. In this section, we define a column version $S_\mu$ of the universal graded Specht module corresponding to a multipartition $\mu$. Then in Theorem~\ref{Tduality} we show that the universal column Specht module $S_\mu$ is isomorphic to (a degree shift of) the homogeneous dual $(S^\mu)^\circledast$ of the universal row Specht module $S^\mu$.


In the section we again work over an arbitrary commutative unital ground ring $\O$,  unless otherwise stated. We fix
$
\al\in Q_+
$, $\mu\in\Par_\al$, and set $d:=\height(\al)$. 

\subsection{Column block intertwiners}\label{Signed block intertwiners}
In this section we assume that $e>0$.  Recall from (\ref{E:nullRoot}) that $\delta$ is the null root and observe
that $\delta'=\delta$ in the notation of section~\ref{SSSign}. Therefore, 
$\sgn$ is an automorphism of $R_{k\de}$, see~(\ref{epsilon}).

Fix $i\in I$ and a composition $\la=(\la_1,\dots,\la_n)$ of $k$. Define
$$\vec{\bs}(i,-\la):=(\bs(i,-e\la_1),\dots,\bs(i,-e\la_n)).$$ We consider the corresponding permutation module
$M(i,-\la):=M(\vec{\bs}(i,-\la))$ for $R_{k\de}$ as in section~\ref{SSPerm}. Let
$\bj=(j_1,\dots,j_{ke}):=\bj(\vec{\bs}(i,-\la))$ as defined in (\ref{EBJ}). We have 
$\bj=\bs(i,-ke). 
$
Let 
$
e(i,-\la):=e(\bj)
$ 
and $m(i,-\la)=m(\vec{\bs}(i,-\la))\in M(i,-\la)$ as in (\ref{EZ}). 

Recall from section~\ref{SSSign} that if $M$ is an $R_{k\delta}$-module then $M^\sgn$
is the $R_{k\delta}$-module obtained from $M$ by twisting with the sign automorphism~$\sgn$.


\begin{Lemma}\label{L:permIso}
We have 
  \begin{enumerate}
     \item $\sgn(e(i,-\la))=e(-i,\la)$.
     \item There is an isomorphism $M(i,-\la) \cong M(-i,\la)^\sgn$ of graded $R_{k\delta}$-modules, under which $m(i,-\la)$ corresponds to $m(-i,\la)$.
  \end{enumerate}
\end{Lemma}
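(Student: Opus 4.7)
The plan is to establish (i) by directly computing residue sequences, and then use the resulting identity together with the presentation (\ref{E:M(s)}) of a permutation module to pull back the $R_{k\delta}$-action on $M(-i,\la)^\sgn$ along the obvious set-theoretic map.

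First I would unwind (i). By definition of the segments, $\bs(i,-e\la_r)=(i,i-1,\dots,i-e\la_r+1)\pmod e$ while $\bs(-i,e\la_r)=(-i,-i+1,\dots,-i+e\la_r-1)\pmod e$. Concatenating shows $-\bj(\vec{\bs}(i,-\la))=\bj(\vec{\bs}(-i,\la))$ in $I^{k\de}$. Applying $\sgn$ to the idempotent $e(i,-\la)=e(\bj(\vec{\bs}(i,-\la)))$ and using (\ref{epsilon}) then gives $\sgn(e(i,-\la))=e(-\bj(\vec{\bs}(i,-\la)))=e(-i,\la)$.

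For (ii) I would use the presentation $M(i,-\la)\cong R_{k\de}/K(\vec{\bs}(i,-\la))$ from (\ref{E:M(s)}). The crucial observation is that the parabolic subgroup $\Si_{\vec{\bs}(i,-\la)}=\Si_{e\la_1}\times\cdots\times\Si_{e\la_n}$ depends only on the lengths of the segments, so $\Si_{\vec{\bs}(i,-\la)}=\Si_{\vec{\bs}(-i,\la)}$. I would then define an $\O$-linear map $\varphi:M(i,-\la)\to M(-i,\la)^\sgn$ by $\varphi(am(i,-\la)):=\sgn(a)m(-i,\la)$, and verify it is well-defined by checking that each of the generators of $K(\vec{\bs}(i,-\la))$ is annihilated on the target. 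Explicitly: $\sgn(e(\bi)-\de_{\bi,\bj(\vec{\bs}(i,-\la))})=e(-\bi)-\de_{-\bi,\bj(\vec{\bs}(-i,\la))}\in K(\vec{\bs}(-i,\la))$ by part (i); $\sgn(y_r)=-y_r$ annihilates $m(-i,\la)$; and for $s_t\in\Si_{\vec{\bs}(i,-\la)}=\Si_{\vec{\bs}(-i,\la)}$, $\sgn(\psi_t)=-\psi_t$ annihilates $m(-i,\la)$. Thus $\varphi$ is a well-defined, homogeneous $R_{k\de}$-module map, by construction intertwining the actions $a\cdot v$ on $M(-i,\la)^\sgn$ with $\sgn(a)\cdot v$ on $M(-i,\la)$, and sending $m(i,-\la)$ to $m(-i,\la)$.

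Finally I would argue that $\varphi$ is an isomorphism. It is surjective since $m(-i,\la)$ generates $M(-i,\la)$. By Theorem~\ref{TMBasis}, both $M(i,-\la)$ and $M(-i,\la)$ are free $\O$-modules with bases indexed by $\mathscr{D}_{\vec{\bs}}$ for the common parabolic subgroup, so they have the same (finite) graded rank; hence any surjection of one onto the other is bijective. I do not expect any real obstacle: the only point requiring care is making sure that the transport of the defining relations under $\sgn$ lands in the correct ideal $K(\vec{\bs}(-i,\la))$, and this is secured once the identification of parabolic subgroups and part (i) are in hand.
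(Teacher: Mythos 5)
Your proof is correct and is essentially the full version of the paper's one-line argument, which simply notes that (i) follows from the definition of $\sgn$ in (\ref{epsilon}) and (ii) from the presentation (\ref{E:M(s)}). You have spelled out exactly what that sketch leaves implicit — the identity $-\bj(\vec{\bs}(i,-\la))=\bj(\vec{\bs}(-i,\la))$, the coincidence of parabolic subgroups, and the verification that $\sgn$ carries $K(\vec{\bs}(i,-\la))$ into $K(\vec{\bs}(-i,\la))$ — so the content matches the intended argument.
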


\begin{proof}
(i) is clear from (\ref{epsilon}),  and (ii) is clear from~(\ref{E:M(s)}).
\end{proof}

From section~\ref{SBI}, we have the elements $w_r\in\Si_{ke}$, $\sigma_r=\psi_{w_r}e(-i,\la)$ and
$\tau_r=(\si_r+1)e(-i,\la)$, for $1\le r<k$. Set 
$$\si^r:=(-1)^{e}\psi_{w_r}e(i,-\la)\quad\text{and}\quad\tau^r=(\si^r+1)e(i,-\la),$$ 
for $r=1,\dots,k-1$, and define the {\em column block permutation subspace} $T(i,-\la)\subseteq M(i,-\la)$ to be the $\O$-span of all vectors of the form 
\begin{equation}\label{ESi^r}
\si^{r_1}\dots\si^{r_a}m(i,-\la).
\end{equation}


\begin{Lemma}\label{rho translation}
We have
\begin{enumerate}
\item $\sgn(\si^r)=\si_r$ and $\sgn(\tau^r)=\tau_r$, for $1\le r<k$.
\item Under the isomorphism of Lemma~\ref{L:permIso}(ii),  $T(i,-\la)$ corresponds  to $T(-i,\la)$
\end{enumerate}
\end{Lemma}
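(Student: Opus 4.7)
The plan is to reduce both statements to a direct computation using the definition of $\sgn$ in (\ref{epsilon}) and the explicit description of $w_r$ in (\ref{w_r}).

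First I would verify the identity $\sgn(e(i,-\la))=e(-i,\la)$. By definition $e(i,-\la)=e(\bs(i,-ke))$ where $\bs(i,-ke)=(i,i-1,\dots,i-ke+1)$. Negating entrywise gives $-\bs(i,-ke)=(-i,-i+1,\dots,-i+ke-1)=\bs(-i,ke)$, and hence $\sgn(e(i,-\la))=e(-\bs(i,-ke))=e(\bs(-i,ke))=e(-i,\la)$. Next, the key numerical observation is that $w_r$ from (\ref{w_r}) is the permutation swapping two consecutive blocks of size $e$, so $\ell(w_r)=e^2$. Since $\sgn(\psi_s)=-\psi_s$ for every simple generator, $\sgn(\psi_{w_r})=(-1)^{e^2}\psi_{w_r}=(-1)^e\psi_{w_r}$ (using $e^2\equiv e\pmod 2$). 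Here I would also note that Lemma~\ref{LFullComm} guarantees $\psi_{w_r}$ is independent of the choice of reduced decomposition, so this computation is unambiguous. Combining,
\[
\sgn(\si^r)=\sgn\bigl((-1)^{e}\psi_{w_r}e(i,-\la)\bigr)=(-1)^{e}\cdot(-1)^{e}\psi_{w_r}\cdot e(-i,\la)=\sigma_r,
\]
and then $\sgn(\tau^r)=\sgn(\si^r+e(i,-\la))=\sigma_r+e(-i,\la)=\tau_r$, proving (i).

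For (ii), I would unpack how the isomorphism $\phi\colon M(i,-\la)\iso M(-i,\la)^{\sgn}$ of Lemma~\ref{L:permIso} transports the $R_{k\de}$-action. By definition of the twisted module structure in section~\ref{SSSign}, for any $a\in R_{k\de}$ and $v\in M(i,-\la)$ we have $\phi(av)=\sgn(a)\phi(v)$, where on the right the usual action of $M(-i,\la)$ is used. Applying this to a generating element of $T(i,-\la)$ and using (i) yields
\[
\phi\bigl(\si^{r_1}\cdots\si^{r_a} m(i,-\la)\bigr)
=\sgn(\si^{r_1})\cdots\sgn(\si^{r_a})\,m(-i,\la)
=\sigma_{r_1}\cdots\sigma_{r_a}m(-i,\la),
\]
which is the corresponding spanning vector of $T(-i,\la)$ from~(\ref{ESi^r}). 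Hence $\phi$ restricts to a bijection between the $\O$-spans defining $T(i,-\la)$ and $T(-i,\la)$, completing the proof.

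There is no serious obstacle here; the only subtlety is the parity computation $\ell(w_r)=e^2\equiv e\pmod 2$, which is exactly why the sign $(-1)^e$ appears in the definition of $\si^r$ and makes the statement $\sgn(\si^r)=\sigma_r$ come out cleanly.
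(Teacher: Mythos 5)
Your proposal is correct and takes essentially the same approach as the paper: the paper also uses $\ell(w_r)=e^2$ to compute $\sgn(\psi_{w_r})=(-1)^{e}\psi_{w_r}$, combines this with Lemma~\ref{L:permIso}(i) to get (i), and derives (ii) from (i) and Lemma~\ref{L:permIso}(ii). You have merely filled in a few intermediate details (such as re-deriving $\sgn(e(i,-\la))=e(-i,\la)$, which the paper treats as already established, and writing out the transport-of-structure computation for (ii) explicitly), but the argument is identical in substance.
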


\begin{proof}
Since $\ell(w_r)=e^2$, we have $\sgn(\psi_{w_r})=(-1)^{e^2}\psi_{w_r}=(-1)^{e}\psi_{w_r}$. So Lemma~\ref{L:permIso}(i) yields~(i). Part~(ii) follows 
  from ~(i) and Lemma~\ref{L:permIso}(ii).
\end{proof}

%
%
%



Lemma~\ref{rho translation} and 
Theorem~\ref{tau braid} now imply the following.

\begin{Proposition}\label{signed tau braid}
  Suppose that $1\le r,s<k$ and $v\in T(i,-\la)$. Then
  \begin{enumerate}
  \item $(\tau^r)^2v=v$.
     \item If $|r-s|>1$ then 
        $\tau^r\tau^sv=\tau^s\tau^rv$.
     \item If $r< k-1$ then
        $\tau^r\tau^{r+1}\tau^rv= 
                \tau^{r+1}\tau^r\tau^{r+1}v$.
  \end{enumerate}
  Consequently, $\Si_k$ acts on $T(i,-\la)$, and the elements $\tau^um(i,-\la)$ for $u\in \Si_k$ are well-defined. Finally,  $T(i,-\la)\cong\ind_{\O\Si_\la}^{\O\Si_k}\triv{\la}$ as $\O\Si_k$-modules, and $T(i,-\la)$ has $\O$-basis $\{\tau^um(i,-\la)\mid u\in\D_\la\}
$.  

\end{Proposition}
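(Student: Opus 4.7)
The plan is to transport Theorem~\ref{tau braid} through the sign-twist isomorphism already established in the preceding lemmas. Specifically, by Lemma~\ref{L:permIso}(ii) there is an isomorphism of graded $R_{k\delta}$-modules
$$\phi\colon M(i,-\la)\bijection M(-i,\la)^\sgn$$
sending $m(i,-\la)\mapsto m(-i,\la)$, and by Lemma~\ref{rho translation}(ii) this isomorphism restricts to an isomorphism between the block permutation subspaces $T(i,-\la)$ and $T(-i,\la)$.

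The first step is to observe that for any $a\in R_{k\delta}$ and any $v\in M(i,-\la)$, the definition of the sign twist gives $\phi(av)=\sgn(a)\,\phi(v)$. Specialising to $a=\tau^r$ and using Lemma~\ref{rho translation}(i), which tells us that $\sgn(\tau^r)=\tau_r$, we obtain
$$\phi(\tau^r v)=\tau_r\,\phi(v)\qquad(1\le r<k,\ v\in T(i,-\la)).$$
Consequently, each of the three identities in Theorem~\ref{tau braid}, applied to $\phi(v)\in T(-i,\la)$, can be pulled back through the injective map $\phi$ to yield the corresponding identity (i)--(iii) for $\tau^r$ acting on $v\in T(i,-\la)$. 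This gives all three braid relations essentially for free.

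For the remaining assertions, relations (i)--(iii) immediately imply that $\Si_k$ acts on $T(i,-\la)$ with $s_r$ acting as $\tau^r$, and that the vectors $\tau^u m(i,-\la)$ are well-defined for all $u\in\Si_k$. Under $\phi$, this $\O\Si_k$-action on $T(i,-\la)$ is intertwined with the $\O\Si_k$-action on $T(-i,\la)$ described in Theorem~\ref{tau braid}; in particular $\phi(\tau^u m(i,-\la))=\tau_u m(-i,\la)$. Combining these identifications with the last statement of Theorem~\ref{tau braid}, we get an $\O\Si_k$-isomorphism
$$T(i,-\la)\cong T(-i,\la)\cong \ind_{\O\Si_\la}^{\O\Si_k}\triv{\la},$$
under which $m(i,-\la)$ corresponds to the natural cyclic generator, and the set $\{\tau^u m(i,-\la)\mid u\in\D_\la\}$ is transported to the $\O$-basis $\{\tau_u m(-i,\la)\mid u\in\D_\la\}$ of $T(-i,\la)$, hence is itself an $\O$-basis of $T(i,-\la)$.

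There is no real obstacle here: the entire proposition is a formal consequence of sign-twisting the row-case result, and the only substantive input (the identity $\sgn(\tau^r)=\tau_r$) has already been recorded in Lemma~\ref{rho translation}(i). The only cosmetic point to double-check is the sign $(-1)^e$ in the definition $\si^r=(-1)^e\psi_{w_r}e(i,-\la)$, which was chosen precisely so that $\sgn(\psi_{w_r}e(i,-\la))=(-1)^{\ell(w_r)}\psi_{w_r}e(-i,\la)=(-1)^{e^2}\psi_{w_r}e(-i,\la)$ cancels out and yields $\sgn(\si^r)=\si_r$; this is exactly the computation carried out in the proof of Lemma~\ref{rho translation}.
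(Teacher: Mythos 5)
Your proof is correct and follows exactly the route the paper intends: the paper's own justification is the single sentence ``Lemma~\ref{rho translation} and Theorem~\ref{tau braid} now imply the following,'' and your argument is precisely the fleshed-out version of that implication, transporting the braid relations and the module structure through the sign-twist isomorphism $\phi\colon M(i,-\la)\bijection M(-i,\la)^\sgn$ via $\phi(\tau^r v)=\sgn(\tau^r)\phi(v)=\tau_r\phi(v)$.
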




\subsection{Column Garnir tableau}
We now rework the combinatorics of row Garnir tableaux for column Garnir tableaux. 
A node $A=(a,b,m)\in\mu$ is a
{\em column Garnir node} of $\mu$ if $(a,b+1,m)$ is
a node of $\mu$.  The (column) {\em  $A$-Garnir belt} $\Belt_A$ is the set of nodes 
$$\Belt_A=\set{(c,b,m)\in\mu|c\geq a}\cup\set{(c,b+1,m)\in\mu |c\leq a}.$$

Recall from (\ref{EWT}) that if $\T\in\St(\mu)$ then $\T_\mu\ledom\T$ and $w_\T\in\Si_d$ is the permutation such
that $\T=w_\T T_\mu$.  Let $u=\T_\mu(a,b,m)$ and $v=\T_\mu(a+1,b,m)$.  
The {\em (column) $A$-Garnir tableau} $\G_A$  is the $\mu$-tableaux which agrees with $T_\mu$ outside of $\Belt_A$ and where the numbers $u,u+1,\dots,v$ are inserted into the Garnir belt in order, from top right to left bottom.


Just as in section~\ref{SGTab} we have the following two results.

\begin{Lemma}\label{signed LAgrees} 
Suppose that $A\in \mu$ is a column Garnir node and $\Stab\in\St(\mu)$. 
If $\G_A\ldom\Stab$ then $\Stab$ agrees with $\T_\mu$ outside of $\Belt_A$. 
\end{Lemma}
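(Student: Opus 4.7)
The proof plan is to reduce the column case to the row case (Lemma~\ref{LAgrees}) via the conjugation $\mu \mapsto \mu'$. First, I would check the combinatorial compatibility of conjugation: the column Garnir node $A=(a,b,m)\in\mu$ corresponds to the row Garnir node $A' := (b,a,l{+}1{-}m) \in \mu'$, the column Garnir belt $\Belt_A$ corresponds under conjugation (sending $(c,b,m)\leftrightarrow(b,c,l{+}1{-}m)$) to the row Garnir belt $\Belt^{A'}$ of $\mu'$, and, by directly comparing the explicit fillings, the column Garnir tableau $\G_A$ conjugates to the row Garnir tableau $\G^{A'}$. Here the ``top right to left bottom'' traversal of $\Belt_A$ used in the definition of $\G_A$ transforms into the ``left bottom to top right'' traversal of $\Belt^{A'}$ under conjugation, matching the definition of $\G^{A'}$. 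Together with the identity $(\T_\mu)'=\T^{\mu'}$ noted in Lemma~\ref{eBruhat}, this identifies all the combinatorial data in the two pictures.

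Second, by Lemma~\ref{eBruhat}(i) together with the definition (\ref{EBruhat}) of $\ledom$ in terms of $w_\T$, conjugation reverses the dominance order on standard tableaux: for $\Stab_1,\Stab_2\in\St(\mu)$ we have $\Stab_1\ledom\Stab_2$ if and only if $\Stab_2'\ledom\Stab_1'$ in $\St(\mu')$. Combining this order-reversal with the combinatorial identifications of the preceding paragraph, the hypothesis relating $\G_A$ and $\Stab$ translates exactly into the hypothesis of Lemma~\ref{LAgrees} applied to $\G^{A'}$ and $\Stab'$ in $\St(\mu')$.

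Applying Lemma~\ref{LAgrees} to $\mu'$ and $A'$ then yields that $\Stab'$ agrees with $\T^{\mu'}$ outside of $\Belt^{A'}$. Passing back through conjugation and using $(\T^{\mu'})'=\T_\mu$, together with the conjugation of $\Belt^{A'}$ to $\Belt_A$, we conclude that $\Stab$ agrees with $\T_\mu$ outside of $\Belt_A$, as required. The main technical step in this plan is the explicit combinatorial verification that conjugation correctly matches up the column data $(A,\Belt_A,\G_A,\T_\mu)$ with the row data $(A',\Belt^{A'},\G^{A'},\T^{\mu'})$; the filling of $\G_A$ reading ``top right to left bottom'' is the subtlest point to check, since one must see that this orientation precisely mirrors the ``left bottom to top right'' convention of the row Garnir tableau after swapping rows and columns. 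Once this verification is carried out, the lemma follows formally from Lemma~\ref{LAgrees} and Lemma~\ref{eBruhat}.
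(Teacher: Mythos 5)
Your overall strategy—reducing the column statement to the row statement Lemma~\ref{LAgrees} via conjugation $\mu\mapsto\mu'$—is a valid and natural way to obtain this result, and the combinatorial dictionary you set up is correct: the column Garnir node $A=(a,b,m)$ corresponds to the row Garnir node $A'=(b,a,l{+}1{-}m)$ of $\mu'$, the belt $\Belt_A$ conjugates to $\Belt^{A'}$, the ``top right to bottom left'' filling of $\G_A$ conjugates to the ``left bottom to top right'' filling of $\G^{A'}$ so that $(\G_A)'=\G^{A'}$, and $(\T_\mu)'=\T^{\mu'}$. The paper gives no proof of this lemma (it is stated with the remark ``Just as in section 5.1''), and a proof by conjugation is arguably cleaner than redoing the row argument column-wise.

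However, there is a genuine gap in the sentence where you assert that ``the hypothesis relating $\G_A$ and $\Stab$ translates exactly into the hypothesis of Lemma~\ref{LAgrees}.'' Carry out the order-reversal you just stated: $\Stab_1\ledom\Stab_2$ iff $\Stab_2'\ledom\Stab_1'$. The hypothesis as printed in the paper is $\G_A\ldom\Stab$, and under conjugation this becomes $\Stab'\ldom(\G_A)'=\G^{A'}$, i.e.\ $\Stab'\ldom\G^{A'}$. This is the \emph{opposite} of the hypothesis $\Stab'\gdom\G^{A'}$ required by Lemma~\ref{LAgrees}, so the reduction does not go through as written. What is really going on is that the paper's printed hypothesis $\G_A\ldom\Stab$ appears to be a typo: since $\G_A$ is the \emph{maximal} element of $\Gar_A$ and, by Lemma~\ref{signed L10910}, the relevant standard tableaux satisfy $\T\ledom\G_A$, the hypothesis must be $\Stab\ldom\G_A$ (equivalently $\G_A\gdom\Stab$). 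With that correction the order-reversal gives $\G^{A'}\ldom\Stab'$, i.e.\ $\Stab'\gdom\G^{A'}$, which is exactly the hypothesis of Lemma~\ref{LAgrees}, and the rest of your argument is fine. You should make this step explicit: compute the translated inequality rather than asserting it matches, and note (or silently correct) the direction of the inequality in the stated lemma so that the reduction actually lands on the hypothesis of Lemma~\ref{LAgrees}.
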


\begin{Lemma} \label{signed LAndrew}%
Suppose that 
$\T$ is a column strict $\mu$-tableaux which is not standard. Then
there exists a column Garnir tableaux $\G$ and $w\in S_d$ such that
$\T=w\G$ and $\ell(w_\T)=\ell(w_\G)+\ell(w)$. 
\end{Lemma}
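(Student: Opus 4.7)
The plan is to reduce the column statement to the row statement (Lemma~\ref{LAndrew}) via the conjugation duality already encoded in Lemma~\ref{eBruhat} and the preceding discussion of conjugate tableaux.

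First I will establish the dictionary between row and column Garnir combinatorics under conjugation. Given a column Garnir node $A=(a,b,m)\in\mu$, the corresponding node $B=(b,a,l-m+1)\in\mu'$ satisfies $(b+1,a,l-m+1)\in\mu'$, so $B$ is a row Garnir node of $\mu'$. A direct inspection of the defining sets shows that conjugation sends the column Garnir belt $\Belt_A\subseteq\mu$ bijectively onto the row Garnir belt $\Belt^B\subseteq\mu'$. Using the identity $(\T_\mu)'=\T^{\mu'}$ together with the fact that $\T_\mu$ numbers cells top-to-bottom within columns while $\T^{\mu'}$ numbers cells left-to-right within rows, one checks that the consecutive range of entries $u,u+1,\dots,v$ placed in $\Belt^B$ in the ``bottom-left to top-right'' order for $\G^B$ corresponds, position-for-position under conjugation, to the same range of entries placed in $\Belt_A$ in the ``top-right to bottom-left'' order for $\G_A$. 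Thus $(\G^B)'=\G_A$ and, equivalently, $(\G_A)'=\G^B$.

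Now let $\T$ be a column-strict $\mu$-tableau which is not standard. Then $\T'$ is a row-strict $\mu'$-tableau which fails column-strictness, so $\T'$ is not standard. Apply Lemma~\ref{LAndrew} to $\T'$: there is a row Garnir node $B\in\mu'$ and $w\in\Si_d$ with
\[
\T'=w\,\G^B,\qquad \ell(w^{\T'})=\ell(w^{\G^B})+\ell(w).
\]
Conjugation of tableaux commutes with the left $\Si_d$-action because this action merely relabels entries while leaving positions (and hence their conjugate positions) fixed; that is, $(w\U)'=w\,\U'$ for any $\mu'$-tableau $\U$. Conjugating the displayed equation and using the dictionary above gives $\T=w\,\G_A$, where $A\in\mu$ is the column Garnir node conjugate to $B$.

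It remains to convert the length equation. By Lemma~\ref{eBruhat}(i), $w_\T=w^{\T'}$ and $w_{\G_A}=w^{(\G_A)'}=w^{\G^B}$, so
\[
\ell(w_\T)=\ell(w^{\T'})=\ell(w^{\G^B})+\ell(w)=\ell(w_{\G_A})+\ell(w),
\]
as required. The only non-routine step in this argument is the combinatorial identification $(\G^B)'=\G_A$; everything else follows formally from Lemma~\ref{LAndrew} and the duality results of Section~\ref{SS:tableaux}.
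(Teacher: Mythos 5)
Your proposal is correct, and it fills in exactly the argument the paper leaves implicit (the paper only remarks that Lemma~\ref{signed LAndrew} holds ``just as'' the row version, giving no proof). The reduction via conjugation is sound: the node correspondence $(a,b,m)\leftrightarrow(b,a,l-m+1)$ carries column Garnir nodes of $\mu$ to row Garnir nodes of $\mu'$ and column belts to row belts, and a straightforward check (which you correctly describe: ``bottom row, left-to-right, then top row, left-to-right'' in $\mu'$ conjugates to ``right column, top-to-bottom, then left column, top-to-bottom'' in $\mu$) gives $(\G^B)'=\G_A$. Combining this with the equivariance $(w\U)'=w\U'$ and the length conversion $w_\T=w^{\T'}$ from Lemma~\ref{eBruhat}(i) delivers the conclusion. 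This is a little more self-contained than literally re-running the proof of Lemma~\ref{LAndrew} in the column setting, since it avoids needing a column analogue of the cited reference; otherwise the two routes are morally the same, both resting on conjugation duality.
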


\subsection{Column bricks}\label{SSColBricks}
A (column $A$-){\em brick} is a set of $e$ nodes 
$$\{(c,d,m),(c+1,d,m),\dots,(c+e-1,d,m)\}\subseteq \Belt_A$$
such that $\res(c,d,m)=\res A$. The Garnir belt
$\Belt_A$ is a disjoint union of the bricks that it contains together with
less than $e$ nodes at the bottom of column~$a$ which are not contained in a brick and less than $e$ 
nodes at the top of column~$a+1$ which are not contained in a brick. 

For example, if $e=2$, then the $(3,1,2)$-Garnir belt of
$\mu=\big((1),(7,7,4,1)\big)$ contains two bricks: 
$$\G_A=\begin{array}{l}
\Tableau{{20}}\\[10pt]
\begin{tikzpicture}[scale=0.5,draw/.append style={thick,black}]
  \newcount\col
  \foreach\Row/\row in {{1,3,8,11,14,16,18}/0,{2,4,9,12,15,17,19}/-1,{6,5,10,13}/-2,{7}/-3} {
     \col=1
     \foreach\k in \Row {
        \draw(\the\col,\row)+(-.5,-.5)rectangle++(.5,.5);
        \draw(\the\col,\row)node{\k};
        \global\advance\col by 1
      }
   }
   \draw[red,double,very thick]
     (0.5,-3.5)--++(0,2)--++(1,0)--++(0,2)--++(1,0)--++(0,-3)--++(-1,0)--++(0,-1)--cycle;
   \draw[red,double,very thick](1.5,-2.5)--(1.5,-1.5);
   \draw[red,double,very thick](1.5,-0.5)--(2.5,-0.5);
\end{tikzpicture}
\end{array}.$$

Let $k=k_A$ be the number of bricks in $\Belt_A$. Label the bricks $B^1_A,B^2_A,\dots B^k_A$ in $\Belt_A$
from top to bottom first down column~$b+1$ and then down column~$b$ of~$\mu$.  
Set $k=0$ if $\Belt_A$ does not contain any bricks.

If $k>0$ let $n=n_A$ be the smallest number in $\G_A$ which is contained in a brick in
$\Belt_A$. In the example above, $k=2$ and $n=4$. 
Define
$$w^r_A=\prod_{a=n+re-e}^{n+re-1}(a,a+e)\in\Si_d\qquad (1\le r<k).$$ 
The {\em (column) brick permutation group} is the subgroup $\Si_A$ of $\Si_d$ generated by 
$w^1_A,w^2_A,\dots,w^{k-1}_A$. Then $\Si_A\cong\Si_k$. 

Let $\Gar_A$ be the set of all column-strict $\mu$-tableaux which are obtained from the Garnir tableau
$\G_A$ by acting with the brick permutation group $\Si_A$ on $\G_A$. All tableaux in $\Gar_A$ are standard except for $\G_A$, $\G_A$ is the maximal element of
$\Gar_A$, and there is a unique minimal tableaux $\T_A$ in $\Gar_A$. If $\T\in\Gar_A$ then $\bi(\T)=\bi(\G_A)$. We let $\bi_A:=\bi(\G_A)$.

Define $f=f_A$ to be the number of $A$-bricks in column $b$ of the Garnir belt $\Belt_A$ 
and let
$\D_A$ be the set of minimal length left coset representations of $\Si_f\times\Si_{k-f}$ in
$\Si_A\cong\Si_k$. Just as in (\ref{EGarD}), we have
\begin{equation}\label{signed EGarD}
\Gar_A=\{w\T_A\mid w\in\D_A\}. 
\end{equation}

Finally, as in Lemma~\ref{L10910}, we have:

\begin{Lemma}\label{signed L10910}
Let 
$A\in\mu$ be a column Garnir node. Then
$$
\Gar_A\setminus\{\G_A\}=\{\T\in\St(\mu)\mid \T\ledom\G_A\ \text{and}\ \bi(\T)=\bi_A\}.
$$
Moreover, $\codeg(\T)=\codeg(\G_A)$ for all $\T\in\Gar_A$. 
\end{Lemma}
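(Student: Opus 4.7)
The strategy is to reduce the column-Garnir statement to the already-established row-Garnir analogue, Lemma~\ref{L10910}, by means of the tableau conjugation $\T\mapsto\T'$ which, by \S\ref{SS:tableaux}, gives a bijection $\St(\mu)\bijection\St(\mu')$ that swaps $\T_\mu$ and $\T^{\mu'}$. First I would check that conjugation intertwines column Garnir combinatorics over $(\mu,\kappa)$ with row Garnir combinatorics over $(\mu',\kappa')$: if $A=(a,b,m)$ is a column Garnir node of $\mu$ then its conjugate $A':=(b,a,l{+}1{-}m)$ is a row Garnir node of $\mu'$, the column belt $\Belt_A$ conjugates to the row belt $\Belt^{A'}$, column $A$-bricks conjugate to row $A'$-bricks, and the explicit recipe for filling the belt (top-right to bottom-left along columns) conjugates to the row recipe (bottom-left to top-right along rows); consequently $\G_A'=\G^{A'}$, $\T_A'=\T^{A'}$, and conjugation restricts to a bijection $\Gar_A\bijection\Gar^{A'}$.

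Next I would handle the residue matching. From (\ref{ERes}) and (\ref{EKappaPrime}), $\res^\kappa(a,b,m)=-\res^{\kappa'}(b,a,l{+}1{-}m)$, so $\bi^{\kappa'}(\T')=-\bi^\kappa(\T)$ for every $\T\in\St(\mu)$; in particular $\bi^{\kappa'}(\G^{A'})=-\bi_A=\bi^{A'}$ under the identification $I\to I,\ i\mapsto-i$. Combining this with Lemma~\ref{eBruhat}(ii), which gives $\T\ledom\G_A\iff\T'\gedom\G^{A'}$, the set displayed in the lemma conjugates to $\{\Stab\in\St(\mu')\mid\Stab\gedom\G^{A'},\ \bi(\Stab)=\bi^{A'}\}$. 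Lemma~\ref{L10910} applied to $(\mu',\kappa',A')$ identifies this with $\Gar^{A'}\setminus\{\G^{A'}\}$, and conjugating back recovers $\Gar_A\setminus\{\G_A\}$, proving the first assertion.

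For the codegree statement I would establish, by a short induction on $d$ based on the recursions (\ref{EDegTab}) and (\ref{ECodeg}), the identity
\begin{equation*}
\codeg^\kappa(\T)=\deg^{\kappa'}(\T')\qquad(\T\in\St(\mu)).
\end{equation*}
The inductive step reduces to the combinatorial fact $d^A(\mu_A)=d_{A'}(\mu'_{A'})$, which follows because conjugation sends addable (resp.\ removable) $i$-nodes of $\mu_A$ strictly above $A$ bijectively onto addable (resp.\ removable) $(-i)$-nodes of $\mu'_{A'}$ strictly below $A'$. Granting this identity, Lemma~\ref{L10910} gives $\deg^{\kappa'}(\T')=\deg^{\kappa'}(\G^{A'})$ for all $\T\in\Gar_A$, and translating back yields $\codeg^\kappa(\T)=\codeg^\kappa(\G_A)$.

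The main obstacle is the careful bookkeeping required to verify that conjugation genuinely intertwines the column and row Garnir constructions, particularly that the column brick labelling (down column $b+1$, then down column $b$) matches under conjugation the row brick labelling of Lemma~\ref{L10910} (along row $a+1$ then row $a$ of $\mu'$), and the verification of the codegree/degree identity via the recursive definitions. Both checks are routine but demand precision. (Alternatively, once the sets are matched and residue sequences are seen to coincide within $\Gar_A$, one can appeal to (\ref{EDegCodeg}) to transfer the degree-constancy on $\Gar^{A'}$ into codegree-constancy on $\Gar_A$, bypassing the explicit $\codeg^\kappa(\T)=\deg^{\kappa'}(\T')$ identity.)
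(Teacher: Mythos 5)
The paper offers no explicit proof of Lemma~\ref{signed L10910} beyond the remark ``as in Lemma~\ref{L10910}'', so your task was really to supply the reduction, and the conjugation route you take is the natural one and almost certainly what the authors intend. The core of your argument is sound: conjugation does carry the column Garnir node $A$ to the row Garnir node $A'$ of $\mu'$, carries $\Belt_A$ to $\Belt^{A'}$, carries column bricks to row bricks, carries the column filling rule to the row filling rule (so $(\G_A)'=\G^{A'}$ and $\Gar_A$ conjugates to $\Gar^{A'}$), flips residues via $\res^\kappa(a,b,m)=-\res^{\kappa'}(b,a,l{+}1{-}m)$, and reverses the dominance order by Lemma~\ref{eBruhat}(i)--(ii). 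Feeding all this into Lemma~\ref{L10910} over $(\mu',\kappa')$ gives the first assertion, and the identity $\codeg^\kappa(\T)=\deg^{\kappa'}(\T')$ (which is part of Lemma~\ref{L:signs}, stated later in the paper but purely combinatorial, so no circularity) gives the second.

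Two small corrections. First, the key step of your induction should read $d^A(\mu_A)=d_{A'}(\mu')$, not $d_{A'}(\mu'_{A'})$: the recursion $\deg^{\kappa'}(\T')=d_{A'}(\mu')+\deg^{\kappa'}((\T')_{<d})$ uses $d_{A'}$ evaluated at $\mu'$ (where $A'$ is removable), and $d_{A'}(\mu'_{A'})$ is not even well-defined since $A'\notin\mu'_{A'}$. Your bijection of addable/removable nodes naturally lands in $\mu'_{A'}$, so you additionally need to observe that for nodes strictly below $A'$ of residue $-i$, addability/removability in $\mu'$ and in $\mu'_{A'}$ agree (the only potentially discrepant node is $(a'{+}1,b',m')$, which has residue $-i-1\ne -i$). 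Second, the parenthetical ``bypass'' via (\ref{EDegCodeg}) does not actually avoid the issue: (\ref{EDegCodeg}) relates $\deg$ and $\codeg$ of the \emph{same} tableau, so degree-constancy on $\Gar^{A'}\subseteq\St(\mu')$ converts only to codegree-constancy on $\Gar^{A'}$, not on $\Gar_A\subseteq\St(\mu)$; you still need the cross-relation $\codeg^\kappa(\T)=\deg^{\kappa'}(\T')$ (or its twin) to transfer between the two shapes. Drop the parenthetical or replace it by a citation of Lemma~\ref{L:signs}.
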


\subsection{\boldmath The column permutation modules $M_\mu$}
Let 
$C_1,\dots,C_g$ be the non-empty columns of $\mu$ counted from left to right in the component $\mu^{(l)}$, then from left to right in the component $\mu^{(l-1)}$, and so on, until from left to right in the component $\mu^{(1)}$ of~$\mu$.
{\em We emphasize that the order of the components
of~$\mu$ is reversed here}. 

To each $1\leq a\leq g$ we associate the segment $\bc(a):=\bs(i,-N)$, where the column
$C_a$ has length $N$ and $i$ is the residue of the top node of $C_a$.  Let $\vec{\bc}=(\bc(1),\dots,\bc(g))$,
and, recalling the definitions from section~\ref{SSPerm}, set 
$$
M_\mu=M_\mu(\O):=M(\vec{\bc})\,\<\codeg \T_\mu\>.
$$ 
The module $M_\mu$ is generated by the vector
$
m_\mu:=m(\vec{\bc}) 
$
of degree $\codeg \T_\mu$. For any $\mu$-tableau $\T$, define $m_\T:=\psi_\T m_\mu$. 
As a special case of Theorem~\ref{TMBasis}, we have:

\begin{Theorem} \label{signed TMMUBasis}
$
\{m_\T\mid \text{$\T$ is a column-strict $\mu$-tableau}\}
$
is an $\O$-basis of $M_\mu$.  
\end{Theorem}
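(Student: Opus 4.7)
The plan is to deduce this directly from the general basis theorem for permutation modules, Theorem~\ref{TMBasis}, applied to the tuple of column segments $\vec{\bc}=(\bc(1),\dots,\bc(g))$. The degree shift by $\codeg\T_\mu$ is harmless for the question of being a basis, so we may work with $M(\vec{\bc})$ itself.

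First I would identify the parabolic subgroup $\Si_{\vec{\bc}}$. By construction of $\T_\mu$, the numbers $1,2,\dots,d$ fill $\mu$ by going down the successive columns, starting from the leftmost column of $\mu^{(l)}$ and ending at the rightmost column of $\mu^{(1)}$; this is precisely the order used to list $\vec{\bc}$. Hence the $a$th column $C_a$ of (this reordered) $\mu$ is occupied in $\T_\mu$ by the consecutive block of integers of length $|\bc(a)|$, and therefore $\Si_{\vec{\bc}}=\Si_{|\bc(1)|}\times\dots\times\Si_{|\bc(g)|}$ coincides with the column stabilizer of $\T_\mu$ inside $\Si_d$.

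Next I would establish the bijection
$$
\mathscr D_{\vec{\bc}}\bijection\{\text{column-strict }\mu\text{-tableaux}\},\qquad w\mapsto w\T_\mu.
$$
Two permutations $w,w'$ lie in the same left coset of $\Si_{\vec{\bc}}$ exactly when $w\T_\mu$ and $w'\T_\mu$ have identical column sets, and the minimal-length representative of such a coset is the unique one producing a column-strict tableau (this is standard for the column stabilizer; it uses only that $\Si_{\vec{\bc}}$ is a Young subgroup generated by the ``within-column'' simple transpositions, so the minimal representative sorts the entries in each column in increasing order). Under this bijection, for a column-strict $\T$ the permutation $w_\T$ defined by $w_\T\T_\mu=\T$ in (\ref{EWT}) belongs to $\mathscr D_{\vec{\bc}}$, and conversely every element of $\mathscr D_{\vec{\bc}}$ is of the form $w_\T$ for a unique column-strict $\T$.

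Finally I would match up the elements. By definition $m_\mu=m(\vec{\bc})$ and $m_\T=\psi_\T m_\mu=\psi_{w_\T}m(\vec{\bc})$ (with the preferred reduced decompositions fixed once and for all as in section~\ref{SSisomorphisms}). So Theorem~\ref{TMBasis} says exactly that $\{\psi_w m(\vec{\bc})\mid w\in\mathscr D_{\vec{\bc}}\}=\{m_\T\mid\T\text{ column-strict}\}$ is an $\O$-basis of $M(\vec{\bc})=M_\mu$ (as ungraded $\O$-modules; the shift $\langle\codeg\T_\mu\rangle$ only relabels degrees). There is no genuine obstacle: the only points requiring care are the reversal of the order of the components of $\mu$ in the definition of $\vec{\bc}$ and the matching of $\Si_{\vec{\bc}}$ with the column stabilizer of $\T_\mu$, both of which are built into the conventions of section~\ref{SS:tableaux}.
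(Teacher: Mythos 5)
Your proposal is correct and is essentially the paper's own argument: the paper simply states the result as a special case of Theorem~\ref{TMBasis}, and your write-up just spells out the routine verification that $\Si_{\vec{\bc}}$ is the column stabilizer of $\T_\mu$ and that $w\mapsto w\T_\mu$ gives a bijection $\D_{\vec{\bc}}\bijection\{\text{column-strict }\mu\text{-tableaux}\}$. Nothing further is needed.
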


\subsection{\boldmath Universal column Specht modules $S_\mu$}\label{SSdual}

Fix a column Garnir node $A\in\mu$, and let $\Si_A=\langle w^1_A,\dots,w^{k-1}_A\rangle$ be the corresponding block permutation group. 
For
any $\Stab\in\Gar_A$, we can write $w_\Stab=u_\Stab w_{\T_A}$ with
$\ell(w_\Stab)=\ell(u_\Stab)+\ell(w_{\T_A})$ and $u_\Stab\in\D_A$. 
By Lemma~\ref{LFullComm}, 
$w_\Stab,u_\Stab,$ and $w_{\T_A}$ are fully commutative so we have
elements $\psi_\Stab,\psi_{u_\Stab}$ and $\psi_{\T_A}$, with
$\psi_\Stab=\psi_{u_\Stab}\psi_{\T_A}$, each of which is independent of the choice of preferred
decomposition.

Set $m_A:=m_{\T_A}=\psi_{\T_A}m_\mu$ 
and define 
$$
\si^r_A:=(-1)^e\psi_{w^r_A}e(\bi_A)\quad\text{and}\quad \tau^r_A:=(\si^r_A+1)e(\bi_A). 
$$
Any element $u\in\Si_A$ can written as a reduced product $u=w_{r_1}^A\dots w_{r_m}^A$.
If~$u$ is fully
commutative then 
$\tau^u_A:=\tau^{r_1}_A\dots \tau^{r_m}_A$
is independent of the choice of the reduced expression by Lemma~\ref{LFullComm},
so we have well-defined elements 
$\{\tau^u_A\mid u\in \D_A\}$.

\begin{Definition}\label{dual Specht relations}
Suppose that 
$A\in\mu$ is a column Garnir node. The {\em column Garnir
element} is  
$$
g_A:=\sum_{u\in\D_A}\tau^u_A\psi_{\T_A}\in R_\al.  
$$
\end{Definition}

Since $\psi_{\T_A}m_\mu=m_A$, we have 
$g_Am_\mu=\sum_{u\in\D_A}\tau^u_Am_A
$, and, by Lemma~\ref{signed L10910}, all summands on the right hand side have  the same degree.  
If $k=0$ then $\D_A=\{1\}$, $\G_A=\T_A$ and $g_A=\psi_{\G_A}$. 

\begin{Definition} \label{signed Specht}
The {\em universal graded column Specht module}\, $S_\mu=S_\mu(\O)$ is the graded $R_\al$-module generated  by the vector $z_\mu$ of degree $\codeg(\T_\mu)$ subject only to the following relations:
\begin{enumerate}
\item[{\rm (i)}] $e(\bj)z_\mu=\de_{\bj,\bi_\mu} z_\mu$ for all $\bj\in I^\al$;
\item[{\rm (ii)}] $y_rz_\mu=0$ for all $r=1,\dots,d$;
\item[{\rm (iii)}] $\psi_rz_\mu=0$ for all $r=1,\dots,d-1$ such that  $r\downarrow_{\T_\mu}  r+1$;
\item[{\rm (iv)}] ({\em homogeneous (column) Garnir relations})\, $g_A z_\mu=0$\, for all (column) Garnir nodes\, $A$ in~$\mu$. 
\end{enumerate}
In other words, $S_\mu= (R_\al/J_{\al,\mu})\<\codeg(\T_\mu)\>$, where $J_{\al,\mu}$ is the left ideal of $R_\al$ generated by the elements 
\begin{enumerate}
\item[{\rm (i)}] $e(\bj)-\de_{\bj,\bi_\mu}$ for all $\bj\in I^\al$;
\item[{\rm (ii)}] $y_r$ for all $r=1,\dots,d$;
\item[{\rm (iii)}] $\psi_r$ for all $r=1,\dots,d-1$ such that  $r\downarrow_{\T_\mu}  r+1$;
\item[{\rm (iv)}] $g_A$\, for all column Garnir nodes\, $A\in \mu$. 
\end{enumerate}
\end{Definition}

Since the elements (i)-(iii) generate the left ideal $K_{\mu}$ with $R_\al/K_{\mu}\cong M_\mu$,
we have a natural surjection $M_\mu\onto S_\mu$, which maps $m_\mu$ to $z_\mu$, and the kernel $J_\mu=J_{\al,\mu} m_\mu$ of this surjection is generated by the
Garnir relations. 

\subsection{\boldmath Column brick permutation space $T_{\mu,A}$} \label{signed SSBrickPerm}
The {\em (column) brick permutation space}\,
$T_{\mu,A}\subseteq M_\mu$ is the $\O$-span of all elements of the form 
$\si^{r_1}_A\dots\si^{r_a}_Am_A$.
Repeating the argument of Theorem~\ref{TTauA} now gives:

\begin{Theorem} \label{signed TTauA}
Suppose that 
$A\in\mu$ is a column Garnir node, and 
let~$k=k_A$ and~$f=f_A$. Then:
\begin{enumerate}
\item[{\rm (i)}] $T_{\mu,A}$ the $\O$-span of all elements of the form 
$
\tau^{r_1}_A\dots\tau^{r_a}_Am_A. 
$
In particular, the elements $\tau^1_A,\dots,\tau^{k-1}_A$ act on $T_{\mu,A}$. 
\item[{\rm (ii)}]  As $\O$-linear operators on  $T_{\mu,A}$, the elements $\tau^1_A,\dots,\tau^{k-1}_A$ satisfy the Coxeter relations for the symmetric group $\Si_k$. Thus, we can consider $T_{\mu,A}$ as an $\O\Si_k$-module. 
\item[{\rm (iii)}] There is an isomorphism of $\O\Si_k$-modules 
$$T_{\mu,A}\cong\ind_{\O\Si_{(f,k-f})}^{\O\Si_k}\triv{(f,k-f)}$$ under which $m_A$ corresponds to the natural cyclic generator of the induced module on the right hand side. 
\item[{\rm (iv)}] $\{\tau^u_A m_A\mid u\in\D_A\} 
$
is an $\O$-basis of $T_{\mu,A}$. 
\end{enumerate}
\end{Theorem}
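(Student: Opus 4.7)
The plan is to mimic the proof of Theorem~\ref{TTauA} step by step, reducing the column brick permutation space $T_{\mu,A}$ to a module of the form $M(i,-\lambda)$ studied in section~\ref{Signed block intertwiners}, and then importing the structural results from Proposition~\ref{signed tau braid} (which is itself the column analogue of Theorem~\ref{tau braid}, obtained by twisting with $\sgn$).

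First, I would set $i := \res A$ and let $n = n_A$ be the smallest entry of $\G_A$ that lies in a brick of $\Belt_A$. The residue sequence $\bi_A = (i_1,\dots,i_d)$ has the property that $(i_n, i_{n+1},\dots,i_{n+ke-1}) = \bs(i,-ke)$: indeed, the bricks in $\Belt_A$ are stacked \emph{vertically} of height $e$, so reading the entries of $\G_A$ from position $n$ upwards traverses the bricks down the two columns, and within each brick the residues decrease by $1$ as one moves downward. For any $\bj = (j_1,\dots,j_{ke}) \in I^{k\delta}$, define $\bj_A$ to be the sequence obtained from $\bi_A$ by replacing positions $n,n+1,\dots,n+ke-1$ by $j_1,\dots,j_{ke}$. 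Exactly as in the row case, there is a non-unital embedding
$$\iota_A : R_{k\delta} \hookrightarrow R_\alpha, \qquad \psi_s \mapsto \psi_{s+n-1},\ \ y_t \mapsto y_{t+n-1},\ \ e(\bj) \mapsto e(\bj_A),$$
and I will identify $R_{k\delta}$ with its image from now on.

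Next I would show that $R_{k\delta}\cdot m_A \cong M(i,-(f,k-f))$ as graded $R_{k\delta}$-modules, sending the cyclic generator $m(\vec{\bs}(i,-(f,k-f)))$ to $m_A$. This amounts to checking the defining relations of $M(i,-(f,k-f))$ from~(\ref{E:M(s)}): $e(\bj)m_A = \delta_{\bj,\bs(i,-ke)}m_A$ follows from Theorem~\ref{signed TMMUBasis} and the definition of $\bi_A$; $y_t m_A = 0$ follows because $z_\mu$-like relations hold at the level of $M_\mu$ (these are inherited from the defining relations of the permutation module $M(\vec{\bc})$); and $\psi_s m_A = 0$ whenever $s$ corresponds to a transposition inside one of the two column blocks of size $ef$ and $e(k-f)$, since such $\psi_s$ already kills $m_\mu$ and $\psi_{\T_A}$ commutes appropriately. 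Surjectivity of the resulting map is obvious, and injectivity follows from the basis theorem (Theorem~\ref{TBasis}) applied to both sides.

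Given this identification, I would observe that under the isomorphism of section~\ref{Signed block intertwiners} the elements $\sigma^r_A$ and $\tau^r_A$ attached to the column Garnir node $A$ correspond precisely to the elements $\sigma^r$ and $\tau^r$ acting on $M(i,-(f,k-f))$, and hence preserve the subspace $T(i,-(f,k-f))$ which corresponds to $T_{\mu,A}$. Parts (i)--(iv) of Theorem~\ref{signed TTauA} then follow directly from Proposition~\ref{signed tau braid}: part (i) because each $\sigma^r_A v$ equals $(\tau^r_A - 1)v$ and the transition matrix between the two kinds of products is unitriangular; part (ii) is the braid relations of Proposition~\ref{signed tau braid}; part (iii) is the isomorphism $T(i,-(f,k-f)) \cong \ind_{\O\Si_{(f,k-f)}}^{\O\Si_k}\triv{(f,k-f)}$ stated there; and part (iv) is the basis statement of the same proposition.

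The only genuine step of work is verifying the identification $R_{k\delta}\cdot m_A \cong M(i,-(f,k-f))$: the sign conventions must be tracked carefully, since $\sigma^r_A$ carries a factor of $(-1)^e$ relative to $\psi_{w^r_A}$, and one must make sure this matches the $(-1)^e$ sign appearing in the definition of $\sigma^r$ in section~\ref{Signed block intertwiners}. Once this bookkeeping is done, everything else is a direct translation of the argument used to prove Theorem~\ref{TTauA}.
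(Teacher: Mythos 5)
Your proposal is correct and takes essentially the same approach as the paper, which does not give an explicit proof of Theorem~\ref{signed TTauA} but merely says ``Repeating the argument of Theorem~\ref{TTauA} now gives'' — i.e.\ precisely the embedding $\iota_A\colon R_{k\delta}\hookrightarrow R_\alpha$, the identification $R_{k\delta}\cdot m_A\cong M(i,-(f,k-f))$, and an appeal to Proposition~\ref{signed tau braid}, just as you outline. Your sign bookkeeping for $\sigma_A^r$ versus $\sigma^r$ is also consistent with the $(-1)^e$ convention used in both definitions.
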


%

\begin{Corollary} \label{signed C8910}
Suppose that 
$A\in\mu$ is a column Garnir node of $\mu$, and $\Stab=u\T_A\in\Gar_A$ for some $u\in\D_A$. 
Then
$$
\psi_\Stab m_\mu=\tau^u_A m_A+\sum_{w\in\D_A,\ w\lhd u} c_w\tau^w_Am_A
$$
for some $c_w\in\O$. In particular, $\{m_\T \mid \T\in\Gar_A\}$  
is an $\O$-basis of $T_{\mu,A}$.
\end{Corollary}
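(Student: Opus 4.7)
My plan is to mirror the proof of Corollary~\ref{C8910}. Given $\Stab = u\T_A$ with $u \in \D_A$, observe that $w_\Stab = u\, w_{\T_A}$ with $\ell(w_\Stab) = \ell(u) + \ell(w_{\T_A})$, and by Lemma~\ref{LFullComm} the elements $u$, $w_{\T_A}$ and $w_\Stab$ are all fully commutative, so $\psi_\Stab, \psi_u, \psi_{\T_A}$ are independent of the choice of preferred reduced decompositions and satisfy $\psi_\Stab = \psi_u\, \psi_{\T_A}$ (as already noted in Section~\ref{SSdual}). Therefore
\[\psi_\Stab\, m_\mu = \psi_u\, \psi_{\T_A}\, m_\mu = \psi_u\, m_A.\]

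Next I fix a reduced decomposition $u = w^{r_1}_A \cdots w^{r_a}_A$ in $\Si_A$, with $a = \ell(u)$. The crux is that each brick permutation $w^r_A$ fixes the residue sequence $\bi_A$, so I can insert the idempotent $e(\bi_A)$ between consecutive factors and then convert using $\si^r_A = (-1)^e \psi_{w^r_A} e(\bi_A)$ (equivalently, $\psi_{w^r_A} e(\bi_A) = (-1)^e \si^r_A$ since $(-1)^{-e} = (-1)^e$), yielding
\[\psi_u\, m_A = (-1)^{ea}\, \si^{r_1}_A\, \si^{r_2}_A \cdots \si^{r_a}_A\, m_A.\]
On $T_{\mu,A}$ we have $\si^r_A = \tau^r_A - 1$, so this becomes $(-1)^{ea}\prod_{j=1}^{a}(\tau^{r_j}_A - 1)\, m_A$. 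Expanding the product and using Proposition~\ref{signed tau braid}, which ensures that the $\tau^r_A$ satisfy the Coxeter relations of $\Si_A \cong \Si_k$ on $T_{\mu,A}$, the leading subword term is $(-1)^{ea}\,\tau^u_A\, m_A$ and the remaining subword terms are $\O$-multiples of $\tau^v_A\, m_A$ for subwords $v < u$ (in Bruhat order) of the fixed reduced expression.

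To match the form of the statement, I invoke Theorem~\ref{signed TTauA}(iii): $T_{\mu,A} \cong \ind_{\O\Si_{(f,k-f)}}^{\O\Si_k} \triv{(f,k-f)}$, so $\tau^p_A\, m_A = m_A$ for any $p \in \Si_{(f,k-f)}$. Writing each such $v = u'\, p$ with $u' \in \D_A$ and $p \in \Si_{(f,k-f)}$ and lengths adding, I obtain $\tau^v_A\, m_A = \tau^{u'}_A\, m_A$ for some $u' \in \D_A$ with $u' \leq v < u$. Collecting terms (and absorbing the global sign $(-1)^{ea}$ into the coefficients, the statement being that $\tau^u_A m_A$ appears with unit leading coefficient and the remaining $\tau^w_A m_A$ are indexed by $w \lhd u$ in $\D_A$) gives the claimed formula. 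The ``in particular'' statement is then immediate: $\{m_\T \mid \T \in \Gar_A\}$ is related to the basis $\{\tau^u_A\, m_A \mid u \in \D_A\}$ of Theorem~\ref{signed TTauA}(iv) by a Bruhat-triangular change of basis with $\pm 1$ diagonal, hence it is also an $\O$-basis of $T_{\mu,A}$. The main subtleties are the bookkeeping of the sign $(-1)^{ea}$ and the parabolic reduction from $\Si_A$ to $\D_A$; neither affects the triangular structure that makes the basis claim go through.
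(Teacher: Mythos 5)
Your argument is correct and is essentially the one the paper has in mind: it transposes the proof of the row version Corollary~\ref{C8910} to the column setting — writing $\psi_\Stab m_\mu=\psi_u m_A$, inserting $e(\bi_A)$, converting each $\psi_{w^r_A}e(\bi_A)$ into $\si^r_A$, expanding $\prod_j(\tau^{r_j}_A-1)$, and finishing with Theorem~\ref{signed TTauA}. You are right to track the extra global factor $(-1)^{ea}$ coming from the sign in $\si^r_A=(-1)^e\psi_{w^r_A}e(\bi_A)$; the row case has no such sign, and the computation does produce $\psi_\Stab m_\mu=(-1)^{ea}\tau^u_Am_A+\sum_{w\lhd u}c_w\tau^w_Am_A$. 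As literally stated the corollary puts the coefficient $1$ in front of $\tau^u_Am_A$, so either that sign should appear or, as you put it, the assertion should be read as ``unit leading coefficient''; in any case this is a unit in $\O$, so the unitriangular change of basis and hence the ``in particular'' basis claim go through unchanged. Your explicit use of Theorem~\ref{signed TTauA}(iii) to push general subwords of the fixed reduced expression down to coset representatives in $\D_A$ is a correct filling-in of a step the paper leaves implicit in its one-line appeal to Theorem~\ref{TTauA}.
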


\subsection{A spanning set for the universal column Specht module}
Recall from section~\ref{SSdual}
that $S_\mu\cong M_\mu/J_\mu$ and $z_\mu=m_\mu+J_\mu$. Also set 
$z_A:=m_A+J_\mu\in S_\mu$
for any column Garnir node $A\in\mu$. 
Recall from (\ref{EPsiT}) that for each $\mu$-tableau $\T$ we have defined the element $\psi_\T\in R_\al$,
which depends on a fixed choice of reduced decomposition of $w_\T\in\Si_d$. We associate to $\T$ the
homogeneous element $v_\T:=\psi_\T z_\mu \in S_\mu$.

Adapting the arguments from section~\ref{SSspanning} we obtain the 
following result.


\begin{Proposition} \label{signed PSpan}
The elements 
$\{v_\T\mid \T\in\St(\mu)\}$
span $S_\mu$ over $\O$. Moreover, we have $\deg(v_\T)=\codeg(\T)$ for all $\T\in \St(\mu)$. 
\end{Proposition}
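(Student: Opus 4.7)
The argument mirrors that of Proposition~\ref{PSpan}, with row-strict tableaux replaced by column-strict ones, $\T^\mu$ by $\T_\mu$, $\G^A$ by $\G_A$, and the $w^\T$-based Bruhat order (decreasing from $\T^\mu$) replaced by the $w_\T$-based order (increasing from $\T_\mu$). For the degree statement, since $\deg(z_\mu)=\codeg(\T_\mu)$ by Definition~\ref{signed Specht}, it suffices to verify that $\deg(\psi_\T e(\bi_\mu)) = \codeg(\T) - \codeg(\T_\mu)$. This is the column counterpart of \cite[Corollary~3.14]{BKW}; it may be proved directly by induction on $d$ using the recursion~(\ref{ECodeg}), or deduced from the row formula by means of the identity $\deg(\T)+\codeg(\T) = \defect(\al)$ of~(\ref{EDegCodeg}).

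For the spanning claim, by Theorem~\ref{signed TMMUBasis} it is enough to express every $v_\T$ with $\T$ column-strict as an $\O$-combination of the $v_\Stab$ with $\Stab\in\St(\mu)$. I argue by induction on $\ell(w_\T)$ (equivalently, on the Bruhat order $\ledom$ on tableaux), starting from the standard tableau $\T_\mu$, for which there is nothing to prove. For the inductive step, suppose $\T$ is column-strict but non-standard. Lemma~\ref{signed LAndrew} produces a column Garnir node~$A$ and $w\in\Si_d$ with $\T=w\G_A$ and $\ell(w_\T)=\ell(w_{\G_A})+\ell(w)$, so that in particular $w_{\G_A}\le w_\T$. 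Proposition~\ref{PSubtle} then gives
\begin{equation*}
\psi_\T e(\bi_\mu) \;=\; \psi_w\psi_{\G_A} e(\bi_\mu) + X,
\end{equation*}
where $X$ is a linear combination of terms $\psi_u f(y) e(\bj)$ with $u<w_\T$. Acting on $z_\mu$ and using the relations $y_r z_\mu=0$ and $e(\bj)z_\mu=\delta_{\bj,\bi_\mu}z_\mu$ from Definition~\ref{signed Specht} reduces $Xz_\mu$ to a combination of $\psi_u z_\mu$ with $u<w_\T$, each of which is $v_\U$ for some column-strict $\U\ldom\T$ (iterating the same rewriting if $u\T_\mu$ fails to be column-strict), hence handled by the inductive hypothesis.

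The essential intermediate step is a column analog of Lemma~\ref{L8910}, namely
\begin{equation*}
v_{\G_A} \;=\; \sum_{\Stab\in\Gar_A,\ \Stab\ldom\G_A} c_\Stab\, v_\Stab \qquad(c_\Stab\in\O).
\end{equation*}
This follows from the column Garnir relation $g_A z_\mu=0$ (Definition~\ref{signed Specht}(iv)) by inverting the unitriangular transition formula of Corollary~\ref{signed C8910}, by an argument identical to that of Lemma~\ref{L8910}. Every $\Stab$ appearing on the right is standard (the unique non-standard member of $\Gar_A$ being $\G_A$ itself), and by Lemma~\ref{signed L10910} each satisfies $w_\Stab<w_{\G_A}\le w_\T$. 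Substituting this expansion into $v_\T=\psi_w v_{\G_A}+Xz_\mu$ and applying Proposition~\ref{PSubtle} once more to each $\psi_w\psi_\Stab e(\bi_\mu)$ writes $v_\T$ as an $\O$-combination of $v_{\U}$ with $w_\U<w_\T$, closing the induction.

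I expect the only technical obstacle to be the verification of the column version of Lemma~\ref{L8910}; however, since the elements $\tau^r_A$ were constructed in section~\ref{Signed block intertwiners} precisely so that Proposition~\ref{signed tau braid} and Theorem~\ref{signed TTauA} parallel their row counterparts, and since the Garnir relations are imposed by hand in Definition~\ref{signed Specht}, this analog goes through formally.
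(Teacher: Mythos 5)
Your proof is correct and follows the same route as the paper intends: the paper explicitly says the result is obtained by ``adapting the arguments from section~\ref{SSspanning},'' and you have carried out exactly that adaptation, with the right substitutions ($\T^\mu\leadsto\T_\mu$, $\G^A\leadsto\G_A$, $w^\T\leadsto w_\T$, the row Garnir relation $\leadsto$ the column one, Lemma~\ref{LAndrew} $\leadsto$ Lemma~\ref{signed LAndrew}, Corollary~\ref{C8910} $\leadsto$ Corollary~\ref{signed C8910}) and with the induction now running upwards in $\ledom$ from the minimal standard tableau $\T_\mu$, equivalently by $\ell(w_\T)$. The column analogue of Lemma~\ref{L8910} that you isolate is indeed the crux, and your derivation of it from Definition~\ref{signed Specht}(iv) together with the unitriangularity in Corollary~\ref{signed C8910} is the intended argument.

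One small imprecision worth flagging: the column degree identity $\deg(\psi_\T e(\bi_\mu))=\codeg(\T)-\codeg(\T_\mu)$ does not fall out of the row formula using (\ref{EDegCodeg}) alone, because the row and column formulas involve different permutations ($w^\T$ versus $w_\T$) and different residue sequences ($\bi^\mu$ versus $\bi_\mu$). The clean deduction is via conjugation: $w_\T=w^{\T'}$ by Lemma~\ref{eBruhat}(i), the $\sgn$ isomorphism of (\ref{epsilon}) is degree-preserving and sends $e(\bi_\mu)$ to $e(\bi^{\mu'})$, and then Lemma~\ref{L:signs} converts $\deg^{\kappa'}(\T')$ to $\codeg^\kappa(\T)$. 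Your fallback suggestion of a direct induction along the recursion (\ref{ECodeg}) is also fine. Either way this is a minor point of bookkeeping and does not affect the validity of the argument.
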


%

\subsection{\boldmath Graded column Specht modules $S_{\mu}^{H}$ for Hecke algebras}\label{SS graded H dual}
The 
{\em graded column Specht modules} 
for the cyclotomic Hecke algebra $H_\al^\La$ were defined in \cite[\S6]{HM} as cell modules for certain graded cellular structure on $H_\al^\La$ (different from the one used to define cell modules $S^{\mu}_{H}$). 
We review the key properties of these modules, paralleling section~\ref{SSHSpecht}. 

Recall the definition of the conjugate multipartition $\mu'$ and conjugate tableaux from section~\ref{SS:tableaux}.
If $\mu\in\Par_\al$ then in general $\mu'\notin\Par_\al$. Because of
this we will use a {\em different labelling of the column Specht modules than}\,~\cite{HM}.
Let $S_\mu^{HM}(F)$ be the graded column Specht module for $H_\al^\La(F)$ constructed in \cite[\S6.4]{HM}. That is, $S_\mu^{HM}(F)$ is a graded cell module with basis $\set{\psi'_\T|\T\in\St(\mu)}$, where $\deg(\psi'_\T)=\codeg(\T')$ for $\T\in\St(\mu)$, using the cellular basis notation of \cite[\S6.4]{HM}. 
Define 
$$
S^H_\mu(F):=S_{\mu'}^{HM}(F)\qquad \text{and}\qquad 
z^H_\mu:=\psi'_{\T^{\mu'}}.
$$ 
The following lemma was proved in \cite[Proposition 3.7]{JM}:

\begin{Lemma}\label{signed LZ1} 
Let $\al\in Q_+$, $d=\height(\al)$, and $\mu\in \Par_\al$. As an $H_\al^\La$-module, $S_\mu^H$ is generated by  $z^H_\mu$, 
$\deg(z^H_\mu)=\codeg(\T_\mu)$, $z^H_\mu\in e(\bi_\mu)S_\mu^H$, and $y_r z^H_\mu=0$, for all $r=1,\dots,d$.
\end{Lemma}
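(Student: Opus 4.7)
The plan is simply to translate the conventions of the current paper into those of \cite{HM} and then invoke \cite[Proposition 3.7]{JM} verbatim. By construction we have $S^H_\mu(F)=S^{HM}_{\mu'}(F)$ and $z^H_\mu=\psi'_{\T^{\mu'}}$ in the cellular basis notation of \cite[\S6.4]{HM}. So every claim of the lemma amounts to a statement about the ``initial'' cellular basis element $\psi'_{\T^{\mu'}}$ of the HM-indexed column Specht module $S^{HM}_{\mu'}(F)$, and these are precisely the statements verified in \cite[Proposition 3.7]{JM} at the level of the graded cellular algebra presentation of $H^\Lambda_\alpha$ from \cite{HM}. In particular, \cite[Proposition 3.7]{JM} shows that $\psi'_{\T^{\mu'}}$ generates $S^{HM}_{\mu'}(F)$ cyclically, that it is annihilated by every $y_r$, and that it lies in the weight space labelled by the residue sequence attached to the initial $\mu'$-tableau.

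Next, I would carry out the bookkeeping to match the weight and degree statements. For the residue sequence, the HM construction places $\psi'_{\T^{\mu'}}$ into $e(\bi(\T^{\mu'})')H^\Lambda_\alpha$ where $(\cdot)'$ denotes the conjugate tableau; since $(\T^{\mu'})'=\T_\mu$ by Lemma~\ref{eBruhat}, we have $\bi(\T^{\mu'})'=\bi(\T_\mu)=\bi_\mu$, giving $z^H_\mu\in e(\bi_\mu)S^H_\mu$. For the degree, the HM grading on the column cellular basis assigns $\deg\psi'_T=\codeg(T')$ for $T\in\St(\mu')$. Specialising to $T=\T^{\mu'}$ and using $(\T^{\mu'})'=\T_\mu$ once more yields $\deg(z^H_\mu)=\codeg(\T_\mu)$, as required.

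Finally, the cyclicity and the vanishing of the $y_r$'s transfer without change: these are statements about a single element and its annihilator in the presented algebra, and the relabelling $\mu\leftrightarrow\mu'$ affects only which cell module the element lives in, not the algebraic relations it satisfies in that cell module.

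The main obstacle, and the only real content in the translation, is keeping the three conventions straight: (a)~HM index their column Specht modules by $\mu'$ where we use $\mu$; (b)~their grading is recorded on the conjugate tableau; and (c)~the residue sequence arising from their cellular basis element is the one obtained by reading residues down columns of $\mu$ (equivalently, along rows of $\mu'$), which must be matched to $\bi_\mu$ through the conjugation identities of Lemma~\ref{eBruhat}. Once these identifications are made explicit, the lemma is immediate from \cite[Proposition 3.7]{JM}.
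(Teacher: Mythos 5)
Your reduction — pass to HM's notation via $S^H_\mu=S^{HM}_{\mu'}$ and $z^H_\mu=\psi'_{\T^{\mu'}}$, then use $(\T^{\mu'})'=\T_\mu$ to recover $\bi_\mu$ and $\codeg(\T_\mu)$ — is the same conceptual move the paper makes, and the degree computation from $\deg\psi'_\T=\codeg(\T')$ is correct. (Minor point: $(\T^{\mu'})'=\T_\mu$ is the observation $(\T^\mu)'=\T_{\mu'}$ at the end of section~\ref{SS:tableaux}, not Lemma~\ref{eBruhat}.)

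The gap is your reliance on \cite[Proposition~3.7]{JM} to deliver the $y_r$-annihilation and $e(\bi)$-membership "at the level of the graded cellular algebra presentation of $H^\Lambda_\alpha$ from \cite{HM}". This is anachronistic: \cite{JM} is an ungraded paper from 2000 that works with the Jucys–Murphy / $X_r$ eigenvalues, and has no $y_r$'s or $e(\bi)$'s to speak of. What you can extract from \cite[Proposition~3.7]{JM} is that $z^H_\mu$ is a simultaneous $X_r$-eigenvector with eigenvalue $\nu(i_r)$ for $\bi_\mu=(i_1,\dots,i_d)$; you then need a translation step, namely (\ref{QEPolKL}), which expresses $y_r$ as $\sum_\bi(1-\nu(i_r)^{-1}X_r)e(\bi)$ (and the idempotent $e(\bi)$ as a spectral projection of the $X_r$'s), to conclude $y_rz^H_\mu=0$ and $z^H_\mu\in e(\bi_\mu)S^H_\mu$. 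This is exactly what the paper's proof does by combining \cite[(6.2)]{HM} with (\ref{QEPolKL}); for the cyclicity, degree, and weight-space claims it cites \cite[Proposition~6.10]{HM}, with \cite[Corollary~3.11]{HM2} as an alternative one-stop reference. Your argument becomes correct once you make the $L_r\leadsto y_r$ bridge explicit instead of claiming \cite{JM} proves graded statements verbatim.
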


\begin{proof}
  Since $(\T^{\mu'})'=\T_\mu$, the first three claims follow from   
  \cite[Proposition~6.10]{HM}. That $y_rz^H_\mu=0$ for all $r$, can be deduced from \cite[(6.2)]{HM} and (\ref{QEPolKL}). 
 Alternatively, it is a special case of \cite[Corollary~3.11]{HM2}.
\end{proof}

For each $\mu$-tableau~$\T$, define $v^H_\T=\psi_{T}z^H_\mu\in S_\mu^H$. By \cite[Definition~6.9]{HM}, $v^H_\T$ is the same as the element $\psi'_{\T'}$ in the notation of \cite{HM}. 
In particular, $z^H_\mu=v^H_{\T_\mu}$. 

\begin{Lemma}\label{signed LPsiStr} 
Suppose that 
$\T \in\St(\mu)$. 
If $r\downarrow_\T  r+1$ or $r\rightarrow_\T  r+1$ 
then
$$
\psi_rv^H_\T =\sum_{\Stab\in \St(\mu),\,\Stab\ldom \T,\,\bi(\Stab)=\bi(s_r\T)}a_\Stab v^H_\Stab.
$$
for some $a_\Stab\in F$. 
In particular, $\psi_rz^H_\mu=0$ whenever $r\downarrow_{\T_\mu}  r+1$. 
\end{Lemma}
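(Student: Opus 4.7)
The plan is to derive this from the row analog, Lemma~\ref{LPsiStr}, by passing to the conjugate tableau. Recall from Section~\ref{SS graded H dual} that $S_\mu^H = S^{HM}_{\mu'}$ and $v^H_\T = \psi'_{\T'}$, the Hu--Mathas cellular basis element indexed by $\T' \in \St(\mu')$. First I would verify that conjugation reverses the dominance order on tableaux and interchanges the relations $\downarrow$ and $\rightarrow$: applying Lemma~\ref{eBruhat}(i)--(ii) separately in $\St(\mu)$ and in $\St(\mu')$ gives $\Stab \ldom \T$ iff $\Stab' \gdom \T'$, while the definitions in Section~\ref{SS:tableaux} yield $r\downarrow_\T r+1 \Leftrightarrow r\rightarrow_{\T'} r+1$ and $r\rightarrow_\T r+1 \Leftrightarrow r\downarrow_{\T'} r+1$. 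Residue sequences behave compatibly under conjugation, so $\bi(\Stab)=\bi(s_r\T)$ translates correctly.

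Under this translation, the lemma is equivalent to the following statement for the Hu--Mathas basis elements of $S^{HM}_{\mu'}$: if $\T' \in \St(\mu')$ satisfies $r\rightarrow_{\T'} r+1$ or $r\downarrow_{\T'} r+1$, then
$$
\psi_r\, \psi'_{\T'} = \sum_{\substack{\Stab'\in \St(\mu')\\ \Stab'\gdom\T',\ \bi(\Stab')=\bi(s_r\T')}} a_{\Stab'}\, \psi'_{\Stab'}
$$
for some $a_{\Stab'} \in F$. This is the direct HM-basis analog of \cite[Lemma~4.9]{BKW}. The main step, and what I expect to be the principal obstacle, is essentially re-running the BKW argument with the graded cellular basis of \cite[\S6]{HM} in place of the BKW cellular basis; concretely, one must show that left multiplication by $\psi_r$ is ``upper triangular'' with respect to the dominance filtration in the HM setting. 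This triangularity is an intrinsic feature of the HM graded cellular structure and follows from the relations among the cellular basis elements established in \cite{HM}.

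Finally, for the ``in particular'' assertion, I would apply the formula with $\T = \T_\mu$. Since $\T_\mu$ is the minimum of $(\St(\mu),\ledom)$, as noted after Lemma~\ref{eBruhat}, the indexing set $\{\Stab\in\St(\mu) : \Stab\ldom\T_\mu\}$ is empty, so the sum vanishes and $\psi_r z^H_\mu = \psi_r v^H_{\T_\mu} = 0$ whenever $r\downarrow_{\T_\mu} r+1$.
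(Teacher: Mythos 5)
Your plan opens by promising to ``derive this from the row analog, Lemma~\ref{LPsiStr}, by passing to the conjugate tableau,'' but the reduction you then carry out does not actually close with Lemma~\ref{LPsiStr}. Conjugation (together with the identity $v^H_\T=\psi'_{\T'}$, which is built into the paper's definition of $S^H_\mu=S^{HM}_{\mu'}$) simply rewrites the claim as a triangularity statement for left multiplication by~$\psi_r$ on the Hu--Mathas dual basis $\{\psi'_{\Stab}\}$ of $S^{HM}_{\mu'}$. That is \emph{not} the same as Lemma~\ref{LPsiStr}, which concerns the BKW $\psi$-basis $\{\psi^\Stab z^\mu_H\}$ of a different cell module. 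The bookkeeping you do (dominance reverses under conjugation by Lemma~\ref{eBruhat}(i)--(ii), $\downarrow$ and $\rightarrow$ swap, residue sequences behave coherently) is correct, but it is essentially just unwinding the definition of $v^H_\T$; it does not transport the known result to the needed one.

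The mathematical content is therefore concentrated in what you call ``the main step'': proving that $\psi_r\cdot\psi'_{\T'}$ expands into $\psi'_{\Stab'}$ with $\Stab'\gdom\T'$. You identify this correctly as an HM-basis analogue of~\cite[Lemma~4.9]{BKW}, but you leave it as an assertion, saying only that it ``follows from the relations among the cellular basis elements established in \cite{HM}'' and suggesting one could ``re-run the BKW argument.'' That is the gap: the triangularity with respect to the opposite dominance order for the dual graded cellular structure needs either a concrete argument or a precise citation. The paper handles exactly this point by invoking \cite[Proposition~6.10(c)]{HM} together with the standard triangularity of the ungraded dual Murphy basis, or alternatively by citing \cite[Corollary~3.12]{HM2}, which gives the statement directly. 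Your ``in particular'' deduction (the sum over $\Stab\ldom\T_\mu$ is empty since $\T_\mu$ is minimal) is fine. So: the approach is essentially the same as the paper's, but you should replace the vague appeal to ``relations in \cite{HM}'' with the specific result that delivers the triangularity, and you should drop the claim of reducing to Lemma~\ref{LPsiStr}, since that reduction is not what actually happens.
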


\begin{proof}
  This can be deduced from \cite[Proposition~6.10(c)]{HM} using standard properties of the
  (ungraded) dual Murphy basis.  Alternatively, it follows immediately from
  \cite[Corollary~3.12]{HM2}.
\end{proof}

The next result is the analogue of Theorem~\ref{PVZ}. 

\begin{Theorem}\label{signed PVZ} 
We have 
\begin{enumerate}
\item[{\rm (i)}] If $\T$ is a $\mu$-tableau then $v^H_\T\in e(\bi(\T))S_\mu^H$. 
\item[{\rm (ii)}] If $\T\in\St(\mu)$ then $\deg(v^H_\T)=\codeg(\T)$. 
\item[{\rm (iii)}] $\{v^H_\T\mid \T\in \St(\mu)\}$ is a basis of  $S_\mu^H(F)$. Moreover, for any 
$\mu$-tableau~$\T$, 
$$
v^H_\T=\sum_{\Stab\in \St(\mu),\Stab\ledom \T,\bi(\Stab)=\bi(\T)}b_\Stab v^H_\Stab
$$
for some constants $b_\Stab\in F$. 
\end{enumerate}
\end{Theorem}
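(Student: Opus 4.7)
The plan is to mirror the proof of Theorem~\ref{PVZ} with the roles of row and column tableaux (equivalently, of degree/codegree and $\unrhd/\unlhd$) swapped via conjugation, and with $\T_\mu$ playing the role of the ``minimal'' base tableau in place of $\T^\mu$.

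Part (i) reduces to a straightforward residue computation. Since $\T=w_\T\T_\mu$, the entry at any node of~$\T$ is obtained by applying $w_\T$ to the entry of $\T_\mu$ at the same node; as residues depend only on $\mu$, this yields $\bi(\T)=w_\T\cdot\bi_\mu$ as a place permutation. Choosing a reduced decomposition of $w_\T$ and applying (\ref{R2PsiE}) repeatedly gives $\psi_\T e(\bi_\mu)=e(\bi(\T))\psi_\T$, and combining with $e(\bi_\mu)z^H_\mu=z^H_\mu$ from Lemma~\ref{signed LZ1} gives~(i). For part (ii), Lemma~\ref{signed LZ1} supplies $\deg(z^H_\mu)=\codeg(\T_\mu)$ and $y_r z^H_\mu=0$. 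The required identity $\deg(\psi_\T e(\bi_\mu))=\codeg(\T)-\codeg(\T_\mu)$ for $\T\in\St(\mu)$ is the column/codegree analogue of \cite[Corollary~3.14]{BKW}; it can be derived either by a direct inductive check based on the codegree recursion in (\ref{ECodeg}) or by dualising the row version using (\ref{EDegCodeg}) and the identity $w_\T=w^{\T'}$ of Lemma~\ref{eBruhat}(i). Adding gives $\deg(v^H_\T)=\codeg(\T)$.

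For part (iii), the basis statement is immediate from \cite[\S6.4]{HM} under the identification $v^H_\T=\psi'_{\T'}\in S^{HM}_{\mu'}=S_\mu^H$ recorded just before Lemma~\ref{signed LZ1}. The triangularity statement, for standard $\T$, is first reconciled with the choice of preferred reduced decompositions used here. If $w_\T=s_{r_1}\cdots s_{r_m}=s_{t_1}\cdots s_{t_m}$ are two reduced decompositions, Proposition~\ref{PSubtle} expresses the difference between the two resulting $\psi$-products (after multiplying on the right by $e(\bi_\mu)$) as a linear combination of elements $\psi_u f(y)e(\bi_\mu)$ with $u<w_\T$ of matching degree. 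Multiplying on the right by $z^H_\mu$ and invoking $y_r z^H_\mu=0$ annihilates the positive-degree part of $f(y)$ and leaves only scalar multiples of $\psi_u z^H_\mu$ with $u<w_\T$. Setting $\Stab=u\T_\mu$, Lemma~\ref{eBruhat} translates $u<w_\T$ into $\Stab\ldom\T$, and part~(i) identifies the residue sequences. This gives the triangular expansion for standard~$\T$.

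For a general $\mu$-tableau~$\T$ we argue by downward induction on $\ell(w_\T)$. If $\T$ fails to be column-strict, relations (ii)--(iii) of Definition~\ref{signed Specht} (after applying (\ref{ypsi})--(\ref{quad})) rewrite $v^H_\T$ in terms of $v^H_{\Stab}$'s with $\ell(w_\Stab)<\ell(w_\T)$. If~$\T$ is column-strict but non-standard, Lemma~\ref{signed LAndrew} writes $\T=w\G_A$ with $\ell(w_\T)=\ell(w_{\G_A})+\ell(w)$, and the column Garnir relation $g_A z^H_\mu=0$ rewrites $v^H_{\G_A}$ as a combination of $v^H_\Stab$ for $\Stab\in\Gar_A\setminus\{\G_A\}$, each satisfying $\Stab\ldom\G_A$ by Lemma~\ref{signed L10910}. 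The induction hypothesis applied to $\psi_w$ acting on each summand then yields the desired expansion. The main technical obstacle is bookkeeping the translation between our column conventions and the conjugated row conventions of \cite{HM}: one must verify that the Bruhat order $\Stab\ledom\T$ used in the statement, which is natural from the $w_\T$-side, matches the order that governs the cellular structure of $S^{HM}_{\mu'}$, and this is provided precisely by Lemma~\ref{eBruhat}(i)--(ii), which also explains why the triangularity here goes in the opposite direction to that in Theorem~\ref{PVZ}(iii).
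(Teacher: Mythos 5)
Your parts (i) and (ii) are fine and match the spirit of the paper's (extremely terse) proof, which simply cites \cite[Proposition~6.10]{HM} and \cite[\S6.4]{HM}/\cite[Theorem~3.9]{HM2}. The conjugation observation $\bi(\T)=w_\T\cdot\bi_\mu$, together with Lemma~\ref{signed LZ1}, is exactly the right input for (i), and the codegree version of \cite[Corollary~3.14]{BKW} via (\ref{EDegCodeg}) and Lemma~\ref{eBruhat}(i) is a clean way to obtain (ii).

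There is, however, a genuine circularity in your treatment of the last part of (iii). To reduce a column-strict but non-standard tableau you invoke the column Garnir relation $g_A z^H_\mu=0$ as holding in the cell module $S^H_\mu$. But at this point in the paper that relation is \emph{not} available in~$S^H_\mu$: it is precisely the content of Theorem~\ref{signed TMain} (that $S^H_\mu\cong S_\mu$ as graded $R_\al$-modules) that establishes the Garnir relations in~$S^H_\mu$, and the proof of Theorem~\ref{signed TMain}, mirroring that of Theorem~\ref{TMain}, uses Theorem~\ref{signed PVZ}(iii) and its corollary (Corollary~\ref{signed C10910}) as an essential ingredient. Lemmas~\ref{signed LZ1} and~\ref{signed LPsiStr} give you only relations (i)--(iii) of Definition~\ref{signed Specht} in $S^H_\mu$, not (iv). So the general triangularity cannot be proved by running the Garnir straightening algorithm inside~$S^H_\mu$; it must be imported from the graded cellular structure of~$H^\La_\al$, which is what the citations to \cite[Proposition~6.10(c)]{HM} or \cite[Theorem~3.9]{HM2} accomplish. (A minor further quibble: you describe the induction as ``downward on $\ell(w_\T)$'' yet reduce to tableaux with strictly smaller $\ell(w_\Stab)$, which is really an upward induction starting from $\T_\mu$ where $\ell(w_{\T_\mu})=0$; but this is a terminological slip and independent of the circularity.)

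To repair the proposal, replace the final inductive step by the citation to HM/HM2 exactly as the paper does, or else rework the ordering of results so that the Garnir relation in $S^H_\mu$ is established before this theorem --- which would require a different argument than the dimension count of Theorem~\ref{TMain}.
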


\begin{proof}
  Everything except for the second part of~(iii) is clear from results in~\cite{HM} and the
  remarks above. Part~(iii) can be deduced from \cite[Proposition~6.10(c)]{HM}. Alternatively,
  it can be deduced from  \cite[Theorem~3.9]{HM2}.
\end{proof}


\begin{Corollary}\label{signed C10910}
Suppose that 
$A\in \mu$ is a column Garnir node. Then 
$$
v^H_{\G_A}=\sum_{\T\in\Gar^A,\ \T\ldom \G_A}c_\T v^H_\T\qquad
$$
for some $c_\T\in F$. 
\end{Corollary}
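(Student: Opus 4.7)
The plan is to deduce this result directly from Theorem~\ref{signed PVZ} together with Lemma~\ref{signed L10910}, in exactly the same way that Corollary~\ref{C10910} was obtained from Theorem~\ref{PVZ} and Lemma~\ref{L10910} in the row case.

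First I would note that the element $v^H_{\G_A} = \psi_{\G_A} z^H_\mu$ is defined via $w_{\G_A} \in \Si_d$, which makes sense for any $\mu$-tableau, not merely for standard ones. Hence Theorem~\ref{signed PVZ}(iii), applied to the (column-strict but non-standard) tableau $\T = \G_A$, yields an expansion
\[
v^H_{\G_A} = \sum_{\substack{\Stab \in \St(\mu)\\ \Stab \ledom \G_A,\ \bi(\Stab) = \bi(\G_A)}} b_\Stab\, v^H_\Stab
\]
for suitable $b_\Stab \in F$.

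Next I would identify the indexing set. By definition $\bi(\G_A) = \bi_A$, so Lemma~\ref{signed L10910} tells us that the set of standard $\mu$-tableaux $\Stab$ satisfying $\Stab \ledom \G_A$ and $\bi(\Stab) = \bi_A$ is precisely $\Gar_A \setminus \{\G_A\}$. Because $\G_A$ itself is not standard (it is only column-strict, as noted in section~\ref{SSColBricks}), every $\Stab$ appearing in the sum automatically satisfies $\Stab \ne \G_A$, so $\Stab \ledom \G_A$ upgrades to the strict inequality $\Stab \ldom \G_A$. Renaming $\Stab$ as $\T$ and $b_\Stab$ as $c_\T$ then gives the desired identity.

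There is no real obstacle here: both pieces of the argument are already in place, and the proof is essentially a one-line deduction. The only minor point to be careful about is that $v^H_{\G_A}$ makes sense despite $\G_A$ being non-standard, which is immediate from its definition as $\psi_{\G_A} z^H_\mu$.
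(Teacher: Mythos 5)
Your proof is correct and matches the paper's intended argument: the row-case Corollary~\ref{C10910} is proved by combining Theorem~\ref{PVZ} with Lemma~\ref{L10910}, and the column-case Corollary~\ref{signed C10910} is its exact analogue, obtained from Theorem~\ref{signed PVZ}(iii) and Lemma~\ref{signed L10910} in precisely the way you describe. You also correctly read $\Gar^A$ in the statement as a typo for $\Gar_A$, and you rightly note that the non-standardness of $\G_A$ is what upgrades $\ledom$ to $\ldom$.
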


\subsection{Connecting the universal column Specht modules with the cell modules}
As in the last section let $S_\mu^H(F)$ be the graded column Specht module for
$H_\al^\La$, where $F$ is a field. As in section~\ref{SSconnection} we consider $S_\mu^H(F)$ as an
$R_\al(F)$-module. 

Mimicking the proof of Theorem~\ref{TMain} and using, in particular, the results in 
section~\ref{SS graded H dual}, Theorem~\ref{signed TTauA} and Corollary~\ref{signed C10910}, we can now show that
$S_\mu^H(F)\cong S_\mu(F)$ as an $R_\al(F)$-module. As the argument is similar to the proof of Theorem~\ref{TMain} we
leave the details to the reader.

\begin{Theorem} \label{signed TMain}
There is a homogeneous isomorphism
$S_\mu^H(F)\iso S_\mu(F)$ of graded $R_\al(F)$-modules, which maps $v^H_\T\in
S_\mu^H(F)$ to $v_\T\in S_\mu(F)$ for all $\T\in\St(\mu)$. 
\end{Theorem}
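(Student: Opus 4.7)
The plan is to mirror the proof of Theorem~\ref{TMain}, constructing the isomorphism in the direction $S_\mu(F) \iso S_\mu^H(F)$ by first factoring it through the column permutation module $M_\mu$. By Lemma~\ref{signed LZ1}, Lemma~\ref{signed LPsiStr} and the defining relations~(\ref{E:M(s)}) of $M_\mu$, the assignment $m_\mu \mapsto z^H_\mu$ extends to a surjective degree zero homogeneous $R_\al(F)$-module homomorphism $\pi: M_\mu \twoheadrightarrow S_\mu^H(F)$ satisfying $\pi(m_\T) = v^H_\T$ for every column-strict $\mu$-tableau $\T$. Once this is established, and once we verify that the column Garnir relations $g_A z^H_\mu = 0$ hold in $S_\mu^H(F)$ for every column Garnir node $A \in \mu$, Proposition~\ref{signed PSpan} and Theorem~\ref{signed PVZ}(iii) together show that $\pi$ factors through $M_\mu \twoheadrightarrow S_\mu(F)$ to give the desired isomorphism (since both sides have $F$-basis indexed by $\St(\mu)$ in degree-preserving bijection).

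The main work is the verification of the column Garnir relations, which I would carry out exactly as in Theorem~\ref{TMain}. Fix a column Garnir node $A \in \mu$ and set $k = k_A$, $f = f_A$. By Corollary~\ref{signed C8910} the set $\{m_\T \mid \T \in \Gar_A\}$ is an $F$-basis for the column brick permutation space $T_{\mu,A}$, and $\G_A$ is the unique non-standard element of $\Gar_A$. Corollary~\ref{signed C10910} then shows that $\{v^H_\T \mid \T \in \Gar_A \setminus \{\G_A\}\}$ spans $\pi(T_{\mu,A})$, while these vectors are linearly independent by Theorem~\ref{signed PVZ}(iii); hence $\dim \pi(T_{\mu,A}) = \dim T_{\mu,A} - 1$, so $\ker(\pi|_{T_{\mu,A}})$ is one-dimensional.

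By Theorem~\ref{signed TTauA}, the brick permutation group $\Si_A \cong \Si_k$ acts on $T_{\mu,A}$ via the operators $\tau^r_A$, and as an $F\Si_k$-module $T_{\mu,A} \cong \ind_{F\Si_{(f,k-f)}}^{F\Si_k} \triv{(f,k-f)}$. Since $\Si_A$ acts by elements of $R_\al$ and $\pi$ is an $R_\al$-homomorphism, $\ker(\pi|_{T_{\mu,A}})$ is a one-dimensional $F\Si_k$-submodule of this induced module. Outside the exceptional case $(k,f) = (2,1)$ with $\Char F \neq 2$, the unique one-dimensional submodule is the trivial one, spanned by $\sum_{u \in \D_A} \tau^u_A m_A = g_A m_\mu$; thus $g_A z^H_\mu = \pi(g_A m_\mu) = 0$ as required.

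The main obstacle is therefore the exceptional case $k=2$, $f=1$, $\Char F \neq 2$, where the induced module has two non-isomorphic one-dimensional submodules (trivial and sign), and we must rule out the possibility that $\ker(\pi|_{T_{\mu,A}})$ is the sign submodule spanned by $(\tau^1_A - 1) z^H_\mu$. This amounts to showing that $\sigma^1_A z^H_\mu \neq 0$ in $S_\mu^H(F)$. Writing $A = (a,b,m)$ and letting $r$ be the entry occupying $(a,b+1,m)$ in $\G_A$, so that $s_r \T_A$ is standard, one applies Lemma~\ref{LPsiSi} (transported via the sign twist of section~\ref{Signed block intertwiners}, using Lemma~\ref{rho translation}) to obtain $\psi_r \sigma^1_A z^H_\mu = \pm 2\, v^H_{s_r \T_A}$, which is nonzero since $\Char F \neq 2$ and $v^H_{s_r \T_A}$ is a basis element of $S_\mu^H(F)$ by Theorem~\ref{signed PVZ}(iii). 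This forces $\ker(\pi|_{T_{\mu,A}})$ to be the trivial submodule and completes the verification of the Garnir relation.
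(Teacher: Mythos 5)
Your proof is essentially the paper's intended argument. The paper itself writes only ``Mimicking the proof of Theorem~\ref{TMain}\dots we leave the details to the reader,'' and you have filled in those details faithfully: the surjection $\pi\colon M_\mu\onto S^H_\mu$ from Lemma~\ref{signed LZ1}, Lemma~\ref{signed LPsiStr} and~(\ref{E:M(s)}); the dimension count on $T_{\mu,A}$ via Corollaries~\ref{signed C8910} and~\ref{signed C10910}; the $F\Si_k$-module analysis from Theorem~\ref{signed TTauA}; and the exceptional case $(k,f)=(2,1)$, $\Char F\ne 2$ handled by transporting Lemma~\ref{LPsiSi} through $\sgn$ (Lemma~\ref{rho translation}) and invoking the basis Theorem~\ref{signed PVZ}(iii). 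One small slip: in the exceptional case you write ``$\sigma^1_A z^H_\mu\neq0$'', but $\sigma^1_A$ carries the idempotent $e(\bi_A)$ while $z^H_\mu\in e(\bi_\mu)S^H_\mu$, so that product is trivially zero unless $\bi_A=\bi_\mu$; the element you actually need (and actually use in the next line, where $\psi_r$ is applied to produce $v^H_{s_r\T_A}$) is $\sigma^1_A\,\pi(m_A)=\sigma^1_A v^H_{\T_A}$. The paper's own row-case proof commits the same abuse of notation (writing $z^A$ for $\pi(m^A)=v_H^{\T^A}$ rather than for the element of $S^\mu$ it was earlier defined to be), so this is a cosmetic rather than substantive issue.
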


The following three Corollaries of Theorem~\ref{signed TMain} are proved in exactly the same way as the corresponding
results in section~\ref{SSconnection}.

\begin{Corollary} \label{signed COBasis}
$\{v_\T\mid \T\in\St(\mu)\}$ 
is an $\O$-basis of 
$S_\mu(\O)$. 
\end{Corollary}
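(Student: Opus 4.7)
The plan is to follow precisely the template used for the row case in Corollary~\ref{COBasis}, with Proposition~\ref{signed PSpan}, Theorem~\ref{signed TMain}, and Theorem~\ref{signed PVZ}(iii) replacing their row counterparts. First I would observe that by Proposition~\ref{signed PSpan} the set $\{v_\T \mid \T \in \St(\mu)\}$ already spans $S_\mu(\O)$, so only linear independence requires work.

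Next I would reduce to the integral case: since the defining relations in Definition~\ref{signed Specht} are defined over $\Z$, we have $S_\mu(\O) \cong S_\mu(\Z) \otimes_\Z \O$ canonically, with $v_\T \otimes 1$ corresponding to $v_\T$. So it suffices to prove that $\{v_\T \mid \T \in \St(\mu)\}$ is $\Z$-linearly independent in $S_\mu(\Z)$. Then, given any hypothetical relation $\sum_{\T \in \St(\mu)} c_\T v_\T = 0$ with $c_\T \in \Z$, I would extend scalars along the flat embedding $S_\mu(\Z) \hookrightarrow S_\mu(\Z) \otimes_\Z \C = S_\mu(\C)$ to reduce the problem to showing independence over $\C$.

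To handle the complex case I would pick $\xi \in \C^\times$ to be a primitive $e$-th root of unity when $e > 0$, or a non-root of unity when $e = 0$; this is the parameter whose associated $e$-value (from~(\ref{ENu}) and the definition just above) agrees with the fixed $e$ used throughout. Then Theorem~\ref{signed TMain} identifies $S_\mu(\C)$ with the graded column cell module $S_\mu^H(\C)$ of the cyclotomic Hecke algebra $H_\al^\La(\C, \xi)$ as an $R_\al(\C)$-module, and sends $v_\T \mapsto v_\T^H$. Under this identification the relation becomes $\sum_\T c_\T v_\T^H = 0$ in $S_\mu^H(\C)$, and Theorem~\ref{signed PVZ}(iii) asserts that $\{v_\T^H \mid \T \in \St(\mu)\}$ is in fact a $\C$-basis of $S_\mu^H(\C)$. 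Consequently all $c_\T$ vanish, completing the proof.

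There is essentially no obstacle here because all of the heavy lifting has been done earlier: Proposition~\ref{signed PSpan} supplies the spanning set, Theorem~\ref{signed TMain} supplies the identification with the cellular column Specht module over a field, and Theorem~\ref{signed PVZ}(iii) supplies the basis result on the Hecke side. The only subtle point is verifying that the scalar-extension argument goes through, which rests on the observation that $S_\mu$ is defined by relations with integer coefficients so that $S_\mu(\O) \cong S_\mu(\Z) \otimes_\Z \O$ in a way that respects the elements $v_\T$. This is immediate from the presentation in Definition~\ref{signed Specht} together with the integrality of the element $g_A$ and of all the relations (i)--(iii).
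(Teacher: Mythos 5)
Your proof is correct and follows exactly the same approach the paper intends, since the paper explicitly notes that this corollary "is proved in exactly the same way as the corresponding results in section~\ref{SSconnection}" (i.e.\ by mirroring the proof of Corollary~\ref{COBasis}, replacing Proposition~\ref{PSpan}, Theorem~\ref{TMain}, and Theorem~\ref{PVZ}(iii) by their column analogues). Nothing further is needed.
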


\begin{Corollary} 
The universal column Specht module $S_\mu(\O)$ factors through the natural surjection $R_\al(\O)\onto  R_\al^\La(\O)$ so that $S_\mu(\O)$ is naturally a graded  $R_\al^\La(\O)$-module. 
\end{Corollary}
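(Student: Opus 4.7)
The plan is to mimic the argument used for the universal row Specht module in the preceding corollary, but now applied to $S_\mu(\O)$. In view of relation~(\ref{ERCyc}), what needs to be proved is that
\[
y_1^{(\La,\al_{i_1})}e(\bi)\,S_\mu(\O)=0\qquad(\bi=(i_1,\dots,i_d)\in I^\al).
\]
Since $S_\mu$ is defined by generators and relations with integer coefficients, the universal property gives $S_\mu(\O)\cong S_\mu(\Z)\otimes_\Z\O$, so it suffices to prove the identity in $S_\mu(\Z)$.

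Next, Corollary~\ref{signed COBasis} shows that $\{v_\T\mid\T\in\St(\mu)\}$ is an $\O$-basis of $S_\mu(\O)$ for any commutative ring $\O$; specializing gives bases over $\Z$ and over $\C$ of the same cardinality. Hence $S_\mu(\Z)$ embeds into $S_\mu(\Z)\otimes_\Z\C\cong S_\mu(\C)$, and it is enough to verify the cyclotomic relation in $S_\mu(\C)$.

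Finally, choose a parameter $\xi\in\C$ which is a primitive $e$th root of unity when $e>0$ and not a root of unity when $e=0$, so that $H_\al^\La(\C,\xi)\cong R_\al^\La(\C)$ by Theorem~\ref{TBK}. Theorem~\ref{signed TMain} identifies $S_\mu(\C)$ with the graded column cell module $S_\mu^H(\C)$ of the cyclotomic Hecke algebra $H_\al^\La(\C,\xi)$ as an $R_\al(\C)$-module. Since $S_\mu^H(\C)$ is, by construction, an $H_\al^\La(\C,\xi)$-module, the $R_\al(\C)$-action on $S_\mu(\C)$ automatically factors through the cyclotomic quotient $R_\al^\La(\C)$, so the cyclotomic relation holds in $S_\mu(\C)$. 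Pulling back through the chain $S_\mu(\Z)\hookrightarrow S_\mu(\C)$ and base-changing to $\O$ gives the result.

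The only genuine content here is the combination of the basis statement (Corollary~\ref{signed COBasis}), which makes the $\Z\to\C$ reduction lossless, with the identification theorem (Theorem~\ref{signed TMain}); every other step is formal. There is no real obstacle, since the hard work has been done in proving those two results.
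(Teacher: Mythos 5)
Your proof is correct and follows the same approach the paper intends: the paper explicitly states that this corollary "is proved in exactly the same way as the corresponding results in section~\ref{SSconnection}," and your argument is precisely that — reduce to $\O=\Z$, embed into $S_\mu(\C)$ via the freeness guaranteed by Corollary~\ref{signed COBasis}, identify with $S_\mu^H(\C)$ via Theorem~\ref{signed TMain}, and conclude the cyclotomic relation holds.
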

%

\begin{Corollary} \label{C:dualRels}
We have 
\begin{enumerate}
\item[{\rm (i)}] As a graded $R_\al^\La(\O)$-module, the universal column Specht module $S_\mu(\O)$ is generated by the homogeneous element $z_\mu$ of degree $\codeg(\T_\mu)$ subject only to the relations (i)--(iv) from Definition~\ref{signed Specht}. 
\item[{\rm (ii)}] As a graded  $H_\al^\La$-module, the column Specht module $S_\mu^H$ is generated  by the homogeneous element $z_\mu$ of degree $\codeg(\T_\mu)$ subject only to the relations (i)--(iv) from Definition~\ref{signed Specht}. 
\end{enumerate}
\end{Corollary}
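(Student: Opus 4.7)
The plan is to deduce both parts from the analogous statement already established for $R_\al(\O)$, namely Definition~\ref{signed Specht}, together with the preceding corollary that $S_\mu(\O)$ carries a (compatible) $R_\al^\La(\O)$-module structure. The argument is a standard universal-property sandwich, and there is no real obstacle beyond keeping track of which algebra each presentation is over.

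For part~(i), let $\widetilde S_\mu$ denote the graded $R_\al^\La(\O)$-module generated by a homogeneous element $\widetilde z_\mu$ of degree $\codeg(\T_\mu)$ modulo the images of the relations (i)--(iv) of Definition~\ref{signed Specht}, viewed now inside $R_\al^\La(\O)$. By the corollary immediately preceding Corollary~\ref{C:dualRels}, the module $S_\mu(\O)$ is naturally an $R_\al^\La(\O)$-module, and its generator $z_\mu$ still satisfies those relations in $R_\al^\La(\O)$. Hence by the universal property of $\widetilde S_\mu$ there is a surjection $\pi\colon\widetilde S_\mu\onto S_\mu(\O)$ of graded $R_\al^\La(\O)$-modules sending $\widetilde z_\mu\mapsto z_\mu$.

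Conversely, inflate $\widetilde S_\mu$ to a graded $R_\al(\O)$-module along the canonical surjection $R_\al(\O)\onto R_\al^\La(\O)$. The relations (i)--(iv) are, by construction, satisfied by $\widetilde z_\mu$ when regarded as relations in $R_\al(\O)$, so the universal property of $S_\mu(\O)$ (this time in its original incarnation as an $R_\al(\O)$-module, Definition~\ref{signed Specht}) produces a surjection $\sigma\colon S_\mu(\O)\onto\widetilde S_\mu$ of graded $R_\al(\O)$-modules, sending $z_\mu\mapsto\widetilde z_\mu$. Since $\pi\circ\sigma$ and $\sigma\circ\pi$ both fix the respective cyclic generators, they are the identity maps, so $\pi$ is an isomorphism. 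This proves (i).

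For part~(ii), take $\O=F$. Theorem~\ref{signed TMain} gives a homogeneous isomorphism $S_\mu^H(F)\iso S_\mu(F)$ of graded $R_\al(F)$-modules sending $z^H_\mu\mapsto z_\mu$. Under the Brundan--Kleshchev isomorphism $H_\al^\La(F,\xi)\cong R_\al^\La(F)$ of Theorem~\ref{TBK}, this is an isomorphism of graded $H_\al^\La$-modules (equivalently, of graded $R_\al^\La(F)$-modules). Combining this isomorphism with part~(i) applied over $F$ shows that $S_\mu^H$ admits exactly the presentation described in~(ii), completing the proof.
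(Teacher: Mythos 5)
Your argument is correct and takes essentially the same route as the paper, which (in the row case, section~\ref{SSconnection}) simply declares the corollary ``clear'' once $S_\mu(\O)$ is known to factor through $R_\al(\O)\onto R_\al^\La(\O)$, and then says the column version follows identically. Your universal-property sandwich just makes that ``clear'' explicit, and part~(ii) is the same reduction to Theorem~\ref{signed TMain} and the Brundan--Kleshchev isomorphism that the paper intends.
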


\subsection{Contragredient duality for Specht modules}
Recall from section~\ref{SSisomorphisms} that $M^\circledast$ denotes the graded dual of the
$R_\al$-module $M$. We now use \cite{HM} to show that $S^\mu(\O)^\circledast\cong S_\mu(\O)$, up to an explicit degree shift, as graded 
$R_\alpha(\O)$-modules for any integral domain~$\O$. 

Recall that
$\set{v^\T|\T\in\St(\mu)}$ is a basis of $S^\mu(\O)$ and that $\set{v_\T|\T\in\St(\mu)}$ is a basis of
$S_\mu(\O)$. Let $\set{f_\T|\T\in\St(\mu)}$ and $\set{f^\T|\T\in\St(\mu)}$ be the corresponding dual bases of
$S^\mu(\O)^\circledast$ and $S_\mu(\O)^\circledast$, respectively, so that
$$f_\Stab(v^\T)=\delta_{\Stab,\T}=f^\Stab(v_\T),$$
where $\Stab,\T\in\St(\mu)$. 
By definition, $\deg f_\T=-\deg
v^\T=-\deg\T$ and $\deg f^\T=-\codeg\T$. Recalling (\ref{EDegCodeg}), we now have
\begin{equation}\label{E:DualDegrees}
   \deg f_\T=\codeg\T-\defect\al\qquad\text{and}\qquad\deg f^\T=\deg\T-\defect\al.
\end{equation}

\begin{Lemma}\label{L:generation}
 As $R_\alpha$-modules,
  $S^\mu(\O)^\circledast$ is generated by $f_{\T_\mu}$ and
  $S_\mu(\O)^\circledast$ is generated by $f^{\T^\mu}$.
\end{Lemma}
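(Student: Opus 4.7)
The plan is to prove the first statement; the second is entirely symmetric (using the column analogues of the relevant results). I would proceed by induction on the Bruhat order on $\St(\mu)$, showing that each $f_\T$ lies in the cyclic submodule $R_\alpha\cdot f_{\T_\mu}$. The base case $\T=\T_\mu$ is trivial. For the inductive step, my goal is to exhibit for each $\T\gdom\T_\mu$ an explicit element $x_\T\in R_\alpha$ such that
$$ x_\T\cdot f_{\T_\mu} \;=\; f_\T \;+\; \sum_{\Stab\lhd\T} c_\Stab\, f_\Stab \qquad(c_\Stab\in\O);$$
by the inductive hypothesis, every $f_\Stab$ on the right is in $R_\alpha\cdot f_{\T_\mu}$, so $f_\T$ is as well.

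For the candidate, I would take $x_\T:=\tau(\psi_{w_\T^{-1}})$, where $\tau$ is the anti-involution of~(\ref{ECircledast}) and $w_\T\in\Si_d$ is the permutation satisfying $w_\T\T_\mu=\T$. Unwinding the definition of the $R_\alpha$-action on the graded dual gives
$$(x_\T\cdot f_{\T_\mu})(v^\Stab)\;=\;f_{\T_\mu}\bigl(\psi_{w_\T^{-1}}\,\psi^\Stab\, z^\mu\bigr).$$
Since $y_r z^\mu=0$ for all $r$, when I expand $\psi_{w_\T^{-1}}\psi^\Stab$ in the basis $\{\psi_w y^m e(\bi)\}$ of Theorem~\ref{TBasis}, only the polynomial-free terms $\psi_w e(\bi^\mu)$ contribute, yielding $\psi_w z^\mu=v^{w\T^\mu}$. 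Using Theorem~\ref{PVZ}(iii) (applied over a field and transferred to $\O$ via Corollary~\ref{COBasis}), the standard-basis expansion of $v^{w\T^\mu}$ only involves $v^\Sigma$ with $\Sigma\gedom w\T^\mu$; since $\T_\mu$ is the Bruhat minimum among all $\mu$-tableaux, $v^{\T_\mu}$ appears with nonzero coefficient only when $w\T^\mu=\T_\mu$, i.e.\ $w=w^{\T_\mu}$.

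The computation therefore reduces to extracting the coefficient of $\psi_{w^{\T_\mu}}e(\bi^\mu)$ in $\psi_{w_\T^{-1}}\psi_{w^\Stab}$. When $\Stab=\T$, Lemma~\ref{eBruhat}(iii) gives the reduced factorization $w^{\T_\mu}=w_\T^{-1}w^\T$ with additive lengths, so by Proposition~\ref{PSubtle} one obtains $\psi_{w_\T^{-1}}\psi^\T=\psi_{w^{\T_\mu}}+(\text{strictly lower }\psi_u f(y)e(\bi))$. The lower terms, when acting on $z^\mu$, produce $v^{u\T^\mu}$ with $u<w^{\T_\mu}$, and by the argument above these contribute no $v^{\T_\mu}$-component. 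Hence $(x_\T f_{\T_\mu})(v^\T)=1$, as required.

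The main obstacle is the case $\Stab\ne\T$: one must show the coefficient of $\psi_{w^{\T_\mu}}e(\bi^\mu)$ in the expansion of $\psi_{w_\T^{-1}}\psi_{w^\Stab}$ vanishes unless $\Stab\lhd\T$. The cleanest route is to reduce to a field $F$ via $\O$-freeness and invoke the cellular structure on $H_\alpha^\Lambda(F)\cong R_\alpha^\Lambda(F)$ from~\cite{HM}: the Murphy-type basis has the triangularity property that products of basis elements expand into Bruhat-closed sets, forcing $w^{\T_\mu}\le w_\T^{-1}w^\Stab$ in any contributing reduction, equivalently $w^\T\le w^\Stab$, equivalently $\Stab\ledom\T$ by Lemma~\ref{eBruhat}(ii). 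This gives the desired triangular formula and completes the inductive step.
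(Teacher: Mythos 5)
Your overall strategy coincides with the paper's: you use the same element $x_\T=\tau(\psi_{w_\T^{-1}})=\psi_\T$ (up to choice of preferred reduced word), reduce via the dual pairing to extracting the coefficient of $\psi_{w^{\T_\mu}}e(\bi^\mu)$ in the KLR-basis expansion of $\psi_{w_\T^{-1}}\psi^\Stab e(\bi^\mu)$, and handle the diagonal case $\Stab=\T$ exactly as in the paper via Lemma~\ref{eBruhat}(iii) and Proposition~\ref{PSubtle}. The gap is in the off-diagonal case. Your cellularity appeal asserts the expansion of $\psi_{w_\T^{-1}}\psi_{w^\Stab}$ is ``Bruhat-closed'', forcing $w^{\T_\mu}\le w_\T^{-1}w^\Stab$, and then passes to ``equivalently $w^\T\le w^\Stab$''. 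Neither step is justified. What Theorem~\ref{TBasis} and Proposition~\ref{PSubtle} actually give is that the $\psi_w$ appearing in the expansion are indexed by subwords of a reduced expression for $w_\T^{-1}$ concatenated with one for $w^\Stab$; this bounds $\ell(w)$ by $\ell(w_\T)+\ell(w^\Stab)$ but does \emph{not} yield $w\le w_\T^{-1}w^\Stab$ when the concatenation is non-reduced. And even granting $w_\T^{-1}w^\T\le w_\T^{-1}w^\Stab$, left-cancelling $w_\T^{-1}$ in Bruhat order to conclude $w^\T\le w^\Stab$ is false in general; it requires length-additivity of $w_\T^{-1}w^\Stab$, which you have not established. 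The graded cellular structure gives triangularity across cells in the dominance order, not the inner-cell Bruhat claim you invoke.

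The repair is simpler and avoids passing to a field: every $\psi_w$ in the expansion of $\psi_{w_\T^{-1}}\psi_{w^\Stab}e(\bi^\mu)$ has $\ell(w)\le\ell(w_\T)+\ell(w^\Stab)$, while $\ell(w^{\T_\mu})=\ell(w_\T)+\ell(w^\T)$ by Lemma~\ref{eBruhat}(iii), so the coefficient of $\psi_{w^{\T_\mu}}e(\bi^\mu)$ can be nonzero only if $\ell(w^\Stab)\ge\ell(w^\T)$, with equality forcing $w^{\T_\mu}=w_\T^{-1}w^\Stab$ and hence $w^\Stab=w^\T$, $\Stab=\T$. This gives the length-triangular formula $\psi_\T f_{\T_\mu}=f_\T+\sum_{\ell(w^\Stab)>\ell(w^\T)}c_\Stab f_\Stab$, which already makes $\{\psi_\T f_{\T_\mu}\mid\T\in\St(\mu)\}$ a basis and hence $f_{\T_\mu}$ a generator; the stronger assertion that $c_\Stab\neq 0$ only for $\Stab\ldom\T$ is never needed.
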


\begin{proof}
  We only prove that $S^\mu(\O)^\circledast=R_\alpha f_{\T_\mu}$. The proof of the second
  statement is similar. 

  We claim that if $\T\in\St(\mu)$ then there exist scalars $c_\Stab\in\O$ such that
  $$f_\T = \psi_\T f_{\T_\mu} + \sum_{\Stab\in\St(\mu)}c_\Stab f_\Stab,$$
  where $c_\Stab\ne0$ only if $\ell(w^\Stab)>\ell(w^\T)$.
  The claim implies that $f_\T\in R_\alpha f_{\T_\mu}$, for all
  $\T\in\St(\mu)$, so that $S^\mu(\O)^\circledast=R_\alpha f_{\T_\mu}$ by the remarks above. 
  
  To prove the claim we argue by downwards induction on the dominance order. If $\T=\T_\mu$
  then $w_{\T_\mu}=1=\psi_{\T_\mu}$ so that indeed 
  $f_{\T_\mu}=\psi_{\T_\mu}f_{\T_\mu}$. 
  Next suppose that $\T\ldom\T_\mu$ and let
  $\Stab\in\St(\mu)$. Then, by definition,
  $$(\psi_\T f_{\T_\mu})(v^\Stab)=f_{\T_\mu}(\psi_{w_\T^{-1}} v^\Stab)
                         =f_{\T_\mu}(\psi_{w_\T^{-1}}\psi_{w^\Stab} z^\mu).$$
  By Lemma~\ref{eBruhat}(iii), $w^{\T_\mu}=w_\T^{-1}w^\T$ and $\ell(w^{\T_\mu})=\ell(w_\T^{-1})+\ell(w^\T)$.
  Consequently, if $\ell(w^\Stab)\le\ell(w^\T)$ and $\Stab\ne\T$ then $w^{\T_\mu}$ can not appear as a
  subexpression of~$w_\T^{-1}w^\Stab$ so that $(\psi_\T f_{\T_\mu})(v^\Stab)=0$ by Proposition~\ref{PSubtle}.
  Therefore, the coefficient of $f_\Stab$ in $\psi_\T f_{\T_\mu}$ is zero whenever $\ell(w^\Stab)<\ell(w^\T)$.
  Finally, consider the case when $\Stab=\T$. By Proposition~\ref{PSubtle}, there exist polynomials
  $p_u(y)\in\O[y_1,\dots,y_d]$ such that 
  \begin{align*}
  \psi_{w_\T^{-1}}\psi_{w^\T}z^\mu
        &=\psi^{\T_\mu}z^\mu +\sum_{u<w^{\T_\mu}}\psi_u p_u(y)z^\mu\\
        &=v^{\T_\mu} +\sum_{u<w^{\T_\mu}}p_u\psi_u z^\mu,
  \end{align*}
  where $p_u=p_u(0)\in\O$ and the last equality follows from Lemma~\ref{LZ1}. 
  It follows that $(\psi_\T f_{\T_\mu})(v^\T)=1$. 
  Hence, if
  we write $\psi_\T f_{\T_\mu}$ with respect to the basis $\{f_\Stab\}$ then $f_\T$ appears with
  coefficient~$1$. This completes the proof of the claim and,  hence, of the lemma.
\end{proof}

We can now prove the main result of this section.

\begin{Theorem}\label{Tduality}
As graded $R_\al(\O)$-modules, 
  $$S^\mu(\O)\cong S_\mu(\O)^\circledast\<\defect\alpha\>\quad\text{and}\quad
    S_\mu(\O)\cong S^\mu(\O)^\circledast\<\defect\alpha\>$$
\end{Theorem}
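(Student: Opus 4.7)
The plan is to construct a homogeneous $R_\alpha(\mathcal{O})$-module map
\[
\phi \colon S^\mu(\mathcal{O}) \longrightarrow S_\mu(\mathcal{O})^\circledast\langle\defect\alpha\rangle,\qquad z^\mu \longmapsto f^{T^\mu},
\]
and then show that $\phi$ is an isomorphism. The second isomorphism of the theorem then follows formally by applying $(-)^\circledast\langle\defect\alpha\rangle$ to both sides and using the natural identifications $(M^\circledast)^\circledast\cong M$ and $(M\langle n\rangle)^\circledast\cong M^\circledast\langle -n\rangle$, both of which hold for finitely generated free graded $\mathcal{O}$-modules. Since the universal Specht modules and the graded dual are compatible with base change from $\mathbb{Z}$ for free modules (Corollaries~\ref{COBasis} and~\ref{signed COBasis}), it suffices to prove the statement for $\mathcal{O}=\mathbb{Z}$.

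A degree check using~\eqref{EDegCodeg} and~\eqref{E:DualDegrees} confirms that $f^{T^\mu}$ has degree $-\codeg T^\mu+\defect\alpha=\deg T^\mu$ in $S_\mu(\mathbb{Z})^\circledast\langle\defect\alpha\rangle$, matching $\deg z^\mu$. To see that $\phi$ is well-defined we must verify that $f^{T^\mu}$ satisfies the defining relations (i)--(iv) of Definition~\ref{DSpecht}. Using $\tau(e(\bj))=e(\bj)$, $\tau(y_r)=y_r$, $\tau(\psi_s)=\psi_s$ from~\eqref{ECircledast}, relation~(i) is immediate from $\bi(T^\mu)=\bi^\mu$. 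Because $S_\mu(\mathbb{Z})$ is $\mathbb{Z}$-free (Corollary~\ref{signed COBasis}), $S_\mu(\mathbb{Z})^\circledast$ is torsion-free, and so relations~(ii)--(iv) can be verified after extending scalars to $\mathbb{Q}$. Over $\mathbb{Q}$, Theorems~\ref{TMain} and~\ref{signed TMain} identify the universal Specht modules with their graded Hecke-algebra cell-module counterparts $S^\mu_H(\mathbb{Q})$ and $S_\mu^H(\mathbb{Q})$. The required duality $(S_\mu^H)^\circledast\langle\defect\alpha\rangle\cong S^\mu_H$, pairing $f^{T^\mu}$ with $z^\mu$, is built into the two graded cellular structures on $H_\alpha^\Lambda(\mathbb{Q})$ from~\cite{HM}: the anti-involution $\tau$ interchanges the two cellular bases up to the appropriate degree shift, producing exactly this pairing.

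Once $\phi$ is defined, its surjectivity is immediate from Lemma~\ref{L:generation}, which states that $S_\mu(\mathbb{Z})^\circledast$ is generated as an $R_\alpha$-module by $f^{T^\mu}$. To conclude, compare graded ranks: by Corollaries~\ref{COBasis} and~\ref{signed COBasis} combined with~\eqref{EDegCodeg}, both $S^\mu(\mathbb{Z})$ and $S_\mu(\mathbb{Z})^\circledast\langle\defect\alpha\rangle$ are free $\mathbb{Z}$-modules with identical graded rank $\sum_{\T\in\St(\mu)} q^{\deg\T}$. A surjection between finitely generated free graded $\mathbb{Z}$-modules of matching graded rank is automatically an isomorphism; extending scalars from $\mathbb{Z}$ to an arbitrary $\mathcal{O}$ then yields the result for any ground ring.

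The main obstacle is the verification of the homogeneous Garnir relations (iv) for $f^{T^\mu}$: concretely, one must show that $f^{T^\mu}$ annihilates $\tau(g^A) v_\T$ for every $\T\in\St(\mu)$ and every row Garnir node $A\in\mu$. Reducing to the Hecke algebra over $\mathbb{Q}$ sidesteps this via the cellular formalism of~\cite{HM}; a direct combinatorial attack would unfold $\tau(g^A)v_\T$ using Corollary~\ref{signed C10910}, Theorem~\ref{signed TTauA}, and Lemma~\ref{signed L10910}, and then show by tracking the Bruhat and degree data that the coefficient of $v_{T^\mu}$ vanishes.
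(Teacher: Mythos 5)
Your proposal follows essentially the same strategy as the paper: construct the map $z^\mu\mapsto f^{\T^\mu}$, check the four relations of Definition~\ref{DSpecht}, reduce the Garnir relations to characteristic zero via freeness over $\Z$, and transport the problem to the Hecke algebra using Theorems~\ref{TMain}, \ref{signed TMain} and the graded cellular duality of~\cite{HM}, with Lemma~\ref{L:generation} and the basis results supplying the bijectivity. However, there is a concrete error in your reduction step: you extend scalars from $\Z$ to $\Q$ and then invoke Theorems~\ref{TMain} and~\ref{signed TMain} ``over $\Q$''. Those theorems require a parameter $\xi$ in the ground field whose quantum characteristic equals the fixed $e$, but over $\Q$ the only possible values are $e=0$ (with $\xi$ not a root of unity) and $e=2$ (with $\xi=-1$), since $\Q$ has no primitive $e$-th roots of unity for $e\ge 3$. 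For general $e$ the passage to the Hecke algebra simply cannot be made over $\Q$. The paper avoids this by extending to $\C$ and choosing $\xi=\exp(2\pi i/e)$; replacing your $\Q$ with $\C$ (or $\Q(\zeta_e)$) repairs the argument.

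A secondary point: you assert that ``the anti-involution $\tau$ interchanges the two cellular bases up to the appropriate degree shift, producing exactly this pairing''. That is not quite what is proved in~\cite{HM}, and it glosses over the one nontrivial verification. What~\cite[Proposition~6.19]{HM} provides is a homogeneous associative bilinear form $\{\cdot,\cdot\}:S^\mu_H\times S_\mu^H\langle\defect\alpha\rangle\to F$ with the triangularity property $\{v_H^{\Stab},v^H_\T\}=0$ unless $\T\gedom\Stab$. The paper uses exactly this triangularity to deduce that $\varphi_{v^{\T^\mu}}$ is a scalar multiple of $f^{\T^\mu}$ — without that observation one does not know that the isomorphism furnished by~\cite{HM} actually sends the generator to (a multiple of) the functional $f^{\T^\mu}$ whose Garnir relations you are trying to verify. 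Your write-up should spell this out rather than attribute the pairing to $\tau$ directly, but this is a matter of precision rather than a gap in the idea.
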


\begin{proof}The two isomorphisms are equivalent so we consider only the first isomorphism.  By
  Lemma~\ref{L:generation} and (\ref{E:DualDegrees}) it is enough to show that~$f^{\T^\mu}$ satisfies the
  defining relations from Definition~\ref{DSpecht} for the element $z^{\mu}$ as, taking into account our basis results, this will imply that there
  is a unique isomorphism $S^\mu(\O)\bijection S_\mu(\O)^\circledast\<\defect\alpha\>$
  which sends~$z^{\mu}$ to~$f^{\T^\mu}$. From the definitions, $e(\bi)f^{\T^\mu}=\delta_{\bi \bi^\mu}$, 
  so it remains to show that 
  \begin{enumerate}
    \item[{\rm (ii)}] $y_rf^{\T^\mu}=0$ for all $r=1,\dots,d$;
    \item[{\rm (iii)}] $\psi_rf^{\T^\mu}=0$ for all $r=1,\dots,d-1$ such that  $r\rightarrow_{\T^\mu}  r+1$;
    \item[{\rm (iv)}]  $g^A f^{\T^\mu}=0$\, for all row Garnir nodes $A$ in~$\mu$. 
  \end{enumerate}
  By freeness it is sufficient to consider the case when $\O=\Z$ and, since $S_\mu(\Z)^\circledast$ embeds into
  $S_\mu(\C)^\circledast$, it is enough to verify the relations when $\O=\C$. 
  
  As in Section~\ref{SSconnection}, let $\xi=\exp(2\pi i/e)$ if $e>0$ and if $e=0$ take~$\xi$ to be any
  non-root of unity in~$\C$. Then, by Theorem~\ref{TBK}, $R_\al^\Lambda(\C)\cong H_\al^\Lambda(\C,\xi)$,   so we
  can invoke results from~\cite{HM}. Hence, as graded $R_\al$-modules,
  \begin{xalignat*}{2}
  S^\mu(\C)&\cong S^\mu_H(\C),                      &&\text{by Theorem~\ref{TMain}},\\
      &\cong S_\mu^H(\C)^\circledast\<\defect\al\>, &&\text{by \cite[Proposition~6.19]{HM}},\\
      &\cong S_\mu(\C)^\circledast\<\defect\al\>,   &&\text{by Theorem~\ref{signed TMain}}.
  \end{xalignat*}
  To complete the proof we scrutinize the second isomorphism above. 

  In our notation, the proof of \cite[Proposition~6.19]{HM} shows that there exists a homogeneous associative
  bilinear form
  $$\{\ ,\ \}:S^\mu_H\times S_\mu^H\<\defect\al\>\longrightarrow\C;\quad (a,b)\mapsto\{a,b\},$$
  such that $\{v_H^\Stab,v^H_\T\}=0$ unless $\T\gedom\Stab$. (When comparing our notation with~\cite{HM} the reader
  should remember that $S_\mu^H=S_{\mu'}^{HM}$ as defined in section~\ref{SS graded H dual}.) The isomorphism
  $S^\mu_H(\C)\bijection S_\mu^H(\C)^\circledast\<\defect\al\>$ is then the map which sends $a\in S^\mu_H(\C)$
  to $\varphi_a\in S_\mu^H(\C)^\circledast\<\defect\al\>$, where $\varphi_a(b)=\{a,b\}$, for all
  $b\in  S_\mu^H(\C)^\circledast\<\defect\al\>$. Observe that the triangularity of the form $\{\ ,\
  \}$ implies that $\varphi_{v^{\T^\mu}}$ is a scalar multiple of~$f^{\T^\mu}$. Therefore, since the
  map $a\mapsto\varphi_a$ is an isomorphism, it follows from Definition~\ref{DSpecht} and
  Corollary~\ref{C:dualRels} that~$\varphi_{v^{\T^\mu}}$, and hence $f^{\T^\mu}\in S_\mu(\C)^\circledast$, 
  satisfies the 
  three relations (ii)--(iv) above. Consequently,  $f^{\T^\mu}\in S_\mu(\Z)^\circledast$ also satisfies these 
  relations and the theorem is proved.
\end{proof}

\begin{Remark}
  In principle, it should be possible to prove Theorem~\ref{Tduality} directly by verifying that $f^{\T^\mu}$
  satisfies the relations in Definition~\ref{DSpecht}. This appears to be an involved calculation.
\end{Remark}

%

\section{Two applications}\label{SApp}

In this section we work again over an arbitrary commutative unital ring~$\O$. 

\subsection{Specht modules for higher levels as induced modules}

Let  $\mu=(\mu^{(1)},\dots,\mu^{(l)})\in\Par_\al$ with $\al^{(m)}=\cont(\mu^{(m)})$, for $m=1,\dots,l$. 
Then $\al=\al^{(1)}+\dots+\al^{(l)}$. 
Consider each partition $\mu^{(m)}$ as an element of ${\mathscr P}_{\al^{(m)}}^{(k_m)}$; that is, as a
partition whose $(1,1)$-node has residue $k_m$.  Then we have the universal graded Specht modules
$S^{\mu^{(m)}}$ for the algebras $R_{\al^{(m)}}^{\La_{k_m}}$, for $m=1,\dots,l$. Inflating along the surjection
$R_{\al^{(m)}}\onto R_{\al^{(m)}}^{\La_{k_m}}$ we may consider $S^{\mu^{(m)}}$ as a graded
$R_{\al^{(m)}}$-module.  Note that this graded module is generated by the element $z^{\mu^{(m)}}$ of degree
$\deg (\T^{\mu^{(m)}})$. 

As in (\ref{ECircProd}), considered the graded  $R_\al$-module 
$S(\mu^{(1)})\circ\dots\circ S(\mu^{(l)})$ which is generated by the element 
\begin{equation}\label{E10910}
z^{\mu^{(1)},\dots,\mu^{(l)}}:=1\otimes\big(z^{\mu^{(1)}}\otimes\dots\otimes z^{\mu^{(l)}}\big)
\end{equation}
of degree $\deg \T^{\mu^{(1)}}+\dots+\deg \T^{\mu^{(l)}}$. 

Our new definition of Specht modules by generators and relations makes the following useful result almost obvious. Note that in \cite{Vaz}, \cite[(3.24)]{BKariki} the right hand side of (\ref{EIso}) was taken as the definition of the Specht module.  

\begin{Theorem}\label{TSpechtHigherLevel}
Suppose that $\mu=(\mu^{(1)},\dots,\mu^{(l)})\in\Par_\al$. Then
\begin{equation}\label{EIso}
S^\mu\cong S^{\mu^{(1)}}\circ\dots\circ S^{\mu^{(l)}}\langle d_\mu\rangle,
\end{equation}
where
$$d_\mu:=\deg(\T^\mu)-\deg (\T^{\mu^{(1)}})-\dots-\deg (\T^{\mu^{(l)}}).$$
as graded $R_\al$-modules. In particular, $S^{\mu^{(1)}}\circ\dots\circ S^{\mu^{(l)}}\langle d_\mu\rangle $ factors through the surjection $R_\al\onto R_\al^\La$, and the isomorphism (\ref{EIso}) is also an isomorphism of graded $R_\al^\La$-modules. 
\end{Theorem}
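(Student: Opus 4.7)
The plan is to use the generators-and-relations presentation of $S^\mu$ from Definition~\ref{DSpecht} to construct the isomorphism. Set $N:=S^{\mu^{(1)}}\circ\dots\circ S^{\mu^{(l)}}\langle d_\mu\rangle$ and consider the candidate generator $z:=z^{\mu^{(1)},\dots,\mu^{(l)}}\in N$ from~(\ref{E10910}). By construction $\deg z=\deg(\T^\mu)$ after the shift by $d_\mu$. I would first verify that $z$ satisfies each of the four defining relations of $S^\mu$, which by the universal property produces a degree-preserving homomorphism $\varphi:S^\mu\to N$ sending $z^\mu\mapsto z$.

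Relation (i) holds because $e(\bj)$ acts on the tensor $m^{(1)}\otimes\cdots\otimes m^{(l)}$ (with $m^{(r)}=z^{\mu^{(r)}}$) in the induced module precisely via the concatenated residue sequence, and by construction $\bi^\mu=\bi^{\mu^{(1)}}\cdots\bi^{\mu^{(l)}}$. Relation (ii) follows from $y_rz^{\mu^{(m)}}=0$ in each factor together with the fact that the induction functor uses $e_{\alpha^{(1)},\dots,\alpha^{(l)}}$. Relation (iii) is slightly more delicate: the condition $r\rightarrow_{\T^\mu}r+1$ forces $r$ and $r+1$ to lie in the same row of the same component (since $\T^\mu$ fills by rows within each component in order); hence the corresponding transposition lies inside the parabolic piece $\Si_{|\mu^{(1)}|}\times\cdots\times\Si_{|\mu^{(l)}|}$ and $\psi_r$ acts through the factor $z^{\mu^{(m)}}$, where the relation holds by Definition~\ref{DSpecht}(iii) applied to $\mu^{(m)}$.

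The main obstacle, as expected, is relation (iv). Here a row Garnir node $A=(a,b,m)\in\mu$ satisfies $(a+1,b,m)\in\mu$, so $A$ and its entire Garnir belt $\Belt^A$ lie inside the single component $\mu^{(m)}$; moreover the bricks, the brick permutation group $\Si^A$, and the tableaux in $\Gar^A$ all live within component~$m$. The issue is that the entries in $\T^\mu$ at positions inside component~$m$ differ from the entries in $\T^{\mu^{(m)}}$ by the shift $c_m:=|\mu^{(1)}|+\dots+|\mu^{(m-1)}|$, so I would show that under the identification of $\Si_{|\mu^{(m)}|}$ with the $m$-th factor of the parabolic subgroup (via this shift), $g^A$ corresponds exactly to the Garnir element for the analogous Garnir node in~$\mu^{(m)}$. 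Because all of $w^{\T^A}, w_r^A, u\in\D^A$ involved in $g^A$ fix entries outside the belt, by Proposition~\ref{PSubtle} the element $\psi_{w^A}\cdots$ acts through a single tensor factor (namely the $m$-th), and so $g^A z=0$ follows from the Garnir relation already imposed on $z^{\mu^{(m)}}$.

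Having constructed $\varphi$, surjectivity is automatic since $z$ generates $N$ as an $R_\alpha$-module: by Theorem~\ref{TMBasis}-style arguments the induced module is generated by the element $1\otimes(z^{\mu^{(1)}}\otimes\cdots\otimes z^{\mu^{(l)}})$. To upgrade $\varphi$ to an isomorphism I would work over $\O=\Z$ (the general case follows by base change), and compare ranks using Corollary~\ref{COBasis}: the free $\Z$-rank of $S^\mu$ is $|\St(\mu)|$, while the rank of $N$ is $\binom{d}{|\mu^{(1)}|,\dots,|\mu^{(l)}|}\prod_{m=1}^l|\St(\mu^{(m)})|$, and these two numbers agree because a standard $\mu$-tableau is the same datum as a choice of which $|\mu^{(m)}|$ of $\{1,\dots,d\}$ go to component $m$, together with a standard $\mu^{(m)}$-tableau structure on each component. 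A surjection between free modules of the same finite rank over~$\Z$ is an isomorphism, completing the proof; the second assertion (factoring through $R_\alpha^\Lambda$) is then immediate from the fact that each $S^{\mu^{(m)}}$ is an $R_{\alpha^{(m)}}^{\Lambda_{k_m}}$-module and the compatibility of cyclotomic quotients with induction.
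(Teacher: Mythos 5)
Your argument for the existence of the surjection $\varphi\colon S^\mu\to S^{\mu^{(1)}}\circ\dots\circ S^{\mu^{(l)}}\langle d_\mu\rangle$ (checking that $z^{\mu^{(1)},\dots,\mu^{(l)}}$ satisfies Definition~\ref{DSpecht}(i)--(iv)) is essentially the same as the paper's. You then diverge in how you establish injectivity: you count $\Z$-ranks via Corollary~\ref{COBasis} together with the freeness of $R_\al e_{\al^{(1)},\dots,\al^{(l)}}$ over the parabolic subalgebra, and the combinatorial bijection $\St(\mu)\leftrightarrow\binom{d}{|\mu^{(1)}|,\dots,|\mu^{(l)}|}\times\prod_m\St(\mu^{(m)})$, and then invoke base change to general $\O$. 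The paper instead constructs an explicit inverse: by Frobenius reciprocity one only needs an $R_{\al^{(1)},\dots,\al^{(l)}}$-homomorphism $S^{\mu^{(1)}}\boxtimes\dots\boxtimes S^{\mu^{(l)}}\to\Res^\al_{\al^{(1)},\dots,\al^{(l)}}S^\mu$ sending $z^{\mu^{(1)}}\otimes\cdots\otimes z^{\mu^{(l)}}\mapsto z^\mu$, which again exists because $z^\mu$ satisfies the relations for each factor; the two maps are then mutually inverse because they act correctly on cyclic generators. Both approaches are valid, but the paper's is more self-contained — it avoids the Specht basis theorem and the rank of the induced module — while yours leans on more pre-established machinery (which is, to be fair, already available at that point in the paper). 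One small nitpick: in your verification of relation (iv), the reference to Proposition~\ref{PSubtle} is a bit off; the point you actually need is that every $\psi_r$ occurring in $g^A$ has $r$ strictly within the index range of component~$m$, so $g^A$ lies in the image of the non-unital embedding of the $m$-th factor of the parabolic subalgebra, and hence acts on $z$ through that single tensor slot — Proposition~\ref{PSubtle} (about changing reduced decompositions) is not what is doing the work there.
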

\begin{proof}
The vector $z^{\mu^{(1)},\dots,\mu^{(l)}}$ from (\ref{E10910}) 
satisfies the defining relations on the vector $z^\mu\in S^\mu$ from  Definition~\ref{DSpecht}. This yields a homogeneous module homomorphism $S^\mu\longrightarrow S^{\mu^{(1)}}\circ\dots\circ S^{\mu^{(l)}}\langle d_\mu\rangle$ which maps $z^\mu$ onto $z^{\mu^{(1)},\dots,\mu^{(l)}}$.  
To construct the inverse homomorphism, by Frobenius Reciprocity, it suffices to construct a homomorphism of $R_{\al^{(1)},\dots,\al^{(l)}}$-modules 
$$
S^{\mu^{(1)}}\boxtimes\dots\boxtimes S^{\mu^{(l)}}\longrightarrow \Res^{\al}_{\al^{(1)},\dots,\al^{(m)}}S^{\mu},
$$
which maps $z^{\mu^{(1)}}\otimes\dots\otimes z^{\mu^{(l)}}$ onto $z^\mu$. Such homomorphism arises by Definition~\ref{DSpecht} again, using defining relations for the modules $
S^{\mu^{(1)}},\dots, S^{\mu^{(l)}}$. 
\end{proof}

\subsection{Column Specht modules as signed row Specht modules}\label{SSSign}
In this final section we investigate the analogue of tensoring the Specht modules with the sign representation.
Recall the isomorphism $\sgn:R_\al\to R_{\alpha'}$ from (\ref{epsilon}) 
and the $\sgn$-twist $M^\sgn\in \Mod{R_\al}$ of a module $M\in \Mod{R_{\al'}}$. 
We determine what happens to the Specht modules of $R_{\al'}$ under this twist. 

For each $\mu\in\Par_\al$ we have row Specht module $S^\mu$ and column  Specht module $S_\mu$ with
bases $\{v^\T\}$ and $\{v_\T\}$, respectively, parametrized by $\T\in\St(\mu)$. Similarly, for
each $\nu\in\Par[']_{\al'}$ we have row Specht module $S^\nu$ and column Specht module $S_\nu$,
with bases $\{v^\Stab\}$ and $\{v_\Stab\}$, respectively, parametrized by $\Stab\in\St(\nu)$.
The definition of these modules and bases depends on~$\kappa$
and~$\kappa'$, respectively. 

In section~\ref{SSPar} we defined the conjugate multipartition $\mu'\in\Par[']_{\al'}$ of the multipartition~$\mu\in\Par_\al$. Recall from section~\ref{SS:tableaux} that the
definition of degree and codegree of a tableau~$\T$ depends on~$\kappa$. We
write $\deg^\kappa(\T), \codeg^\kappa(\T), \res^\kappa A$, etc., when we want to emphasize dependence on~$\kappa$. Finally, the conjugate tableau $\T'$ is defined in section~\ref{SS:tableaux}, and if $\bi\in I^\al$ then $-\bi\in I^{\al'}$ is defined in (\ref{E-I}). 

For any node $A=(a,b,m)$ define $A':=(b,a,l-m+1)$. 
Note that $A\in\mu$ if and only if $A'\in \mu'$, in which case $\T(A)=\T'(A')$. Moreover, we have $\res^\kappa A\equiv -\res^{\kappa'}A'$ 
  by (\ref{ERes}), and $A$ is above $B$ if and only if $A'$ is below $B'$. The following lemma now follows from definitions. 

\begin{Lemma}\label{L:signs}
  Suppose that $\mu\in\Par_\al$ and $\T\in\St(\mu)$. Then $\bi^{\kappa'}(\T')=-\bi^\kappa(\T)$, $\deg^\kappa(\T)=\codeg^{\kappa'}(\T')$ and
  $\codeg^\kappa(\T)=\deg^{\kappa'}(\T')$.
\end{Lemma}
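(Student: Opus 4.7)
The plan is to prove the three statements in order, with the residue statement serving as the base and the degree/codegree identity reducing, by a simple induction, to a single combinatorial bijection.

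First, for $\bi^{\kappa'}(\T')=-\bi^\kappa(\T)$, the argument is entirely unwinding definitions. The $r$-th component of $\bi^\kappa(\T)$ is $\res^\kappa A$ where $A=(a,b,m)$ is the node occupied by $r$ in $\T$. By construction of $\T'$, the entry $r$ occupies $A'=(b,a,l-m+1)$ in $\T'$, and by~(\ref{EKappaPrime}) and~(\ref{ERes}),
\[
\res^{\kappa'}A' \equiv -k_m+(a-b) \equiv -\bigl(k_m+(b-a)\bigr) \equiv -\res^\kappa A \pmod{e}.
\]
Applying this coordinatewise gives the identity on residue sequences.

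The two degree identities are equivalent: by (\ref{EDegCodeg}), $\deg^\kappa(\T)+\codeg^\kappa(\T)=\defect(\alpha)=\defect(\alpha')$ (the last equality since conjugating nodes negates residues and hence sends $\alpha$ to $\alpha'$ with the same height), and similarly $\deg^{\kappa'}(\T')+\codeg^{\kappa'}(\T')=\defect(\alpha')$, so the identity $\deg^\kappa(\T)=\codeg^{\kappa'}(\T')$ implies $\codeg^\kappa(\T)=\deg^{\kappa'}(\T')$. So I only need to prove the first. I will induct on $d=|\mu|$, the case $d=0$ being trivial. For the inductive step, let $A$ be the node occupied by $d$ in $\T$; then $A'$ is the node occupied by $d$ in $\T'$, $(\mu_A)'=(\mu')_{A'}$, and $(\T_{<d})'=(\T')_{<d}$. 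By the definitions (\ref{EDegTab}) and (\ref{ECodeg}),
\[
\deg^\kappa(\T)=d_A(\mu)+\deg^\kappa(\T_{<d}),\qquad
\codeg^{\kappa'}(\T')=d^{A'}((\mu')_{A'})+\codeg^{\kappa'}((\T')_{<d}),
\]
so by induction it suffices to verify the combinatorial identity $d_A(\mu)=d^{A'}((\mu')_{A'})$; equivalently (adding back the node), $d_A(\mu)=d^{A'}(\mu')$, since removing $A$ changes both quantities in matching ways.

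The content of that combinatorial identity is a correspondence: conjugation of multipartitions (with the reversal of component order built into the definition) sends $i$-addable (resp.\ $i$-removable) nodes of $\mu$ to $(-i)$-addable (resp.\ $(-i)$-removable) nodes of $\mu'$, and sends nodes strictly below $A$ (in the ordering used in (\ref{EDMUA})) to nodes strictly above $A'$. The residue statement was already checked in the first paragraph; the matching of addable/removable nodes follows because $\mu^B=\mu\cup\{B\}$ is a multipartition iff $(\mu^B)'=\mu'\cup\{B'\}$ is a multipartition, and similarly for removals. The swap of ``below'' and ``above'' is the one genuinely order-theoretic point: if $B=(c,d,n)$ lies strictly below $A=(a,b,m)$ in the ordering on nodes of~$\mu$ (namely, $n>m$, or $n=m$ and $c>a$), then $B'=(d,c,l-n+1)$ and $A'=(b,a,l-m+1)$ satisfy $l-n+1<l-m+1$ (or, when $n=m$, $c>a$ with equal component), so $B'$ lies strictly above $A'$ in $\mu'$. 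This gives a bijection between $i$-addable (resp.\ removable) nodes of $\mu$ strictly below $A$ and $(-i)$-addable (resp.\ removable) nodes of $\mu'$ strictly above $A'$, yielding $d_A(\mu)=d^{A'}(\mu')$ on the nose.

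The main obstacle is simply being careful with the ordering conventions on nodes of a multipartition and with the reversal of components in the definition of $\mu'$; once those are pinned down, the proof is a direct translation under the node-conjugation map $A\mapsto A'$.
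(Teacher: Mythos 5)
Your proof is correct and takes essentially the same route as the paper, which simply records the node correspondence $A\mapsto A'$, the negation of residues, and the reversal of the above/below order before stating that the lemma ``follows from definitions''; you just make the induction on $d$ explicit. Two spots are worth tightening: when you pass from $B$ strictly below $A$ to $B'$ strictly above $A'$ within a single component you tacitly switch from a row-based to a column-based comparison, which is valid because distinct addable/removable $i$-nodes of one partition satisfy $c>a\iff d<b$, but that fact should be stated; and the identity $\defect_\Lambda(\alpha)=\defect_{\Lambda'}(\alpha')$ needs the brief observation that both $(\cdot,\cdot)$ and the pairing with the fundamental weights are invariant under $i\mapsto -i$, which your parenthetical (explaining only that $\cont(\mu')=\alpha'$) does not supply. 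Also note that $d^{A'}(\mu')$ is not defined since $A'\in\mu'$; the quantity you actually need is $d^{A'}\bigl((\mu')_{A'}\bigr)$, together with the easy remark that removing the corner $A'$ does not change which addable or removable $(\res^{\kappa'}A')$-nodes lie strictly above it.
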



The main result of this section is:

\begin{Theorem}\label{TSignDualSpecht}
  Suppose that $\al\in Q_+$ with $d=\height(\al)$, and $\mu\in\Par_\alpha$. Then
  $$S^\mu\cong(S_{\mu'})^\sgn \quad\text{and}\quad S_\mu\cong(S^{\mu'})^\sgn$$ 
  as graded $R_\al(\O)$-modules.
\end{Theorem}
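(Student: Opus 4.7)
The strategy is to exploit the universal property of $S^\mu$ given by Definition~\ref{DSpecht}: I will show that the generator $z_{\mu'}$ of $S_{\mu'}$, viewed as an element of the twisted module $(S_{\mu'})^\sgn$, satisfies the four defining relations of $z^\mu$, and then that the resulting homomorphism $\varphi:S^\mu\to(S_{\mu'})^\sgn$ is an isomorphism. The generator $z_{\mu'}$ has degree $\codeg^{\kappa'}(\T_{\mu'}) = \deg^\kappa(\T^\mu)$ by Lemma~\ref{L:signs} applied to $\T^\mu$ (using $(\T^\mu)'=\T_{\mu'}$), and the $\sgn$-twist preserves degrees, so all degree shifts match.

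Relations (i)--(iii) are direct consequences of the definition of $\sgn$ and the defining relations of $z_{\mu'}$ in $S_{\mu'}$. Indeed $e(\bi)\cdot z_{\mu'} = e(-\bi)z_{\mu'} = \delta_{\bi,\bi^\mu}\,z_{\mu'}$, since $\bi_{\mu'}=-\bi^\mu$ by Lemma~\ref{L:signs}; $y_r\cdot z_{\mu'} = -y_r z_{\mu'} = 0$; and if $r\rightarrow_{\T^\mu}r{+}1$ then equivalently $r\downarrow_{\T_{\mu'}}r{+}1$ (since $(\T^\mu)'=\T_{\mu'}$ and conjugation swaps rows and columns), so $\psi_r\cdot z_{\mu'} = -\psi_r z_{\mu'} = 0$ by Definition~\ref{signed Specht}(iii).

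The main obstacle is the homogeneous Garnir relation (iv): for each row Garnir node $A=(a,b,m)\in\mu$, the conjugate $A':=(b,a,l-m+1)$ is a column Garnir node of $\mu'$, and the key claim is
\[\sgn(g^A) = (-1)^{\ell(w^{\T^A})}\,g_{A'}.\]
Granting this, the desired vanishing $g^A\cdot z_{\mu'}=\sgn(g^A)z_{\mu'}=0$ follows from Definition~\ref{signed Specht}(iv). To establish this identification I would verify the following combinatorial dictionary by direct inspection of the definitions in Section~\ref{SGTab} and Section~\ref{SSColBricks}: under conjugation of nodes, $\Belt^A$ corresponds bijectively to $\Belt_{A'}$, with corresponding entries of $\G^A$ and $\G_{A'}$ coinciding so that $(\G^A)'=\G_{A'}$; in particular the rows of $\Belt^A$ correspond to the columns of $\Belt_{A'}$ in a way compatible with both brick labellings, so that $B_r^A = B^r_{A'}$ as subsets of $\{1,\dots,d\}$ and hence $n^A=n_{A'}$, $w_r^A = w^r_{A'}$ in $\Si_d$, $\Si^A = \Si_{A'}$, $f^A=f_{A'}$, and $\D^A=\D_{A'}$; moreover $w_{\T_{A'}}=w^{\T^A}$ by Lemma~\ref{eBruhat}(i) applied to $(\T_{A'})'=\T^A$. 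With this dictionary in place, using $\ell(w_r^A)=e^2$, $\sgn(\psi_s)=-\psi_s$, and $\sgn(e(\bj))=e(-\bj)$, one computes
\[
\sgn(\sigma_r^A) = (-1)^{e^2}\psi_{w^r_{A'}}e(\bi_{A'}) = \sigma^r_{A'},
\]
because $(-1)^{e(e-1)}=1$ and $\sigma^r_{A'}$ carries its own built-in factor $(-1)^e$. Hence $\sgn(\tau_r^A)=\tau^r_{A'}$, and since the elements of $\D^A$ are fully commutative by Lemma~\ref{LFullComm}, $\sgn(\tau_u^A)=\tau^u_{A'}$ for all $u\in\D^A$; combined with $\sgn(\psi^{\T^A})=(-1)^{\ell(w^{\T^A})}\psi_{\T_{A'}}$, this yields the claim.

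To finish, $\varphi:S^\mu\to(S_{\mu'})^\sgn$ is surjective by cyclicity. For injectivity, Corollary~\ref{COBasis} and Corollary~\ref{signed COBasis} provide homogeneous $\O$-bases $\{v^\T:\T\in\St(\mu)\}$ and $\{v_\Stab:\Stab\in\St(\mu')\}$ with degrees matching under $\T\leftrightarrow\T'$ by Lemma~\ref{L:signs}. A direct computation gives $\varphi(v^\T) = \sgn(\psi^\T)z_{\mu'} = (-1)^{\ell(w^\T)}\psi_{\T'}z_{\mu'} = (-1)^{\ell(w^\T)}v_{\T'}$ (using $w^\T=w_{\T'}$ from Lemma~\ref{eBruhat}(i) and the uniqueness of the preferred reduced decomposition), so $\varphi$ is a signed bijection of the two bases and hence an isomorphism over any $\O$ by a standard reduction to the universal case $\O=\Z$. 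The second isomorphism $S_\mu\cong(S^{\mu'})^\sgn$ follows formally by applying the $\sgn$-twist to the first (using $\sgn^2=\id$ and $(\mu')'=\mu$).
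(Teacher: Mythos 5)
Your proof is correct and takes essentially the same approach as the paper: show that $z_{\mu'}\in(S_{\mu'})^\sgn$ satisfies the defining relations of Definition~\ref{DSpecht}, which by the universal property yields a surjection $S^\mu\onto(S_{\mu'})^\sgn$. Two small remarks. First, you finish by comparing the standard bases $\{v^\T\}$ and $\{v_\T\}$ and checking that $\varphi(v^\T)=(-1)^{\ell(w^\T)}v_{\T'}$, whereas the paper instead constructs a second homomorphism $\theta_{\mu'}:(S_{\mu'})^\sgn\to S^\mu$ directly from Definition~\ref{signed Specht} and observes that $\theta^\mu$ and $\theta_{\mu'}$ are mutually inverse on generators; both finishes are valid and of comparable length. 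Second, your computation $\sgn(g^A)=(-1)^{\ell(w^{\T^A})}g_{A'}$ is actually more precise than what appears in the paper's proof, which asserts $\sgn(g^A)=g_{A'}$ after noting only $\sgn(\tau^A_r)=\tau^r_{A'}$ and thereby omits the factor $(-1)^{\ell(w^{\T^A})}$ coming from $\sgn(\psi^{\T^A})$; as you observe, this sign is harmless for the vanishing $g^A\cdot z_{\mu'}=0$, but it is worth having recorded it correctly.
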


\begin{proof}
  We claim that there are degree zero homomorphisms  of graded  $R_\alpha(\O)$-modules
  $$ \theta^\mu:S^\mu\longrightarrow(S_{\mu'})^\sgn\quad\text{and}\quad
     \theta_{\mu'}:(S_{\mu'})^\sgn\longrightarrow S^\mu$$
  such that $\theta^\mu(z^\mu)=z_{\mu'}$ and $\theta_{\mu'}(z_{\mu'})=z^\mu$. As $z^\mu$
  and $z_{\mu'}$ generate the two Specht modules, this claim implies the theorem.  
  
Note that  
$$\deg z^\mu=\deg^\kappa(\T^\mu)=\codeg^{\kappa'}(\T_{\mu'})=\deg z_{\mu'}$$ 
  by Lemma~\ref{L:signs}. 
 So to prove the existence of $\theta^\mu$, it suffices to check that $z_{\mu'}\in (S_{\mu'})^\sgn$ satisfies the defining relations of~$S^\mu$ from Definition~\ref{DSpecht}. The map $\theta_{\mu'}$ is constructed similarly using Definition~\ref{signed Specht} instead, so we only give details for $\theta^\mu$. 

  By Lemma~\ref{L:signs}, $\bi^\mu=\bi(\T^\mu)=-\bi(\T_{\mu'})=-\bi_{\mu'}$.  Therefore, if
  $\bj\in I^\al$ then
  $$e(\bj)\cdot z_{\mu'}=e(-\bj)z_{\mu'}=\delta_{-\bj,\bi_{\mu'}}z_{\mu'}=\delta_{-\bj,-\bi^{\mu}}z_{\mu'}=\delta_{\bj,\bi^\mu}z_{\mu'}.$$
  Therefore, $z_{\mu'}$ satisfies Definition~\ref{DSpecht}(i).
  Moreover, $y_r\cdot z_{\mu'}=-y_r z_{\mu'}=0$ for all $1\le r\le d$. Next observe that if
  $1\le r<d$ then $r\rightarrow_{\T^\mu}r+1$ if and only if $r\downarrow_{\T_{\mu'}}r+1$.
  Hence,
  $\psi_r\cdot z_{\mu'} =-\psi_rz_{\mu'}=0,$
  by Definition~\ref{DSpecht}(iii) and Definition~\ref{signed Specht}(iii). 

  It remains to check that $z_{\mu'}\in (S_{\mu'})^\sgn$ satisfies the row Garnir relations from Definition~\ref{DSpecht}(iv). Recall the node correspondence $A\leftrightarrow A'$
  defined before Lemma~\ref{L:signs} which sends a node $A\in\mu$ to $A'\in\mu'$.
  If $A\in\mu$ is a row Garnir node then $A'\in\mu'$ is a column
  Garnir node and, further, this correspondence sends row bricks in~$\mu$ to column
  bricks in~$\mu'$.  In particular, $k^A=k_{A'}$, where~$k^A$ is the number of row bricks
  in~$\Belt^A$ and~$k_{A'}$ is the number of column bricks in~$\Belt_{A'}$. Moreover,
  $\sgn(\tau_r^A)=\tau_{A'}^r$ by Lemma~\ref{rho translation}(i), for $1\le r<k_A$, 
  so that $\sgn(g^A)=g_{A'}$. Therefore,
  $$g^A\cdot z_{\mu'}=\sgn(g^A)z_{\mu'}=g_{A'}z_{\mu'}=0,$$ 
  where the last equality is a column Garnir relation in $S_{\mu'}$.
\end{proof}


\end{document}